\title[Distances between Space-Times]{Introducing Various Notions of Distances between Space-Times}
\author[Sakovich]{A Sakovich}
\author[Sormani]{C Sormani}
\thanks{The authors began this research while in residence at the Mathematical Sciences Research Institute (MSRI). 
 Sakovich was partly funded by the Swedish Research Council's grants dnr. 2016-04511 and 2024-04845. The research was also funded in part by Sormani's PSC-CUNY and NSF DMS-1612409 grants. 
}
\theoremstyle{plain}
\newtheorem{thm}{Theorem}[section]
\newcommand{\bt}{\begin{thm}}
\newcommand{\et}{\end{thm}}
\newtheorem{remark}[thm]{Remark}
\newtheorem{ex}[thm]{Example}
\newtheorem{cor}[thm]{Corollary}   
\newtheorem{quest}[thm]{Question}
\newcommand{\bc}{\begin{cor}}
\newcommand{\ec}{\end{cor}}
\newtheorem{lem}[thm]{Lemma}   
\newcommand{\bl}{\begin{lem}}
\newcommand{\el}{\end{lem}}
\newtheorem{prop}[thm]{Proposition}
\newcommand{\bp}{\begin{prop}}
\newcommand{\ep}{\end{prop}}
\newtheorem{defn}[thm]{Definition}
\newtheorem{conj}[thm]{Conjecture}
\newcommand{\ben}{\begin{itemize}}
\newcommand{\een}{\end{itemize}}
\newcommand{\bd}{\begin{defn}}    
\newcommand{\ed}{\end{defn}}
\newtheorem{rmrk}[thm]{Remark}   
\newcommand{\br}{\begin{rmrk}}
\newcommand{\er}{\end{rmrk}}
\newcommand{\ptGHto}{\stackrel { \textrm{pt-GH}}{\longrightarrow} }
\newcommand{\Sdistto}{\stackrel {{S-dist}}{\longrightarrow} }
\newcommand{\distto}{\stackrel {{dist}}{\longrightarrow} }
\newcommand{\Ln}{\operatorname{log}}
\newcommand{\be}{\begin{equation}}
\newcommand{\ee}{\end{equation}}
\newcommand{\diam}{\operatorname{diam}}
\newcommand{\set}{\rm{set}}
\newcommand{\mass}{{\mathbf M}}
\newcommand{\area}{\operatorname{Area}}
\newcommand{\spt}{\operatorname{spt}}
\def\iff{\Longleftrightarrow}
\def\implies{\Longrightarrow}
\DeclareMathOperator{\tr}{tr}
\begin{document}

\begin{abstract}  
We introduce the notion of causally-null-compactifiable space-times which can be canonically converted into  compact timed-metric-spaces using the cosmological time 
of Andersson-Galloway-Howard and the null distance of Sormani-Vega.  We produce a large class of such space-times including future developments of compact initial data sets and regions which exhaust asymptotically flat space-times.   We then present various notions of intrinsic distances between these space-times (introducing the timed-Hausdorff distance)
and prove some of these notions of distance are 
definite in the sense that they equal zero iff there is a time-oriented Lorentzian isometry between the space-times.   These definite distances enable us to define various notions of convergence of space-times to limit space-times which are not necessarily smooth.  Many open questions and conjectures are included throughout.
\end{abstract}

\maketitle

\vspace{-.2cm}
\begin{center}
{\bf Dedicated to Yvonne Choquet-Bruhat.}
\\
Thank you for your inspiring and beautiful work.
\end{center}
\newpage

\tableofcontents

\section{{\bf{Introduction}}}\label{sec:intro}

When physicists study isolated regions within the universe evolving from an initial data set or the evolution of the entire universe from a big bang, they often study a simplified smoother space-time without black holes and then assume that the actual universe exterior to the black holes is close in some sense to this simplified space-time.  It is natural to ask 
\begin{itemize}
\item
In what sense is an almost isotropic  universe close to an isotropic homogeneous FLRW space-time?
\item
In what sense is an isolated gravitational system with almost no mass close to Minkowski space?
\item
In what sense does an isolated gravitational system evolve towards a Kerr space-time?
\end{itemize}
In this paper we introduce various notions for an intrinsic distance between a pair of space-times that can be applied to study convergence of space-times and stability of solutions to the Einstein equations.
 We need to define their closeness even when the space-times are not diffeomorphic if we wish to allow for the possible existence of black holes.
 We define these distances in an  intrinsic way because the space-times do not lie within a common extrinsic space-time. 

There are a variety of notions of intrinsic distances between distinct compact Riemannian manifolds which do not require that they be diffeomorphic.   Gromov introduced the idea of converting a pair of compact Riemannian manifolds, $(M_j,h_j)$, into compact metric-spaces, $(M_j, d_{h_j})$,  and then taking the Gromov-Hausdorff ($GH$) distance between them.  This notion of $GH$ distance between Riemannian manifolds is definite in the sense that the $GH$ distance is only $0$ when there is a distance preserving bijection, $F:M_1\to M_2$, which is thus also a Riemannian isometry, $F^*g_2=g_1$.   

Other definite intrinsic distances between pairs of compact Riemannian manifolds have been defined using a
similar conversion process but also keeping track of volume measures by Sturm and Lott-Villani using earlier work of Fukaya and Cheeger-Colding 
\cite{ChCo-PartI}
\cite{Fukaya-collapsing}\cite{Lott-Villani-09}
\cite{Sturm-2006-I}. 
Sormani-Wenger 
defined the intrinsic flat ($\mathcal F$) distance between compact oriented Riemannian manifolds by converting them into metric-spaces with integral current structures and proved this notion is definite using earlier work of Ambrosio-Kirchheim \cite{SW-JDG}
\cite{AK}. 
We review some of these notions of intrinsic distances between compact Riemannian manifolds and metric-spaces with various structures in Subsections~\ref{sect:back-GH}-\ref{sect:back-Kuratowski}.

Given a pair of space-times, $(N_j,g_j)$, we will convert them into compact metric-spaces and then apply the various notions of intrinsic distances between metric-spaces mentioned above.  This takes a few steps.   Although the Lorentzian distance is not defined between all pairs of points, it can be used to uniquely define cosmological time functions, 
\be
\tau_j=\tau_{g_j}: N_j \to [0,\infty]
\ee
as seen in work of Andersson-Galloway-Howard \cite{AGH} (see also work of Wald-Yip \cite{WaldYip}). When the cosmological time function is regular, the null distance, 
$
\hat{d}_{g_j}=\hat{d}_{\tau_{g_j}},
$
defined by Sormani-Vega in \cite{SV-Null}, can be applied to canonically convert the space-times  into complete metric-spaces, $\left(\bar{N}_j, \hat{d}_{g_j} \right)$. 
See Theorem~\ref{thm:null-canonical} within the review of these concepts in Subsections~\ref{sect:back-L}-\ref{sect:back-Null} for details. 

In this paper we will apply the null distance to define various notions of definite intrinsic distances between space-times which we believe will prove useful especially when combined with these alternate approaches.

\begin{defn}\label{defn:ncst}
We will say that a space-time, $(N,g)$, is a {\bf null compactifiable space-time} if its cosmological time function,
$\tau_g:N\to [0,\tau_{max}]$ is regular and bounded, and its {\bf associated metric-space},
$\left(\bar{N}, \hat{d}_{g} \right)$, is compact.  The associated compact timed-metric-space, $\left(\bar{N}, \hat{d}_{g}, \tau_g \right)$, may also be endowed with a measure or an integral current structure.
\end{defn}

Allen, Burtscher, Graf, Garci\'a-Heveling, Kunzinger,
Sakovich, Sormani, Steinbauer,
and Vega
have studied sequences of such
space-times, $(\bar{N}_j,g_j,\tau_j)$, by studying the $GH$ and $\mathcal{F}$ convergence of their associated compact metric-spaces, $(\bar{N}_j,\hat{d}_{g_j})$ defined using the null distance as above in
\cite{Allen-Null}
\cite{Allen-Burtscher-22} 
\cite{Burtscher-Garcia-Heveling-Global} 
\cite{Graf-Sormani}
\cite{Kunzinger-Steinbauer-22}
\cite{SakSor-SIF}
\cite{Sor-Ober-18}
\cite{SV-Null}
\cite{SV-BigBang}.  
Here we introduce stronger notions of convergence
which also keep track of time and thus the causal structure of the space-times.

When a space-time, $(N,g)$, has a regular cosmological time function, $\tau_g$, which is also proper then its cosmic strips, 
\be
N_{s,t}=\tau_g^{-1}(s,t),
\ee
are null compactifiable space-times. In \cite{SakSor-Null} Sakovich and Sormani proved that, 
in this setting, the null distance of the cosmological time function encodes causality:
\be\label{eq:causal}
p\in J^+(q) \iff \tau_{g}(p)-\tau_g(q)=\hat{d}_{g}(p,q).
\ee
Burtscher, Garc\'ia-Heveling, and Galloway extended this result to larger classes of space-times
\cite{Burtscher-Garcia-Heveling-Global}\cite{Galloway-causal}.  See the review in Subsection~\ref{sect:back-C}.   In this paper we will consider the full class of possible space-times and regions within space-times that have this property by introducing the following notion:

\begin{defn}\label{defn:cncst}
A space-time, $(N,g)$, is a {\bf causally-null-compactifiable space-time} if it has a bounded regular cosmological time function whose null distance encodes causality, so that it has an {\bf associated compact timed-metric-space}, 
$(\bar{N},\hat{d}_{g},\tau_{g})$, which is a compact metric-space,
$(\bar{N},\hat{d}_{g})$, 
endowed with Lipschitz one time function, 
$\tau_{g}:\bar{N} \to [\tau_{min}, \tau_{max}]$, that recovers the original causal structure as in (\ref{eq:causal}).
The associated compact timed-metric-space may also be endowed with a measure or an integral current structure.
\end{defn}

Sakovich-Sormani proved that a bijection, $F:N_1\to N_2$, between two 
causally-null-compactifiable space-times
preserves distance 
\be
\hat{d}_{g_2}(F(p),F(q))=
\hat{d}_{g_1}(p,q) \quad \forall \, p,q\in N_1, 
\ee
and preserves time
\be
\tau_2(F(p))=\tau_1(p)\quad \forall \, p \in N_1,
\ee
if and only if this bijection is also a Lorentzian isometry, $F^*g_2=g_1$, that is time-oriented (Thm 1.3 in \cite{SakSor-Null}).  Thus, it is natural for us to consider two compact timed-metric-spaces to be the same space-time iff there is a space and time preserving bijection.  We can endow these compact timed-metric-spaces with a causal structure using (\ref{eq:causal}) and view them as a class of non-smooth space-times.  

In Section~\ref{sect:space-times}, we provide many classes of causally-null-compactifiable space-times including big bang space-times and future developments from compact initial data sets, as well as regions which exhaust asymptotically flat space-times.  We begin with
Section~\ref{sect:space-times-induced} where we present the explicit construction of the associated metric space using the null distance (see Theorem~\ref{thm:induced}).  In
Section~\ref{sect:space-times-proper} we apply this theorem combined with Sakovich-Sormani's encoding causality theorem of \cite{SakSor-Null} to prove Theorem~\ref{thm:proper} which states that if $(N,g)$ has a proper regular cosmological time function, then the cosmic strip is a causally-null-compactifiable space-time.   In Section~\ref{sect:space-times-big-bang} we briefly review motivating unpublished work on big bang space-times by Sormani-Vega
\cite{SV-BigBang}.  In Section~\ref{sect:space-times-initial-data} we prove certain classes of future developments from compact initial data sets are causally-null-compactifiable space-times (see Theorem~\ref{thm:FD}).

Naturally one may wish to study unbounded space-times.  In Section~\ref{sect:space-times-strip} we consider space-times which do not have finite cosmological time functions because the past extends indefinitely.  We introduce {\em local cosmological time functions} (see Definition~\ref{defn:local-cosmo}) and prove 
that, when proper and regular, their cosmic strips are causally-null-compactifiable (see Definition~\ref{defn:local-cosmo} and Theorem~\ref{thm:local-cosmo}).
In Sections~\ref{sect:space-times-sub}-~\ref{sect:space-times-ex}, we discuss various ways one might canonically exhaust an asymptotically flat space-time by causally null compactifiable sub-space-times.

In Sections~\ref{sect:space-times-measure} and~\ref{sect:space-times-current} we introduce the idea of a {\em timed metric measure space} 
(which will be explored in future work by Mondino-Perales) and a {\em timed integral current space} (which will be explored in future work by Sakovich-Sormani).
In Section~\ref{sect:space-times-other} we survey other weak notions of space-times developed by
Alexander-Bishop
\cite{Alexander-Bishop-Lorentz},
Braun-McCann\cite{Braun-McCann},
Braun-Ohta
\cite{Braun-Ohta}, 
Burtscher-Garc\'ia-Heveling
\cite{Burtscher-Garcia-Heveling-Time},
Busemann
\cite{Busemann-Timelike}, %
Ebrahimi-Vatandoost-Pourkhandani
\cite{Neda-Mehdi-Rahimeh},
Harris \cite{Harris-triangle},
Kunzinger-Saemann
\cite{Kunzinger-Saemann},
Kunzinger-Steinbauer
\cite{Kunzinger-Steinbauer-22},
McCann
\cite{McCann-Synthetic},
Minguzzi-Suhr
\cite{Minguzzi-Suhr-24},
Mondino-Suhr
\cite{Mondino-Suhr},
M\"uller
\cite{Mueller-GH}, 
and others mentioned within.
Throughout Section~\ref{sect:space-times} we include open problems in addition to our new definitions and theorems.

In Section~\ref{sect:defns} we define a collection of intrinsic distances between pairs of causally-null-compactifiable space-times as follows:

\begin{defn}\label{defn:sd}
 We will say that an {\bf intrinsic distance (defined via null compactification with respect to cosmological time) between a pair of causally-null-compactifiable space-times}, 
$(N_j,g_j)$, is defined by taking an intrinsic distance, $d_{dist}$,
\index{d@$d_{dist}$}
\index{da@$d_{S-dist}$}
between their associated compact timed-metric-spaces, 
$(\bar{N}_j,\hat{d}_{g_j},\tau_{g_j})$,
 as follows:
\be
d_{S-dist}\Big((N_1,g_1),(N_2,g_2)\Big)
=d_{dist}\left(\big(\bar{N}_1,\hat{d}_{g_1},\tau_{g_1}\big),
\big(\bar{N}_2,\hat{d}_{g_2},\tau_{g_2}\big)\right).
\ee
We say this intrinsic distance is {\bf definite} when
\be\label{eq:definite}
d_{S-dist}\Big((N_1,g_1),(N_2,g_2)\Big)=0
\ee
iff there is a bijection, $F: N_1\to N_2$ that is
distance and time preserving
\be
\hat{d}_{g_1}\big(p,q\big)=\hat{d}_{g_2}\big(F(p),F(q)\big) \textrm{ and } \tau_{g_1}(p)=\tau_{g_2}\big(F(p)\big) \quad \forall \, p,q \in N_1,
\ee
and thus is also a Lorentzian isometry satisfying $g_1=F^*g_2$ everywhere.
\end{defn}

In Section~\ref{sect:defns}, we introduce a variety of intrinsic distances  between compact timed-metric-spaces
using a variety of choices for the $d_{dist}$.  
We begin in Section~\ref{sect:defns-tl}
with the timeless Gromov-Hausdorff distance,
\be
d_{S-GH}^{tls}\Big((N_1,g_1),(N_2,g_2)\Big)
=d_{GH}\left(\big(\bar{N}_1,\hat{d}_{g_1}\big),
\big(\bar{N}_2,\hat{d}_{g_2}\big)\right),
\ee
the timeless metric measure distance
and the timeless intrinsic flat distances between space-times
 that have been studied in the past by Sormani, Vega, Kunzinger, Saemann, Allen, Burtscher, 
 and Garc\'ia-Heveling in   
\cite{SV-BigBang}
\cite{Sor-Ober-18}
\cite{Kunzinger-Steinbauer-22}
\cite{Allen-Burtscher-22} 
\cite{Burtscher-Garcia-Heveling-Global} 
\cite{Allen-Null}. 
This notion $d_{S-GH}^{tls}$ is timeless (in the sense that it does not involve the time function) and, as a consequence, it is not definite (see Example~\ref{ex:t-to-x}).

We introduce new notions of intrinsic distances which do keep track of time including distances defined by matching level sets in Section~\ref{sect:defns-S-L},
distances between pairs of causally null compactifiable big-bang space-times  
including $d_{S-BB-GH}$ in  Section~\ref{sect:defns-BB}, distances between pairs of future developments from compact initial data sets
including $d_{S-FD-HH}$
in Section~\ref{sect:defns-FD},
and distances between pairs of cosmic strips including $d_{S-GH}^{strip-sup}$
in Section~\ref{sect:defns-strip}.
Each of our notions is explicitly defined using the $GH$ or Hausdorff distances but we also comment on notions defined using metric measure (mm) or intrinsic flat ($\mathcal{F}$) distances between timed-metric-spaces endowed with measures or current structures respectively.  Some of these notions are familiar from our talks but are defined for the first time within this paper.

Our most powerful new notion is the {\bf intrinsic timed-Hausdorff distance},
$d_{\tau-H}$, introduced in Section~\ref{sect:defns-tau-K},
which is defined on any pair of compact timed-metric-spaces. Thus its corresponding, $d_{S-\tau-H}$, is defined on any pair of causally-null-compactifiable space-times.  It can be used to compare cosmic strips in big bang space-times to cosmic strips in space-times without a big bang.  It can be used to study causally null compactifiable sub-space-times in asympototically flat spaces without a finite cosmological time function.   

In Section~\ref{sect:definite}, we show that some of our intrinsic distances defined via null compactification with respect to cosmological time are definite in the sense defined in Definition~\ref{defn:sd} above
and show others are not definite and leave a few as conjectures and open questions.    Most importantly, we prove
the intrinsic timed-Hausdorff distance, $d_{S-\tau-H}$, is definite in Theorem~\ref{thm:definite-tau-K}.
We also prove in Theorem~\ref{thm:BB-GH-definite} that the $d_{S-BB-GH}$ between causally null compactifiable big bang space-times is definite and in Theorem~\ref{thm:FD-HH} that $d_{S-FD-HH}$ between cosmic strips in future developments of compact initial data sets is definite.
 
In Section~\ref{sect:conv}, 
we state a number of open problems related to convergence of space-times. 
Here we say a sequence of causally-null-compactifiable space-times, $(N_j,g_j)$ {\bf converges} to $(N_\infty,g_\infty)$ in the $S-dist$ sense
if
\be
d_{S-dist}((N_j,g_j),(N_\infty,g_\infty))\to 0.
\ee
We can also consider the convergence to a non-smooth timed-metric-space, $(\bar{N}_\infty,d_\infty,\tau_\infty)$, if
\be
d_{dist}\bigg((\bar{N}_j, \hat{d}_{g_j},\tau_{g_j}),
(\bar{N}_\infty, \hat{d}_{g_\infty},\tau_{g_\infty})\bigg)\to 0.
\ee
In Section~\ref{sect:relations} we state conjectures accessible to doctoral students as to the strength of our various notions of convergence.
In Section~\ref{sect:prior} we describe various existing notions of  convergence for space-times by various teams of authors including Noldus \cite{Noldus},
Cavalletti-Manini-Mondino
\cite{Cavalletti-Manini-Mondino-Optimal}, 
Minguzzi-Suhr \cite{Minguzzi-Suhr-24},
and M\"uller
 \cite{Mueller-GH}. 
Ideally our notions can be related to theirs and used together. 

In Section~\ref{sect:stability-compact}
and Section~\ref{sect:FLRW} we restate classical questions regarding the stability of the Einstein vacuum equations with Compact Initial Data Sets and of Friedmann-Lema\^itre-Robertson-Walker (FLRW) space-times in the language of our intrinsic distances and review related literature.  The key point is that our notions allow space-times with many black-holes (like the actual universe) to be compared to smooth homogeneous and isotropic space-times (like the FLRW space-times).

In Section~\ref{sect:conv-ex} we suggest
defining the convergence of sequences of asymptotically flat space-times using two possible canonical exhaustions by causally-null-compactifiable sub-space-times.
In Sections~\ref{sect:stability}
and~\ref{sect:Penrose}
we apply these ideas to restate classical conjectures regarding the stability of Minkowski, Schwarzschild, and Kerr space-times and to restate the Final State Conjecture and ideas of Penrose in a way which allows for lack of diffeomorphisms.   
Throughout the paper we include remarks, open questions, and conjectures, many of which we hope will be accessible to doctoral students and postdocs.   We also announce future work already in progress by various authors.  In particular, work on space-time intrinsic flat convergence is in progress by Sakovich-Sormani and will appear in \cite{SakSor-SIF}.  We welcome anyone interested in working on the open problems to join one of our teams.   It is especially of interest to us to work together with those developing other notions of convergence and other notions of non-smooth space-times to better understand the relationships between our diverse notions.

Readers should be able to follow most of this paper with only a background in basic Riemannian and Lorentzian Geometry as in Minguzzi's Living Reviews \cite{Minguzzi-living} and Metric Geometry as in Burago-Burago-Ivanov's textbook \cite{BBI}, although we try to review all concepts here before applying them.

\vspace{.3cm}
\noindent{\bf Acknowledgements:} Shing-Tung Yau first suggested the idea of defining a space-time convergence to Christina Sormani and then Lars Andersson suggested using the cosmological time function.   We would also like to thank  Greg Galloway and Carlos Vega for discussions at MSRI in 2013 
and the following two years.  We would like to thank the Simons Center for Geometry and Physics and Oberwolfach for hosting workshops on Mathematical General Relativity in 2018 where we had the opportunity to develop many of these ideas.  We would like to thank the Fields Institute for hosting the Thematic Program on Nonsmooth Riemannian and Lorentzian Geometry
in 2022.  We thank
Raquel Perales and Andrea Mondino
for their comments on an early draft of this paper in 2024.  Finally, we thank
Raquel Perales,
Mauricio Che, Andr\'es Ahumada G\'omez, and
Jaime Santos Rodríguez
for careful feedback on the first arxiv post of this paper.   We thank the Simons Center for Geometry and Physics for funding the upcoming Fall 2025 long program on {\em Geometry and Convergence in Mathematical General Relativity}.

\section{{\bf{Background}}}
\label{sect:back}
In this section we review key prior results that we will be using within this paper.

\subsection{{\bf The Gromov-Hausdorff Distance between Riemannian Manifolds and Metric Spaces}}
\label{sect:back-GH}

Gromov first introduced the Gromov-Hausdorff distance between Riemmanian manifolds in \cite{Gromov-1981}\cite{Gromov-metric} using a canonical conversion of the Riemannian manifolds into metric spaces.  The
text by Burago-Burago-Ivanov is a good source \cite{BBI}.
Gromov's notion is definite because the Riemannian manifolds can be recovered from the metric spaces as we review here.

To define an intrinsic distance between a pair of compact Riemannian manifolds with different topologies, 
$(M_j,h_j)$, Gromov first converted them canonically into metric-spaces, $(M_j, d_{h_j})$, using the Riemannian distance function:  \index{aad@$d_{h}$ Riemannian dist}
\be\label{eq:Riem-dist}
d_{h_j}(p,q)=\inf_{C:[0,1]\to M} \, L_{h_j}(C)
\textrm{ where } L_{h_j}(C)=\int_0^1 h_j(C'(s),C'(s))^{1/2}\, ds
\ee 
and the infimum is over all curves between the pair of points $C(0)=p$ and $C(1)=q$.   Gromov then
used what he called the Intrinsic Hausdorff distance between the two resulting metric-spaces, $(X_j,d_j)$: 

\begin{defn} \label{defn:GH-defn} \index{aaea@$d_{GH}$ Gromov Hausdorff dist}\index{aaeb@$d^Z_H=d_H$ Hausdorff dist}
The Gromov-Hausdorff (GH) distance between two compact metric-spaces, $(X_i,d_i)$, is 
\be \label{defn:GH}
d_{GH}\bigg( (X_1,d_1), (X_2,d_2) \bigg)= 
\inf 
d^Z_{H}
\bigg(
\varphi_1(X_1),\varphi_2(X_2)
\bigg)
\ee
where the infimum is taken over all complete metric-spaces, $Z$,
and over all distance preserving maps, $\varphi_i: X_i \to Z$,
which means
\be\label{eq:dist-pres}
d_Z(\varphi_i(x),\varphi_i(y))=d_i(x,y) \qquad \forall \, x,y\in X_i.
\ee
Here the Hausdorff distance between subsets in $Z$ is
\be\label{eq:Hausdorff}
d^Z_H(A,B)=\inf 
\left\{ r\ \middle\vert \begin{array}{l}
 \forall \, a\in A \, \exists \, b_a\in B
\textrm{ s.t. } d_Z(a,b_a)<r \\
    \forall \, b\in B \,\exists \, a_b\in A
\textrm{ s.t. } d_Z(a_b,b)<r
  \end{array}\right\}.
\ee
\end{defn}

Gromov used compact metric-spaces, $Z$, in his infimum, but the definition is easily seen to be the same using the larger class of complete spaces, since he has shown one really only needs to consider the compact images of the $X_i$ lying inside the $Z$
with the restricted distance from $Z$.   We use complete metric-spaces here for reasons that will be clear later.

\begin{thm} [Gromov]
The Gromov-Hausdorff distance between compact Riemannian manifolds is definite in the sense that 
\be
d_{GH}((M_1,d_{h_1}),(M_2,d_{h_2}))=0 
\ee
implies that there is a distance preserving bijection, 
\be\label{eq:dist-pres-M}
F: M_1 \to M_2 \textrm{ such that }
d_{h_1}(p,q)=d_{h_2}(F(p),F(q)) \qquad \forall \, p,q\in M_1,
\ee
which is thus is also a Riemannian isometry,
\be\label{eq:Riem-isom}
F: M_1\to M_2 \textrm{ such that }F^*h_2=h_1.
\ee
\end{thm}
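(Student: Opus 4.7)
The plan is to prove this in two stages: first upgrade the vanishing of $d_{GH}$ to the existence of a distance-preserving bijection $F: M_1 \to M_2$ (a purely metric statement), and second upgrade this to a Riemannian isometry via Myers--Steenrod.

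For the first stage, I would argue as follows. Since $d_{GH}((M_1, d_{h_1}), (M_2, d_{h_2})) = 0$, for each $n \in \N$ there exists a complete metric space $Z_n$ and distance-preserving maps $\varphi_{1,n}: M_1 \to Z_n$ and $\varphi_{2,n}: M_2 \to Z_n$ with $d^{Z_n}_H(\varphi_{1,n}(M_1), \varphi_{2,n}(M_2)) < 1/n$. Using this, for every $p \in M_1$ I can select a point $F_n(p) \in M_2$ with $d_{Z_n}(\varphi_{1,n}(p), \varphi_{2,n}(F_n(p))) < 1/n$, which yields a map $F_n: M_1 \to M_2$ satisfying the approximate distance-preservation property
\[
|d_{h_2}(F_n(p), F_n(q)) - d_{h_1}(p,q)| < 2/n \quad \forall \, p,q\in M_1.
\]
Fix a countable dense subset $\{p_k\}_{k\in\N} \subset M_1$. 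Since $M_2$ is compact, a standard diagonal argument extracts a subsequence along which $F_n(p_k)$ converges in $M_2$ for every $k$, defining $F(p_k) := \lim F_n(p_k)$. The approximate distance-preservation passes to the limit and gives $d_{h_2}(F(p_k), F(p_\ell)) = d_{h_1}(p_k, p_\ell)$, so $F$ is $1$-Lipschitz on the dense set and extends uniquely to a distance-preserving map $F: M_1 \to M_2$.

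To obtain bijectivity, I would run the symmetric construction to produce a distance-preserving map $G: M_2 \to M_1$. Both $F$ and $G$ are automatically injective (distance-preserving), and each has closed image in the compact target. A volume/Hausdorff-measure comparison, or more concretely the observation that $F(M_1)$ and $G(M_2)$ are isometric copies of compact Riemannian manifolds of equal Hausdorff dimension sitting inside compact spaces of the same dimension and measure, forces surjectivity. Alternatively, one applies the classical fact that an injective distance-preserving self-map of a compact metric space is surjective, after composing $G \circ F: M_1 \to M_1$ and $F \circ G: M_2 \to M_2$. This yields the distance-preserving bijection $F$ asserted in \eqref{eq:dist-pres-M}.

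For the second stage, I would invoke the Myers--Steenrod theorem: any distance-preserving bijection between connected Riemannian manifolds (with their induced length-metric distances $d_{h_j}$) is automatically a smooth diffeomorphism satisfying $F^* h_2 = h_1$. This directly yields \eqref{eq:Riem-isom}. The main obstacle in the argument is the first stage, specifically the combination of the diagonal extraction (which requires working with a countable dense set because there is no a priori equicontinuity of $\{F_n\}$ beyond the asymptotic one) and the surjectivity step; once a distance-preserving bijection is in hand, the upgrade to a Riemannian isometry is black-boxed by Myers--Steenrod.
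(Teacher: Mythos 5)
Your proof is correct and follows essentially the same route the paper sketches: extract approximate isometries from the definition of $d_{GH}$, pass to a limit along a countable dense subset to obtain a distance-preserving bijection, and then upgrade this metric isometry to a Riemannian one. The only cosmetic differences are that you obtain surjectivity from the classical fact that a distance-preserving self-map of a compact metric space is onto (rather than from the almost-onto property of Gromov's $2\epsilon$-almost isometries, as the paper does), and that you black-box the final step via Myers--Steenrod where the paper sketches it through preservation of rectifiable lengths of curves.
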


In fact, Gromov proved that if 
\be
d_{GH}((X_1,d_1)),(X_2,d_2)))<\epsilon
\ee
then there is a $2\epsilon$ almost isometry
$F:X_1\to X_2$ which is a map that
is $2\epsilon$ almost distance preserving
\be
|d_1(p,q)-d_2(F(p),F(q))|<2\epsilon \qquad \forall \, p,q\in X_1,
\ee
and is $2\epsilon$ almost onto
\be
\forall \, q\in X_2\quad
\exists \, p\in X_1 \textrm{ such that }
d_2(q,F(p))<2\epsilon.
\ee
So we have a distance preserving bijection
when taking $\epsilon \to 0$.    Once one has a
distance preserving isometry as in (\ref{eq:dist-pres-M}), one shows the rectifiable lengths of curves in the Riemannian manifolds are preserved, $L_{h_1}(C)=L_{h_2}(F\circ C)$,
and so we recover a Riemannian isometry as in (\ref{eq:Riem-isom}).
 
Such a canonical conversion and recovery process was not understood between space-times with Lorentzian metric tensors and their corresponding metric spaces endowed with the null distance.  This has caused significant delays in proving intrinsic space-time distances are definite.  However, we now have the necessary tools, as we shall see within this paper.

\subsection{{\bf Review of Metric Completions}}
\label{sect:back-completion}

The Gromov-Hausdorff distance is only proven to be definite for compact metric spaces.
For example, if one were to take the open Euclidean disk and the closed Euclidean disk, the $GH$ distance between these two spaces is $0$.   When a pair of Riemannian manifolds or metric spaces is only precompact (not compact), then one takes their {\bf metric completions} to have a pair of compact metric spaces before taking their Gromov-Hausdorff distance.   As we need this concept, we add more details here:

\begin{defn}\label{defn:metric completion}
The {\bf metric completion}, $(\bar{X},d)$, of a metric space, $(X,d)$, is defined as the set of equivalence classes of Cauchy sequences in $(X,d)$, where a
sequence, $\{x_i:\, i\in {\mathbb N}\}$, is Cauchy iff
\be
\forall \epsilon>0 \,\exists N_\epsilon\in {\mathbb N} \textrm{ s.t. } 
\forall i,j \ge N_\epsilon
\,\,d(x_i,x_j)<\epsilon,
\ee
and $\{x_i:\, i\in {\mathbb N}\}$ is equivalent to
$\{y_i:\, i\in {\mathbb N}\}$ iff
\be
\forall \epsilon>0 \,\exists N'_\epsilon\in {\mathbb N} \textrm{ s.t. } 
\forall i \ge N'_\epsilon
\,\,d(x_i,y_i)<\epsilon.
\ee
\end{defn}

Every metric space has a complete metric completion. A metric space is said to be precompact if its metric completion is compact.

Given a $\lambda$ Lipschitz map, $f: X\to Y$,
\be \label{eq:Lip-ext}
d_Y(f(p),f(q))\le \lambda d_X(p,q)
\quad \forall p,q\in X,
\ee
it has a unique
$\lambda$ Lipschitz extension to the
metric completion, 
$f: \bar{X}\to \bar{Y}$.   In particular,
distance preserving isometries extend to distance preserving maps.

\subsection{{\bf Pointed Gromov-Hausdorff Convergence}}
\label{sect:pted-GH}

When studying sequences of complete Riemannian manifolds, one needs to use an exhaustion by compact sets in order to find a limit using Gromov-Hausdorff convergence.  
Closed balls, $\bar{B}(p,R)$, within complete Riemannian manifolds are always compact, so one can use them to define pointed Gromov-Hausdorff convergence of Riemannian manifolds using the pointed Gromov-Hausdorff convergence of their corresponding metric spaces:

\begin{defn}\label{defn:pted-GH}
A sequence of pointed metric spaces, $(X_j,d_j, p_j)$ converges in the pointed
Gromov-Hausdorff sense to 
$(X_\infty,d_\infty,p_\infty)$
iff
\be
\forall R>0 \, \exists R_j \to R
\textrm{ s.t. }
d_{GH}(\bar{B}(p_j,R_j), \bar{B}(p_\infty,R))\to 0.
\ee
\end{defn}

Within this paper we will also suggest natural exhaustions of space-times by causally null compactifiable sub-space-times in settings where the space-times themselves are not converted into compact metric spaces using the null distance.

\subsection{{\bf Review of Measures and Metric Measure Distances}}
\label{sect:back-mm}
\footnote{This section may be skipped by those only interested in $GH$ or $\mathcal{F}$ notions of distance.}

Various notions of intrinsic metric measure ($mm$) distances
between compact Riemannian manifolds and more general metric measure spaces have been developed by Lott-Villani \cite{Lott-Villani-09} and by Sturm \cite{Sturm-2006-I} building upon work of Cheeger-Colding and Fukaya \cite{ChCo-PartI}\cite{Fukaya-collapsing}. 

First they converted the $(M_j,h_j)$
into compact metric-spaces, $(X_j,d_j)=(M_j, d_{h_j})$
exactly as done by Gromov, but then they also 
kept track of the renormalized volume measures, 
$\mu_j$, defined so that $\mu_j(M_j)=1$
to create a pair of metric measure spaces, $(X_j,d_j,\mu_j)$, endowed with probability measures, $\mu_j$, such that $\spt(\mu_j)=X_j$.  One can then define the metric measure ($mm$) distance between these spaces using the Wasserstein distance 
as follows:\index{aaw@$d_{mm}$}
\be \label{defn:mm}
d_{mm}((X_1,d_1,\mu_1),(X_2,d_2,\mu_2))
=\inf d_W^Z(\varphi_{1*}\mu_1,\varphi_{2*}\mu_2)
\ee
where the infimum is taken over all complete metric-spaces, $Z$, and over all distance preserving maps, $\varphi_i:X_i\to Z$.   \index{aawa@$d_W^Z$}
Here 
$d_W^Z$ is the Wasserstein distance between the push forwards, $\varphi_{j*}\mu_j$, of the probability measures, $\mu_j$, defined using mass transportation.  Note that it is also possible to define a Wasserstein distance without any reference to $Z$, but for our purposes we define it in this way.   See, for example,  Sturm's D-distance in \cite{Sturm-2006-I} and Villani's textbook on Optimal Transport \cite{Villani-text}.

In this paper we do not need to know the precise definition of the push forwards and the Wasserstein distance,  we just need to understand that metric measure distances can be defined using distance preserving maps and that they are definite in the following sense:  
\be
d_{{mm}}((X_1,d_1,\mu_1),(X_2,d_2,\mu_2))=0 
\ee
implies there is a distance and measure preserving bijection $F:X_1\to X_2$:
\be
d_2(F(x),F(y))=d_1(x,y) \quad \forall \, x,y\in X_1
\textrm{ and } F_{*}\mu_1=\mu_2
\ee
which is also a Riemannian isometry when $(X_i,d_i,\mu_i)=(M_i,d_{g_i},\mu_i)$.  
In this paper we will discuss potential applications of this notion to define various notions of intrinsic space-time distances, but we leave the details to experts. 

\subsection{{\bf Integral Currents and the Intrinsic Flat Distance}}
\label{sect:back-F}
\footnote{This section may be skipped by those only interested in $GH$ and $mm$ notions of distance.}

Sormani-Wenger defined the intrinsic flat ($\mathcal{F}$) distance between compact oriented Riemannian manifolds, $(M_j,g_j)$, and more general spaces called integral current spaces in \cite{SW-JDG}.   As seen in work of Lee-Sormani \cite{LeeSormani1} and Lakzian-Sormani \cite{Lakzian-Sormani}, an almost flat Riemannian manifold with a deep well of small volume or a black hole is close in this $\mathcal{F}$ sense to flat space without a well or hole.  This notion has been applied to measure the geometric stability or almost rigidity of the Riemannian Positive Mass Theorem 
(see Remark~\ref{rmrk:PMT-Stab} within).   

To define the intrinsic flat distance between oriented compact Riemannian manifolds, Sormani-Wenger first convert the $(M_j,g_j)$
into metric-spaces, $(X_j,d_j)=(M_j, d_{g_j})$
exactly as done by Gromov, but then they also 
kept track of the integral current structure, 
$T_j=[[M_j]]$ to create a pair of integral current spaces, $(X_j,d_j,T_j)$, where
\be\label{eq:int-str}
[[M_j]](\omega)=\int_{M_j}\omega,
\ee 
for any top dimensional differential form, $\omega$ \cite{SW-JDG}.  The intrinsic flat ($\mathcal{F}$) distance between these spaces is then defined using Ambrosio-Kirchheim Theory from  \cite{AK}  \index{aawba@$d_{\mathcal{F}}$ intrinsic flat dist}
\index{aawbb@$d^Z_{F}=d_F$ flat distance }
as follows:
\be \label{defn:F} 
d_{\mathcal{F}}((X_1,d_1,T_1),(X_2,d_2,T_2))
=\inf d_F^Z(\varphi_{1\#}T_1,\varphi_{2\#}T_2)
\ee
where the infimum is taken over all complete separable metric-spaces, $Z$, and over all distance preserving maps, $\varphi_i:X_i\to Z$.   Here 
$d_F^Z$ is the flat distance between the push forwards, $\varphi_{j\#}T_j$, of the integral
current structures.   

In this paper we do not need to know the precise definition of the push forwards and the flat distance, $d_F^Z$,  we just need to understand that it involves distance preserving maps and that it is definite in the following sense:  
\be
d_{\mathcal{F}}((X_1,d_1,T_1),(X_2,d_2,T_2))=0 
\ee
implies that there is a distance and current preserving bijection $F:X_1\to X_2$:
\index{aawbaa@$d_{\mathcal{VF}}$}
\be
d_2(F(x),F(y))=d_1(x,y) \quad \forall \, x,y\in X_1
\textrm{ and } F_{\#}T_1=T_2
\ee
which is also an orientation preserving Riemannian isometry when $(X_i,d_i,T_i)=(M_i,d_{g_i},[[M_i]])$.  

There is also the volume preserving intrinsic flat distance denoted,
\be
d_{\mathcal{VF}}((X_1,d_1,T_1),(X_2,d_2,T_2))
= d_{\mathcal{F}}((X_1,d_1,T_1),(X_2,d_2,T_2))
+|\mass_{d_1}(T_1)-\mass_{d_2}(T_2)|,
\ee
which was
first studied by Portegies \cite{Portegies-evalues} 
and later by many others as
surveyed in \cite{Sormani-conjectures}.   

In this paper we will discuss potential applications of this notion to define various types of intrinsic flat space-time distances, but the advanced geometric measure theory needed to develop these notions in full will appear in future work by 
Sakovich-Sormani in \cite{SakSor-SIF}.   We wish to keep this paper accessible to a wider audience.

\subsection{{\bf Review of Fr\'echet Maps}}
\label{sect:back-Kuratowski}
\footnote{This section may be skipped if uninterested in the timed Hausdorff distance.}

A key ingredient in our paper is an adaption of 
the Fr\'echet map first defined in \cite{Frechet1910}.\footnote{In our v1 post on the arxiv we accidentally attributed these maps to Kuratowski.}
So we review this map here.
First recall that a separable metric-space, $(X,d)$,  is a metric-space containing a countable dense collection of points, $\{x_1,x_2,...\}\subset X$.
Compact metric-spaces are separable and so is the Banach space, 
\be\label{eq:defn-ell-infty}
\ell^\infty=\{(s_1,s_2,...)\,: s_i \in {\mathbb{R}}, \, d_{\ell^\infty}((s_1,s_2,...),(0,0,...))<\infty\}
\ee
where   
\be\label{eq:d-ell-infty}
d_{\ell^\infty}((s_1,s_2,...),(r_1,r_2,...))
=\sup\{|s_i-r_i|\,:\, i\in {\mathbb N}\}.
\ee

\begin{defn}\label{defn:Kuratowski}
Given a separable metric-space, $(X,d)$, 
one can define a Fr\'echet map
\be
\kappa_X=\kappa_{X,\{x_1,x_2,...\}}: (X,d)\to (\ell^\infty, d_{\ell^\infty})
\ee
by
\be\label{eq:Kuratowski}
\kappa_X(x)=(d(x_1,x),d(x_2,x),...)\subset \ell^\infty.
\ee
\end{defn}

For completeness of exposition we state and prove Fr\'echet's Theorem \cite{Frechet1910}: 

\begin{thm}[Fr\'echet]
\label{thm:K-dist-pres}
Fr\'echet maps as in Definition~\ref{defn:Kuratowski}
are distance preserving:
\be\label{eq:K-dist-pres}
d_{\ell^\infty}(\kappa_X(x),\kappa_X(y))=
d(x,y) \quad \forall \, x,y\in X.
\ee
\end{thm}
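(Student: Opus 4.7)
The plan is to prove the two inequalities $d_{\ell^\infty}(\kappa_X(x),\kappa_X(y)) \le d(x,y)$ and $d_{\ell^\infty}(\kappa_X(x),\kappa_X(y)) \ge d(x,y)$ separately, using the reverse triangle inequality for the first and density of $\{x_1,x_2,\ldots\}$ for the second.

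First, I would unpack the definitions. By (\ref{eq:Kuratowski}) and (\ref{eq:d-ell-infty}),
\[
d_{\ell^\infty}(\kappa_X(x),\kappa_X(y)) = \sup_{i\in\mathbb{N}} |d(x_i,x) - d(x_i,y)|.
\]
For the upper bound, the reverse triangle inequality in $(X,d)$ gives $|d(x_i,x)-d(x_i,y)|\le d(x,y)$ for every $i$, hence the supremum is at most $d(x,y)$. This also confirms that $\kappa_X(x)$ and $\kappa_X(y)$ differ by a bounded sequence, which ensures $\kappa_X(x)-\kappa_X(y)\in\ell^\infty$ even though the individual sequences $\kappa_X(x)$ need a basepoint convention to lie in $\ell^\infty$ as defined in (\ref{eq:defn-ell-infty}); for bounded (e.g.\ compact) $X$ this is automatic since $d(x_i,x)\le\operatorname{diam}(X)$.

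For the matching lower bound, I would use that $\{x_1,x_2,\ldots\}$ is dense in $X$. Fix $x,y\in X$ and $\epsilon>0$, and choose an index $i$ with $d(x_i,x)<\epsilon$. Then by the triangle inequality $d(x_i,y)\ge d(x,y)-d(x_i,x) > d(x,y)-\epsilon$, so
\[
|d(x_i,x)-d(x_i,y)| \ge d(x_i,y)-d(x_i,x) > d(x,y) - 2\epsilon.
\]
Taking the supremum over $i$ and then letting $\epsilon\to 0$ gives $d_{\ell^\infty}(\kappa_X(x),\kappa_X(y)) \ge d(x,y)$, which combined with the upper bound yields (\ref{eq:K-dist-pres}).

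The main subtlety is not the computation itself, which is a one-line application of the reverse triangle inequality plus density, but rather the ambient space issue: as stated in (\ref{eq:defn-ell-infty}), $\ell^\infty$ consists of sequences bounded in norm from $(0,0,\ldots)$, so $\kappa_X(x)$ lands in $\ell^\infty$ only when $\{d(x_i,x)\}$ is bounded, i.e.\ when $X$ has finite diameter. I would therefore remark that in the unbounded case one should either restrict to compact $X$ (the setting in which this lemma is actually used in the paper) or replace $\kappa_X$ by the basepointed variant $\tilde\kappa_X(x)=(d(x_i,x)-d(x_i,x_0))_{i}$, for which the same proof goes through verbatim and the image always lies in $\ell^\infty$.
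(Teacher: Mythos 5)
Your proof is correct and follows essentially the same route as the paper: the reverse triangle inequality for the upper bound and density of $\{x_1,x_2,\dots\}$ for the lower bound (you approximate $x$ where the paper approximates $y$, which is immaterial). Your added remark about $\kappa_X(x)$ landing in $\ell^\infty$ only for bounded $X$ is a fair point the paper leaves implicit, but it does not change the substance of the argument.
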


\begin{proof}
We first apply the reverse 
triangle inequality:
\be
|d(x,x_i)-d(y,x_i)|\le d(x,y).
\ee
Next we choose a sequence $x_{i_j}\to y$ from the countable
dense set, and observe that
\be
|d(x,x_{i_j})-d(y,x_{i_j})|\to 
|d(x,y)-d(y,y)|=d(x,y).
\ee
Thus
\be
\sup_{i\in {\mathbb N}}|d(x,x_i)-d(y,x_i)|=d(x,y)
\ee
which gives (\ref{eq:K-dist-pres}).
\end{proof}

Note that the fact that the Fr\'echet map is distance preserving immediately implies that $\ell^\infty$ is a candidate for the metric-spaces, $Z$, used in the definitions of $GH$, $mm$, and $\mathcal{F}$ distances, and that Fr\'echet maps are candidates for the distance preserving maps in (\ref{defn:GH}),
(\ref{defn:mm}), and (\ref{defn:F}).    Some papers 
will use this infima as the definition of $d_{GH}$ but they are not always exactly equal, so we will define the infima achieved using Fr\'echet maps in 
(\ref{defn:GH}), (\ref{defn:mm}), and (\ref{defn:F})
as $d_{\kappa-GH}$, $d_{\kappa-mm}$, and
$d_{\kappa-\mathcal{F}}$ respectively.

To be very clear, we will use the notation,
\be\label{defn:kappa-GH}
d_{\kappa-GH}((X,d_X),(Y,d_Y))
=\inf d_H^{\ell^\infty}(\kappa_X(X),\kappa_Y(Y))
\ee
where the infimum is over all pairs of Fr\'echet maps
\be
\kappa_X:X \to \ell^\infty
\textrm{ and }
\kappa_Y:Y \to \ell^\infty
\ee
which is an infimum over
all selections of countably dense points and reorderings of these selected points in
$X$ and in $Y$
and the Hausdorff distance is defined as before in (\ref{eq:Hausdorff}), so that one immediately has
\be\label{eq:kappa-ge}
d_{\kappa-GH}((X,d_X),(Y,d_Y))\ge
d_{GH}((X,d_X),(Y,d_Y)).
\ee
For completeness of exposition we include the following well known
proposition which demonstrates that
\be \label{eq:kappa-GH-comp}
d_{GH}((X,d_X),(Y,d_Y))
\le d_{\kappa-GH}((X,d_X),(Y,d_Y))\le
2 d_{GH}((X,d_X),(Y,d_Y))
\ee
for any pair of compact metric-spaces.  Similar inequalities with different constants can be proven to relate $d_{mm}$ with $d_{\kappa-mm}$ and $d_{\mathcal{F}}$ with $d_{\kappa-{\mathcal{F}}}$.

\begin{prop}\label{prop:2GH}
Given two compact metric-spaces, $(X,d_X)$
and $(Y,d_Y)$,
if 
\be
d_{GH}(X,Y)<R
\ee
then there exists countable dense sets
\be
\{x_1,x_2,...\}\subset X
\textrm{ and }
\{y_1,y_2,....\}\subset Y
\ee
such that the Fr\'echet maps, 
\be
\kappa_X:X\to \ell^\infty
\textrm{ and }
\kappa_Y:Y\to \ell^\infty
\ee
defined using these countable dense sets
as in Definition \ref{defn:Kuratowski} are distance
preserving maps such that
\be\label{K-Haus}
d_H^{\ell^\infty}(\kappa_X(X),\kappa_Y(Y))
< 2R.
\ee
\end{prop}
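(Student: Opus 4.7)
The plan is to unpack the definition of $d_{GH}(X,Y)<R$ to obtain an ambient complete metric space $Z$ with distance preserving embeddings of $X$ and $Y$, then use this $Z$ to choose the two countable dense sequences in a paired fashion, and finally verify the Hausdorff bound in $\ell^\infty$ coordinate by coordinate. By Definition~\ref{defn:GH-defn}, there exist a complete metric space $(Z,d_Z)$ and distance preserving maps $\varphi_X\colon X\to Z$, $\varphi_Y\colon Y\to Z$ with $d_H^Z(\varphi_X(X),\varphi_Y(Y))<r$ for some $r<R$.

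Next I would construct paired countable dense sets. Fix any countable dense subsets $\{a_i\}\subset X$ and $\{b_i\}\subset Y$, which exist since $X$ and $Y$ are compact hence separable. For each $i$, the Hausdorff condition in $Z$ provides some $b_i^\ast\in Y$ with $d_Z(\varphi_X(a_i),\varphi_Y(b_i^\ast))<r$ and some $a_i^\ast\in X$ with $d_Z(\varphi_X(a_i^\ast),\varphi_Y(b_i))<r$. Interleaving, I set $x_{2i-1}=a_i$, $y_{2i-1}=b_i^\ast$, $x_{2i}=a_i^\ast$, $y_{2i}=b_i$. The resulting sequences $\{x_i\}\subset X$ and $\{y_i\}\subset Y$ are countable dense in their respective spaces (the odd-indexed subsequence is dense in $X$, the even-indexed subsequence is dense in $Y$), and they enjoy the pairing property $d_Z(\varphi_X(x_i),\varphi_Y(y_i))<r$ for every $i$.

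With these aligned sequences I define the Kuratowski maps $\kappa_X,\kappa_Y$ as in Definition~\ref{defn:Kuratowski}; they are distance preserving by Theorem~\ref{thm:K-dist-pres}. For any $x\in X$, pick $y_x\in Y$ with $d_Z(\varphi_X(x),\varphi_Y(y_x))<r$. Two applications of the reverse triangle inequality in $Z$ give, for every index $i$,
\[ |d_X(x,x_i)-d_Y(y_x,y_i)| = |d_Z(\varphi_X(x),\varphi_X(x_i))-d_Z(\varphi_Y(y_x),\varphi_Y(y_i))| \le d_Z(\varphi_X(x),\varphi_Y(y_x)) + d_Z(\varphi_X(x_i),\varphi_Y(y_i)) < 2r. \]
Taking the supremum over $i$ yields $d_{\ell^\infty}(\kappa_X(x),\kappa_Y(y_x))\le 2r$, and the symmetric argument starting from $y\in Y$ produces $x_y\in X$ with $d_{\ell^\infty}(\kappa_X(x_y),\kappa_Y(y))\le 2r$. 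Hence $d_H^{\ell^\infty}(\kappa_X(X),\kappa_Y(Y))\le 2r<2R$, which is the required bound.

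The step that deserves the most care is the interleaving construction. Without it, two arbitrary countable dense sequences in $X$ and $Y$ would produce Kuratowski coordinates that compare each point to an unrelated reference in the other space, so the Hausdorff distance of the images in $\ell^\infty$ could be arbitrarily large even when $X$ and $Y$ are very Gromov--Hausdorff close. The coordinated choice of $\{x_i\}$ and $\{y_i\}$ is precisely what converts the $Z$-level Hausdorff estimate into a uniform $\ell^\infty$ estimate via the reverse triangle inequality, and is the main content of the argument.
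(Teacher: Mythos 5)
Your proof is correct and follows essentially the same route as the paper's: embed both spaces into a common $Z$ realizing the $GH$ bound, interleave paired countable dense sequences so that $d_Z(\varphi_X(x_i),\varphi_Y(y_i))$ is small for every index, and apply the inequality $|d(a,b)-d(c,d)|\le d(a,c)+d(b,d)$ coordinatewise. Your introduction of $r<R$ to keep the final inequality strict after taking the supremum is a minor refinement of the same argument.
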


\begin{proof}
Since $d_{GH}(X,Y)<R$
then there exists distance preserving maps,
\be
\varphi_X:(X,d_X)\to (Z, d_Z)
\textrm{ and }
\varphi_Y:(Y,d_Y)\to (Z, d_Z),
\ee
into a separable metric-space, $(Z,d_Z)$, such that
\be
d^Z_H(\varphi_X(X),\varphi_Y(Y))<R.
\ee
So for all $x\in X$ there exists $y_x\in Y$
such that
\be\label{eq:y_x-Haus}
d_Z(\varphi_X(x),\varphi_Y(y_x))<R,
\ee
and for all $y\in Y$ there exists $x_y\in X$
such that
\be
d_Z(\varphi_X(x_y),\varphi_Y(y))<R.
\ee
Let $\{x'_1,x'_2,...\}$ be any countably dense collection of points in $X$
and let 
$\{y'_1,y'_2,...\}$ be any countable dense collection of points in $Y$.   We create a larger countably dense collection of points,
\be
\{x_1,x_2,x_3...\}=\{x_1',x_{y_1'},x_2',x_{y_2'},
x_3',x_{y_3'},...\}\subset X
\ee
and
\be
\{y_1,y_2,y_3,....\}=
\{y_{x_1'},y_1',y_{x_2'}, y_2',
y_{x_3'},y_3',...\}\subset Y
\ee
and define the Fr\'echet maps,
$\kappa_X$ and $\kappa_Y$ as in the
statement of this proposition using this collection of points.

We need only show that:
\be
d_{\ell^\infty}(\kappa_X(x),\kappa_Y(y_x))
<2R
\textrm{ and }
d_{\ell^\infty}(\kappa_X(x_y),\kappa_Y(y))
<2R
\ee
to see that (\ref{K-Haus}) holds.  
We will show the first of these and the
other follows similarly.
Note that
\begin{eqnarray}
\kappa_X(x)&=&(d_X(x_1',x),d_X(x_{y_1'},x),d_X(x_2',x),
d_X(x_{y_2'},x),...)\\
\kappa_Y(y_x)&=&
(d_Y(y_{x_1'},y_x),d_Y(y_1',y_x),
d_Y(y_{x_2'},y_x), d_Y(y_2',y_x),...)
\end{eqnarray}
So we need only show that 
\be
|d_X(x_i',x)-d_Y(y_{x_i'},y_x)|<2R
\textrm{ and }
|d_X(x_{y_i'},x)-d_Y(y_i',y_x)|<2R.
\ee
We show the first of these as the second
follows similarly.
By the fact that $\varphi_X$ and 
$\varphi_Y$ are distance preserving,
and that the triangle inequality
\footnote{$
|d(a,b)-d(c,d)|\le 
|d(a,d)-d(c,d)|+|d(a,b)-d(a,d)|
\le d(a,c)+d(b,d).
$
}
gives,
\be
|d(a,b)-d(c,d)|
\le d(a,c)+d(b,d),
\ee
we can apply (\ref{eq:y_x-Haus})
to see that
\begin{eqnarray*}
|d_X(x_i',x)-d_Y(y_{x_i'},y_x)|
&=&
|d_Z(\varphi_X(x_i'),\varphi_X(x))-
d_Z(\varphi_Y(y_{x_i'}),\varphi_Y(y_x))|\\
&\le &
d_Z(\varphi_X(x_i'),\varphi_Y(y_{x_i'}))
+ d_Z(\varphi_X(x),\varphi_Y(y_x))
< 2R.
\end{eqnarray*}
\end{proof}

\subsection{{\bf Review of Lorentzian Manifolds and Cosmological Time}}
\label{sect:back-L}


A space-time, $(N,g)$, is a
smooth $n$ dimensional manifold with a
Lorentzian metric tensor, $g$, of signature, $- + \cdots +$, endowed with a time orientation. 
A curve, $C:[a,b]\to N$, is a causal curve if 
\be\label{eq:causal-curve}
C:[a,b]\to N \textrm{ such that } g(C'(s),C'(s))\le 0 \qquad \forall \, s\in [a,b],
\ee
and it is a future directed causal curve
if $C'(s)$ points in the future direction.

The space-time, $N$, has 
a {\bf causal structure}, defined as follows: 
$p\in N$ is in the {\bf causal future} of $q\in N$,
written $p\in J^+(q)$, if there exists a future directed causal curve $C:[0,1]\to N$ such that $C(0)=q$ and $C(1)=p$. Equivalently, we can say that $q$ is in the {\bf causal past} of $p$, 
written $q\in J^-(p)$.

One cannot define a distance as in (\ref{eq:Riem-dist}) because the infimum would be zero as seen by taking piecewise null curves,
\be \label{eq:null-curve}
C:[a,b]\to N_j \textrm{ such that } g(C'(s),C'(s))= 0 \qquad \forall \, s\in [a,b].
\ee
However, one can canonically define the {\bf Lorentzian distance} between points 
such that $p\in J^+(q)$ or $q\in J^+(p)$ by taking the supremum
of the {\bf Lorentzian lengths} over all future directed causal curves between them:
\be \index{c@$d_{g}$ Lorentzian dist}
d_{g}(p,q)=\sup_{C \textrm{ causal}} \, L_{g}(C)
\textrm{ where } L_{g}(C)=\int_0^1 |g(C'(s),C'(s))|^{1/2}\, ds.
\ee

We can now introduce the cosmological time function as in Andersson-Galloway-Howard \cite{AGH} (see also Wald-Yip \cite{WaldYip}):

\begin{defn} \label{defn:cosmotime} \index{ca@$\tau_{g}$ cosmo time}
Given a spacetime $(N,g)$, its {\bf cosmological time function} $\tau$ is defined by
\be
\tau_{g}(p)=\sup\{d_{g}(p,q'): \,\, q'\in J^-(p)\}\in [0,\infty].
\ee   
\end{defn}

\begin{ex}\label{ex:Mink-tau}
On Minkowski space-time, 
\be
{\mathbb{R}}^{1,3}=
\{(t,x)\,:\, t\in {\mathbb R}, \, x\in {\mathbb R}^3\}
\textrm{ with } g_{Mink}= -dt^2+dx_1^2+dx_2^2+dx_3^2,
\ee
the cosmological time function is infinite valued everywhere, $\tau(t,x)=\infty$.
\end{ex}

\begin{ex} \label{ex:future-Mink-tau}
On Future Minkowski space-time, 
\be
{\mathbb{R}}^{1,3}_+=
\{(t,x)\,:\, t>0, \, x\in {\mathbb R}^3\}
\textrm{ with } g_{Mink}= -dt^2+dx_1^2+dx_2^2+dx_3^2,
\ee
the cosmological time function is found to be, $\tau(t,x)=t$, which can intuitively be thought of as the time since the initial data slice at $t=0$.
\end{ex}

Our next example includes the 
Friedmann–Lema\^itre–Robertson–Walker (FLRW) big bang space-times:

\begin{ex}\label{ex:BB-tau}
On any warped product big bang space-time, 
\be 
N_{BB,f}=(0,t_{max})\times M
\textrm{ with }
g=-dt^2+f^2(t)\,h_M 
\ee
where $\lim_{t\to 0} f(t)=0$ and $h_M$ is the Riemannian metric on a compact Riemannian manifold, $M$, we easily see that the cosmological time function, $\tau(t,x)=t$, which can be thought of intuitively as the time since the ``big bang"
at $t=0$. Note that this includes as a special case the FLRW big bang space-times which are spatially homogeneous and isotropic so that $(M,h)$ has constant sectional curvature.
\end{ex}

The following theorem clarifies what it means to say that the cosmological time function is canonical.   It summarizes key properties of time-oriented Lorentzian isometries and can easily be proven by the reader.

\begin{thm} \label{thm:time-canonical}
Suppose there is a time-oriented Lorentzian isometry, which is a diffeomorphism,
\be
F:N_1\to N_2
\textrm{ such that } F^*g_2=g_1,
\ee
which matches future causal cones to future causal cones.
Then $F$
preserves the Lorentzian lengths of curves,
\be
L_{g_1}(C)=L_{g_2}(F\circ C) \textrm{ for any timelike curve } C:[a,b]\to N_1,
\ee
$F$ preserves the time-orientation,
\be
\textrm{$C$ is future timelike causal iff $F\circ C$ is future timelike causal,}
\ee
$F$ preserves the causal structure,
\be
F(p)\in J_{+}(F(q)) \quad \iff \quad p\in J^+(q),
\ee
$F$ preserves the Lorentzian distances,
\be
d_{g_2}(F(p),F(q))=d_{g_1}(p,q) \quad \forall \, p \in J^+(q) \text{ and } q\in N_1,
\ee
and $F$ preserves the cosmological time functions:
\be
\tau_{g_2}(F(p))=\tau_{g_1}(p) \qquad \forall \, p\in N_1.
\ee
\end{thm}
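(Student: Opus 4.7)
The plan is to peel the statements off one layer at a time, starting from the pointwise meaning of $F^*g_2 = g_1$ and working up to the global cosmological time function. First I would unfold the pullback condition at the tangent level: for any $p\in N_1$ and $v,w\in T_p N_1$, one has $g_2(F_*v,F_*w) = g_1(v,w)$. Applying this along a smooth curve $C:[a,b]\to N_1$, and using $(F\circ C)'(s) = F_*C'(s)$, gives the pointwise identity
\[
g_2\bigl((F\circ C)'(s),(F\circ C)'(s)\bigr) = g_1\bigl(C'(s),C'(s)\bigr).
\]
This single identity simultaneously yields $L_{g_2}(F\circ C) = L_{g_1}(C)$ and the preservation of the causal character of $C$ (timelike, null, causal, spacelike all being conditions on the sign of $g(C',C')$). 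Combined with the standing hypothesis that $F$ sends future causal cones to future causal cones, this upgrades to preservation of \emph{future}-directed causal/timelike curves.

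Second, the causal-structure statement. If $p\in J^+(q)$, choose a future-directed causal curve from $q$ to $p$; its image under $F$ is a future-directed causal curve from $F(q)$ to $F(p)$ by step one, so $F(p)\in J^+(F(q))$. The reverse implication follows by applying the same argument to $F^{-1}$, which is again a time-oriented Lorentzian isometry (the pullback condition inverts functorially, and the causal-cone matching carries over under the inverse). Consequently, $F$ induces for each $p$ a bijection between $J^-(p)$ and $J^-(F(p))$.

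Third, the Lorentzian distance. The length preservation from step one together with the causal-curve bijection from step two produces a length-preserving bijection between future-directed causal curves from $q$ to $p$ and from $F(q)$ to $F(p)$; taking suprema yields $d_{g_2}(F(p),F(q)) = d_{g_1}(p,q)$ whenever $p\in J^+(q)$. The cosmological time identity is then immediate from Definition~\ref{defn:cosmotime}: since $F$ bijects $J^-(p)$ onto $J^-(F(p))$ and preserves $d_g$ on causally related pairs, the two defining suprema agree, so $\tau_{g_2}(F(p)) = \tau_{g_1}(p)$.

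I do not expect a substantive obstacle: the whole theorem is a layered unfolding of definitions, each layer feeding the next. The only care needed is the bookkeeping at degenerate cases (empty causal past, constant curves, coincident endpoints) and the observation that $F^{-1}$ shares all the hypotheses of $F$, so any inequality derived in one direction upgrades to an equality by symmetry.
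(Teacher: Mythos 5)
Your proposal is correct and is exactly the routine unfolding of definitions the paper has in mind — indeed, the paper offers no proof of this theorem, stating only that it "can easily be proven by the reader." Each layer (pointwise pullback identity, cone-matching, causal-curve bijection via $F^{-1}$, supremum over causal curves, supremum over $J^-$) is handled correctly, including the observation that $F^{-1}$ inherits all hypotheses.
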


Andersson-Galloway-Howard made the following definition in \cite{AGH}
and proved the following theorem using the
notion of a future causal curve, $C:(a,b)\to N$,
which is {\bf past inextendible}:
\be\label{eq:past-inext}
\nexists q\in N \textrm{ such that }
\lim_{s\to a}C(s)=q. 
\ee

\begin{defn} \label{defn:regular}\cite{AGH}
A Lorentzian space-time $(N,g)$ has a {\bf regular cosmological time function} if its cosmological time function, $\tau_g$, has finite values at every $p$ and for every future causal curve, $C:(a,b)\to N$, which is past inextendible 
we have
$\lim_{s\to a}\tau_{g}(C(s))=0$.   
\end{defn}

\begin{thm}\cite{AGH} \label{thm:AGH-generator}
If a Lorentzian space-time has a regular cosmological time function, $\tau_g$, then at every point, $p\in N$, there is a {\bf generator}, $\gamma_p:(0,\tau(p)]\to N$, which is a past inextendible timelike unit speed geodesic such that
\be \label{eq:generator}
\tau_g(\gamma_p(t))=t \textrm{ and } \gamma_p(\tau_g(p))=p.
\ee
This generator is unique for any $p\in N$ where $\tau_g$ is differentiable (at almost every $p\in N$). 
\end{thm}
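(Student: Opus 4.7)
The plan is to produce the generator $\gamma_p$ by a limit curve argument applied to a maximizing sequence of past-directed causal curves from $p$, then show the resulting curve is a past-inextendible unit-speed timelike geodesic realizing cosmological time along its image, and finally handle uniqueness via differentiability of $\tau_g$.

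First, fix $p\in N$ with $\tau_g(p)=T<\infty$. By the definition of the cosmological time function, choose points $q_i\in J^-(p)$ and future directed causal curves $\sigma_i\colon [0,1]\to N$ from $q_i$ to $p$ with $L_g(\sigma_i)\to T$. Reparametrize each $\sigma_i$ by $g$-arclength (the timelike portion) to obtain past-directed curves $\tilde\sigma_i\colon [0, L_g(\sigma_i)]\to N$ starting at $p$. The first step is to extract a limit curve $\gamma\colon [0,T)\to N$ using a Lorentzian Arzelà-Ascoli argument. To run this argument I need local compactness of causal diamonds around $p$, which is where I would use the regular cosmological time hypothesis: if the $\tilde\sigma_i$ failed to converge, either they would accumulate at some interior point (giving a subsequential limit by the usual diagonal argument on a countable dense subset) or they would escape to infinity in finite arclength parameter, and the latter is ruled out because past-inextendible causal curves must have $\tau_g\to 0$.

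Next, I would show that the limit curve $\gamma$ is maximizing in the sense that $L_g(\gamma|_{[0,s]})$ equals $\sup\{d_g(p,q') : q'\in J^-(p)\cap \gamma([s,T))\}$, and in particular $L_g(\gamma)=T$. Once this is established, I would invoke the classical fact that a maximizing causal curve between two points is a smooth timelike (unbroken) geodesic, since any corner or null segment could be shortcut by a deformation gaining strictly positive Lorentzian length. Reparametrizing by unit speed and reversing time orientation yields the desired $\gamma_p\colon (0,T]\to N$ with $\gamma_p(T)=p$. Past inextendibility is immediate from the regularity hypothesis combined with $L_g(\gamma_p)=T$: if $\gamma_p$ had a past endpoint $q_*\in N$, then $\lim_{t\to 0}\tau_g(\gamma_p(t))=\tau_g(q_*)$ would have to be both $0$ (by Definition \ref{defn:regular}) and strictly positive (since one could prepend a short timelike curve to $q_*$), a contradiction. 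The identity $\tau_g(\gamma_p(t))=t$ then follows from $\tau_g(\gamma_p(t))\ge t=L_g(\gamma_p|_{(0,t]})$ together with the reverse triangle inequality: any longer past from $\gamma_p(t)$ could be concatenated with $\gamma_p|_{[t,T]}$ to contradict $\tau_g(p)=T$.

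For the uniqueness statement, suppose $\tau_g$ is differentiable at $p$ and two generators $\gamma_p,\tilde\gamma_p$ exist. Since both realize $\tau_g(p)=T$ as their Lorentzian length and $\tau_g\circ\gamma_p(t)=t$, differentiating along each generator shows $g(\nabla\tau_g(p),\gamma_p'(T))=g(\nabla\tau_g(p),\tilde\gamma_p'(T))=-1$ while both initial velocities are future-directed unit timelike vectors; a Cauchy-Schwarz-type argument in the Lorentzian setting (equality case in the reverse Cauchy-Schwarz inequality for timelike vectors) forces $\gamma_p'(T)=\tilde\gamma_p'(T)$, and since both are geodesics with equal endpoint and equal initial velocity, they coincide. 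Finally, almost-everywhere differentiability of $\tau_g$ follows because $\tau_g$ is locally Lipschitz (indeed, it satisfies a reverse Lipschitz condition along causal curves, $\tau_g(\gamma_p(t_2))-\tau_g(\gamma_p(t_1))\ge t_2-t_1$, and together with upper semicontinuity inherited from the supremum definition this gives local Lipschitz regularity), to which the Rademacher theorem applies. The main obstacle in this plan is the limit curve extraction: verifying that regular cosmological time provides just enough local compactness to convert the maximizing sequence into a genuine limit geodesic without assuming global hyperbolicity is the delicate point, and I would expect this step to require the most careful argument.
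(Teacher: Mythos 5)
The paper itself does not prove this statement: it is quoted from Andersson--Galloway--Howard \cite{AGH}, so your proposal has to be measured against their original argument, and your overall strategy (maximizing sequence, limit curve, maximizers are unbroken timelike geodesics, uniqueness via the gradient) is essentially theirs. However, three steps in your write-up do not go through as written. First, the past-inextendibility argument misapplies Definition~\ref{defn:regular}: the condition $\lim\tau_g=0$ is asserted only along past \emph{inextendible} causal curves, so you cannot invoke it to conclude $\tau_g(q_*)=0$ at a hypothetical past endpoint $q_*$ --- that presupposes the inextendibility you are trying to establish. The correct contradiction is the reverse triangle inequality for $\tau_g$: since $I^-(q_*)\neq\emptyset$ gives $\tau_g(q_*)>0$ and $L_g(\gamma_p)=T$, one gets $\tau_g(p)\ge d_g(p,q_*)+\tau_g(q_*)\ge T+\tau_g(q_*)>T$.

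Second, the uniqueness step is incomplete: knowing $d\tau_g(v_1)=d\tau_g(v_2)=1$ for two future-directed unit timelike vectors does not force $v_1=v_2$, because the affine hyperplane $\{v:\,d\tau_g(v)=1\}$ generically meets the unit hyperboloid in more than one point. You must first show that $1$ is the \emph{minimum} of $d\tau_g$ over the future unit hyperboloid --- equivalently, that $\nabla\tau_g(p)$ is a past-directed unit timelike vector. This does follow from the anti-Lipschitz property $\tau_g(q)\ge\tau_g(q')+d_g(q,q')$ that you state elsewhere (it gives $d\tau_g(v)\ge 1$ for every future unit timelike $v$, with equality along the generator), and only then does the equality case of the reverse Cauchy--Schwarz inequality pin down the direction. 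Third, upper semicontinuity of $\tau_g$ is \emph{not} ``inherited from the supremum definition'': $d_g$ is only lower semicontinuous in general, and a supremum of lower semicontinuous functions is lower semicontinuous. That a regular $\tau_g$ is continuous, indeed locally Lipschitz, is one of the nontrivial conclusions of \cite{AGH}, proved there together with global hyperbolicity of the space-time --- and it is precisely that global hyperbolicity which turns your limit-curve extraction, which you rightly flag as the delicate point, into a rigorous step.
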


Further results on cosmological time 
appear in work of Bahrampour, Cui, Ebrahimi, Jin, Koohestani, Treude, and Vatandoost   
in \cite{Cui-Jin} \cite{E-cosmo}\cite{KEVB-cosmo}
\cite{Treude-Diploma}.

\begin{rmrk}\label{rmrk:Howard}
When Shing-Tung Yau first proposed to Christina Sormani that she define an intrinsic flat convergence of space-times, Lars Andersson suggested that a natural way to do this would be to use the cosmological time function to canonically convert the space-times into integral current spaces.
When a space-time, $(N,g)$, has a smooth cosmological time function, $\tau_g$, Andersson pointed out that one may use the Wick transform to canonically convert $(N,g)$ into a Riemannian manifold $(N, 2d\tau^2+g)$ of the same dimension. Since regular cosmological time functions are differentiable almost everywhere, Andersson suggested that one could convert $N$ into an integral current space
or at least a metric-space using $2d\tau^2+g$ almost everywhere.   There was some unpublished work in this direction by Howard and a student of his in 2013.  However, this work was never completed. \end{rmrk}
 
\begin{rmrk}\label{rmrk:Riem-levels}
Another natural approach would be to consider the spacelike level sets of the cosmological time functions, $M_{t}=\tau_{g}^{-1}(t)\subset N$, as metric-spaces with a Riemannian metric tensor, $h_t$, induced by the Lorentzian metric restricted to the level sets of $\tau_g$ (defined almost everywhere
on almost every level set).  One could then define a distance between two space-times using the distances between their level sets.  However,
a pair of space-times can have isometric level sets without having a Lorentzian isometry between them.  See Example~\ref{ex:levels-match} within.  
\end{rmrk}

As neither approaches described in Remarks~\ref{rmrk:Howard} and~\ref{rmrk:Riem-levels} were leading towards a definite notion of intrinsic flat distance between space-times, Sormani-Vega \cite{SV-Null} introduced the null distance that we describe in the next section.

\subsection{{\bf Review of the Null Distance}}
\label{sect:back-Null}

Given a Lorentzian manifold endowed with a time function, $(N_j, g_j, \tau_j)$, Sormani-Vega defined the null distance in \cite{SV-Null}
as follows:

\begin{defn}\label{defn:null-distance}
The null distance between arbitrary pairs of points in $N$: \index{cb@$\hat{d}_{g}=\hat{d}_{g,\tau}$ null dist}
\be\label{eq:null}
\hat{d}_{g,\tau}(p,q)=\inf \sum_{i=1}^N |\tau(p_i)-\tau(p_{i-1})| 
\ee
where the infimum is over collections of points, $\{p=p_0, p_1,p_2,...,p_{2N}=q\}$,
that are causally related alternating past
with future:
\be\label{eq:null-points}
p_{2i-1}\in J^-(p_{2i-2})\cap J^-(p_{2i}) 
\textrm{ for } i=1,...,N.
\ee
That is, there is a piecewise causal curve, alternating future causal and past causal, from
$p$ to $q$ with corners at the $p_i$.
\end{defn}  

In this paper, we will consider only null distances defined using 
cosmological time functions, $\tau_g$, 
and write 
\be 
\hat{d}_g=\hat{d}_{g, \tau_g},
\ee
to emphasize that this null distance canonically depends only on the Lorentzian metric tensor, $g$.  In \cite{SV-Null} Sormani-Vega proved this null distance, $\hat{d}_g$, is definite and
induces the same topology as the original topological structure on 
$N$ 
when $\tau_g$ is regular.  So $(N, g)$ can be  canonically converted into a metric-space,
$(N, \hat{d}_{g})$.
The following theorem (which easily follows from Theorem~\ref{thm:time-canonical}
and Definition~\ref{defn:null-distance}), clarifies what it means to say that this null distance is {\bf canonical}:

\begin{thm} \label{thm:null-canonical}
Suppose there is a time-oriented Lorentzian isometry, which is a diffeomorphism,
\be
F:N_1\to N_2
\textrm{ such that } F^*g_2=g_1,
\ee
which matches future causal cones to future causal cones.

Then 
$F$  
is a distance preserving bijection
\be
 \hat{d}_{g_2}(F(p),F(q))= \hat{d}_{g_1}(p,q)
 \quad \forall \, p,q\in N_1.
 \ee
\end{thm}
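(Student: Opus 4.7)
The plan is to reduce everything to Theorem~\ref{thm:time-canonical} and then show that the infimum defining $\hat{d}_{g_1}$ in Definition~\ref{defn:null-distance} is carried bijectively onto the infimum defining $\hat{d}_{g_2}$ by the diffeomorphism $F$. Because $F$ is a time-oriented Lorentzian isometry, Theorem~\ref{thm:time-canonical} immediately gives two facts that will do all the work: $F$ preserves the causal structure, i.e.\ $r\in J^+(s)$ iff $F(r)\in J^+(F(s))$, and $F$ preserves the cosmological time function, i.e.\ $\tau_{g_2}\circ F=\tau_{g_1}$.

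First I would prove the inequality $\hat{d}_{g_2}(F(p),F(q))\le \hat{d}_{g_1}(p,q)$. Fix any admissible chain $\{p_0,p_1,\dots,p_{2N}\}$ in $N_1$ with $p_0=p$, $p_{2N}=q$ and alternating causal relations as in (\ref{eq:null-points}). Since $F$ preserves causal relations, the image chain $\{F(p_0),\dots,F(p_{2N})\}$ in $N_2$ is an admissible chain from $F(p)$ to $F(q)$ of the same form (the pattern of alternating past/future is preserved precisely because $F$ preserves the time orientation of causal cones). Since $\tau_{g_2}\circ F=\tau_{g_1}$, the associated telescoping sum
\be
\sum_{i=1}^{2N}|\tau_{g_2}(F(p_i))-\tau_{g_2}(F(p_{i-1}))|=\sum_{i=1}^{2N}|\tau_{g_1}(p_i)-\tau_{g_1}(p_{i-1})|
\ee
is exactly the same number. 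Taking infima over admissible chains in $N_1$ gives $\hat d_{g_2}(F(p),F(q))\le \hat d_{g_1}(p,q)$.

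Second, I would get the reverse inequality from symmetry. The inverse diffeomorphism $F^{-1}:N_2\to N_1$ also satisfies $(F^{-1})^*g_1=g_2$ and matches future cones to future cones (because its inverse $F$ does), so $F^{-1}$ is itself a time-oriented Lorentzian isometry. Applying the argument of the previous paragraph to $F^{-1}$ with base points $F(p),F(q)\in N_2$ yields $\hat d_{g_1}(p,q)=\hat d_{g_1}(F^{-1}(F(p)),F^{-1}(F(q)))\le \hat d_{g_2}(F(p),F(q))$, completing the equality.

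I do not anticipate any genuine obstacle here: once Theorem~\ref{thm:time-canonical} is in hand, the only thing to verify carefully is that an admissible chain as in (\ref{eq:null-points})---with its built-in alternation between past-directed and future-directed jumps---really does map to an admissible chain of the same alternating type under $F$. This is where preservation of the \emph{time orientation} (not just the unsigned causal relation) is used, and it is the single point of the argument worth stating explicitly. Everything else is a direct substitution using $\tau_{g_2}\circ F=\tau_{g_1}$.
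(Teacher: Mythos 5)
Your proof is correct and is exactly the argument the paper intends: the paper states that Theorem~\ref{thm:null-canonical} ``easily follows from Theorem~\ref{thm:time-canonical} and Definition~\ref{defn:null-distance},'' and your write-up simply fills in that reduction (causal chains map to causal chains of the same alternating type, the null lengths agree since $\tau_{g_2}\circ F=\tau_{g_1}$, and the reverse inequality comes from applying the same argument to $F^{-1}$). Nothing further is needed.
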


In \cite{SV-Null}, Sormani-Vega
proved that the cosmological time function is Lipschitz with respect to its null distance:
\be \label{eq:Lip}
|\tau_g(p)-\tau_g(q)| \le \hat{d}_{g}(p,q) \quad \forall \, p,q \in N
\ee
with equality (without the absolute values) when $p\in J^+(q)$.

Allen-Burtscher proved the following lemma
in \cite{Allen-Burtscher-22} 
building upon a result of Sormani-Vega concerning Minkowski space in \cite{SV-Null}:

\begin{lem} \label{lem:Mink-null} 
If $N=[0,a]\times M=\{(t,x):\, t\in [0,a],\, x\in M\}$ and $g=-dt^2+h$ where
$(M,h)$ is a Riemannian manifold, then
the cosmological time function is $\tau_{g}(t,x)=t$, and it is regular, and the null distance
is
\be
\hat{d}_{g}\bigg( (t_1,x_1), (t_2,x_2) \bigg)=\max\{|t_1-t_2|, d_h(x_1,x_2)\},
\ee
where $d_h$ is the Riemannian distance on $M$.
\end{lem}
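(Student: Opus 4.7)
The plan is to verify each assertion in the lemma using the warped-product structure of $(N,g)$, reading the right-hand side $d_g(x_1,x_2)$ as the Riemannian distance $d_h(x_1,x_2)$ on $(M,h)$, since $x_1,x_2\in M$.

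First I would compute the cosmological time function. Any past-directed causal curve $C(s)=(t(s),\gamma(s))$ starting at $(t,x)$ satisfies the pointwise bound $|t'(s)|\ge|\gamma'(s)|_h$, so its Lorentzian length
\[
L_g(C)=\int\sqrt{t'(s)^2-|\gamma'(s)|_h^2}\,ds
\]
is bounded above by the total decrease of $t$, which is itself at most $t$ because $N=[0,a]\times M$. Equality is attained by the vertical unit-speed geodesic $s\mapsto(t-s,x)$, giving $\tau_g(t,x)=t$. Regularity then follows from Definition~\ref{defn:regular}: $\tau_g$ is finite, and along any past-inextendible future-directed causal curve the $t$-coordinate must tend to $0$ (otherwise the inequality $|\gamma'|_h\le|t'|$ would force the spatial projection to have bounded $h$-length, allowing a continuous past extension provided $(M,h)$ is complete, which is the natural assumption here).

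Second I would prove the lower bound
\[
\hat d_g((t_1,x_1),(t_2,x_2))\ge\max\{|t_1-t_2|,\,d_h(x_1,x_2)\}.
\]
The estimate $\hat d_g\ge|t_1-t_2|$ is immediate from the Lipschitz-one property of $\tau_g$ recalled in~(\ref{eq:Lip}). For the spatial estimate, let $p_0,p_1,\dots,p_{2N}$ be any admissible chain as in Definition~\ref{defn:null-distance}. Joining each consecutive causally related pair by a causal curve $C_i(s)=(t_i(s),\gamma_i(s))$ with $|\gamma_i'|_h\le|t_i'|$, the spatial arc $\gamma_i$ has $h$-length at most $|\tau_g(p_i)-\tau_g(p_{i-1})|$. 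Concatenating yields a path from $x_1$ to $x_2$ in $M$ whose $h$-length is at least $d_h(x_1,x_2)$ and at most $\sum|\tau_g(p_i)-\tau_g(p_{i-1})|$.

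Third I would construct admissible chains whose cost approaches $M:=\max\{|t_1-t_2|,d_h(x_1,x_2)\}$. Fix $\varepsilon>0$ and a unit-speed $h$-geodesic $\gamma:[0,L]\to M$ from $x_1$ to $x_2$ with $L<d_h(x_1,x_2)+\varepsilon$. When $|t_1-t_2|\ge L$ the points are causally related and joined by a single causal lift of $\gamma$, giving cost $|t_1-t_2|$. Otherwise I build a zigzag along $\gamma$ from V-shapes and inverted-V-shapes: if $d_h(x_1,x_2)\le a$ a single V- or inverted-V-lift suffices and costs $L$; when $d_h(x_1,x_2)>a$, I subdivide $[0,L]$ into consecutive sub-intervals of $h$-length at most $a$ and lift each piece to a V- or inverted-V-shape fitting inside $[0,a]\times M$, alternating tops and bottoms to match the structure of Definition~\ref{defn:null-distance}. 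The total null cost telescopes to $L<d_h(x_1,x_2)+\varepsilon$; sending $\varepsilon\to 0$ gives the upper bound. The main technical obstacle is precisely this zigzag construction in the thin-slab regime $d_h(x_1,x_2)>a$: a single V would peak outside $[0,a]$, so one must subdivide carefully and alternate the orientation of successive V-shapes so that the resulting chain respects the past/future alternation required by the definition of the null distance.
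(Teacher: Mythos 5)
Your argument is correct, and it is essentially the standard proof: the paper itself does not prove Lemma~\ref{lem:Mink-null} but cites it from Allen--Burtscher (building on Sormani--Vega's Minkowski computation), and your three steps --- identifying $\tau_g(t,x)=t$ via the vertical generators, the lower bound from the Lipschitz property of $\tau_g$ together with the fact that the spatial projection of a causal curve has $h$-length at most its time variation, and the upper bound from fine zigzags of null segments whose cost telescopes to the spatial length --- are exactly the mechanism of that proof, including the correct reading of the typo $d_g(x_1,x_2)$ as $d_h(x_1,x_2)$. Your caveat about completeness of $(M,h)$ is also well taken: without it a past-inextendible causal curve can converge to a missing point of $M$ at positive time, so regularity (though not the formula $\tau_g=t$) genuinely requires completeness, which the lemma tacitly assumes.
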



\subsection{{\bf Review of Causality and Lorentzian Isometries}}
\label{sect:back-C}

Sormani-Vega conjectured that {\bf the null distance encodes causality},
\be\label{eq:encode-causality}
p\in J^+(q) \iff \tau_{g}(p)-\tau_g(q)=\hat{d}_{g}(p,q),
\ee
so that one can recover the causal structure without knowing the metric tensor $g$ in \cite{SV-Null}.
This was proven in the warped product case by
Allen-Burtscher in \cite{Allen-Burtscher-22}.  

Sakovich and Sormani then proved that the null distance encodes causality as in (\ref{eq:encode-causality}) when the space-time, $(N,g)$, has a regular cosmological time function, $\tau_g: N \to [0,\tau_{max}]$ which is proper \cite{SakSor-Null}.   Burtscher and García-Heveling extended this theorem in \cite{Burtscher-Garcia-Heveling-Global} and Galloway generalized their result further in \cite{Galloway-causal} to the following theorem: 
 
 \begin{thm} \label{thm:G} \cite{Galloway-causal}
 Suppose $(N,g)$ is a space-time with a regular cosmological time function, $\tau_g$, such that 
 all the level sets, $S=\tau_g^{-1}(t)$, are future causally complete:
\be \label{eq:fcc}
\forall \, p\in J^+(S), \textrm{ the set }
J^-(p)\cap S \textrm{ has compact closure in }S.
\ee
Then the null distance encodes causality
as in (\ref{eq:encode-causality}).
\end{thm}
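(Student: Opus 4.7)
The plan is to handle the two directions separately. First I would dispose of the easy direction $p\in J^+(q)\Rightarrow \tau_g(p)-\tau_g(q)=\hat{d}_g(p,q)$ by noting that, in Definition~\ref{defn:null-distance}, the three-point chain $p_0=p$, $p_1=q$, $p_2=q$ is admissible because $q\in J^-(p)\cap J^-(q)$; its total time-variation equals $|\tau_g(q)-\tau_g(p)|+0=\tau_g(p)-\tau_g(q)$, yielding the upper bound $\hat{d}_g(p,q)\le \tau_g(p)-\tau_g(q)$. The matching lower bound is exactly (\ref{eq:Lip}).

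For the reverse direction, I would assume $\tau_g(p)-\tau_g(q)=\hat{d}_g(p,q)$, which forces $\tau_g(p)\ge\tau_g(q)$, and fix a minimizing sequence of admissible chains $\{p_0^k=p,p_1^k,\ldots,p_{2N_k}^k=q\}$ with total variations $V_k\to \tau_g(p)-\tau_g(q)$. Writing $V_k=D_k+U_k$ where
$D_k=\sum_j\bigl(\tau_g(p_{2j-2}^k)-\tau_g(p_{2j-1}^k)\bigr)\ge 0$ records the odd down-steps and $U_k=\sum_j\bigl(\tau_g(p_{2j}^k)-\tau_g(p_{2j-1}^k)\bigr)\ge 0$ records the even up-steps, the telescoping identity $U_k-D_k=\tau_g(q)-\tau_g(p)$ combined with $V_k=U_k+D_k$ forces
\[
U_k=\tfrac{1}{2}\bigl(V_k-(\tau_g(p)-\tau_g(q))\bigr)\longrightarrow 0.
\]
I would then realize each chain as a continuous piecewise causal curve $C_k$ by inserting past-directed causal segments from $p_{2j-2}^k$ to $p_{2j-1}^k$ and future-directed segments from $p_{2j-1}^k$ to $p_{2j}^k$; the total $\tau_g$-variation of all future-directed portions of $C_k$ is then exactly $U_k\to 0$.

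My aim is to extract a past-directed causal limit curve from $p$ to $q$, which will give $q\in J^-(p)$ and hence $p\in J^+(q)$. For each $t\in[\tau_g(q),\tau_g(p)]$ I would fix a crossing point $x_k(t)\in C_k\cap\tau_g^{-1}(t)$. Because the future excursions of $C_k$ have vanishing total time, each $x_k(t)$ lies within a $\tau_g$-error of size $U_k$ of $J^-(p)\cap\tau_g^{-1}(t')$ for some $t'$ close to $t$; the future causal completeness hypothesis (\ref{eq:fcc}) makes this slice relatively compact in $\tau_g^{-1}(t')$. A diagonal extraction across a countable dense set of time levels, together with a limit-curve argument adapted to the cosmological time foliation, should then produce a continuous curve $C_\infty:[\tau_g(q),\tau_g(p)]\to N$ from $p$ to $q$ that is monotone nonincreasing in $\tau_g$ and causal at each regular level, hence past-directed causal, establishing $p\in J^+(q)$.

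The hard part will be this final extraction step. Since the $C_k$ are only piecewise causal with small future wiggles rather than honestly monotone causal curves, the classical limit-curve theorem does not apply directly, and global hyperbolicity is unavailable; the FCC hypothesis is precisely the substitute compactness needed to keep the level-crossings $x_k(t)$ from escaping to infinity inside each slice, while the decay $U_k\to 0$ is what forces the limit to be time-monotone. Stitching these two ingredients together into a genuine causal limit curve whose endpoints are exactly $p$ and $q$, rather than interior accumulation points, is the delicate technical core of the argument, and is the point at which I would expect to have to import machinery from the papers referenced immediately before the theorem.
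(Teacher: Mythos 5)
First, note that the paper contains no proof of this statement to compare against: Theorem~\ref{thm:G} is quoted as background from Galloway \cite{Galloway-causal}, which generalizes the causality-encoding theorems of \cite{SakSor-Null} and \cite{Burtscher-Garcia-Heveling-Global}. So you are in effect reconstructing Galloway's argument. Your easy direction is correct (and is already recorded in the paper as the equality case of (\ref{eq:Lip})), and your bookkeeping identity $U_k\to 0$ for near-minimizing chains is correct and is indeed the standard point of departure for all of the known proofs.

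The gap is the entire reverse implication, which is the only place the hypothesis (\ref{eq:fcc}) enters, and your sketch of it does not go through as written. (i) The claim that each crossing point $x_k(t)$ lies ``within a $\tau_g$-error of size $U_k$ of $J^-(p)\cap\tau_g^{-1}(t')$'' is not established, and even if granted it is the wrong kind of closeness: (\ref{eq:fcc}) gives compact closure of $J^-(p)\cap S$ inside the slice $S$, whereas a point whose $\tau_g$-value is nearly right can fail to lie in $J^-(p')$ for any controlled $p'$ and can sit arbitrarily far out in the slice, since a future step of tiny time-length taken far from $p$ exits the past of every point near $p$. Before FCC yields any compactness you must actually confine the chains, or at least their past corners $p^k_{2j-1}$, to sets of the form $J^-(p_\epsilon)\cap\tau_g^{-1}[a,b]$ with $p_\epsilon$ controlled, and that confinement is the real content of the theorem. (ii) The extraction step cannot rely on the classical limit-curve theorem, because the curves $C_k$ are not causal (they contain future excursions); a limit of level-crossing points that is merely monotone in $\tau_g$ is not automatically a past-directed causal curve, and you must also land in $J^-(p)$ rather than its closure --- here one would invoke that regularity of $\tau_g$ forces global hyperbolicity by Andersson--Galloway--Howard \cite{AGH}, so that $J^{\pm}$ are closed, a fact your sketch never uses. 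You acknowledge that this step requires importing machinery from the cited papers; since that machinery is precisely the proof, the proposal as it stands establishes only the direction that was already known from \cite{SV-Null} and leaves the substance of Galloway's theorem open.
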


All three of these causality encoding theorems 
are proven for a larger class of time functions, but we are focusing on the cosmological time function in this paper because it defines a canonical null distance in Theorem~\ref{thm:null-canonical}.  Note that not
all space-times have regular cosmological time functions (see \cite{AGH}) and not all space-times with regular cosmological time functions have null distances that encode causality (see \cite{SakSor-Null}).  

In Section~\ref{sect:space-times} we will present a large class of causally-null- compactifiable space-times (as in Definition~\ref{defn:cncst}) 
that arise naturally in General Relativity. We will apply
the Sakovich-Sormani causality theorem 
to prove that a large class of
big bang space-times and also future developments of compact initial data sets are causally-null-compactifiable space-times by proving their cosmological time functions are proper.   For asymptotically flat space-times, like Minkowski space-time and Schwarzschild space-time, we will describe regions within the space-times that exhaust the space-times and can be shown to
be causally null compactifiable by applying Galloway's causality theorem.

We end this background section with a theorem that immediately follows from Definition~\ref{defn:cncst}
and Theorem 1.3 of \cite{SakSor-Null} by Sakovich-Sormani:

\begin{thm} 
\label{thm:dist-time-Lor} 
A bijection, 
$F:N_1\to N_2$, between causally-null-compactifiable space-times 
preserves distance,
\be \label{eq:pres-dist}
\hat{d}_{g_2}(F(p),F(q))=\hat{d}_{g_1}(p,q) \qquad \forall \, p,q\in N_1,
\ee
and preserves cosmological time,
\be \label{eq:pres-time}
\tau_2(F(p))=\tau_1(p) \qquad \forall \, p \in N_1,
\ee
iff this bijection 
is a Lorentzian isometry,
\be\label{eq:Lor-isom}
F: N_1\to N_2 \textrm{ such that }F^*g_2=g_1 \textrm{ everywhere in } N_1,
\ee
which is time oriented.
\end{thm}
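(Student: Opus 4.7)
The plan is to treat the two directions separately. For the forward implication, if $F$ is a time-oriented Lorentzian isometry with $F^*g_2=g_1$, then Theorem~\ref{thm:time-canonical} directly yields (\ref{eq:pres-time}) and Theorem~\ref{thm:null-canonical} yields (\ref{eq:pres-dist}); both assertions are already recorded in the background section, so nothing additional is required here.

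For the reverse implication, suppose $F$ is a bijection satisfying (\ref{eq:pres-dist}) and (\ref{eq:pres-time}). My first step is to recover that $F$ preserves the causal structure. Since both $(N_j,g_j)$ are causally-null-compactifiable, Definition~\ref{defn:cncst} provides that each null distance encodes causality as in (\ref{eq:causal}), and thus
\[
p\in J^+(q) \iff \tau_1(p)-\tau_1(q)=\hat{d}_{g_1}(p,q).
\]
Substituting the hypotheses (\ref{eq:pres-dist}) and (\ref{eq:pres-time}) into the right hand side gives
\[
\tau_2(F(p))-\tau_2(F(q))=\hat{d}_{g_2}(F(p),F(q)),
\]
which by (\ref{eq:causal}) applied in $N_2$ is equivalent to $F(p)\in J^+(F(q))$. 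Chaining these equivalences shows that $F$ is a causality preserving bijection as well.

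Once $F$ is known to preserve null distance, cosmological time, and the causal structure, the remaining step is to deduce $F^*g_2=g_1$ together with the preservation of the time orientation. This is exactly the conclusion of Theorem~1.3 of Sakovich--Sormani \cite{SakSor-Null}, whose hypotheses (regularity and boundedness of $\tau$, together with the null distance encoding causality) are precisely the ingredients built into the definition of a causally-null-compactifiable space-time. Invoking their theorem finishes the proof. The genuine work of this rigidity statement sits inside \cite{SakSor-Null}: one uses Theorem~\ref{thm:AGH-generator} to note that the generators $\gamma_p$ are past-inextendible unit speed timelike geodesics and that $F$ must map them to generators of $N_2$ parametrized by $\tau$; from this family of images one reconstructs the future timelike cones, and hence $g_2$ pointwise, with the time orientation dictated by preservation of $\tau$. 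The main obstacle, namely the passage from a discrete distance-and-time preservation statement to the infinitesimal identity $F^*g_2=g_1$, is handled there; in the present excerpt the theorem is consequently only a bookkeeping exercise combining Definition~\ref{defn:cncst} with \cite{SakSor-Null}.
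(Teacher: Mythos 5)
Your proposal is correct and follows essentially the same route as the paper: the paper gives no independent argument but states that the theorem "immediately follows from Definition~\ref{defn:cncst} and Theorem 1.3 of \cite{SakSor-Null}," which is exactly the reduction you make, with the forward direction read off from Theorems~\ref{thm:time-canonical} and~\ref{thm:null-canonical}. Your extra step deriving causality preservation from the encoding property, and your sketch of the generator argument inside \cite{SakSor-Null}, are harmless elaborations of the same citation-based proof.
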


We will apply this last theorem in Section~\ref{sect:definite} to prove some of the intrinsic distances between causally-null-compactifiable space-times defined in Section~\ref{sect:defns} are definite.

\section{{\bf Causally-Null-Compactifiable Space-Times}}
\label{sect:space-times}

In this section we will introduce large classes of causally-null-compactifiable space-times as defined in Definition~\ref{defn:cncst}.  

We will begin in Section~\ref{sect:space-times-induced} 
 by proving the following constructive theorem describing the metric-space associated with a cosmic strip and its cosmological time function:

\begin{thm}\label{thm:induced}
If $(N,g)$ has a regular cosmological time function, $\tau_g:N\to (0,\infty)$, and $0<s<t<\infty$ then the cosmic strip, 
\be
N_{s,t}=\tau_g^{-1}(s,t)\subset N
\textrm{ with metric tensor } g,
\ee
has a 
regular cosmological time function,
\be\label{eq:taust-1}
\tau_{g,s,t}:N_{s,t}\to (0,t-s)
\textrm{ such that } \tau_{g,s,t}(p)=\tau_g(p)-s
\ee
and an associated metric-space,
$({N}_{s,t},\hat{d}_{g,s,t})$,
where $\hat{d}_{g,s,t}=\hat{d}_{g,\tau_{g,s,t}}$ is defined using $\tau_{g,s,t}$
and piecewise causal curves in $N_{s,t}$.  Furthermore,
$\hat{d}_{g,s,t}$ is the
induced length metric on $N_{s,t}$:
\be\label{eq:induced}
\hat{d}_{g,\tau_{g,s,t}}(p,q)
=\inf L_{\hat{d}_{g}}(C)
\ge \hat{d}_{g}(p,q)
\ee
where the infimum is taken over all $\hat{d}_{g}$
rectifiable curves 
$C:[0,1]\to N_{s,t}$ 
such that $C(0)=p$ and $C(1)=q$.
\end{thm}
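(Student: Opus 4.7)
The plan is to verify three claims in order: the formula $\tau_{g,s,t}=\tau_g-s$, the regularity of $\tau_{g,s,t}$, and the length-metric characterization of $\hat{d}_{g,s,t}$, which I expect to be the main obstacle.

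For the formula, I would prove both inequalities. The upper bound $\tau_{g,s,t}(p) \le \tau_g(p) - s$ uses two observations: (i) the Lorentzian distance in $N_{s,t}$ is dominated by the one in $N$ since causal curves in $N_{s,t}$ are causal in $N$, and (ii) the superadditivity $\tau_g(p) \ge \tau_g(q') + d_g(q',p)$ for $q' \in J^-(p)$, which follows from the reverse triangle inequality applied inside the supremum defining $\tau_g$. For $q' \in J^-_{N_{s,t}}(p)$ these combine to give a Lorentzian distance bounded by $\tau_g(p) - \tau_g(q') < \tau_g(p) - s$. The lower bound comes from the generator $\gamma_p$ from \thmref{thm:AGH-generator}: since $\tau_g(\gamma_p(r))=r$, the restriction $\gamma_p|_{[s+\epsilon,\tau_g(p)]}$ lies in $N_{s,t}$, is past-directed causal, and has Lorentzian length $\tau_g(p)-s-\epsilon$, witnessing $\tau_{g,s,t}(p) \ge \tau_g(p) - s - \epsilon$ for every $\epsilon>0$.

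For regularity, given a past-inextendible future causal curve $C: (a,b) \to N_{s,t}$, the monotonic limit $L = \lim_{r\to a^+} \tau_g(C(r))$ lies in $[s,t)$. If $L > s$, then $\tau_g \circ C$ stays bounded away from $0$, so $C$ is not past-inextendible in $N$ (by regularity of $\tau_g$ there); it must then extend continuously to some $q \in N$ with $\tau_g(q) = L \in (s,t)$, forcing $q \in N_{s,t}$ and contradicting past-inextendibility in $N_{s,t}$. Hence $L = s$, and so $\tau_{g,s,t}(C(r)) = \tau_g(C(r)) - s \to 0$.

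For the length-metric equality, the key lemma is that any causal segment has $\hat{d}_g$-length equal to the $\tau_g$-increment along it: the Lipschitz estimate (\ref{eq:Lip}) gives $\ge$, the trivial two-point piecewise causal path gives $\le$, and these telescope along the segment using monotonicity of $\tau_g$ on causal curves. Thus any piecewise causal curve $\hat{C} \subset N_{s,t}$ with corners $p_0,\ldots,p_{2N}$ satisfies $L_{\hat{d}_g}(\hat{C}) = \sum |\tau_{g,s,t}(p_i)-\tau_{g,s,t}(p_{i-1})|$. Since piecewise causal curves in $N_{s,t}$ form a subfamily of the $\hat{d}_g$-rectifiable curves there, this immediately yields $\inf L_{\hat{d}_g}(C) \le \hat{d}_{g,s,t}(p,q)$, while $L_{\hat{d}_g}(C) \ge \hat{d}_g(p,q)$ is automatic. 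The reverse inequality $\hat{d}_{g,s,t}(p,q) \le \inf L_{\hat{d}_g}(C)$ is the main obstacle and requires a localization argument: given a $\hat{d}_g$-rectifiable $C \subset N_{s,t}$ of length $\ell$ and $\epsilon > 0$, I would partition $[0,1]$ into pieces so short that the consecutive points $p_i = C(t_i)$ are nearby and $\sum \hat{d}_g(p_{i-1},p_i) \le \ell$, then replace each pair by a piecewise causal sequence in $N$ whose $\tau$-sum exceeds $\hat{d}_g(p_{i-1},p_i)$ by less than $\epsilon/n$. The delicate step is keeping these approximators inside the open set $N_{s,t}$: using compactness of the image of $C$ in $N_{s,t}$ together with local control of $\hat{d}_g$ (near any point of $N_{s,t}$ the null distance to nearby points can be realized by piecewise causal sequences confined to a small neighborhood), a sufficiently fine partition forces all local approximators to lie in $N_{s,t}$. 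Concatenating them yields a piecewise causal sequence in $N_{s,t}$ from $p$ to $q$ with total $\tau_g$-variation at most $\ell + \epsilon$, establishing the desired inequality.
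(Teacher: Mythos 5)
Your proposal is correct and follows the same skeleton as the paper's proof, but it is more self-contained at exactly the places where the paper cites or compresses. The formula $\tau_{g,s,t}=\tau_g-s$ is obtained as in the paper (the generator of Theorem~\ref{thm:AGH-generator} for the lower bound, the bound $d_g(p,q')\le\tau_g(p)-\tau_g(q')$ for the upper bound), and you add an explicit verification that $\tau_{g,s,t}$ is regular, which the paper only asserts. For the length-metric statement (\ref{eq:induced}), the paper quotes Lemma 3.1 of \cite{SakSor-Null} (a piecewise causal curve is rectifiable and its $\hat{d}$-length equals its null length) and then concludes in one sentence that the only difference between the two definitions is the allowed range of the curves; you instead reprove that lemma directly (the Lipschitz bound (\ref{eq:Lip}) for one inequality, the two-point competitor for the other, telescoped along each causal piece using monotonicity of $\tau_g$), and, more substantially, you spell out the argument the paper leaves implicit for the inequality $\hat{d}_{g,s,t}(p,q)\le\inf L_{\hat{d}_g}(C)$: partition a $\hat{d}_g$-rectifiable curve in $N_{s,t}$ finely and replace consecutive pairs by nearly optimal piecewise causal paths that stay inside the strip. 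That localization is where the real content of the phrase induced length metric lies, and your compactness argument is the right mechanism; what the paper's citation-based route buys is brevity, while yours buys a proof readable without \cite{SakSor-Null}.

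One refinement: the parenthetical lemma you invoke, that the null distance between nearby points can be nearly realized by piecewise causal paths confined to a small neighborhood, is stronger than what you need and is not obviously true in general, since zigzags of small null length may wander far in spacelike directions (compare the behavior near the big bang point in Example~\ref{ex:induce}). What your argument actually requires is only confinement in time: if $\beta$ is a piecewise causal path starting at $y$, then every point $w$ on $\beta$ satisfies $|\tau_g(w)-\tau_g(y)|\le \hat{L}_{\tau_g}(\beta)$, because $\tau_g$ is monotone along each causal piece. Since the image of $C$ is compact in the open strip, $\tau_g$ on it keeps a positive margin $\delta$ from the levels $s$ and $t$; choosing the partition so fine that $\hat{d}_g(p_{i-1},p_i)+\epsilon/n<\delta$ forces every replacement path, corners and causal segments alike, to have $\tau_g$-values in $(s,t)$ and hence to lie in $N_{s,t}$. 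With that substitution (which uses nothing beyond the definition of null length and the fact that $N_{s,t}$ is a $\tau_g$-band), your proof is complete.
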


In Section~\ref{sect:space-times-proper} we will apply this theorem combined with Sakovich-Sormani's encoding causality theorem \cite{SakSor-Null} to prove Theorem~\ref{thm:proper} which states that if $(N,g)$ has a proper regular cosmological time function, then the cosmic strip is a causally-null-compactifiable space-time, and we extend the time function, the causal structure, and the generators to the corresponding compact timed-metric-space. 

In Sections~\ref{sect:space-times-big-bang} and ~\ref{sect:space-times-initial-data} we discuss how certain classes of big bang space-times and certain classes of future developments from compact initial data sets are causally-null-compactifiable space-times.  This includes a description of the unpublished work of Sormani-Vega in \cite{SV-BigBang}, 
Conjecture~\ref{conj:BB}, and our new Theorem~\ref{thm:FD}.

In Section~\ref{sect:space-times-strip} we consider space-times which do not have finite cosmological time functions, because their past extends infinitely.  We define the notion of a local cosmological time function, $\tau: N\to (-\infty,\infty)$, in Definition~\ref{defn:local-cosmo}.  We prove that cosmic strips, $\tau^{-1}(\tau_0,\tau_1)$, defined by such time functions (when proper) are causally-null-compactifiable space-times when $\tau$ is a proper local cosmological time function in Theorem~\ref{thm:local-cosmo}.  We close with Questions~\ref{quest:Busemann} and~\ref{quest:local-unique}
on the existence and uniqueness of these local cosmological time functions.

In Section~\ref{sect:space-times-sub}
we consider asymptotically flat spaces
and define the notion of a null compactifiable sub-space-time
(see Definition~\ref{defn:sub-space-time}).  In Sections~\ref{sect:space-times-Minkowski} and~\ref{sect:space-times-Schwarzschild} we describe various regions in Minkowski space and Schwarzschild space which are causally-null-compactifiable sub-space-times.  We believe the method described there can be extended more generally to handle the exterior regions of more complicated space-times with many black holes and other asymptotically flat space-times, but leave this to others.
One challenge is defining the appropriate definition for the exterior region outside of a collection of black holes in a space-time.
In Section~\ref{sect:space-times-ex} we describe how one might possibly exhaust space-times with causally-null-compactifiable space-times in a canonical way.

We close with Sections~\ref{sect:space-times-measure} and~\ref{sect:space-times-current} describing work towards the development of timed metric measure spaces and timed integral current spaces, and finally a brief review of existing notions of weak space-times in Section~\ref{sect:space-times-other}.
Throughout Section~\ref{sect:space-times} we include open problems in addition to our theorems.

\begin{rmrk}\label{rmrk:bndry}
Note that causally-null-compactifiable space-times,
$(N,g)$ have a boundary set $\partial N \subset \bar{N}\setminus N$.  It would be interesting to explore how this boundary is related to prior notions of boundary in
work of Geroch, Kronheimer, Penrose, Beem and Harris
\cite{Geroch-Kronheimer-Penrose-Ideal}
\cite{Beem-topology}
\cite{Harris-Topology-boundary}.
See also the
$C^0$ extensions to boundaries 
as in the work of 
Chrusciel, Grant, Graf, van den Beld-Serrano, Ling,
Saemann, and Sbierski in
\cite{Chrusciel-2010-JDG}
\cite{Chrusciel-Grant-continuous}
\cite{Ling-Aspects}
\cite{GBS}\cite{Saemann-Global}\cite{Sbierski-extensions}.  See also the boundaries studied in the work of Costa e Silva, Flores, Herrera, and Burgos in \cite{CeSFH-c-bndry} \cite{CeSFH-c-bndry-add} 
\cite{Burgos-Flores-Herrera}\cite{Flores-boundary}.  
There should be a Lipschitz one map from their notions of boundary to ours and, perhaps under certain hypotheses, an isometry.
\end{rmrk}

\subsection{{\bf Metric-Space-Times defined by Cosmic Strips}}
\label{sect:space-times-induced}

In this subsection we prove Theorem~\ref{thm:induced}. It can help to keep a very simple example in mind while reading the proof so we present an example first.   Recall Definition~\ref{defn:null-distance}.

\begin{ex}\label{ex:induce}
Let $(N,g)$ be a big bang space-time with a linear warping factor,
\be
N=(0,t)\times {\mathbb S}^1
\textrm{ and } g=-d\tau^2+ \tau^2 g_{{\mathbb S}^1},
\ee
which is easy to work with and which is topologically the same as a disk with the center point removed.
Then the cosmological time function is $\tau_g(\tau,\theta)=\tau$.   On this space one can integrate along the causal curves to compute:
\begin{eqnarray}
J_{-}((\tau_0,\theta_0))&=&
\{(\tau,\theta):\, 0<\tau\le \tau_0 \exp(-|\theta-\theta_0|)\}\\
J_{+}((\tau_0,\theta_0))&=&
\{(\tau,\theta):\, \tau\ge \tau_0 \exp(|\theta-\theta_0|)\}
\end{eqnarray}
as depicted on the left of
Figure~\ref{fig:induce1}.
Note that $J_{-}((\tau_0,\theta_0))$ contains an open neighborhood around the origin, so
\begin{eqnarray}
\hat{d}_{g}((\tau_0,0),
(\tau_0,\pi))&\le&
|\tau_g(\tau_0,0)-\tau_g(\tau_0 e^{-\pi},\pi/2)|\\
&&\quad +
|\tau_g(\tau_0 e^{-\pi},\pi/2)-\tau_g(\tau_0,\pi)|
<2\tau_0,
\end{eqnarray}
as depicted on the right in Figure~\ref{fig:induce1}.

\begin{figure}[h] 
   \centering
   \includegraphics[width=2.8in]{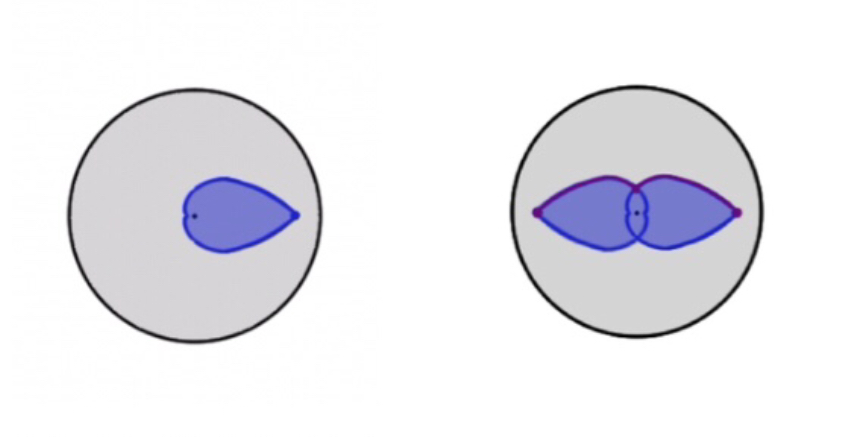} 
   \caption{Past sets of points in $N$ of Example~\ref{ex:induce} are easily computed.  The past sets of antipodal points intersect allowing for easy estimation of $\hat{d}_{g}$.
   }
\label{fig:induce1}
\end{figure}
Choose $t>s>0$ with $s>\tau_0 e^{-\pi/2}$, so that 
\be
J_{-}((\tau_0,\theta_0))\cap
J_{-}((\tau_0,\theta_0+\pi/2))\cap
\tau_{g}^{-1}(s,t)=\emptyset,
\ee
as depicted on the left in Figure~\ref{fig:induce2}.
\begin{figure}[h] 
   \centering
   \includegraphics[width=4.5in]{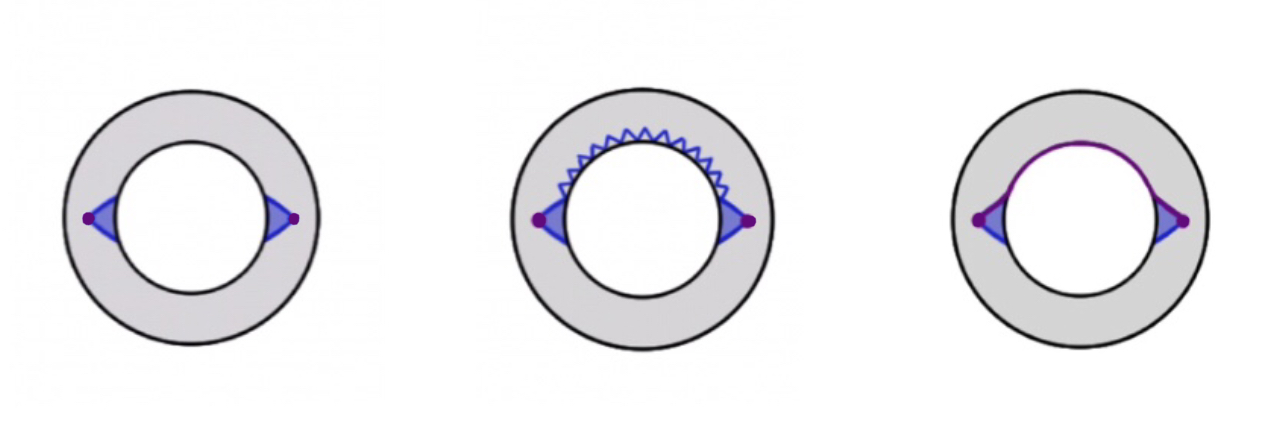} 
   \caption{Past sets of antipodal points in $N_{s,t}$ of Example~\ref{ex:induce} do not intersect so we need to take many
   zig-zags to estimate $\hat{d}_{g,s,t}$ and see that this distance is only achieved by a curve that leaves $N_{s,t}$, and in fact,
   $\hat{d}_{g,s,t}>\hat{d}_{g}$.
   }\label{fig:induce2}
\end{figure}
Then we need more than one point
to compute the null distance on $N_{s,t}=\tau_g^{-1}(s,t)$.  Recall that for any $\epsilon>0$ there
exists points,
\be
\{p=(\tau_0,0), p_1=(\tau_1,\theta_1),p_2=(\tau_2,\theta_2),...,p_{2N}=(\tau_0,\pi)\}\subset N_{s,t},
\ee
such that
\be
\left| \hat{d}_{g,s,t}((\tau_0,0),
(\tau_0,\pi))- \sum_{i=1}^{2N}|\tau_i-\tau_{i-1}|\right|<\epsilon
\ee
where each point is causally related to the previous point.  We can choose $\tau_i$ to
alternate in $(s,t)$
\be
\tau_0>\tau_1<\tau_2>\cdots<\tau_{2N}=\tau_0
\ee
and we can choose $\theta_i$ increasing as depicted in the center in Figure~\ref{fig:induce2}.   So if we connect these points by causal curves we get a piecewise causal curve, 
\be
C_\epsilon:[0,\pi]\to N_{s,t} \textrm{ where } C_\epsilon(\theta)=(\tau_\epsilon(\theta), \theta),
\ee
whose rectifiable length with respect to $\hat{d}_{g}$ is
\be
L_{\hat{d}_{g}}(C_\epsilon)=\sum_{i=1}^{2N}|\tau_i-\tau_{i-1}|
\ee
which agrees with its rectifiable
length with respect to $\hat{d}_{g,s,t}$.
Due to the shape of the past sets, as $\epsilon\to 0$, $C_\epsilon$, will converge to a curve, $C:[0,\pi]\to \tau^{-1}[s,t]$ that runs along a null curve directly to $(s,\theta_C)$ and then runs along the $\tau^{-1}(s)$ level set to $(s, \pi-\theta_C)$ 
and back up along a null curve as depicted on the right in Figure~\ref{fig:induce2}.  Its
$\hat{d}_{g}$ rectifiable length is:
\be
L_{\hat{d}_{g}}(C)=
|\tau_0-s|+s(\pi - 2\theta_C)+ |\tau_0-s|
\ee
where
\be
s=\tau_0 \exp(-|\theta_C|)\,\, \implies\,\, 
\theta_C=-\Ln(s/\tau_0)
\ee
Putting this all together we see that not only
is the distance between these points not achieved as
the length of a curve in $N_{s,t}$ but also that
\begin{eqnarray}
\hat{d}_{g,s,t}((\tau_0,0),
(\tau_0,\pi))&=&L_{\hat{d}_{g}}(C)\\
&=&2(\tau_0-s) + s(\pi+\Ln(s/\tau_0) 
\end{eqnarray}
decreases as $s$ decreases so that
\be
\hat{d}_{g,s,t}((\tau_0,0),
(\tau_0,\pi))>\hat{d}_{g}((\tau_0,0),
(\tau_0,\pi)).
\ee
\end{ex}

We now prove Theorem~\ref{thm:induced}.

\begin{proof}
Let us start with our full space-time, $(N,g)$, and its
cosmological time function,
\be
\tau_g(p)=\tau_{N,g}(p)=\sup
\{d_g(p,q):q\in J^-(p)\subset N\}
\ee
and its null distance, $\hat{d}_{g}$.  

Since the cosmological time function is regular, by Anderson-Galloway-Howard \cite{AGH}
there is a past inextensible timelike generator, $\gamma_p:(0,\tau_g(p))\to N$ such that $\gamma_p(\tau_g(p))=p$
and $\tau_g(\gamma_p(t))=t$.  

Consider $p\in N_{s,t}$, then
\be
\tau_{g,s,t}(p)=\sup
\{d_g(p,q):q\in J^-(p)\subset N_{s,t}\}
\ge \tau_g(p)-s
\ee
because we can take $q_j=\gamma_p(s_j)$
where $s_j$ decrease to $s$.
On the
other hand, for any $q\in J^-(p)$, the Lorentzian distance
\be
d_q(p,q) \le \tau_g(p)-\tau_g(q)=\hat{d}_{g}(p,q)
\ee
so
\be
\tau_{g,s,t}(p)\le 
\sup
\{\tau_g(p)-\tau_g(q):q\in J^-(p)\subset N_{s,t}\}
=\tau_g(p) -s
\ee
because we can take $q_j=\gamma_p(s_j)$
where $s_j$ decrease to $s$. 
Thus we have (\ref{eq:taust-1}) and
so $\tau_{g,s,t}$ is also a proper cosmological time function.

Let us now consider the null distance
$\hat{d}_{g,s,t}$ defined using $\tau_{g,s,t}$ on $N_{s,t}$.   
By (\ref{eq:taust-1}), for any piecewise
null curve $C:[0,1]\to N_{s,t}$, its null length defined with respect to $\tau_g$
equals its null length with respect to 
$\tau_{g,s,t}$:
\begin{eqnarray}
\hat{L}_{\tau_g}(C)&=&\sum_{i=1}^N
|\tau_g(C(t_{i-1}))-\tau_g(C(t_i))|\\
&=&\sum_{i=1}^N
|\tau_{g,s,t}(C(t_{i-1}))-\tau_{g,s,t}(C(t_i))|=\hat{L}_{\tau_{g,s,t}}(C).
\end{eqnarray}
In Lemma 3.1 of \cite{SakSor-Null},
such a piecewise causal $C$ is rectifiable with respect to that null distance and
\be
L_{\hat{d}_{g}}(C)=\hat{L}_{\tau_g}(C)
\textrm{ and }
L_{\hat{d}_{g,s,t}}(C)=\hat{L}_{\tau_{g,s,t}}(C)
\ee
so that $\hat{d}_{g}$ is a length metric whose distance is the infimum over the lengths of all its $\hat{d}_{g}$-rectifiable curves
which agrees with the infimum over the lengths of all its piecewise null curves, and the same is true for 
$\hat{d}_{g,s,t}$.   Since all these
lengths agree and the only difference in the definitions is the allowed range for the curves, we see that
 $\hat{d}_{g,s,t}$ is the
induced length metric on $N_{s,t}$
as described in (\ref{eq:induced}).
\end{proof}

\subsection{{\bf Space-Times with Proper Regular Cosmological Time Functions}}
\label{sect:space-times-proper}

Here we prove the following theorem:

\begin{thm} \label{thm:proper}
Suppose $(N,g)$ has a proper regular cosmological time function, $\tau_g:N\to (0,\infty)$, and $0<s<t<\infty$ then the cosmic strip, $(N_{s,t},g)$, is a causally-null-compactifiable space-time,
with a proper cosmological time function,
$\tau_{g,s,t}: N_{s,t}\to (0,t-s)$ and a 
compact associated timed-metric-space,
$(\bar{N}_{s,t},\hat{d}_{g,s,t},\tau_{g,s,t})$, satisfying all the properties described in Theorem~\ref{thm:induced}.
Then we can extend $\tau_{g,s,t}: N_{s,t}\to (0,t-s)$ as a Lipschitz $1$ function with respect to
$\hat{d}_{g,s,t}$ to the metric completion, $\tau_{g,s,t}:\bar{N}_{s,t}\to [s,t]$, has the distance from initial data property, which we define as: 
\be\label{eq:init-data-prop}
\tau_{g,s,t}(p)=\min\left\{
\hat{d}_{g}(p,q):\, q\in \tau_{g,s,t}^{-1}(s)\subset \bar{N}_{s,t}\right\}.
\ee
The causal structure defined by
\be \label{eq:causal-bar}
p\in J^+(q) \iff \tau_{g,s,t}(p)-\tau_{g,s,t}(q)=\hat{d}_{g,s,t}(p,q)
\ee
extends the causal structure on $(N_{s,t},g)$
to $(\bar{N}_{s,t},\hat{d}_{g,s,t},\tau_{g,s,t})$.
For all $p,q\in \bar{N}_{s,t}$, there is a 
$\hat{d}_\tau$-length minimizing curve joining
the points and achieving the $\hat{d}_\tau$ distance between them (but is not necessarily a $g$-geodesic).  Finally, $\forall \, p\in \bar{N}_{s,t}$,
there exists a Lipschitz $1$ generator, 
$\bar{\gamma}_p:[s, \tau_g(p)]\to \bar{N}_{s,t}$,
which agrees with a $\tau_g$ generator $\gamma_p$ for $p$  when $p\in N_{s,t}\subset N$ and is also a $\hat{d}_\tau$-length minimizing curve such that 
\be\label{eq:generator-bar}
\tau_{g,s,t}(\bar{\gamma}_p(t'))=t'-s
\qquad \forall \, t'\in [s,\tau_{g,s,t}(p)+s].
\ee
\end{thm}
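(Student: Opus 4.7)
The plan is to reduce this theorem to Theorem~\ref{thm:induced}, the Sakovich-Sormani encoding-causality theorem, and standard length-space arguments. First I would apply Theorem~\ref{thm:induced} directly to obtain the regular cosmological time function $\tau_{g,s,t}$ on $N_{s,t}$ together with the null distance $\hat{d}_{g,s,t}$ realised as the induced length metric of $\hat{d}_g|_{N_{s,t}}$. Properness of $\tau_{g,s,t}: N_{s,t}\to (0, t-s)$ is immediate from properness of $\tau_g$, since $\tau_{g,s,t}^{-1}[a,b] = \tau_g^{-1}[a+s, b+s]$ is compact in $N_{s,t}$ whenever $0 < a \leq b < t-s$. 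With regularity and properness in hand, the Sakovich-Sormani encoding-causality theorem applies to $(N_{s,t}, g)$ and yields the identity $p\in J^+(q) \iff \tau_{g,s,t}(p)-\tau_{g,s,t}(q)=\hat{d}_{g,s,t}(p,q)$ on $N_{s,t}$.

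The main technical step is compactness of $(\bar{N}_{s,t}, \hat{d}_{g,s,t})$, which I would establish via total boundedness. For $\epsilon>0$, let $K_\epsilon = \tau_g^{-1}[s+\epsilon/3,\, t-\epsilon/3]$, which is compact in $N$ by properness of $\tau_g$. On $K_\epsilon$ the metrics $\hat{d}_g$ and $\hat{d}_{g,s,t}$ agree locally, since any piecewise null curve of $\hat{d}_g$-length less than $\epsilon/6$ between points of $K_\epsilon$ has all corners with $\tau_g$-values within $\epsilon/6$ of a fixed endpoint by (\ref{eq:Lip}), hence lies in $N_{s,t}$ and is admissible for $\hat{d}_{g,s,t}$. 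This produces a finite $\hat{d}_{g,s,t}$-net for $K_\epsilon$. Points $p\in N_{s,t}$ with $\tau_g(p) > t-\epsilon/3$ can be moved into $K_\epsilon$ along the generator $\gamma_p$ at $\hat{d}_{g,s,t}$-cost less than $\epsilon/3$. The past layer $\tau_g^{-1}(s, s+\epsilon/3]$ is more delicate because generators go further past rather than into $K_\epsilon$; here I would select a finite family of generators whose evaluations at time $s+\epsilon/3$ form an $\epsilon/3$-net for $\tau_g^{-1}\{s+\epsilon/3\}$ (compact by properness) and use the encoding-causality identity together with (\ref{eq:Lip}) to bound the $\hat{d}_{g,s,t}$-distance from each past-layer point to one of these representatives. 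Completeness of the metric completion together with total boundedness yields compactness of $\bar{N}_{s,t}$.

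With compactness established, the Lipschitz 1 property (\ref{eq:Lip}) of $\tau_{g,s,t}$ on $N_{s,t}$ gives a unique Lipschitz 1 extension to $\bar{N}_{s,t}$ with the claimed range. The causal structure on $\bar{N}_{s,t}$ is defined by (\ref{eq:causal-bar}); since both $\hat{d}_{g,s,t}$ and $\tau_{g,s,t}$ extend continuously, this extends the causal structure verified on $N_{s,t}$ via Sakovich-Sormani. Because $\hat{d}_{g,s,t}$ is a length metric on $N_{s,t}$, its completion inherits the length-space structure, and compactness plus completeness yield minimising geodesics between arbitrary pairs of points by the Hopf-Rinow theorem for complete length spaces (\cite{BBI}).

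For the initial data property (\ref{eq:init-data-prop}) and the extended generators, I proceed as follows. For $p\in N_{s,t}$, take the Andersson-Galloway-Howard generator $\gamma_p$ from Theorem~\ref{thm:AGH-generator}; its restriction to $(s, \tau_g(p)]$ lies in $N_{s,t}$ and is a unit-speed past-directed timelike geodesic whose $\hat{d}_{g,s,t}$-length equals $\tau_g(p)-s$, so it extends by continuity to a Lipschitz 1 curve $\bar{\gamma}_p:[s, \tau_g(p)]\to \bar{N}_{s,t}$ satisfying (\ref{eq:generator-bar}). For $p\in \bar{N}_{s,t}\setminus N_{s,t}$, approximate $p$ by $p_n\in N_{s,t}$ and extract a subsequential limit of $\bar{\gamma}_{p_n}$ by Arzel\`a-Ascoli using the uniform Lipschitz 1 bound and compactness of $\bar{N}_{s,t}$. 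The Lipschitz 1 property of the extended $\tau_{g,s,t}$ yields the lower bound $\hat{d}_{g,s,t}(p,q) \geq \tau_{g,s,t}(p)$ for every $q\in \tau_{g,s,t}^{-1}(s)$, while $\bar{\gamma}_p(s)$ realises equality, proving (\ref{eq:init-data-prop}). The main obstacle throughout is the compactness argument, and in particular controlling $\hat{d}_{g,s,t}$ near the past level set $\tau_g = s$ where minimising piecewise null curves in $N$ may escape $N_{s,t}$, as illustrated by Example~\ref{ex:induce}.
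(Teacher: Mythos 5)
Your overall architecture matches the paper's: reduce to Theorem~\ref{thm:induced}, invoke the Sakovich--Sormani causality theorem, obtain minimizing curves from compactness of a complete length space, and build the generators and the initial-data property from the Andersson--Galloway--Howard generators. The one place you genuinely diverge is the compactness of $\bar{N}_{s,t}$, which you prove by total boundedness, whereas the paper proves sequential compactness: given a Cauchy sequence $q_i\in N_{s,t}$, it passes to a subsequence with $\tau_g(q_i)\to\tau_\infty$, applies Arzel\`a--Ascoli to the generators $\gamma_{q_i}:[s,\tau_g(q_i)]\to\tau_g^{-1}[s,t]$ inside the compact set $\tau_g^{-1}[s,t]\subset N$, and shows that $q_i$ converges to the endpoint of the limiting curve. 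That route never has to move a point near the bottom of the strip forward in time, because the generator of $q_i$ already lives in the ambient compact set $\tau_g^{-1}[s,t]$.

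Your past-layer step is where the gap is. To place a point $p$ with $\tau_g(p)\in(s,s+\epsilon/3]$ within $2\epsilon/3$ of your finite net of $\tau_g^{-1}(s+\epsilon/3)$, you must first exhibit some $q\in J^+(p)$ with $\tau_g(q)=s+\epsilon/3$ reached by a future causal curve staying in $N_{s,t}$; only then do the causality identity and (\ref{eq:Lip}) give $\hat{d}_{g,s,t}(p,q)=(s+\epsilon/3)-\tau_g(p)\le\epsilon/3$. Neither of the two tools you cite produces such a $q$: the encoding-causality identity computes the distance once a causally related point is handed to you, and the Lipschitz bound only yields lower bounds on distances. The existence of $q$ is not free from the stated hypotheses alone --- you need that a future-inextendible causal curve from $p$ cannot remain imprisoned in the compact set $\tau_g^{-1}[\tau_g(p),s+\epsilon/3]$ and must therefore cross the level $s+\epsilon/3$ (since $\tau_g$ is nondecreasing along future causal curves, the curve cannot escape downward). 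This does hold, because Andersson--Galloway--Howard show that regularity of $\tau_g$ forces global hyperbolicity, hence strong causality and no imprisonment, but that input has to be invoked explicitly; as written the step does not close. Once you add it, your total-boundedness argument goes through, and the rest of your proposal (properness of $\tau_{g,s,t}$, the middle-region comparison of $\hat{d}_g$ with $\hat{d}_{g,s,t}$, the future layer via generators, Hopf--Rinow in the complete compact length space, the extension of generators by Arzel\`a--Ascoli, and the initial-data property) is sound and matches the paper in substance.
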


Before we begin, we present an illustrative example.

\begin{ex} \label{ex:proper}
Let $(N,g)$ be the big bang space-time with a linear warping factor,
\be
N=(0,t)\times {\mathbb S}^1
\textrm{ and } g=-d\tau^2+ \tau^2 g_{{\mathbb S}^1}
\ee
as in Example~\ref{ex:induce}.
Then the cosmological time function, $\tau_g(\tau,x)=\tau$, is proper and regular 
with circular level sets and the
generators are radial:
\be
\gamma_{\tau_0,x_0}(t')=(t',x_0) \textrm{ for } t'\in (0, \tau_0],
\ee
as depicted on the left side in Figure~\ref{fig:proper}.
The topology on $N$ agrees with that of 
${\mathbb D}^2\setminus\{0\}$, and so is the topology induced by $\hat{d}_{g}$, so if we take the metric completion, $\bar{N}$, we add in the point $p_0=0$, and we can consider this point to be the big bang itself. 
In $\bar{N}$ we can take the limit, 
\be
\lim_{t'\to 0} \gamma_{\tau_0,x_0}(t')=p_0
\ee
and we can extend $\tau_{g}$ to $\bar{N}$ as a Lipschitz one function, to be equal to $0$ at $p_0$, so that $\tau_g$ is
the $\hat{d}_{g}$ distance to this point:
\be
\tau_g^{-1}(0)\subset\bar{N}=\{p_0\}.
\ee

\begin{figure}[ht] 
   \centering
   \includegraphics[width=4.5in]{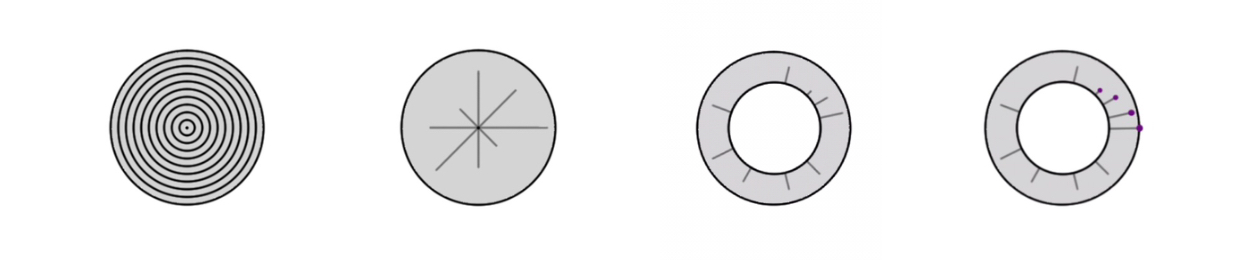} 
   \caption{Example~\ref{ex:proper}: On the left we see the level sets of $\tau_g$ in $N$ and its generators in $N$ achieving the distance to the big bang point at the center. On the right, we see
   the generators in $N_{s,t}$ achieving the distance to the initial set on the inner ring, and sequences of generators converging to a generator starting at a point in $\bar{N}_{s,t}$ on the outer ring.}\label{fig:proper}
\end{figure}

If we consider $0<s<t<\infty$, then $N_{s,t}$ has the topology of an annulus and $\tau_{g,s,t}(\tau,\theta)=\tau-s$ and $\bar{N}_{s,t}$
has the topology of the closed annulus. 
In $\bar{N}_{s,t}$ we can take the limit, 
\be
\lim_{t'\to s} \gamma_{\tau_0,x_0}(t')=
\bar{\gamma}_{\tau_0,x_0}(s)
\ee
We can extend $\tau_{g,s,t}$ to $\bar{N}_{s,t}$ as a Lipschitz one function, to be equal to $0$ on the inner boundary (which can be considered to be the initial surface) so that $\tau_{g,s,t}$ is
the $\hat{d}_{g,s,t}$ distance to this initial surface:
\be
\tau_{g,s,t}^{-1}(0)=\tau_g^{-1}(s).
\ee
We can also extend $\tau_{g,s,t}$ to the outer boundary,
\be
\tau_{g,s,t}^{-1}(t-s)=\tau_g^{-1}(t).
\ee
and define generators for points on the outer boundary by taking limits of generators, as depicted on the right in Figure~\ref{fig:proper}.
\end{ex}

We now prove Theorem~\ref{thm:proper}:

\begin{proof}
We have all the properties of
Theorem~\ref{thm:induced}.  Applying work of Sormani-Vega in \cite{SV-Null}, 
 $\hat{d}_{g,s,t}$ is a definite metric on $N_{s,t}$ and 
$\tau_{g,s,t}: N_{s,t}\to (0,t-s)$
in Lipschitz one with respect to $\hat{d}_{g,s,t}$.   Applying work of Anderson-Galloway-Howard, 
for all $\forall \, p\in \bar{N}_{s,t}$,
there exists a $\tau_g$ generator, 
$\gamma_p:(0, \tau_g(p)]\to {N}$,
which is a timelike past inextensible geodesic such that
\be\label{eq:generator-1}
\tau_{g,s,t}(\gamma_p(t'))=
\tau_g(\gamma_p(t'))-s=t'-s
\ee
for all $t'\in (s,\tau_{g,s,t}(p)+s]$.
Since $\gamma_p$ is causal, we see by Lemma 3.11 of
\cite{SV-Null} that:
\be
\hat{d}_{g,s,t}(\gamma_p(s'),\gamma_p(t'))=
|\tau_{g,s,t}(\gamma_p(s'))-
\tau_{g,s,t}(\gamma_p(t'))|=|s'-t'|
\ee
for all $s',t'\in (s,\tau_g(p)]$.

We now construct the metric completion,
$(\bar{N}_{s,t},\hat{d}_{g,s,t})$. See Section~\ref{sect:back-completion} for the definition and properties of metric completions.   In particular,
\be
\forall \, p\in \bar{N}_{s,t}
\,\,\exists \, q_i \in N_{s,t}
\textrm{ such that } \hat{d}_{g,s,t}(q_i,p)\to 0.
\ee
Since $\tau_{g,s,t}$ is Lipschitz one, we can uniquely define its 
Lipschitz one extension
to the metric completion,
$\tau_{g,s,t}:\bar{N}_{s,t}\to [s,\tau_g(p)]$.
Note that,
\be
\tau_{g,s,t}(p)=\lim_{i\to \infty}
\tau_{g,s,t}(p_i).
\ee

Now we will prove that the metric completion, $(\bar{N}_{s,t}, \hat{d}_{g,s,t})$ of
$(N_{s,t}, \hat{d}_{g,s,t})$ is compact. Consider any $p_i\in \bar{N}_{s,t}$,
we need only show that a subsequence converges in $\bar{N}_{s,t}$.
Let
$q_i\in {N}_{s,t}$ such that
$\hat{d}_{g,s,t}(p_i,q_i)\to 0$.
We need only show a subsequence of $q_i$ converges in $\bar{N}_{s,t}$.  

Since $\tau_i=\tau_g(q_i)\in [s,t]$, a subsequence of the $q_i$ also denoted $q_i$ has
\be
\tau_i=\tau_g(q_i)\to \tau_\infty \in [s,t].
\ee
Consider the $\tau_{g}$-generators of $q_i$ as $\hat{d}_{g}$-Lipshitz one maps:
\be \label{eq:gen-1}
\gamma_{q_i}:[s,\tau_g(q_i)]\to \tau_g^{-1}[s,t]\subset N
\textrm{ s.t. } \tau_g(\gamma_{q_i}(t'))=t'.
\ee
Since $\tau_g^{-1}[s,t]\subset N$ is compact, by the Arzela-Ascoli theorem, a subsequence of these generators, also denoted $\gamma_{q_i}$, converges to a 
$\hat{d}_{g}$ Lipschitz one curve, 
\be \label{eq:gen-2}
\gamma_\infty:[s,\tau_\infty]\to \tau_g^{-1}[s,t]\subset N
\textrm{ s.t. } \tau_g(\gamma_\infty(t'))=t'.
\ee
In particular there exists $\epsilon_i \to 0$ such that:
\be\label{eq:arzasc}
\sup\hat{d}_{g,s,t}\left(\gamma_{p_i}(s'),\gamma_\infty(s')\right)\le \epsilon_i\to 0,
\ee
where the sup is over all $s'\in [s,t]$.

We claim the $q_i$ of this subsequence converge. If we restrict the limit curve, 
\be\label{eq:gen-3}
\gamma_\infty:(s,\tau_\infty)\to N_{s,t}\subset N
\textrm{ s.t. } \tau_{g,s,t}(\gamma_\infty(t'))=t'-s.
\ee
we can use the fact that it is Lipschitz one to extend it as a Lipschitz one function to the metric completion,
\be \label{eq:gen-4}
\bar{\gamma}_\infty:[s,\tau_\infty]\to \bar{N}_{s,t}
\textrm{ s.t. } \tau_{g,s,t}(\bar{\gamma}_\infty(t')=t'-s.
\ee
We claim $q_i \to p=\bar{\gamma}_\infty(\tau_\infty)\in \bar{N}_{s,t}$.
To see this we apply the triangle inequality:
\begin{eqnarray*}
\hat{d}_{g,s,t}(q_i,p)&=&
\hat{d}_{g,s,t}(\bar{\gamma}_i(\tau_i),\bar{\gamma}_\infty(\tau_\infty))\\
&\le &
\hat{d}_{g,s,t}(\bar{\gamma}_i(\tau_i),\bar{\gamma}_\infty(\tau_i))
+
\hat{d}_{g,s,t}(\bar{\gamma}_\infty(\tau_i),\bar{\gamma}_\infty(\tau_\infty))\\
&\le & 
|\tau_i-\tau_\infty|+\epsilon_i \to 0.
\end{eqnarray*}
In the last line above, we apply (\ref{eq:arzasc}) and the fact that
\be
\hat{d}_{g,s,t}(\bar{\gamma}_i(\tau_i),\bar{\gamma}_\infty(\tau_i))=
\hat{d}_{g,s,t}({\gamma}_i(\tau_i),{\gamma}_\infty(\tau_i))
\ee
because $\tau_i\in (s,t)$.  Thus we have shown $\bar{N}_{s,t}$ is compact.

Note that for any $p\in \bar{N}_{s,t}$,
we have $q_i \to p$, where $q_i\in N_{s,t}$ 
have generators $\gamma_{q_i}$. Furthermore, $\tau_g(p)=\lim_{i\to \infty}\tau_g(q_i)$.  We can repeat the process above in 
(\ref{eq:gen-1})-(\ref{eq:gen-3}) 
to find a Lipschitz one curve as in (\ref{eq:gen-4}) such that 
\be
p=\bar{\gamma}_\infty(\tau_\infty)
=\bar{\gamma}_\infty(\tau_g(p)).
\ee
This curve is not unique but it does define a generator for $p$, 
\be
\bar{\gamma}_p(t'):=\bar{\gamma}_\infty(t') \quad \forall \, t'\in [s,\tau_g(p)]=[s,\tau_{g,s,t}(p)+s],
\ee
satisfying (\ref{eq:generator-bar}).


By the Sakovich-Sormani Causality Theorem
in \cite{SakSor-Null}, we have
\be
\tau_{g,s,t}(p)=\tau_{g,s,t}(q)+\hat{d}_{g,s,t}\tau(p,q)
\iff q\in J^-(p).
\ee
We can extend this continuously to define a causal structure on $\bar{N}_{s,t}$.

We now prove that for any $p,q\in \bar{N}_{s,t}$ there
is a $\hat{d}_\tau$-length minimizing curve
between them which achieves the distance between them.  By Theorem~\ref{thm:induced}, we know
$(N_{s,t}, \hat{d}_\tau)$ is a length space. Its metric completion, $(\bar{N}_{s,t}, \hat{d}_\tau)$, is then also a length space.  
Since it is compact, it has length minimizing curves which are distance achieving (see for example Theorem 2.5.23 from the text of Burago-Burago-Ivanov \cite{BBI}).   

Finally we prove the initial data property in (\ref{eq:init-data-prop}).  For any $p\in N_{s,t}$, we take $q_j={\gamma}_p(s_j)$ with $s_j$ decreasing to $s$ we see that
\be
\tau_{g,s,t}(p)=\tau_{g,s,t}(q_j)+\hat{d}_{g,s,t}\tau(p,q_j)
= s + \hat{d}_{g,s,t}\tau(p,q_\infty),
\ee
where $q_\infty\in \tau_{g,s,t}^{-1}(s)\subset \bar{N}_{s,t}$.
Since cosmological times are Lipschitz one with respect to their null distances, for any $q\in \tau_{g,s,t}^{-1}(s)\subset \bar{N}_{s,t}$, we have
\be
\tau_{g,s,t}(p)\le s+\hat{d}_{g,s,t}\tau(p,q)
\ee
Thus $\tau_{g,s,t}$ satisfies (\ref{eq:init-data-prop}) for any $p\in N_{s,t}$.   This extends continuously to $p\in \bar{N}_{s,t}$.   
\end{proof}

\subsection{{\bf Big Bang Space-Times}}
\label{sect:space-times-big-bang}

We introduce the following notion which is consistent with the notion of a big bang FLRW space-time but applies to a much larger class of space-times.

\begin{defn} \label{defn:BB}
A {\bf causally-null-compactifiable space-time has a big bang} if its associated compact metric-space, $(\bar{N},\hat{d}_g,\tau_g)$,
is a {\bf big bang timed-metric-space}: which means it 
has an initial level set, $\tau^{-1}(0)\subset \bar{N}$, that is a single point, $p_{BB}\in \bar{N}$, called the big bang point,
and 
its cosmological time function is the
distance from the big bang point:
\be\label{eq:dist-BB}
\tau(p)=\hat{d}_\tau(p,p_{BB}).
\ee
\end{defn}

\begin{conj}\label{conj:BB}
If $(N,g)$ has a smooth proper regular cosmological time function, $\tau:N \to (0,\tau_{max})$, and the smooth level sets,
$(M_t=\tau^{-1}(t), h_t)$ with the induced Riemannian metric have
\be \label{eq:diam-BB}
\lim_{t\to 0}\diam_{h_t}(M_t)=0
\ee
then $(N,g)$ is a causally-null-compactifiable space-time with a big bang.
\end{conj}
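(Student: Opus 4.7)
The plan is to verify, in sequence, the three requirements: encoding of causality, compactness of the metric completion, and identification of a single point at $\tau_g=0$ realizing the big bang identity~(\ref{eq:dist-BB}). Since $\tau_g$ is bounded and regular by hypothesis, and properness forces every level set $M_t=\tau_g^{-1}(t)$ to be compact, each $M_t$ is automatically future causally complete in the sense of~(\ref{eq:fcc}). Galloway's Theorem~\ref{thm:G} therefore immediately gives that $\hat{d}_g$ encodes causality on all of $N$, so the remaining work concerns the structure of the completion $\bar{N}$.

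The key geometric input I would establish is the upper bound $\hat{d}_g(p,q)\le d_{h_t}(p,q)$ for all $p,q\in M_t$. Since the cosmological time is smooth and its generators are unit timelike geodesics, one has $g(\nabla\tau_g,\nabla\tau_g)=-1$ along them and $g$ decomposes locally in the warped form $g=-d\tau_g^2+h_{\tau_g}$. Given $p,q\in M_t$, take a minimizing $h_t$-geodesic $\gamma\colon[0,L]\to M_t$ of length $L=d_{h_t}(p,q)$, partition it into $N$ pieces, and at each midpoint insert a past ``dip'' of depth slightly greater than $L/(2N)$ so that it lies in the causal past of both adjacent spatial sample points. The sum of $\tau_g$-variations along the resulting piecewise causal chain is $L+O(1/N)$, so letting $N\to\infty$ yields the claimed bound. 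Combined with hypothesis~(\ref{eq:diam-BB}) this produces the crucial estimate
\[
\diam_{\hat{d}_g}(M_t)\longrightarrow 0 \quad \text{as } t\to 0.
\]

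Compactness of $(\bar{N},\hat{d}_g)$ then splits into two regimes for an arbitrary sequence $p_j\in N$. If $\tau_g(p_j)\ge\delta>0$, properness of $\tau_g$ renders $\tau_g^{-1}[\delta,\tau_{max}-\epsilon]$ compact, and an Arzel\`a--Ascoli extraction applied to the generators $\gamma_{p_j}$, followed by a diagonal argument as $\epsilon\to 0$ exactly as in the proof of Theorem~\ref{thm:proper}, produces a Lipschitz one limit curve whose right endpoint in $\bar{N}$ is the desired subsequential limit. If instead $\tau_g(p_j)\to 0$, descend each generator to a small level $s$ and cross inside $M_s$; the triangle inequality gives
\[
\hat{d}_g(p_j,p_k)\le \bigl(\tau_g(p_j)-s\bigr)+\diam_{\hat{d}_g}(M_s)+\bigl(\tau_g(p_k)-s\bigr),
\]
which tends to zero by the level-set estimate. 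Thus every such sequence is Cauchy, and interleaving two such sequences shows their limits agree, so they define a single point $p_{BB}\in\bar{N}$ at $\tau_g=0$.

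The big bang identity follows from the Lipschitz one bound~(\ref{eq:Lip}) together with the causality-encoding equality $\hat{d}_g(p,\gamma_p(s))=\tau_g(p)-s$, valid since $\gamma_p(s)\in J^-(p)$; passing to the limit $s\to 0$, with $\gamma_p(s)\to p_{BB}$ in $\bar{N}$, yields $\hat{d}_g(p,p_{BB})=\tau_g(p)$. The main obstacle I anticipate is the level-set diameter estimate near $t=0$: although the dip construction is conceptually clear, establishing it with uniform control as $t\to 0$, where the second fundamental forms of the collapsing level sets generically degenerate and the adapted cosmological gauge may not extend uniformly, will require delicate estimates along the generator foliation rather than within ambient charts, and is precisely where the hypothesized smoothness of both $\tau_g$ and the $M_t$ is used most essentially.
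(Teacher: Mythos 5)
You should first be aware that the paper does not prove this statement: it is stated as Conjecture~\ref{conj:BB} and left open. Remark~\ref{rmrk-SV-FLRW} records that it is known (in unpublished work of Sormani--Vega) only in the warped-product case $g=-dt^2+f^2(t)h_M$ with $h_M$ homogeneous, sketches the intended route via Theorem~\ref{thm:induced} and Theorem~\ref{thm:proper}, and explicitly flags the open difficulty as showing that $\tau_g^{-1}(0)\subset\bar N$ is a single point ``even with only the diameter estimate on the level sets.'' So there is no proof in the paper to compare against; what you have written is a proposed resolution of an open conjecture.

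Measured against the authors' own sketch, your strategy is the right one and your central contribution --- the zig-zag estimate $\hat{d}_g(p,q)\le d_{h_t}(p,q)$ on each level set, obtained by dipping into the past along the Gaussian foliation $g=-d\tau_g^2+h_{\tau_g}$ defined by the smooth generators, followed by the Cauchy/interleaving argument identifying a single point $p_{BB}$ --- addresses exactly the gap the authors identify. Two comments. First, the obstacle you single out at the end is weaker than you fear: you do not need uniform control of the dip construction as $t\to 0$. You only need the levelwise inequality $\diam_{\hat d_g}(M_t)\le\diam_{h_t}(M_t)$ for each fixed $t>0$ (where, for fixed $t$, the comparison constant between $h_\tau$ and $h_t$ on the collar $\tau\in[t-\delta,t]$ tends to $1$ as $\delta\to 0$ by smoothness and compactness of $M_t$); the decay as $t\to 0$ is then supplied entirely by hypothesis~(\ref{eq:diam-BB}), not by the construction. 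Second, your appeal to the proof of Theorem~\ref{thm:proper} for the regime $\tau_g(p_j)\ge\delta$ glosses over a real difference: there, compactness of $\tau_g^{-1}[s,t]$ inside an ambient space-time is available, whereas here $\tau_g^{-1}[\delta,\tau_{max})$ need not be compact since $\tau_{max}$ is not attained. Your diagonal argument over $\epsilon\to 0$ does repair this (the generators restricted to $[\delta,\tau_{max}-\epsilon]$ land in a compact set, the limit curve is Lipschitz one and so extends to $\bar N$, and $\hat d_g(p_j,\gamma_\infty(\tau_{max}-\epsilon))\le\epsilon+o(1)$), but this is the one step that is genuinely not ``exactly as in'' Theorem~\ref{thm:proper} and should be written out. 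With those two points elaborated, I see no gap in the argument, and it would upgrade the conjecture to a theorem.
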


\begin{rmrk}\label{rmrk-SV-FLRW}
In unpublished work, Sormani-Vega proved the
above conjecture in the special case where
\be
g=-dt^2+f^2(t)\,h_M
\ee
where $h_M$ is the metric tensor of a homogeneous Riemannian manifold, as in
FLRW space-times, and $\lim_{t\to 0}f(t)=0$
\cite{SV-BigBang}.
Only the causality was published as 
Lemma 3.24 in  \cite{SV-Null}.   The rest was done by estimating the null distance on such warped product manifolds.   Such estimates have been published independently in work of Allen-Burtscher \cite{Allen-Burtscher-22}.   The above conjecture can be proven by first applying Theorem~\ref{thm:induced} and then Theorem~\ref{thm:proper}.  The challenge is to prove that there is a single point in the level set $\tau_g^{-1}(0)$ even with only the diameter estimate on the level sets.
\end{rmrk}

\subsection{{\bf Future Developments from Compact Initial Data Sets}}
\label{sect:space-times-initial-data}

Given a compact initial data set, $(M,h,k)$, satisfying the Einstein constraint equations, 
Choquet-Bruhat proved there is a globally hyperbolic space-time, $(N,g)$,
satisfying the Einstein vacuum equations such that $M$ lies as a smooth Cauchy surface in $N$ with restricted metric tensor $h$ and second fundamental form $k$
\cite{Ch-Br}. Similar solutions have been found satisfying Einsteins Equations for a variety of matter models.

Such spaces have future sets,
\be
N_+=(J^+(M)\setminus M) \subset N
\ee
with regular cosmological time functions, $\tau_g:N_+\to (0,\infty)$, whose generators,
$\gamma_p:(0,\tau_g(p))\to N_+$
can be extended in $N$ smoothly through $\gamma_p(0)\in M$ with
$\gamma_p'(0)$ perpendicular to $M$.   Keeping these key properties in mind we make the following definition:

\begin{defn} \label{defn:FD}
A causally-null-compactifiable future-developed space-time, $(N_+,g)$,
with initial surface, $(M,h,K)$,
has an associated compact timed-metric-space, $(\bar{N}_+,\hat{d}_g,\tau_g)$,
that is {\bf a future-developed timed-metric-space}: which means it has
 an initial level set, 
\be
\bar{M}=\tau_g^{-1}(0)\subset \bar{N},
\ee
where $\tau_{g}:\bar{N}_+\to [0,\tau_{max}]$
is the Lipschitz extension of $\tau_g$.
In addition, $\tau_g$ has the distance from initial data property: 
\be\label{eq:init-data-prop-2}
\tau_{g}(p)=\min\left\{
\hat{d}_{g}(p,q):\, q\in M\right\}.
\ee
and the causal structure 
\be \label{eq:causal-bar-2}
p\in J^+(q) \iff \tau_{g}(p)-\tau_{g}(q)=\hat{d}_{g}(p,q)
\ee
extends the causal structure on $(N_+,g)$
to $(\bar{N}_+,\hat{d}_{g},\tau_{g})$
so that $\bar{N}_+$ lies in the
causal future of $M$:
\be\label{eq:future-of-M}
\bar{N}_+=J^+(M)\subset \bar{N}_+.
\ee
Every $p\in \bar{N}_{s,t}$,
has a Lipschitz $1$ generator, 
$\bar{\gamma}_p:[0, \tau_g(p)]\to \bar{N}_+$,
which is the Lipschitz extension of a $\tau_g$ generator $\gamma_p$ for $p$  when $p\in N_+$ such that 
\be\label{eq:generator-bar-2}
\tau_{g}(\bar{\gamma}_p(t'))=t'
\qquad \forall \, t'\in [0,\tau_{g}(p)].
\ee
\end{defn}

The following theorem applies to a large class of space-times which are subdomains of
future developments

\begin{thm}\label{thm:FD}
Suppose $(M,h,K)$ is a compact
Cauchy surface with restricted metric, $h$, and second fundamental form, $K$, of a globally hyperbolic space-time, $(N,g)$, and suppose $N_+=J^+(M)\setminus M\subset N$
and cosmological time $\tau:N_+\to (0,\tau_{max})$ is proper.   Then
$(N_+,g)$ is a causally-null-compactifiable future-developed space-time
with initial surface, $(M,h,K)$
as in Definition~\ref{defn:FD}.
\end{thm}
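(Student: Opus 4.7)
The plan is to imitate the proof of Theorem~\ref{thm:proper}, with the Cauchy surface $(M,h,K)$ replacing the role of the level set $\tau_g^{-1}(s)$ and with the value $s=0$. The argument naturally splits into verifying the two halves of Definition~\ref{defn:cncst}---causality-encoding and compactness of the associated metric space---followed by unpacking the structural consequences listed in Definition~\ref{defn:FD}.

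First I would show that $\tau_g\colon N_+\to(0,\tau_{\max})$ is a regular cosmological time function in the sense of Definition~\ref{defn:regular}. Finiteness is built into the hypothesis. For the vanishing along past-inextendible future causal curves, any such $C\colon(a,b)\to N_+$ cannot accumulate inside $N_+$ (else it would be extendible), so in the ambient globally hyperbolic space $(N,g)$ it must exit through $\partial N_+=M$; as it approaches $M$ the Lorentzian length of the portion below parameter $s$ shrinks to zero, forcing $\tau_g(C(s))\to 0$. Since $\tau_g$ is thus regular, proper, and bounded, the Sakovich--Sormani encoding-causality theorem of \cite{SakSor-Null} applies on $N_+$ to give (\ref{eq:causal}), so the first half of Definition~\ref{defn:cncst} is in place.

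Next, compactness of the metric completion $(\bar N_+,\hat d_g)$ I would establish by a variant of the Arzela--Ascoli argument used in Theorem~\ref{thm:proper}. Given a Cauchy sequence $q_i\in N_+$, pass to a subsequence with $\tau_g(q_i)\to\tau_\infty\in[0,\tau_{\max}]$. If $\tau_\infty>0$, the sequence eventually lies in the compact set $\tau_g^{-1}[\tau_\infty/2,\tau_{\max}]$ and the generator argument of Theorem~\ref{thm:proper} applies verbatim. If $\tau_\infty=0$, I would extend each generator $\gamma_{q_i}\colon(0,\tau_g(q_i)]\to N_+$ smoothly as a past-directed unit timelike geodesic in $N$ meeting $M$ orthogonally at a point $m_i=\tilde\gamma_{q_i}(0)\in M$. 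Compactness of $M$ together with Arzela--Ascoli in the ambient manifold then yields a uniform subsequential limit $\tilde\gamma_\infty\colon[0,\tau_\infty]\to N$ with initial point in $M$, and reading this limit back in $\bar N_+$ produces the required Cauchy limit while simultaneously attaching a point of $M$ to the completion.

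The same construction delivers the identifications needed for Definition~\ref{defn:FD}. Sending a limit point $p\in\bar N_+$ with $\tau_g(p)=0$ to the $M$-endpoint of its extended generator provides an embedding $M\hookrightarrow\bar N_+$ realized as $\tau_g^{-1}(0)$; the distance-from-initial-data property (\ref{eq:init-data-prop-2}) follows from $\hat d_g(p,\gamma_p(s_j))=\tau_g(p)-s_j$ (via the Lipschitz-one property of $\tau_g$ combined with causality encoding) by letting $s_j\downarrow 0$; the causal extension (\ref{eq:causal-bar-2}) and the identity $\bar N_+=J^+(M)$ follow from joint continuity of $\tau_g$ and $\hat d_g$ on $\bar N_+$; and the generator extension $\bar\gamma_p$ satisfying (\ref{eq:generator-bar-2}) is the extension to the closed interval produced by the same limit argument. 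The hard part will be to verify that the identification $\tau_g^{-1}(0)\cap\bar N_+\cong M$ is genuinely bijective: injectivity requires a strictly positive lower bound on $\hat d_g$ between sequences converging to distinct points of $M$, which I would obtain from quantitative estimates on the null length of admissible piecewise-causal zig-zags in $N_+$ near $M$ controlled by the induced Riemannian distance on $M$, while surjectivity follows by running future-directed unit-normal timelike geodesics off $M$ into $N_+$ and checking that each has cosmological time equal to its affine parameter for short times.
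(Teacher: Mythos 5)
Your overall strategy is sound, but it is genuinely different from the paper's, and the difference matters precisely at the step you yourself flag as ``the hard part.'' The paper does not re-run the Arzela--Ascoli/generator argument of Theorem~\ref{thm:proper} with $M$ in the role of a level set. Instead it observes that Theorem~\ref{thm:proper} cannot be applied with $s=0$ and so \emph{extends the space-time below $M$}: using the normal exponential map and compactness of $M$ it builds a uniform collar $U_\epsilon\cong M\times(-\epsilon,\epsilon)$, forms $N_{\epsilon,+}=N_+\cup U_\epsilon$ with cosmological time $\tau_{\epsilon,g}=\tau_g+\epsilon$ on $N_+$, and then applies Theorem~\ref{thm:proper} verbatim with $s=\epsilon>0$ and $t=\tau_{max}+\epsilon$. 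Since $M=\tau_{\epsilon,g}^{-1}(\epsilon)$ is now an \emph{interior} level set of a space-time on which the null distance is already known to be definite, the identification of $\tau_g^{-1}(0)\subset\bar N_+$ with $M$, the injectivity you were worried about, the distance-from-initial-data property~(\ref{eq:init-data-prop-2}), and the generator extensions~(\ref{eq:generator-bar-2}) all fall out of the conclusions of Theorem~\ref{thm:proper} with no new estimates. By contrast, your direct approach must supply a strictly positive lower bound on $\hat d_g$ between sequences in $N_+$ converging to distinct points of $M$ (definiteness of the completed null distance along the initial boundary), and your proposal only sketches this via ``quantitative zig-zag estimates near $M$.'' That estimate is doable (near $M$ the metric is approximately a product in Gaussian normal coordinates, so the warped-product computations of Sormani--Vega and Allen--Burtscher apply), but it is real work that the paper's collar trick avoids entirely. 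If you want to keep your direct route, you must actually carry out that estimate; otherwise I would adopt the paper's reduction, which buys you everything in Definition~\ref{defn:FD} as corollaries of Theorem~\ref{thm:proper} rather than as a parallel proof.
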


\begin{proof}
We would like to use Theorem~\ref{thm:proper} with $s=0$ but that theorem requires $s>0$. 
So we need to extend our region below $M$.

For each point $q\in M$, we have normal geodesics,
$\eta_q:(-\epsilon_q,\epsilon_q) \to N
$ with no cut or conjugate points such that
$\eta_q(0)=q$,
$\eta'_q(0)$ is $g$-perpendicular to $TM_q$,
and
$g(\eta_q'(0),\eta_q'(0))=-1$.   Since $M$ is compact, we can find a minimum value for 
$\epsilon=\min\{\epsilon_q:\, q\in M\}>0$.
The function
\be
F:M \times(-\epsilon,\epsilon)\to U_\epsilon\subset N
\ee
is a diffeomorphism onto its image $U_\epsilon$ which is thus an open set about $M$.  See work of Treude regarding the
properties of this map \cite{Treude-Diploma}.

The region $N_{\epsilon,+}=N_+\cup U_\epsilon$
has a cosmological time function
\be
\tau_{\epsilon,g}: N_{\epsilon,+}\to (0,\tau_{max,\epsilon})
\textrm{ s.t. } \tau_{\epsilon,g}(F(q,t'))=t'+\epsilon
\ee
such that 
\be
\tau_{\epsilon,g}(p)=\tau_g(p)+\epsilon
\qquad \forall \, p\in N_+.
\ee
So $\hat{d}_{\epsilon,g}=\hat{d}_g$ on $N_+$.
and $\tau_{\epsilon,g}$ is
proper and regular. So we can apply 
Theorem~\ref{thm:proper} to $(N_{\epsilon,+},g)$ with $s=\epsilon$
and $t=\tau_{max,\epsilon}=\tau_{max}+\epsilon$.
Since $M=\tau_{\epsilon,g}^{-1}(\epsilon)$,
 (\ref{eq:init-data-prop})
 gives us (\ref{eq:init-data-prop-2})
 and
the generators extend
 satisfying all our claims including (\ref{eq:generator-bar-2}) from
 (\ref{eq:generator-bar}).
 
 We get our first
 causal claim in (\ref{eq:causal-bar-2}) from (\ref{eq:causal-bar}).  
To prove our second causal claim in (\ref{eq:future-of-M}), 
 we need only show
 \be\label{eq:future-of-M-2}
 \forall \, p\in \bar{N}_+ \,\,\exists \, q\in M\subset \bar{N}_+
 \textrm{ such that } 
 \tau_g(p)-\tau_g(q)=\hat{d}_\tau(p,q).
 \ee
 We know there exists $\bar{\gamma}_p:[0,\tau_g(p)]\to \bar{N}_+$, so we
 can take $q=\bar{\gamma}_p(0)\subset M$
 and use the fact that $\gamma_p$ is causal
 to conclude (\ref{eq:future-of-M-2}).
\end{proof}

\begin{rmrk}\label{rmrk:low-reg-init-data}
It would be interesting to extend Theorem~\ref{thm:FD} above to space-times with lower regularity initial data like those mentioned in Remark~\ref{rmrk:tau-compact-stable} below.
\end{rmrk}

The next two examples can be useful to keep in mind: 

\begin{ex}\label{ex:lowerhyp1}
Consider the lower hyperboloid,
\be
M=\{(t,x):\, t=-\sqrt{1+|x|^2}\}
\subset {\mathbb R}^{1,3},
\ee
with $h$ restricted from the
Minkowski metric, $g_{Mink}$, and $K$ equal to the second fundamental form of $M$ as a subset of Minkowski space, $({\mathbb R}^{1,3}, g_{Mink})$.   The global maximal development is
not full Minkowski space but instead:
\be
N=\{(t,x):\, t<-|x|\}\subset {\mathbb R}^{1,3}.
\ee
The future maximal development is
\be
N_+=\{(t,x):\, -\sqrt{1+|x|^2}<t<-|x|\}\subset {\mathbb R}^{1,3}.
\ee
and the generators, $\gamma_p$,
extend radially away from the origin to meet $M$ perpendicularly with respect to $g_{Mink}$.  The cosmological time function,
$\tau_{N_+,g}:N_+\to (0,1)$,
has level sets that are hyperboloids.   Changing coordinates, we see that,
$N_+=(0,1)\times M$ with
\be
g= -d\tau_{N_+,g}^2 + (1-\tau_{N_+,g})^2 h 
\ee
This is a classic FLRW space-time with hyperbolic space as an initial data set and a big crunch 
at $\tau_{N_+,g}=1$.   The null distance
is now easy to study and we can see that, $(N_+,\hat{d}_{N_+,g})$, is a metric-space whose metric completion has a single point in
the level set, $\tau_{N_+,g}^{-1}(1)$.  Note that the metric completion also includes a copy of the initial surface, but that it does not include any additional points in the level sets between zero and one because hyperbolic space is already complete.
\end{ex}

\begin{ex}\label{ex:lowerhyp2}
To make a compact example, we can take any compact hyperbolic manifold as our initial data.  Its universal cover is hyperbolic space, $(M,h)$, we can
use the same tensor, $K$, as in Example~\ref{ex:lowerhyp1} and then divide $N_+\subset N$ by the same group of isometries to find its future maximal development.
\end{ex}

\begin{rmrk}\label{rmrk:cmpct-remove-holes}
Suppose $(N_+,g)$ is a future maximal development of the Einstein equations containing a collection of black holes.
It would be natural to consider, $(N'_+,g)$, which has the interior of the black holes removed.   It would be of interest to show, $(N'_+,g)$, is a
causally-null-compactifiable future-developed space-time.   We discuss this more in Remark~\ref{rmrk:exterior-region}
for regions in asymptotically flat space-times.  It is also of interest when studying future developments of compact initial data sets.
\end{rmrk}

\subsection{{\bf Cosmic Strips}}
\label{sect:space-times-strip}

Many space-times like Minkowski space do not have a finite cosmological time function. However, such space-times might be studied using a null distance defined by a time function with the following properties:

\begin{defn}\label{defn:local-cosmo}
A time function, $\tilde{\tau}: N \to (-\infty,\infty)$, is a {\bf locally cosmological
time function}, if for all $(s,t)\subset (-\infty,\infty)$ the cosmic strip, $N_{s,t}=\tilde{\tau}^{-1}(s,t)\subset N$,
endowed with the restricted Lorentzian metric tensor $g$ has a bounded cosmological time function, $\tau_{g,s,t}:N_{s,t}\to (0, t-s)$, such that
\be \label{eq:tau-shift-3}
\tau_g(p)=\tilde{\tau}(p)-s.
\ee
The locally cosmological time function, $\tilde{\tau}$,
is regular (respectively proper) iff $\tau_{g,s,t}$ is regular (respectively proper) for all $(s,t)\subset (-\infty,\infty)$.
\end{defn}

The following theorem is an immediate consequence of the above definition combined with
Theorem~\ref{thm:induced} and Theorem~\ref{thm:proper}:

\begin{thm}\label{thm:local-cosmo}
If a space-time, $(N,g)$, has a proper regular
locally cosmological time function, $\tilde{\tau}:N\to (-\infty,\infty)$, then for all $(s,t)\subset (-\infty,\infty)$, the cosmic strip, $(N_{s,t},g)$ is a causally-null-compactifiable space-time satisfying (\ref{eq:taust-1}),
whose null distance, $\hat{d}_{g,s,t}$, agrees with the induced length distance defined using $\hat{d}_{\tilde{\tau},g}$-rectifiable lengths 
of curves restricted to $N_{s,t}$:
\be
\sup \sum_{i=1}^N
\hat{d}_{g,s,t}(C(r_i),C(r_{i-1}))
=
\sup \sum_{i=1}^N
\hat{d}_{\tilde{\tau},g}
(C(r_i),C(r_{i-1}))
\ee
where the suprema are both over all
partitions, $0=r_0<r_1<r_2<\cdots r_N=1$,
and over all $N\in {\mathbb N}$.

Furthermore, the
associated compact timed-metric-space, 
\be
(\bar{N}_{s,t},\hat{d}_{g,s,t},\tau_{g,s,t}),
\ee
has generators satisfying (\ref{eq:generator-bar}), has the distance from initial data property as in (\ref{eq:init-data-prop}),
and has the same causal structure as in (\ref{eq:causal-bar}) extended from the causal structure on $N$.
\end{thm}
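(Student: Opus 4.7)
The plan is to reduce Theorem~\ref{thm:local-cosmo} to Theorem~\ref{thm:proper} by enlarging $N_{s,t}$ slightly and working inside that enlargement, and then to separately verify that $\hat{d}_{g,s,t}$ coincides with the length metric induced by $\hat{d}_{\tilde{\tau},g}$.

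First, fix arbitrary $s'<s<t<t'$ and consider the enlarged strip $N_{s',t'}=\tilde{\tau}^{-1}(s',t')$. By Definition~\ref{defn:local-cosmo} and the hypothesis that $\tilde{\tau}$ is a proper regular locally cosmological time function, $(N_{s',t'},g)$ carries a proper regular cosmological time function $\tau_{g,s',t'}\colon N_{s',t'}\to(0,t'-s')$ with $\tau_{g,s',t'}=\tilde{\tau}-s'$. The original strip is recovered as the sub-strip $(N_{s',t'})_{s-s',\,t-s'}=\tau_{g,s',t'}^{-1}(s-s',t-s')=N_{s,t}$, with $0<s-s'<t-s'<t'-s'$. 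Applying Theorem~\ref{thm:proper} to $(N_{s',t'},g)$ with these sub-strip parameters immediately yields that $(N_{s,t},g)$ is causally-null-compactifiable, that its cosmological time equals $\tau_{g,s',t'}-(s-s')=\tilde{\tau}-s$ (verifying (\ref{eq:taust-1})), and that the associated compact timed-metric-space $(\bar{N}_{s,t},\hat{d}_{g,s,t},\tau_{g,s,t})$ has all the claimed structure: Lipschitz one generators satisfying (\ref{eq:generator-bar}), the distance-from-initial-data property (\ref{eq:init-data-prop}), and a causal structure (\ref{eq:causal-bar}) extending the one on $N_{s,t}$.

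It remains to identify $\hat{d}_{g,s,t}$ with the length metric on $N_{s,t}$ induced by $\hat{d}_{\tilde{\tau},g}$. Since $\tau_{g,s,t}=\tilde{\tau}-s$ on $N_{s,t}$, any piecewise causal curve lying in $N_{s,t}$ has identical null lengths with respect to the two time functions. Infimizing over such curves contained in $N_{s,t}$ gives $\hat{d}_{g,s,t}$, whereas allowing the larger class of piecewise causal curves in $N$ gives
\[
\hat{d}_{\tilde{\tau},g}(p,q)\le\hat{d}_{g,s,t}(p,q)\qquad\forall\,p,q\in N_{s,t}.
\]
Consequently, for every continuous $C\colon[0,1]\to N_{s,t}$ the rectifiable length with respect to $\hat{d}_{\tilde{\tau},g}$ is bounded above by the rectifiable length with respect to $\hat{d}_{g,s,t}$.

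The reverse bound, which is the main obstacle, reduces to showing that near every point of $N_{s,t}$ the two null distances coincide. More precisely, I would prove that each $p\in N_{s,t}$ admits an open neighborhood $U_p\subset N_{s,t}$ with $\hat{d}_{\tilde{\tau},g}(x,y)=\hat{d}_{g,s,t}(x,y)$ for all $x,y\in U_p$. The idea is that $\tilde{\tau}$ is Lipschitz one with respect to $\hat{d}_{\tilde{\tau},g}$ (this extends (\ref{eq:Lip}) and is built into the definition of $\hat{d}_{\tilde{\tau},g}$), so any piecewise causal curve in $N$ realizing a value close to $\hat{d}_{\tilde{\tau},g}(x,y)$ has small oscillation of $\tilde{\tau}$, and hence, via the generators of Theorem~\ref{thm:AGH-generator} applied inside $N_{s',t'}$, stays confined to a compact neighborhood of $p$ lying in $N_{s,t}$; its null length therefore also upper-bounds $\hat{d}_{g,s,t}(x,y)$. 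Once this local coincidence is in hand, given any $C\colon[0,1]\to N_{s,t}$ and $\varepsilon>0$ we cover the compact image $C([0,1])$ with finitely many such $U_{p_k}$, choose a partition $0=r_0<\dots<r_N=1$ fine enough that consecutive points $C(r_{i-1}),C(r_i)$ lie in a common $U_{p_k}$, and conclude that the two partition sums agree. Passing to the supremum over partitions on both sides yields the rectifiable-length equality, and combining with Theorem~\ref{thm:induced} applied inside $N_{s',t'}$ gives the desired length-metric identification. The technical heart of the argument is the uniform local confinement of near-optimal piecewise causal curves, which I expect to follow from the regularity of $\tau_{g,s',t'}$ on $N_{s',t'}$ together with the openness of $N_{s,t}$ inside $N_{s',t'}$.
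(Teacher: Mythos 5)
Your proposal is correct and follows the route the paper intends: the paper offers no written proof here, stating only that the theorem is an immediate consequence of Definition~\ref{defn:local-cosmo} combined with Theorems~\ref{thm:induced} and~\ref{thm:proper}, and your enlargement to $N_{s',t'}$ with $s'<s<t<t'$ is precisely the way to make those theorems applicable to $N_{s,t}=\tau_{g,s',t'}^{-1}(s-s',t-s')$. Your additional local-coincidence argument for the rectifiable-length equality (confining near-optimal piecewise causal curves using the fact that the null length bounds the total variation of $\tilde{\tau}$ and that $\tilde{\tau}\circ C$ is compactly contained in $(s,t)$) is sound and correctly supplies a detail the paper leaves implicit.
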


Later we will define distances between null compactifiable space-times and we can in particular apply those distances to two different cosmic strips in the same space-time.

\begin{quest}\label{quest:Busemann}
Do all globally hyperbolic space-times have regular locally cosmological time functions?
Are Lorentzian Busemann functions regular locally cosmological time functions?
\end{quest}

\begin{quest}\label{quest:local-unique}
If a space-time has a proper regular locally cosmological time function, is it unique up to a constant?
\end{quest}

\begin{rmrk}\label{rmrk:local-unique}
Note that without the properness assumption, the answer to the above question is no.  For example, on Minkowski space-time, $({\mathbb R}^2, -dt^2+dx^2)$, for any $a,b$ such that $a^2-b^2=1$ the function, 
\be
\tilde{\tau}(x,t)=at+bx,
\ee
is a regular locally cosmological time function,
as can be seen by taking a Lorentzian transformation. 
\end{rmrk}

\begin{rmrk}\label{rmrk:slab}
There is a notion called ``Cauchy slab''
studied by M\"uller in various papers
including \cite{Mueller-Maximality}.   A cosmic strip,
$\tau^{-1}(t_1,t_2)$ 
is a Cauchy slab when 
$\tau^{-1}(t_1)$ and $\tau^{-1}(t_2)$
are smooth Cauchy surfaces.  It would be interesting to relate his work with ours. 
\end{rmrk}

\subsection{{\bf Sub-Space-Times that are Causally-Null-Compactifiable}}\label{sect:space-times-sub}

In this subsection we introduce a larger class of sub-space-times which are not just cosmic strips bounded in time, but are compactified by bounding spacelike directions as well.   We state a few definitions here and then present examples of sub-space-times in Minkowski and Schwarzschild space in the next subsections.

\begin{defn}\label{defn:sub-space-time} 
A region, $N$,
within a smooth space-time, $(\tilde{N},g, \tilde{\tau})$, 
that has a time function, 
$\tilde{\tau}: \tilde{N} \to (-\infty,\infty)$, 
defining a null distance, 
\be
\hat{d}_{g,\tilde{\tau}}: \tilde{N}\times \tilde{N}\to [0,\infty),
\ee
is called a 
{\bf sub-space-time} if its cosmological time function, $\tau_{g}:N\to [0,\infty)$, is regular and bounded, and shifted:
\be \label{eq:tau-shift-4}
\tau_g(p)=\tilde{\tau}(p)-\tilde{\tau}_{min}
\ee
where $\tilde{\tau}_{min}=\min_N \tilde{\tau}$,
so that its null distance, $\hat{d}_{N,g}$,
is the induced $\hat{d}_{\tilde{\tau}}$-rectifiable length metric on $N$.   It is a {\bf null compactifiable sub-space-time} if its
{\bf associated metric-space},
$\left(\bar{N}, \hat{d}_{g} \right)$, is compact
and it is a {\bf causally-null-compactifiable sub-space-time}
if it encodes causality:
\be \label{eq:causal-sub}
p\in J^+_N(q) \iff \tau_{g}(p)-\tau_{g}(q)=\hat{d}_{N,g}(p,q) \quad \forall \, p,q \in N
\ee
where $J^+_N(q)$ is the induced causal future of $q$ in $N$ defined using causal curves in $N$ rather than the restricted causal structure from
$\tilde{N}$.
\end{defn}

\begin{rmrk}\label{rmrk:sub-space-time}
Note that in the definition above we are  not assuming that $\tilde{\tau}$ is a regular cosmological time or a
locally cosmological time function.  Nor are we assuming that
its associated null distance, $\hat{d}_{g,\tilde{\tau}}$,
is definite nor do we assume it encodes causality. We only
require these additional properties for the cosmological time of the region, $N$, and we only study the causal structure within $N$ defined using causal curves within $N$.  This allows us to study regions, $N$, within space-times, $\tilde{N}$,
that are not well understood as a whole. 
\end{rmrk}

\begin{ex} \label{ex:sub-space-times}
Future Minkowski space-time, ${\mathbb R}^{1,3}_+$, of
Example~\ref{ex:future-Mink-tau} is easily seen to be a sub-space-time of
Minkowski space-time, ${\mathbb R}^{1,3}$.

However, the future, $N_+$, of the lower hyperboloid, $M\subset {\mathbb R}^{1,3}$, of Example~\ref{ex:lowerhyp1} is not a sub-space-time of Minkowski space because 
the cosmological time function is not shifted.   These two subregions have the same Lorentzian metric and causal structures, and both satisfy (\ref{eq:causal-sub})
but their cosmological times and null distances are very different from one another. 
\end{ex}

\subsection{{\bf Regions in Minkowski Space-Time and other Asymptotically Flat Space-Times}}
\label{sect:space-times-Minkowski}

Recall Minkowski, ${\mathbb{R}}^{1,3}$ and Future Minkowski space-times, ${\mathbb{R}}^{1,3}_+$, as in Examples~\ref{ex:Mink-tau} and~\ref{ex:future-Mink-tau}.

In this section, we will find 
some null compactifiable sub-space-times of Minkowski space.

\begin{ex}\label{ex:Mink-strip}
Note that the time function in Minkowski space has a local cosmological time function as in Definition~\ref{defn:local-cosmo}.
Consider the strip,
\be\label{eq:Mink-strip}
N=N_{\tau_{min},\tau_{max}}
=\{(t,x):\, t\in(\tau_{min},\tau_{max}),\, x\in {\mathbb R}^3\}
\ee
with $g=g_{Mink}$ and its bounded and regular but not proper cosmological time, 
\be
\tau_g(x,t)=t-\tau_{min},
\ee
and recall that for any $q=(t_q,x_q)\in N$, its generator, 
\be
\gamma_q(t)=(t+\tau_{min},x_q)\subset N \quad \forall \, t\in (0, \tau_g(q)).
\ee
It is easy to see that $(N,g)$ is not null compactifiable because its diameter is not bounded:
\be\label{eq:null-dist-Mink}
\hat{d}_{g_{Mink}}(p,q)=\max\{|t_p-t_q|,|x_p-x_q|\}\to \infty
\ee
when $|x_p-x_q|\to \infty$. 
\end{ex}

\begin{prop}\label{prop:past-in-Mink}
Given any point in Minkowski space-time, $p\in {\mathbb{R}}^{1,3}_+$,
and consider a strip intersected with its timelike past,
\be
N_{\tau_{min},\tau_{max},p}=I^-(p)\cap N_{\tau_{min},\tau_{max}}
\subset {\mathbb{R}}^{1,3}_+, 
\ee
which can also be written as
\be
N_{\tau_{min},\tau_{max},p}=\{(t,x):\, \tau_{min}<t<\min\{t_p, \tau_{max}\},
\, |x-x_p|<t_p-t \},
\ee
then $(N_{\tau_{min},\tau_{max},p},g_{Mink})$ is a causally-null-compactifiable sub-space-time
of future Minkowski space.  
See Figure~\ref{fig:past-in-Mink}.
Note that by fixing 
$\tau_{min}<\tau_{max}$ and taking a sequence of $p_i=(t_i,x_0)$ with $t_i\to \infty$ we get an exhaustion of the cosmic strip by causally-null-compactifiable sub-space-times:
\be
N_{\tau_{min},\tau_{max}}=\bigcup_{i=1}^\infty 
N_{\tau_{min},\tau_{max}, p_i}
\ee
\end{prop}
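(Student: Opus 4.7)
The plan is to verify, in order, the three conditions of Definition~\ref{defn:sub-space-time} (that $N=N_{\tau_{min},\tau_{max},p}$ is a sub-space-time, that its associated metric space is compact, and that it encodes causality) and then the exhaustion claim. First, I would compute the cosmological time intrinsically: for $q=(t_q,x_q)\in N$ the set $J^-_N(q)$ consists of $(t,x)\in N$ with $t\le t_q$ and $|x-x_q|\le t_q-t$, on which the Lorentzian distance is $\sqrt{(t_q-t)^2-|x-x_q|^2}$. The supremum is approached along the vertical sequence $x=x_q$, $t\to\tau_{min}$, giving $\tau_g(q)=t_q-\tau_{min}$ and hence the shift relation (\ref{eq:tau-shift-4}). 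The past-directed timelike segments $\gamma_q(s)=(s+\tau_{min},x_q)$ are the generators; they are past-inextendible in $N$ because their limit as $s\to 0$ lies on $\partial N$, so $\tau_g$ is regular and bounded.

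Second, I would exploit that $N$ is affine-convex in $\mathbb{R}^{1,3}_+$ (an intersection of the three half-spaces $t>\tau_{min}$, $t<\tau_{max}$, and $t+|x-x_p|<t_p$), and hence causally convex, so the ambient and intrinsic causal relations agree on $N$. Combining Lemma~\ref{lem:Mink-null} on the ambient strip with a zigzag-refinement argument along the Minkowski straight segment between any two points of $N$ (fine enough that each valley stays strictly above $\tau_{min}$ and strictly inside the light cone of $p$), I would obtain the explicit formula
\be
\hat d_{N,g}\bigl((t_1,x_1),(t_2,x_2)\bigr)=\max\{|t_1-t_2|,\,|x_1-x_2|\},
\ee
which matches the $\hat d_{g,\tilde\tau}$-induced length metric on $N$ and thereby establishes the sub-space-time property.

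Third, compactness follows at once: the above formula is bi-Lipschitz equivalent to the Euclidean metric on the bounded set $N\subset\mathbb{R}^4$, so $(\bar N,\hat d_{N,g})$ coincides with the Euclidean closure of $N$, which is closed and bounded, hence compact. For the causality encoding (\ref{eq:causal-sub}), the same formula gives $\tau_g(q_2)-\tau_g(q_1)=\hat d_{N,g}(q_1,q_2)$ iff $t_2-t_1\ge |x_2-x_1|$, which is equivalent to $q_2\in J^+_N(q_1)$ by causal convexity; alternatively, Galloway's Theorem~\ref{thm:G} applies since each level set $\tau_g^{-1}(t_0-\tau_{min})$ is an open Euclidean ball in a spacelike hyperplane and therefore future causally complete.

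The exhaustion is then immediate: for $(t,x)\in N_{\tau_{min},\tau_{max}}$ choose $i$ large enough that $t_i>t+|x-x_0|$; then $(t,x)\in I^-(p_i)$ and so $(t,x)\in N_{\tau_{min},\tau_{max},p_i}$, while the reverse inclusion is trivial. The main obstacle in the whole argument is the zigzag approximation appearing in the second step for spacelike-separated pairs lying near the boundary of $N$: one must verify quantitatively that, once the subdivision is fine enough, the induced valleys neither fall below $t=\tau_{min}$ nor leave the light cone of $p$. This is the only place requiring non-trivial estimates, and it parallels the convexity-based length-metric identification used in the proof of Theorem~\ref{thm:induced}.
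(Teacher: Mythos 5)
Your proposal is correct and follows essentially the same route as the paper's proof: the vertical generators give $\tau_{N,g}(t,x)=t-\tau_{min}$ (regular, bounded, not proper), convexity of $N_{\tau_{min},\tau_{max},p}$ plus arbitrarily fine piecewise-null zigzags along the straight Minkowski segment identify $\hat d_{N,g}$ with the ambient formula $\max\{|\Delta t|,|\Delta x|\}$, boundedness then yields a compact completion (your bi-Lipschitz-to-Euclidean remark is in fact a bit more careful than the paper's diameter bound), and causality is encoded either directly from the formula or, as the paper does, via future causal completeness of the level sets and Galloway's theorem, with the exhaustion claim immediate. The only blemishes are cosmetic: $I^-(p)$ is a convex cone, not a half-space, and ``affine-convex hence causally convex'' is not a valid implication in general — what you actually need (and what holds here) is that the straight causal segment between ambiently causally related points of the convex region lies in $N$, so the intrinsic and ambient causal relations agree.
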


\begin{figure}[h] 
   \centering
   \includegraphics[width=6.5in]{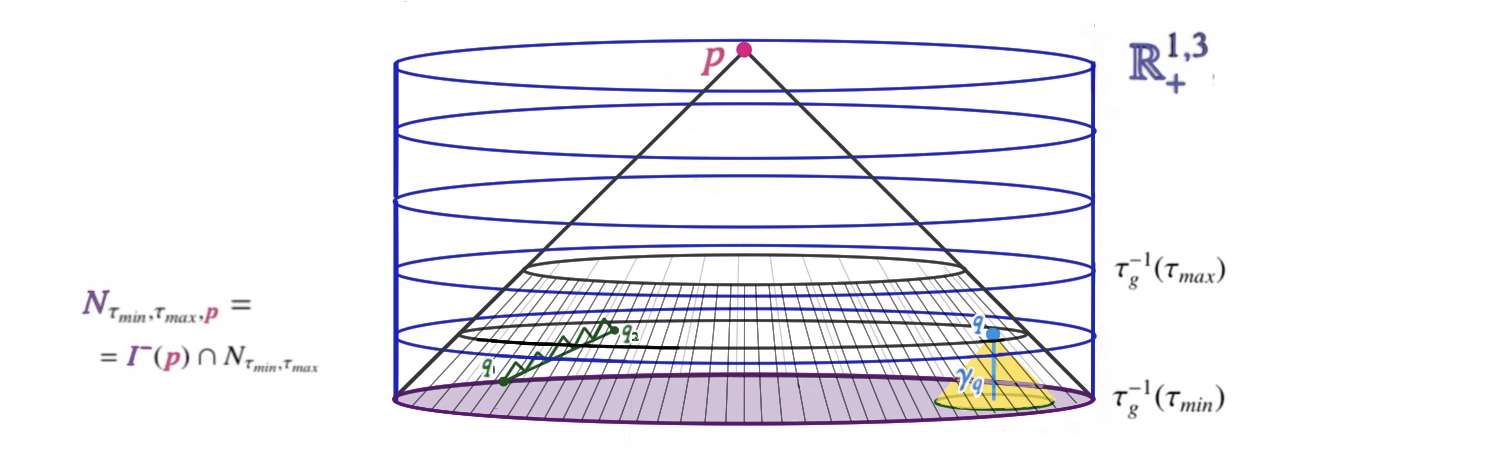} 
   \caption{Here we see $q, q_1,q_2 \in N_{\tau_{min},\tau_{max},p}\subset {\mathbb R}^{1,3}$ from the proof of Proposition~\ref{prop:past-in-Mink}.
   }\label{fig:past-in-Mink}
\end{figure}

\begin{proof}
Recall that the cosmological time
function of ${\mathbb{R}}^{1,3}_+$
with $g=g_{Mink}$
is 
\be
\tau_g(t,x)=\tau_{g_{Mink}}(t,x)=t.
\ee
Consider now the sub-domain, $N=N_{\tau_{min},\tau_{max},p}\subset
{\mathbb{R}}^{1,3}_+$ with the same metric tensor $g=g_{Mink}$.
We claim $(N, g)$ has
a bounded cosmological time: 
\be \label{eq:tauN(t,x)}
\tau_{N,g}(t,x)=
\tau_g(t,x)-\tau_{min}=t-\tau_{min}.
\ee
To see this, fix any $q=(t_q,x_q) \in N_{\tau_{min},\tau_{max},p}$.
Observe that any
timelike curve, $C$, in the past null cone $J^-(q)$
that runs from $q$ to the level set $t=\tau_{min}$
has Lorentzian length,
$L_{Mink}(C)\le t-\tau_{min}$.
The maximum is achieved by the
generator, which is the timelike geodesic, 
\be
\gamma_q:[0,t_q-\tau_{min}]\to N_{\tau_{min},\tau_{max},p},
\ee
defined by
\be\label{eq:gen-c}
\gamma_q(t)=(t+\tau_{min},x_q)\in 
J^-(q)\cap N_{\tau_{min},\tau_{max}}\subset I^-(p)\cap N_{\tau_{min},\tau_{max}}.
\ee
This cosmological time function is not proper, because its level sets are diffeomorphic to open balls in ${\mathbb R}^3$, however it is easily seen to be regular.

We next show that
the rectifiable lengths of the null distances are the same in $N_{\tau_{min},\tau_{max},p}$ and in ${\mathbb R}^{1,3}_+$.  We claim that, in fact, the null distances are the same due to the convexity of the region $N_{\tau_{min},\tau_{max},p}$ with respect to the
future Minkowski null distance in (\ref{eq:null-dist-Mink}).  Recall that Sormani-Vega proved in
\cite{SV-Null} that
\be
\hat{d}_{g_{Mink}}(q_1,q_2)=\max\{|t_1-t_1|,|x_1-x_1|\}
\ee
We can easily find piecewise null curves achieving this null distance from $q_1$ to $q_2$ 
that lie in $N_{\tau_{min},\tau_{max},p}$ 
by first taking a straight Minkowski geodesic
between them and then taking piecewise null curves lying above and below that straight geodesic that stay within the strip.   See Figure~\ref{fig:past-in-Mink}.  
Finally we see that the
sum in the definition of the null distance for 
$N_{\tau_{min},\tau_{max},p}$ and
 for ${\mathbb R}^{1,3}_+$ are the same
using (\ref{eq:tauN(t,x)}).

The diameter of ${N}_{\tau_{min},\tau_{max},p}$
is then easily computed to be
\be
\diam(N_{\tau_{min},\tau_{max},p})
\le |\tau_{max}-\tau_{min}| + 2 (t_p-\tau_{min})
\ee
so it is bounded with respect to the
Minkowski null distance, and thus its metric completion is compact.   

Finally note that any level set, $S=\tau_g^{-1}(s)\subset N=N_{\tau_{min},\tau_{max},p}$, is future causally complete  because
\be \label{eq:N-fcc}
\forall \, q\in J^+(S)\cap N, \textrm{ the set }
J^-(q)\cap S \subset I^-(p)\cap S \subset 
N=N_{\tau_{min},\tau_{max},p}
\ee
is just a closed Euclidean disk which is compact.
See Figure~\ref{fig:past-in-Mink}.  Thus by Galloway's theorem in
\cite{Galloway-causal}, we have causality encoded
as in (\ref{eq:causal}) and so the space is causally null compactifiable.
\end{proof}

In light of the above proposition we make the following conjecture:

\begin{conj}\label{conj:past-P}
Suppose we have a smooth space-time, $(\tilde{N},g,\tilde{\tau})$,
that has a locally cosmological time function, 
$\tilde{\tau}: \tilde{N} \to (-\infty,\infty)$, 
defining a null distance, $\hat{d}_{g,\tilde{\tau}}: \tilde{N}\times \tilde{N}\to [0,\infty)$.
Suppose that there is a point $p_0 \in \tilde{N}$
then the timelike past of $p_0$ intersected with a 
cosmic strip,
\be
(I^-(U)\cap \tilde{\tau}^{-1}(s,t),g),
\ee
is a causally-null-compactifiable sub-space-time of
$(\tilde{N},g,\tilde{\tau})$.   Perhaps an additional hypothesis might be required like, for example, global hyperbolicity.
\end{conj}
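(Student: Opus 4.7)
The plan is to adapt the argument of Proposition~\ref{prop:past-in-Mink} to the general setting, keeping track of which features of Minkowski space were actually used and where an additional hypothesis (as the conjecture itself flags) is needed. Setting $N = I^-(p_0) \cap \tilde{\tau}^{-1}(s,t)$, I would verify in order: (a) the shift formula $\tau_{N,g}(q) = \tilde{\tau}(q) - s$ for all $q \in N$; (b) that $\hat{d}_{N,g}$ coincides with the induced $\hat{d}_{g,\tilde{\tau}}$-rectifiable length metric on $N$; (c) that the associated metric-space $(\bar{N}, \hat{d}_{N,g})$ is compact; and (d) that causality is encoded on $N$ as in (\ref{eq:causal-sub}).

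For (a), I would first observe that since $\tilde{\tau}$ is a locally cosmological time function, its restriction to the ambient cosmic strip $\tilde{\tau}^{-1}(s,t)$ has cosmological time $\tilde{\tau} - s$, and, assuming regularity, Theorem~\ref{thm:AGH-generator} gives a past-inextensible timelike generator $\gamma_q : (0, \tilde{\tau}(q) - s] \to \tilde{\tau}^{-1}(s,t)$ with $\tilde{\tau}(\gamma_q(t')) = t' + s$ and $\gamma_q(\tilde{\tau}(q) - s) = q$ for each $q \in N$. The key observation is that $\gamma_q$ stays inside $N$: indeed $\gamma_q(t') \in J^-(q)$ and $q \in I^-(p_0)$, so $\gamma_q(t') \leq q \ll p_0$ forces $\gamma_q(t') \in I^-(p_0)$. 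This supplies the lower bound $\tau_{N,g}(q) \geq \tilde{\tau}(q) - s$, while the reverse bound follows from the usual Lipschitz-one property of $\tilde{\tau}$ along any causal curve. Part (b) is then routine: since $\tau_{N,g}$ and $\tilde{\tau}$ differ by the constant $s$ on $N$, the null-length sums in Definition~\ref{defn:null-distance} coincide for any piecewise causal chain, and since the chains used to define $\hat{d}_{N,g}$ are exactly those with all corners in $N$, the resulting metric is the induced $\hat{d}_{g,\tilde{\tau}}$-rectifiable length metric.

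The main obstacle is part (c). In Minkowski space compactness came for free from the explicit formula $\hat{d}_{g_{Mink}}(q_1,q_2) = \max\{|\Delta t|, |\Delta x|\}$ and the convex shape of $I^-(p_0)$, which together gave an immediate diameter bound. In general one must control the null diameter of $N$, and this is where I expect the conjecture's additional hypothesis (global hyperbolicity, or an analog ensuring $\overline{I^-(p_0) \cap \tilde{\tau}^{-1}[s,t]}$ has compact closure in $\tilde{N}$) to enter. Given such a hypothesis, I would run an Arzelà–Ascoli argument on Cauchy sequences in $(N, \hat{d}_{N,g})$, in the spirit of the proof of Theorem~\ref{thm:proper}: for a Cauchy sequence $q_i \in N$ the values $\tilde{\tau}(q_i)$ are Cauchy in $[s,t]$, the generators $\gamma_{q_i}$ from step (a) are uniformly $\hat{d}_{g,\tilde{\tau}}$-Lipschitz-one curves contained in a fixed compact region of $\tilde{N}$, and a subsequence converges uniformly to a Lipschitz-one curve whose endpoint is the limit of $q_i$ in $\bar{N}$.

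Finally, part (d) should follow directly from Galloway's Theorem~\ref{thm:G} applied to $(N,g)$ with its cosmological time $\tau_{N,g}$. It suffices to check that every level set $S_{s'} = \tau_{N,g}^{-1}(s'-s) = I^-(p_0) \cap \tilde{\tau}^{-1}(s')$ is future causally complete inside $N$: for $q \in J^+_N(S_{s'})$ the set $J^-_N(q) \cap S_{s'}$ is contained in $J^-_{\tilde{N}}(q) \cap \tilde{\tau}^{-1}(s')$, which under global hyperbolicity has compact closure in $\tilde{\tau}^{-1}(s')$, and intersecting with the open set $I^-(p_0)$ leaves a set with compact closure in $S_{s'}$. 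This yields (\ref{eq:causal-sub}) and finishes the argument. The main delicate points to nail down are therefore the precise additional hypothesis required in (c) and the verification that $\tilde{\tau}$-level sets in $\tilde{N}$ behave well enough for the Galloway argument in (d); I expect both to reduce cleanly to global hyperbolicity of the ambient space-time.
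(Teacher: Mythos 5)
You should know at the outset that the paper offers no proof of this statement: Conjecture~\ref{conj:past-P} is stated as an open problem (the authors even flag that an extra hypothesis such as global hyperbolicity may be needed), so there is no argument of theirs to compare yours against. Judged on its own terms, your outline follows the template the paper itself suggests — the Minkowski case in Proposition~\ref{prop:past-in-Mink}, the generator/Arzel\`a--Ascoli compactness argument of Theorem~\ref{thm:proper} and Proposition~\ref{prop:cpct-ext-Sch}, and Galloway's Theorem~\ref{thm:G} for causality — and your steps (a) and (b), including the push-up observation that generators of points of $I^-(p_0)$ stay in $I^-(p_0)$, are sound (modulo adding regularity of $\tilde\tau$ to the hypotheses, which you correctly flag).

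Two steps are not yet right as written. First, in (c) you propose to run the argument ``on Cauchy sequences in $(N,\hat d_{N,g})$'' and show their limits exist in $\bar N$; but Cauchy sequences converge in the metric completion by construction, so this only re-proves completeness. Compactness of $\bar N$ requires total boundedness, i.e.\ that an \emph{arbitrary} sequence $q_i\in N$ has a subsequence that is Cauchy for $\hat d_{N,g}$ — this is exactly how Theorem~\ref{thm:proper} proceeds, and it is precisely here that the extra hypothesis must carry the weight: you need the generators $\gamma_{q_i}$ to lie in a fixed set that is compact in a topology controlled by the ambient null distance (in Theorem~\ref{thm:proper} this comes from properness of $\tau_g$, in Proposition~\ref{prop:cpct-ext-Sch} from the explicit structure of Schwarzschild), and you must also handle the case $\tau_\infty=0$ where the limit curve degenerates. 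Second, in (d) the assertion that intersecting a set of compact closure with the open set $I^-(p_0)$ ``leaves a set with compact closure in $S_{s'}$'' is false in general: the closure could meet $\partial I^-(p_0)$ and escape $S_{s'}$. What rescues the Minkowski proof is push-up again: for $q\in I^-(p_0)$ one has $J^-_{\tilde N}(q)\subset I^-(p_0)$, so the entire (closed) set $J^-_{\tilde N}(q)\cap\tilde\tau^{-1}(s')$ already sits inside $S_{s'}$, and the real issue is its compactness — which does not follow from global hyperbolicity alone but needs some condition on the level sets of $\tilde\tau$ (e.g.\ that they are Cauchy surfaces, or are themselves future causally complete in the ambient strip). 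Repairing (c) and pinning down the precise level-set hypothesis in (d) is where the substance of the conjecture lies; as it stands your proposal is a reasonable plan of attack rather than a proof.
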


\begin{rmrk}\label{rmrk:no-claim-convex}
Note that in the above conjecture and in Definition~\ref{defn:sub-space-time}, we are not assuming or concluding any kind of convexity.  Null distances between points in the larger space are not necessarily the same as null distances between points in the sub-space-time.  This is only true on convex sub-domains of the sub-space-time and thus only the rectifiable null lengths agree.   This is
also true of Riemannian distances in sequences of balls that exhaust complete Riemannian manifolds.
\end{rmrk}

Next let us consider a nonconvex region in a Minkowski strip which is also defined as a past but this time the past of a large ring rather than a single point.

\begin{figure}[h] 
   \centering
   \includegraphics[width=5in]{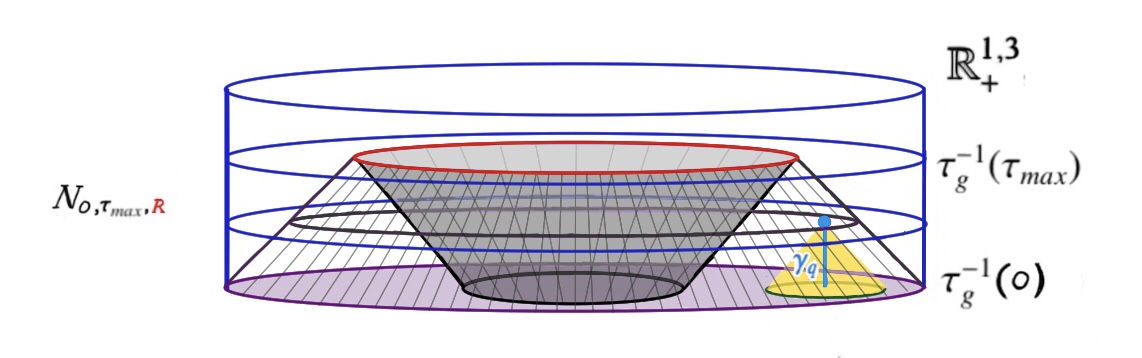} 
   \caption{
   Here we see $N_{0,\tau_{max},R}\subset {\mathbb R}^{1,3}$ of
   Example~\ref{ex:Mink-ext} and a generator,
   $\gamma_q$, of its cosmological time.
   }\label{fig:Mink-ext}
\end{figure}

\begin{ex} \label{ex:Mink-ext}
Given $0<\tau_{max}<3\tau_{max}<R$, let us consider
\be
N=N_{0,\tau_{max},R}
=I^-(\{(3\tau_{max},y):\, |y|=R\})\cap N_{0,\tau_{max}}\subset {\mathbb R}^{1,3}.
\ee
See Figure~\ref{fig:Mink-ext}
Note that this region can also be described as
\be
N_{0,\tau_{max},R}=\{(t,x):\, t\in(0, \tau_{max}), \, 
|\,|x|-R\, |< 3\tau_{max}-t \}\subset {\mathbb R}^{1,3},
\ee
which has a hole in the center as seen in Figure

Exactly as in (\ref{eq:gen-c}) we see that the
generators of the cosmological time agree and in this case the cosmological time is not even shifted.
Exactly as in (\ref{eq:N-fcc}) we see that the levels of $\tau_{N,g}$ are future causally complete and thus we have causality encoded.   However our region, $N_{0,\tau_{max},R}$, is not convex with respect to the future Minkowski null distance and so the null distance
for $N_{0,\tau_{max},R}$ does not agree with the future Minkowski null distance.   
Nevertheless, for any convex set inside $N_{0,\tau_{max},R}$, the null distances do agree, and so it is
easy to see that the null distance on $N_{0,\tau_{max},R}$ is the induced rectifiable length metric.   Furthermore, the null distance is bounded, and so $N_{0,\tau_{max},R}$ is a causally-null-compactifiable sub-space-time of Minkowski space
endowed with the time function, $\tau(t,x)=t$.
\end{ex}

\begin{rmrk} \label{rmrk:Mink-est}
Observe that in Example~\ref{ex:Mink-ext}, for any $[s,t]\subset (0,\tau_{max}$, we have a null compactifiable
sub-space-time,
\be
N_{s,t,R}=\tau^{-1}(s,t)\cap N_{0,\tau_max,R},
\ee
and, in fact this sub-space-time is the time-like past of an open set intersected with a cosmic strip
\be
N_{s,t,R}=I^-(U)\cap \tilde{\tau}^{-1}(s,t)
\ee
where $U=N_{t,\tau_{max},R}$.
\end{rmrk}

In light of this, we make the following
conjecture:

\begin{conj}\label{conj:past-U}
Suppose we have a smooth space-time, $(\tilde{N},g,\tilde{\tau})$,
that has a locally cosmological time function, 
$\tilde{\tau}: \tilde{N} \to (-\infty,\infty)$, 
defining a null distance, $\hat{d}_{g,\tilde{\tau}}: \tilde{N}\times \tilde{N}\to [0,\infty)$.
Suppose that there is an open region $U \subset \tilde{N}$
with a bounded $\hat{d}_{g,\tilde{\tau}}$-diameter,
then the past of $U$ intersected with a cosmic strip,
\be
\left(J^-(U)\cap \tilde{\tau}^{-1}(s,t),g\right),
\ee
is a causally-null-compactifiable sub-space-time of
$(\tilde{N},g,\tilde{\tau})$.
\end{conj}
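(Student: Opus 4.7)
The plan is to verify the three conditions of Definition~\ref{defn:sub-space-time} for $N := J^-(U) \cap \tilde{\tau}^{-1}(s,t)$ by bootstrapping off Theorem~\ref{thm:local-cosmo}, which already establishes that the cosmic strip $N_{s,t} := \tilde{\tau}^{-1}(s,t)$ is causally-null-compactifiable with cosmological time $\tau_{g,s,t}(p) = \tilde{\tau}(p) - s$. For each $p \in N$, the past-inextendible generator $\gamma_p \subset N_{s,t}$ provided by Theorem~\ref{thm:local-cosmo} satisfies $\gamma_p(\xi) \in J^-(p) \subset J^-(U)$ by transitivity of the causal relation, hence lies entirely in $N$. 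Therefore $\tau_{N,g}(p) = \tilde{\tau}(p) - s$, giving a regular cosmological time bounded by $t-s$ and shifted as in (\ref{eq:tau-shift-4}). The argument of Theorem~\ref{thm:induced}, which rests on Lemma~3.1 of \cite{SakSor-Null}, then transfers verbatim to show that $\hat{d}_{N,g}$ coincides with the $\hat{d}_{g,\tilde{\tau}}$-rectifiable length metric induced on $N$.

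Next I would bound the $\hat{d}_{N,g}$-diameter of $N$. Fix $u_0 \in U$; the Lipschitz property of $\tilde{\tau}$ with respect to $\hat{d}_{g,\tilde{\tau}}$ gives $\tilde{\tau}(U) \subset [\tilde{\tau}(u_0) - D, \tilde{\tau}(u_0) + D]$, where $D := \diam_{\hat{d}_{g,\tilde{\tau}}}(U) < \infty$. For any $p \in N$ and $u_p \in U$ with $p \in J^-(u_p)$, a future directed causal curve from $p$ to $u_p$ contributes at most $\tilde{\tau}(u_p) - \tilde{\tau}(p) \leq \tilde{\tau}(u_0) + D - s$ to the null distance; concatenating for $p, q \in N$ with a piecewise causal curve in $\tilde{N}$ from $u_p$ to $u_q$ of null length at most $D$ yields a uniform a priori bound on $\hat{d}_{g,\tilde{\tau}}(p,q)$. \emph{The main obstacle} is promoting this to $\hat{d}_{N,g}(p,q)$, since the intermediate points in the constructed piecewise causal curve need not lie in $N$---they may have $\tilde{\tau} \geq t$, taking them out of the strip. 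The strategy is to truncate each causal segment just before it would cross $\tilde{\tau}^{-1}(t-\delta)$ and connect the truncation points by short piecewise causal curves inside $N$, then let $\delta \to 0^+$. Making this rigorous may require an auxiliary hypothesis on how $U$ sits relative to the strip (for instance, $U$ meeting $\tilde{\tau}^{-1}(s', t)$ for some $s' > s$, as in the motivating Remark~\ref{rmrk:Mink-est}) or a separate argument showing that $J^-(U) \cap \tilde{\tau}^{-1}(t - \delta)$ has the connectivity needed to interpolate between the truncation points.

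Once the $\hat{d}_{N,g}$-diameter bound is secured, compactness of the metric completion $(\bar{N}, \hat{d}_{N,g})$ follows from an Arzela--Ascoli extraction along generators exactly as in the proof of Theorem~\ref{thm:proper}: any sequence $p_i \in N$ yields generators $\gamma_{p_i}$ that are uniformly Lipschitz-one into the compact set $\bar{N}_{s,t}$, so a subsequence converges to a Lipschitz-one limit whose terminal point lies in $\bar{N}$. The causality encoding (\ref{eq:causal-sub}) then follows from Galloway's Theorem~\ref{thm:G} applied to $(N, g, \tau_{N,g})$: every level $S_\sigma := N \cap \tilde{\tau}^{-1}(s + \sigma)$ is future causally complete in $N$, since $J^-_N(q) \cap S_\sigma \subset J^-(q) \cap \tilde{\tau}^{-1}(s + \sigma)$ inherits compact closure from the corresponding future-causal-completeness property on the full strip $N_{s,t}$. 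These steps together verify all the conditions of Definition~\ref{defn:sub-space-time} for $(N,g)$ as a sub-space-time of $(\tilde{N}, g, \tilde{\tau})$.
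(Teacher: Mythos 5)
First, a point of orientation: the statement you are proving is stated in the paper as Conjecture~\ref{conj:past-U}, and the paper offers \emph{no proof} of it. Indeed, the authors immediately follow it with Remark~\ref{rmrk:past-U}, conceding that ``the above conjecture may require additional hypotheses'' and that a counterexample in the absence of such hypotheses would be interesting. So there is no argument in the paper to compare yours against; the only evidence offered is the worked Minkowski case (Proposition~\ref{prop:past-in-Mink} and Example~\ref{ex:Mink-ext}), where the diameter bound and future causal completeness are verified by hand using the explicit formula $\hat{d}_{g_{Mink}}(p,q)=\max\{|t_p-t_q|,|x_p-x_q|\}$ and the fact that the relevant sets $J^-(q)\cap S$ are closed Euclidean disks.

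Your proposal is not yet a proof, and the gap sits exactly where the conjecture is hard. The decisive step is showing that the \emph{induced} length metric $\hat{d}_{N,g}$ on $N=J^-(U)\cap\tilde{\tau}^{-1}(s,t)$ has finite diameter and compact completion; your truncate-and-interpolate strategy is only sketched, and you yourself concede it ``may require an auxiliary hypothesis.'' That concession is the whole ballgame: the piecewise causal chains realizing $\hat{d}_{g,\tilde\tau}(p,q)$ must pass up toward $U$, which may lie entirely above the strip, and there is no general reason the truncation points on $\tilde\tau^{-1}(t-\delta)$ can be joined by short piecewise causal curves \emph{inside} $N$ (one does not even know $N$ is connected, nor that $\hat{d}_{N,g}$ is finite between its components). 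A second, unflagged gap is your appeal to Theorem~\ref{thm:G}: you assert that $J^-_N(q)\cap S_\sigma$ ``inherits compact closure'' from the full strip, but the conjecture assumes neither properness of $\tilde\tau$ nor global hyperbolicity, so compact closure in the ambient level set is not available, and even if it were, compactness of the closure taken \emph{within} the relatively open set $S_\sigma=N\cap\tilde\tau^{-1}(s+\sigma)$ can fail when limit points escape to $\partial J^-(U)$. (Your first step --- that the strip generators of points of $N$ stay in $N$ by transitivity of $J^-$, so $\tau_{N,g}(p)=\tilde\tau(p)-s$ --- is fine, and matches how the paper handles the Minkowski and Schwarzschild examples.) In short, what you have is a reasonable plan plus a correct identification of the obstruction, which is consistent with the authors' own assessment that the statement is open and may be false as literally stated.
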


\begin{rmrk}\label{rmrk:past-U}
It is possible that the above conjecture may require additional hypotheses.   A counter example without additional hypotheses would be interesting. 
\end{rmrk}

\subsection{{\bf Regions in Exterior Schwarzschild Space-Time}}
\label{sect:space-times-Schwarzschild}

In this section we find sub-space-times of Schwarzschild space-time.

\begin{ex}\label{ex:ext-Sch}
The Schwarzschild space-time, $N_{Sch,m}$, of mass $m>0$ is globally hyperbolic
for $t\in (-\infty,\infty)$, $r> 0$ :  
\be
g_{Sch,m}=-\left(\frac{r^2-2mr}{r^2}\right) dt^2 
+ \left(\frac{r^2}{r^2-2mr} \right) dr^2 + r^2 g_{{\mathbb{S}}^2}.
\ee
This metric appears to be singular at the event horizon, $\Sigma=r^{-1}(2m)$, however one can extend smoothly as a space-time across $\Sigma$.   In fact, $\Sigma$ is an event horizon of a black hole: 
\be\label{eq:event-horizon}
\forall \, p\in \Sigma, \,\, J^+(p)\cap r^{-1}(2m, \infty)=\emptyset.
\ee
The exterior region of the Schwarzschild black hole is the region,
\be\label{eq:ext-Sch}
N'_{Sch,m}= t^{-1}(-\infty,\infty) \cap r^{-1}(2m,\infty)\subset N_{Sch,m}.
\ee
The $t=const$ level sets are
isometric to the Riemannian Schwarzschild space, $(M_{Sch},h_{Sch})$, have second fundamental form, $k=0$, and are a foliation of Schwarzschild space by maximal spacelike hypersurfaces (such that $\tr k=0$).   

Exterior Riemannian Schwarzschild is
the surface,
\be\label{eq:ext-R-Sch}
M'_{Sch,m}=t^{-1}(0)\cap N'_{Sch,m}=t^{-1}(0)\cap r^{-1}(2m,\infty)\subset N_{Sch,m}.
\ee
See for example, O'Neill's text \cite{O'Neill-text}.
Finally we define the future exterior Schwarzschild space-time:
\be\label{eq:ext-Sch-+}
N'_{Sch,m,+}= J^+(M'_{Sch,m})\cap N'_{Sch,m}= t^{-1}(0,\infty) \cap r^{-1}(2m,\infty)\subset N_{Sch,m}.
\ee
By (\ref{eq:event-horizon}), we have
\be \label{eq:ext-Sch-past}
\forall \, p \in N'_{Sch,m,+},\,\, J^-(p) \cap M_{Sch,m}\subset M'_{Sch,m}
\ee
as depicted in Figure~\ref{fig:ext-Sch}.
In particular, future exterior Schwarzschild space-time, $N'_{Sch,m,+}$,
satisfies the hypothesis of Galloway's causality theorem \cite{Galloway-causal}.
So it has a regular cosmological time function, $\tau_g:N'_{Sch,m,+}\to (0,\infty)$,
and the null distance, 
\be
\hat{d}_{Sch',m,+}=\hat{d}_{N'_{Sch,m,+},g_{Sch,m}},
\ee
encodes causality as in (\ref{eq:causal}).

The generators, $\gamma_p:(0,\tau(p)]\to N'_{Sch,m,+}$, fall inward,
\be\label{eq:falling}
\tfrac{d}{d\tau}r(\gamma_p(\tau))\le 0 \textrm{ and } 
\tfrac{d}{d\tau}t(\gamma_p(\tau))\ge 0.
\ee
with
\be
-1=g(\gamma_p'(\tau),\gamma_p'(\tau))=-\left(\tfrac{r^2-2mr}{r^2}\right) \left(\tfrac{d(t\circ \gamma_p)}{d\tau}\right)^2
+ \left(\tfrac{r^2}{r^2-2mr} \right) \left(\tfrac{d(r\circ \gamma_p)}{d\tau}\right)^2
\ee
so that
\be
\left(\tfrac{d(t\circ \gamma_p)}{d\tau}\right)^2>
\left(\tfrac{r^2-2mr}{r^2}\right) \left(\tfrac{d(t\circ \gamma_p)}{d\tau}\right)^2=
1+
\left(\tfrac{r^2}{r^2-2mr} \right) \left(\tfrac{d(r\circ \gamma_p)}{d\tau}\right)^2
> 1+ \left(\tfrac{d(r\circ \gamma_p)}{d\tau}\right)^2>1.
\ee
Thus $d(t\circ \gamma_p)/d\tau >1$ and 
$d(t\circ \gamma_p)/d\tau >d(r\circ \gamma_p)/d\tau$.
Travelling upwards from the $\tau=t=0$ level set we see that,
\be\label{eq:Sch-tau<t}
\tau(p)<t(p) \qquad \forall \, p\in N'_{Sch,m,+} \textrm{ and }\tau(p)=0\quad\forall \, p\in M'_{Sch,m}.
\ee 
\end{ex}

\begin{figure}[h] 
   \centering
   \includegraphics[width=5in]{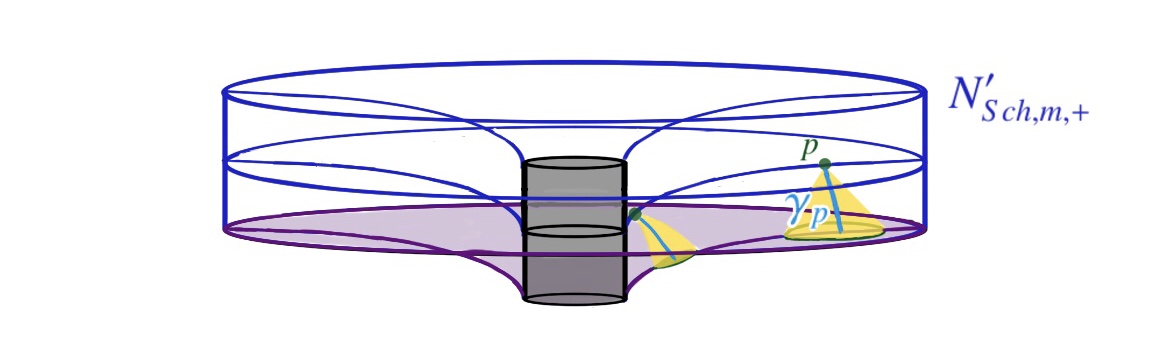} 
   \caption{Here we see the future
   exterior Schwarzschild space-time, $N'_{Sch,m,+}$, as in Example~\ref{ex:ext-Sch}.
   }\label{fig:ext-Sch}
\end{figure}

\begin{ex} \label{ex:cpct-ext-Sch}
The future exterior
Schwarzschild space-time,
\be
N'_{Sch,m,+}=r^{-1}(2m,R)\cap t^{-1}(0,\infty)
\subset N_{Sch,m},
\ee
can be exhausted 
by domains,
 \be\label{eq:WR}
\Omega(R)=J^-(U_R) \cap N'_{Sch,m,+} \textrm{ s.t. } \bigcup_{R>2m} \Omega(R)=N'_{Sch,m,+},
\ee
where
 \be \label{eq:UR}
U_R=r^{-1}(2m,R) \subset N'_{Sch,m,+}.
\ee
Observe that for all $p\in \Omega(R)$, the generators, $\gamma_p$, of 
$\tau_g:N'_{Sch,m,+}\to (0,\infty)$ have images which lie in $J^-(p)\subset \Omega(R)$
and are not past extensible within $\Omega(R)$,
so the cosmological time function, 
$\tau_{\Omega(R),g}: \Omega(R) \to (0,\infty)$ is regular and satisfies 
\be
\tau_{\Omega(R),g}(p)=\tau_g(p) \qquad \forall \, p\in \Omega(R).
\ee
Thus the
$\Omega(R)$ are sub-space-times of future exterior Schwarzschild space-time as in Definition~\ref{defn:sub-space-time}.
\end{ex}

\begin{prop}\label{prop:cpct-ext-Sch}
The cosmic strips in future exterior Schwarzchild space-time, 
\be
\Omega_{0,\tau_{max}}(R)=\Omega(R) \cap \tau_g^{-1}(0,\tau_{max})
\ee
where
$\Omega(R)$ is as in Example~\ref{ex:cpct-ext-Sch}
depicted in Figure~\ref{fig:cpct-ext-Sch}
are causally-null-compactifiable
space-times. 
\end{prop}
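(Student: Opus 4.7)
The plan is to verify the three defining properties of a causally-null-compactifiable space-time from Definition~\ref{defn:cncst} for the cosmic strip $\Omega_{0,\tau_{max}}(R)$: bounded regular cosmological time, compact associated metric space, and null distance encoding causality. Example~\ref{ex:cpct-ext-Sch} already identifies $\Omega(R)$ as a sub-space-time of $N'_{Sch,m,+}$ whose cosmological time $\tau_g$ is regular, bounded on the strip by $\tau_{max}$, and has generators trapped inside $\Omega(R)$. The overarching strategy is to apply Theorem~\ref{thm:proper} after thickening $\Omega(R)$ slightly below the initial Cauchy surface $M'_{Sch,m}$ via the normal exponential map, exactly as in the proof of Theorem~\ref{thm:FD}, so that $\Omega_{0,\tau_{max}}(R)$ sits inside a cosmic strip with $s>0$ and the shifted time is proper.

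For causality encoding, I would invoke Galloway's theorem~\ref{thm:G}. It suffices to show that every level set $S_t = \tau_g^{-1}(t) \cap \Omega(R)$ with $t \in (0,\tau_{max})$ is future causally complete, i.e., for each $p \in J^+(S_t) \cap \Omega(R)$ the set $J^-(p) \cap S_t$ has compact closure in $S_t$. This combines the standard Arzel\`a--Ascoli argument for past-directed causal curves ending at $p$ with the confinement $\Omega(R) = J^-(U_R) \cap N'_{Sch,m,+}$, exploiting that $U_R$ is a radial cylinder bounded in $r$ and that the Schwarzschild exterior is well behaved away from the horizon.

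The hard part will be proving compactness of the metric completion $(\bar\Omega_{0,\tau_{max}}(R), \hat d_g)$. Following the template of the proof of Theorem~\ref{thm:proper}, I would take a sequence $p_i \in \Omega_{0,\tau_{max}}(R)$, pass to a subsequence with $\tau(p_i) \to \tau_\infty \in [0,\tau_{max}]$, and apply Arzel\`a--Ascoli to the $\hat d_g$-Lipschitz 1 generators $\gamma_{p_i}$ (extended across $M'_{Sch,m}$ via the thickening above) to extract a limit generator $\gamma_\infty$ and conclude $p_i \to \gamma_\infty(\tau_\infty)$ in the metric completion. The critical input is that these generators take values in a compact subset of the thickened $\bar\Omega(R)$, which in turn reduces to compactness of the initial-data footprint $\overline{J^-(U_R)} \cap M'_{Sch,m}$ in the Riemannian Schwarzschild Cauchy surface. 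Because inward null rays in Schwarzschild can reach $U_R$ from arbitrarily large $r$ on $M'_{Sch,m}$, this footprint could fail to be compact with $U_R$ defined as in~(\ref{eq:UR}); I expect the proof either requires a time cutoff on $U_R$ (mirroring the Minkowski construction in Example~\ref{ex:Mink-ext}, where the past is taken of a sphere at a specific time rather than a cylinder) or a quantitative estimate in Schwarzschild that keeps the footprint bounded for the given choice of $R$ and $\tau_{max}$.
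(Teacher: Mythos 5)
Your strategy is essentially the paper's: Galloway's theorem~\ref{thm:G} for causality encoding (the paper combines it with (\ref{eq:ext-Sch-past}) and the properties of $\tau_g$ from Example~\ref{ex:ext-Sch}), and for compactness the Arzel\`a--Ascoli argument on the Lipschitz-one generators $\gamma_{p_i}$ modeled on the proof of Theorem~\ref{thm:proper}. The one procedural difference is at the initial surface: you propose thickening below $M'_{Sch,m}$ via the normal exponential map so that Theorem~\ref{thm:proper} applies with $s>0$, whereas the paper splits into the cases $\tau_\infty>0$ (where $\gamma_\infty$ is a genuine timelike generator staying off $\Sigma$ and off the outer causal boundary, so the argument of Theorem~\ref{thm:proper} goes through verbatim) and $\tau_\infty=0$ (where it extends $\tau_g$ and hence the null distance to a smooth neighborhood of $\gamma_\infty(0)$ in the ambient Schwarzschild space-time and uses that the null-distance topology agrees with the manifold topology to see the $p_i$ form a Cauchy sequence). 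These two devices do the same job and your version is arguably cleaner since it reduces everything to one previously proved theorem.

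Your worry about the initial-data footprint is well founded and points at a real gap --- though it is a gap in the paper's setup rather than in your argument. The paper's proof simply asserts that the generators lie in $N'_{Sch,m}\cap r^{-1}[2m,R_{max}]\cap\tau^{-1}[0,t]$ for some $R_{max}$, which presupposes that $\Omega(R)$ is spatially bounded. With $U_R=r^{-1}(2m,R)\subset N'_{Sch,m,+}$ as written in (\ref{eq:UR}), $U_R$ is a full cylinder in $t$, and from any point of $N'_{Sch,m,+}$ at arbitrarily large $r$ an ingoing radial null geodesic reaches $r<R$ at some finite later $t>0$; hence $J^-(U_R)\cap N'_{Sch,m,+}=N'_{Sch,m,+}$, $\Omega_{0,\tau_{max}}(R)$ is the whole cosmic strip, and its null diameter is infinite exactly as for the Minkowski strip in Example~\ref{ex:Mink-strip}. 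The construction the figure and the proof intend is the analogue of Example~\ref{ex:Mink-ext}, where the past is taken of a sphere (or a spatially and temporally bounded set) at a fixed time such as $3\tau_{max}$; with that corrected $U_R$ the footprint $\overline{J^-(U_R)}\cap M'_{Sch,m}$ is compact, your Arzel\`a--Ascoli step closes, and the rest of your outline matches the paper's proof.
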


\begin{proof}
The fact that the timed-metric-space in Example~\ref{ex:cpct-ext-Sch} encodes
causality as in
(\ref{eq:causal}) follows from (\ref{eq:ext-Sch-past}),
the properties of $\tau_g$ we showed in Example~\ref{ex:cpct-ext-Sch}
and Galloway's causality theorem in \cite{Galloway-causal}.

To prove compactness of the metric completion of $\Omega_{0,t}(R)$ we proceed as in the
proof of Theorem~\ref{thm:proper} now studying sequences of $p_i\in \Omega_{0,t}(R)$.
  Again we can choose a subsequence
so that $\tau_i=\tau_g(p_i)$ converges to some $\tau_\infty$.  

Next
we consider the
generators, $\gamma_{p_i}$.
By (\ref{eq:ext-Sch-past}) and (\ref{eq:falling}), we see that
\be
\gamma_{p_i}[0,\tau_g(p_i)]\in N'_{Sch,m}\cap r^{-1}[2m,R_{max}] \cap \tau^{-1}[0,t]
\ee
so a subsequence converges to a limiting geodesic, $\gamma_\infty$,
just as in (\ref{eq:gen-2}) in the proof of Theorem~\ref{thm:proper}.

If $\tau_\infty>0$,
the limiting geodesic is still a generator of $\tau_g$ in exterior Schwarzschild, $N'_{Sch,m,+}$, so it is timelike and thus cannot run along $\Sigma$ nor along the causal outer boundary of 
$\Omega_{0,t}(R)$.
Thus
$\gamma_{\infty}(0,\tau_\infty)\subset \Omega_{0,t}(R)$, so we can continue
exactly as in Theorem~\ref{thm:proper} to produce a limit point, $p_\infty=\bar{\gamma}_\infty(\tau_\infty)
$ inside
the metric completion of the region.

If $\tau_\infty=0$, then inside the closure of the region in Schwarzschild, the sequence $p_i$ converges to $\gamma_\infty(0)$ in Riemannian Schwarzschild.   Since $\tau_g$ is smooth near this initial data, we can extend it to a neighborhood of $\gamma_\infty(0)$ and thus extend the null distance to the same neighborhood.   Since the null distance has the same 
topology as the manifold, we see that $p_i$ converges with respect to this extended null distance to $p_\infty$.  Thus $p_i$ is a Cauchy sequence with respect to the null distance in $\Omega_{0,t}(R)$
and so it has a limit in the metric completion.
\end{proof}

\begin{figure}[h] 
   \centering
   \includegraphics[width=5in]{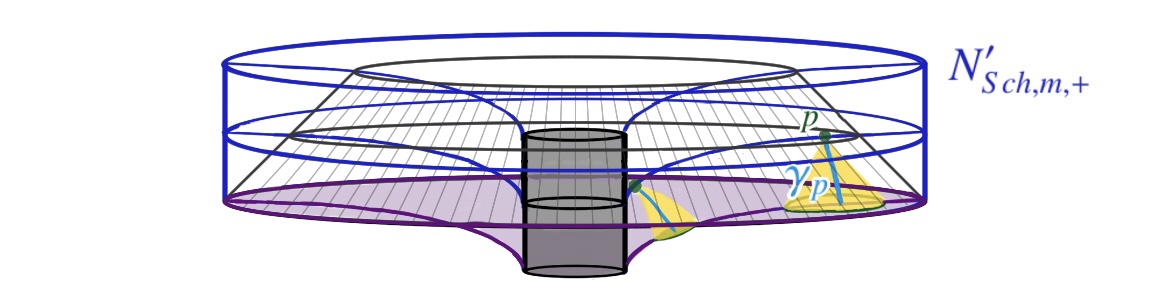} 
   \caption{A causally-null-compactifiable sub-space-time of Future Exterior Schwarzschild space as in Example~\ref{ex:cpct-ext-Sch}.
   }\label{fig:cpct-ext-Sch}
\end{figure}

\begin{rmrk} \label{rmrk:Kerr-exterior}
Note that Kerr space-time, $(N_{Kerr},g_{Kerr})$, also has a foliation by maximal spacelike hypersurfaces, one of which may be used as the initial data set to define future Kerr space-time, $N_{Kerr,+}$.    We define the exterior region, $N'_{Kerr,+}$, to be the region outside the event horizon.   The geodesics and causal structure of Kerr space-time is well known. 
We believe one can construct causally-null-compactifiable
sub-space-times of exterior regions in Kerr space-times using methods similar to those described above for Schwarzschild space-times. 
\end{rmrk}

Naturally the goal is to test ideas so that one can answer the following more challenging question:

\begin{rmrk}\label{rmrk:exterior-region}
For a general asymptotically flat space-time, $N$, possibly with multiple black holes, we could define the the exterior region, $N'$, to be the region outside the outermost event horizons or use some other definition.  The key concern is ensuring the exterior region can be exhausted by causally null compactifiable sub-space-times.
\end{rmrk}

\subsection{{\bf Exhaustions by Causally Null Compactifiable Sub-space-times}}\label{sect:space-times-ex}

In this section we define exhaustions by causally null compactifiable sub-space-times as in Definition~\ref{defn:sub-space-time}.  These exhaustions can be applied to define convergence of space-times which are not null compactifiable themselves.  Such exhaustions would be needed for example to define the convergence of asymptotically flat space-times.

\begin{defn}\label{defn:exhaustion}
We say that a smooth space-time, $(\tilde{N},g,\tilde{\tau})$,
that has a global time function, 
$\tilde{\tau}: \tilde{N} \to (-\infty,\infty)$, 
defining a null distance, $\hat{d}_{g,\tilde{\tau}}: \tilde{N}\times \tilde{N}\to [0,\infty)$, is {\bf exhausted by (causally) null compactifiable sub-space-times},
\be
\Omega(R_1)\subset \Omega(R_2)\subset ... \subset N
\textrm{ such that } \bigcup_{i=1}^\infty \Omega(R_i)=
\tilde{N}
\ee
if each $(\Omega(R_i),g_i)$ is a (causally) null compactifiable sub-space-time of $(\tilde{N},g, \tilde{\tau})$.
\end{defn}

\begin{rmrk}\label{rmrk:sub-sub}
Note that in Definition~\ref{defn:exhaustion} each $(\Omega(R_i),g)$ endowed with its own cosmological time function is a
(causally) null compactifiable sub-space-time of the
next $(\Omega(R_{i+1}),g)$ endowed with its own cosmological time function.   These time functions at each step may shift significantly so it is better to use the global time function, 
$\tilde{\tau}: \tilde{N} \to (-\infty,\infty)$ to unify
the sequence keeping in mind that all the null distances
define the same rectifiable lengths.
\end{rmrk}

\begin{figure}[t] 
   \centering
   \includegraphics[width=5in]{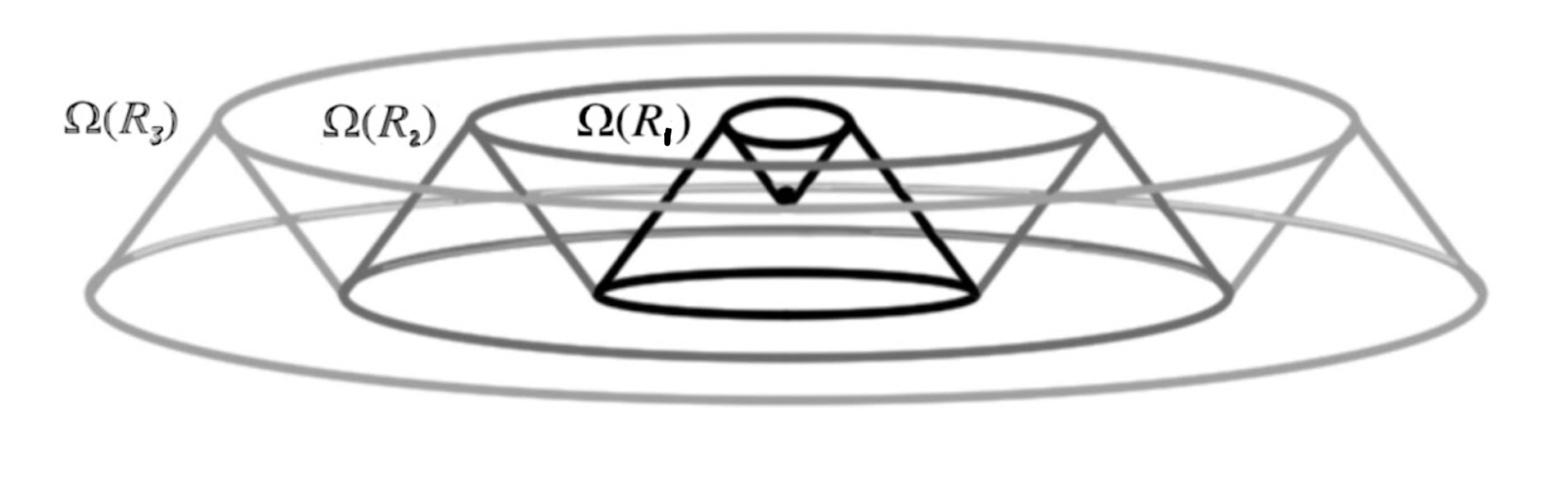} 
   \caption{The first three convex causally null compactifiable sub-space-times, 
   $\Omega(R_3)\supset \Omega(R_2)\supset \Omega(R_1)$,
   in the pointed exhaustion  of a cosmic strip in future Minkowski space-time as a special case of Remark~\ref{rmrk:pt-exhaustion}.
   }\label{fig:pt-exhaustion}
\end{figure}

\begin{rmrk}\label{rmrk:exhaust}
Note that in Definition~\ref{defn:exhaustion} we are not assuming that $\tilde{\tau}$ is a regular cosmological time or a locally cosmological time function.  Nor are we assuming that
its associated null distance, $\hat{d}_{g,\tilde{\tau}}$,
is definite nor do we assume it encodes causality. We only
require these additional properties within the regions, $\Omega(R_i)$. However, as a consequence of the fact that the union of the $\Omega(R_i)$ is the entire space-time, $N$, we see that the 
null distance, $\hat{d}_{g,\tilde{\tau}}$,
is definite afterwards.  Furthermore, it encodes causality when the exhaustion is causal.  Is the global time function, $\tilde{\tau}$, a regular locally cosmological time function?
This is easy to see if it is proper, but not immediately obvious to us otherwise.
\end{rmrk}

\begin{rmrk} \label{rmrk:pt-exhaustion}
One should be able to prove a general theorem
which allows one to exhaust an asymptotically flat space-time with causally-null-compactifiable sub-space-times as in Definition~\ref{defn:exhaustion} using methods similar to those in Sections~\ref{sect:space-times-Minkowski}
and~\ref{sect:space-times-Schwarzschild} in a canonical way depending only on the choice of a basepoint.  Perhaps we might 
fix $p\in N$ and fix $\tau_{min}<\tau_{max}$
and take 
\be
\Omega(R_1)=I^-\left(J^+(p)\cap \tilde\tau_g^{-1}(\tau_{min},\tau_{max})\right)
\cap \tilde\tau_g^{-1}(\tau_{min},\tau_{max})
\ee
and then iteratively define 
\be
\Omega(R_{i+1})=I^-\left(J^+(\Omega(R_i))\cap \tilde\tau_g^{-1}(\tau_{min},\tau_{max})\right)
\cap \tilde\tau_g^{-1}(\tau_{min},\tau_{max})
\ee
as in Figure \ref{fig:pt-exhaustion} and
take
\be\label{eq:R_i-pt}
R_{i}=2i(\tau_{max}-\tau_{min}) 
\ee
so that
\be
\forall q\in \Omega(R_{i}) \quad
d(q,p)< R_i.
\ee
We conjecture that $(\Omega(R_i),g)$
are causally-null-compactifiable
and can be used to define an
exhaustion of cosmic strip, 
\be\label{eq:exhaustion-Ri}
\bigcup_{i=1}^\infty \Omega(R_i) = \tilde\tau_g^{-1}(\tau_{min},\tau_{max})\subset N.
\ee
In Remark~\ref{rmrk:exhausted-conv}
we will explain how such a canonical choice of exhaustion could help define space-time convergence for asymptotically flat space-times.
\end{rmrk}

\begin{rmrk}\label{rmrk:CMC}
Another approach to constructing exhaustions
of future developments of time symmetric initial data set, $(M,h,0)$, would be to use the canonical Constant-Mean-Curvature (CMC) foliations studied by Huisken-Yau, Qing-Tian, Ma,
Chodosh, Eichmair, Koerber, Metzger, Huang, Alaee and Lesaurd   
in 
\cite{Huisken-Yau-CoM}
\cite{Qing-Tian-Foliations}
\cite{Metzger-Foliations}
\cite{Eichmair-Metzger-Unique}
\cite{Chodosh-Eichmair-Global}
\cite{Eichmair-Koerber}
\cite{Nerz-Foliations}
\cite{Huang-Foliations}
\cite{Ma-Foliations}
\cite{Alaee-Lesourd-Yau-stable}.
Cederbaum and Sakovich have extended these ideas to develop a canonical space-time-constant-mean-curvature (STCMC) foliation
for more general initial data sets in \cite{Cederbaum-Sakovich}.  This defines the canonical STCMC exhaustion of an initial data set, 
\be
\bigcup_{R>0} W(R) \subset M, 
\ee
where $\partial W(R)=\Sigma_R$
is the canonical space-time-constant-mean-curvature (STCMC) foliation.   Here $R$ is the area radius of the surface.
\end{rmrk}

\begin{figure}[h] 
   \centering
   \includegraphics[width=4in]{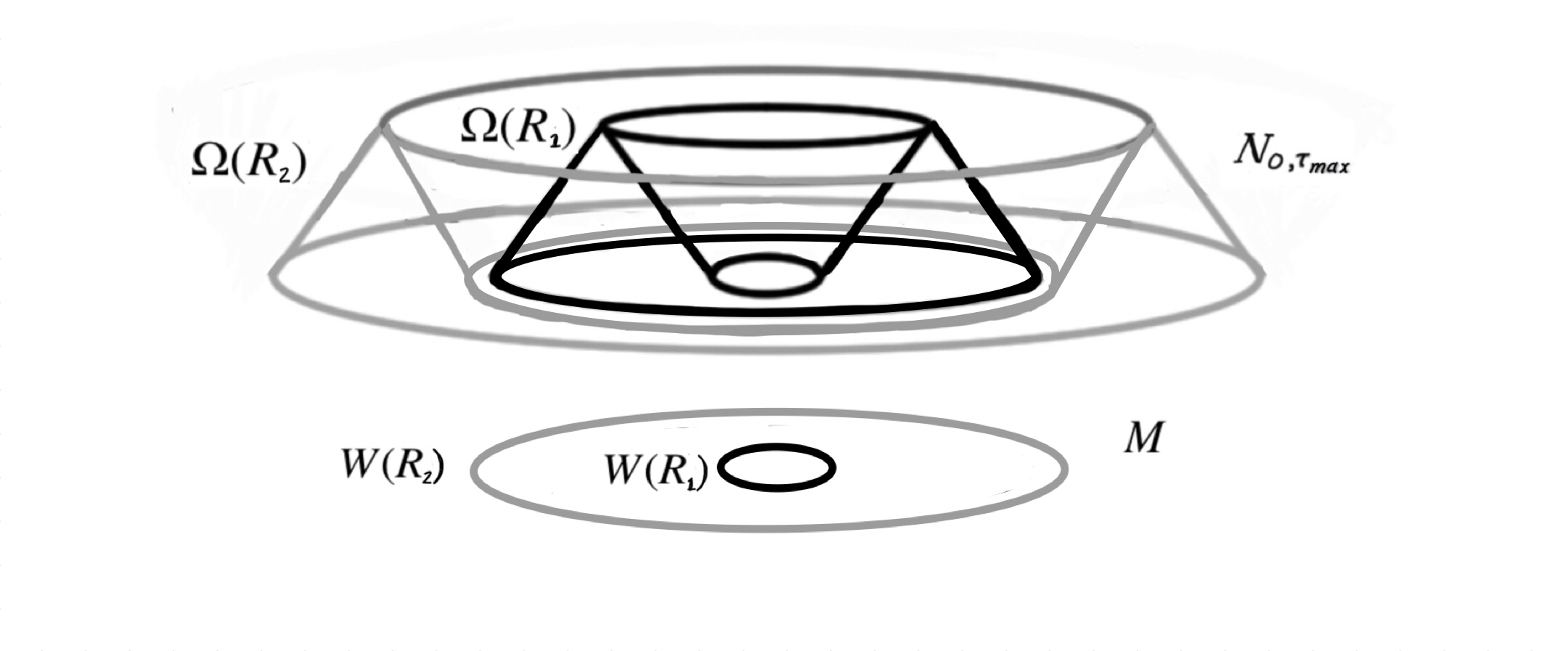}
   \caption{Two convex causally null compactifiable sub-space-times, $\Omega(R_2)\supset \Omega(R_1)$, in an exhaustion  of a cosmic strip in future Minkowski space-time defined using an exhaustion by regions, $W(R_2)\supset W(R_1)$, bounded by CMC surfaces in Euclidean space (viewed as initial data for future Minkowski space) as a special case of Remark~\ref{rmrk:CS-exhaustion}.
   }\label{fig:CS-exhaustion}
\end{figure}

\begin{rmrk} \label{rmrk:CS-exhaustion}
On an asymptotically flat future-developed space-time, $(N,g)$, starting from an asymptotically flat initial data set, $(M,h,K)$, there is 
a natural exhaustion by
$\Omega(R)\subset N$ of $N$ that we define as follows.   Start with a canonical
Cederbaum-Sakovich STCMC exhaustion of the initial data set 
\be
\bigcup_{i=1}^\infty W(R_i) \subset M 
\ee 
(as explained in Remark \ref{rmrk:CMC}).
Let
\be
U(R_i)=
J^+(W(R_i))\cap \tau^{-1}(0,\tau_{max})
\subset N
\ee
and take
\be
\Omega(R_i)=I^-(U(R_i)) \subset N
\ee
as in Figure~\ref{fig:CS-exhaustion}.
We conjecture that $(\Omega(R),g)$
is causally-null-compactifiable
and can be used to define an
exhaustion of cosmic strip, $\tau^{-1}(0,\tau_{max})$
as in (\ref{eq:exhaustion-Ri}).  Further work in this direction is in progress by Sakovich and others.  
\end{rmrk}

\subsection{{\bf Metric Measure Space-Times}}
\label{sect:space-times-measure}
\footnote{This section may be skipped by those only interested in $GH$ and $\mathcal{F}$ notions of distance.}

The following definition could possibly be viewed as something vague and somewhat conjectural.

\begin{defn}\label{defn:measure-space-time}
A timed metric measure space could possibly be defined to be a timed-metric-space endowed with a measure, $\mu$,
that is defined in a canonical way.   Sometimes $\mu$ might be the volume measure, $\mu_g$, defined by the Lorentzian metric tensor.   
Sometimes the measure might be required to be a doubling measure with respect to the null distance.
In settings where one needs a probability measure, the measure may always be renormalized. 
\end{defn}

\begin{rmrk}\label{rmrk:measure-space-time}
Prior work developing the notion of metric measure space-times using optimal transport and other measure theoretic approaches appear in work of     
Beran, 
Braun, 
Brenier,
Calisti,
Cavalletti, 
Eckstein,
Frisch,
Gigli,
Manini,
Matarrese, 
McCann,
Miller,
Minguzzi,
Mohayaee,
Mondino,
Ohanya,
Ohta,
Rott,
Sobolevski,
and Suhr
\cite{BBCGMORS}
\cite{Braun-entropy}
\cite{Braun-23} 
\cite{Braun-McCann}
\cite{Braun-Ohta} 
\cite{Brenier-GR}
\cite{Cavalletti-Mondino}
\cite{Cavalletti-Manini-Mondino-Optimal}
\cite{Eckstein-Miller-causality}
\cite{FMMS-Nature}
\cite{McCann-Synthetic}
\cite{Minguzzi-Further}
\cite{Minguzzi-Suhr-24}
\cite{Mondino-Suhr}
\cite{Mueller-GH}
building upon work by Lott-Villani \cite{Lott-Villani-09} and by Sturm \cite{Sturm-2006-I}.
These papers use the relationship between Ricci curvature and optimal transport to 
explore various aspects of the Einstein vacuum equations on these spaces.
Mondino and Perales are
working to relate the work of Mondino-Suhr
with our notion of a timed-metric-space endowed with a measure so that our theories may be used
together.   
\end{rmrk}

\begin{rmrk}\label{rmrk:space-time-measure-disclaimer}
In our definitions of intrinsic distances between causally-null-compactifiable space-times below, we will not emphasize where the measure comes from, but rather just assume that there is a measure on the associated timed-metric-space.  In this way any notion of metric measure space-time developed in the future may be applied in combination with one of our notions of intrinsic distance between causally-null-compactifiable space-times.
\end{rmrk}

\subsection{{\bf Integral Current Space-Times}}
\label{sect:space-times-current}
\footnote{This section may be skipped by those only interested in $GH$ and $mm$ notions of distance.}

Recall the definition of an integral current space of Sormani-Wenger \cite{SW-JDG} reviewed vaguely in Section~\ref{sect:back-F}.

\begin{defn}\label{defn:integral-current-space-time}
A timed integral current space (or integral current space-time) can be defined to be a timed-metric-space endowed with an integral current structure, $T$, defined by an atlas of
$\hat{d}_g$-biLipschitz charts
so that the resulting 
settled completion, $(N'=\set(T), \hat{d}_g,T)$ is an integral current space.  In particular, the manifold $N$ and its boundary, $\partial N\subset \bar{N}$ are $\hat{d}_g$-rectifiable sets.  Sub-space-times only have integral current structures if their regions and the boundaries of their regions are also $\hat{d}_g$ rectifiable sets.
\end{defn}

\begin{rmrk}\label{rmrk:int-curr-space-time}
Future work by Sakovich-Sormani will explore the notion of integral current space-times which are timed-metric-spaces endowed with an integral current structure \cite{SakSor-SIF}.   Preliminary work in this direction has begun in work of
Allen, Burtscher and Garc/'ia-Heveling in
\cite{Allen-Burtscher-22}
\cite{Burtscher-Garcia-Heveling-Global}
 \cite{Burtscher-Garcia-Heveling-Time}.
\end{rmrk}

\begin{rmrk}\label{rmrk:space-time-current-disclaimer}
In our definitions of intrinsic distances between causally-null-compactifiable space-times below, we will not emphasize where the integral current structure comes from, but rather just assume that there is an integral current structure on the associated timed-metric-space.  In this way any notion of integral current space-time developed in the future may be applied in combination with one of our notions of intrinsic distance between space-times.
\end{rmrk}

\subsection{{\bf Other Weak Notions of Space-Times}}
\label{sect:space-times-other}

\begin{rmrk}\label{rmrk:C0-spaces}
Perhaps the simplest weak notion of a space-time is a manifold endowed with a Lorentzian metric tensor that is only continuous
For a discussion of such space-times see the
work of Klainerman-Rodnianski-Szeftel 
and Chrusciel-Grant
in \cite{KlRodSz} 
\cite{Chrusciel-Grant-continuous} 
and the survey by Ling \cite{Ling-Aspects}. 
See also Steinbauer's paper
\cite{Steinbauer-singularity-survey}.
It would be interesting to explore the
cosmological time function and null distance
on such space-times. 
\end{rmrk}

\begin{rmrk}\label{rmrk:Lor-length}
There is a weak notion of space-time called a ``Lorentzian length space" which is not a metric space
but has a causal structure.   Early work of Busemann has causal sets  \cite{Busemann-Timelike}.
Alexander-Bishop, Harris, Kunzinger-Saemann study causal sets with length structures
and Lorentzian distances that satisfy the reverse triangle inequality
in \cite{Harris-triangle}\cite{Alexander-Bishop-Lorentz}\cite{Kunzinger-Saemann}. See also work of Alexander, Beran, Graf, Kunzinger, Naper, Rott, S\"amann, Ak\'e Hau, Cabrera Pacheco, Solis, Ebrahimi, Vatandoost, Pourkhandani, Barrera, Montes de Oca in 
\cite{AGKS}\cite{Hau-Cabrera-Pacheco-Solis}\cite{Neda-Mehdi-Rahimeh}\cite{BMdOS}\cite{BNR-TAMS}. 
Kunzinger-Steinbauer and Burtscher-Garc\'ia-Heveling have studied the null distance
on these spaces in
\cite{Kunzinger-Steinbauer-22}
\cite{Burtscher-Garcia-Heveling-Time}\cite{BMdOS}.  
\end{rmrk}

\begin{rmrk}\label{rmrk:not-general}
Our notion of a timed-metric-space, $(X,d,\tau)$, is a metric-space $(X,d)$ endowed with a Lipschitz one time function $\tau:X\to [\tau_{min},\tau_{max}]$
whose causal structure is defined by 
\be
p\in J^+(q) \iff \tau(p)-\tau(q)=d(p,q).
\ee
It is unclear when a timed-metric-space corresponds to a Lorentzian length space or a metric space-time.   This would be interesting to explore especially for $GH$ limits because $GH$ based notions of convergence preserve lengths. 
\end{rmrk}

Note that our notion of a timed-metric-space might be discrete or even finite, so only special cases would ever have any sort of length structure. Including discrete spaces in our definition allows us to use it to study numerical solutions and their convergence
 to smooth solutions in General Relativity. 

 \begin{rmrk}\label{rmrk:discrete}
Discrete spaces are also included in the notion of ``Lorentzian metric spaces" of Minguzzi-Suhr \cite{Minguzzi-Suhr-24}. 
It should be noted that their ``Lorentzian metric spaces" are not metric spaces but causal sets that are topological spaces with a ``Lorentzian distance function" that satisfies some natural properties similar to the Lorentzian distance on a Lorentzian manifold.  It would be interesting to see when one can define a canonical cosmological time function on such a causal set and 
 define a null distance.
 \end{rmrk}
 
\begin{rmrk}\label{rmrk:Dowker}
 There are also the 
 discrete causal sets used as a ``deep structure for space-time" in the work of Dowker  \cite{Dowker} used in the field of Causal Set Cosmology.  It would be interesting to explore when such causal sets have canonical cosmological time functions and a null distances.  
 \end{rmrk}

 \begin{rmrk}\label{rmrk:Mueller-POM}
There are also the partially ordered measured spaces
 studied by M\"uller in \cite{Mueller-Maximality}
 and other papers.  
 \end{rmrk}

Recall also Remark~\ref{rmrk:measure-space-time} on metric measure notions of space-times.

\section{{\bf{Defining Various Notions of Intrinsic Distances between Space-Times}}}
\label{sect:defns}

In this section we will define a variety of intrinsic distances between pairs of causally-null-compactifiable space-times,
\be
d_{S-dist}\Big((N_1,g_1),(N_2,g_2)\Big)
=d_{dist}
\Big((\bar{N}_1,d_{g_1},\tau_{g_1}),(\bar{N}_2,d_{g_2},
\tau_{g_2}))\Big),
\ee
as in Definition~\ref{defn:sd}.
Note that any notion of distance between smooth space-times can be applied to define convergence of smooth space-times:
\be \label{eq:Sdistto}
(N_j,g_j)\Sdistto (N_\infty,g_\infty)
\iff d_{S-dist}( (N_j,g_j), (N_\infty,g_\infty))\to 0.
\ee
We wish to have a notion of convergence to limit spaces that
may no longer be smooth space-times, so we define convergence to timed-metric-spaces (possibly endowed with a measure or integral current structure) by defining distances, $d_{dist}$, between timed-metric-spaces, and setting,
\be \label{eq:distto}
(\bar{N}_j,d_j,\tau_j)\distto (\bar{N}_\infty,d_\infty,\tau_\infty)
\iff d_{dist}\Big( (N_j,d_{j},\tau_{j}),(\bar{N}_\infty,d_\infty,\tau_\infty) \Big)\to 0.
\ee
In this section, we consider any pair of compact timed-metric-spaces, $(\bar{N}_j,d_j,\tau_j)$, as in Definition~\ref{defn:cncst}, and we
introduce a variety of choices of intrinsic distances,
$d_{dist}$, between them.
   
Naturally we may define $d_{dist}$ building upon the Gromov-Hausdorff distance ($GH$) and Sturm-Wasserstein metric measure distance ($mm$) between metric measure spaces, Sormani-Wenger intrinsic flat distance ($\mathcal{F}$) and volume preserving intrinsic flat distance ($\mathcal{VF}$) between integral current spaces.   To keep things simple, we will define everything using only the $GH$ distance, with remarks discussing how each can be adapted for $mm$, $\mathcal{F}$, and $\mathcal{VF}$. 
Many of our distances will involve the following quantities:
\be\label{eq:tau-min}
\tau_{min}=\min\left\{\min_{p\in N_1}\tau_1(p), \min_{p\in N_2}\tau_2(p)\right\}
\ee
and
\be \label{eq:tau-max}
\tau_{max}=\max\left\{\max_{p\in N_1}\tau_1(p), \max_{p\in N_2}\tau_2(p)\right\}
\ee
which are always finite on a compact timed-metric-space because
the cosmological time functions, $\tau_i$, are continuous.

\subsection{{\bf Timeless Intinsic Distances}}
\label{sect:defns-tl}

When Sormani-Vega first defined the null distance in \cite{SV-Null}, their announced plan was to convert a space-time into a metric-space using the cosmological time and then apply the Gromov-Hausdorff, intrinsic flat, or metric measure distance between the spaces as follows:  

\begin{defn}\label{defn:S-tls-GH}
The {\bf timeless 
space-time Gromov-Hausdorff distance between null compactifiable space-times} is defined
\index{fxc@$d^{tls}_{S-GH}$}
\be
d^{tls}_{S-GH}\Big((N_1,g_1),(N_2,g_2)\Big)
=d_{GH}
\left(\big(\bar{N}_1,\hat{d}_{g_1}\big),
\big(\bar{N}_2,\hat{d}_{g_2}\big)\right).
\ee
\end{defn}

\begin{rmrk}
\label{rmrk:timeless}
The timeless intrinsic flat distances, $d^{tls}_{S-{\mathcal F}}$ 
and 
$d^{tls}_{S-{\mathcal VF}}$,
\index{fxda@$d^{tls}_{S-{\mathcal F}}$ and $d^{tls}_{S-{\mathcal VF}}$}
and the timeless metric measure, $d^{tls}_{S-{mm}}$, 
\index{fxe@$d^{tls}_{S-{mm}}$}
distances can be defined in a similar way using the time only to define the null distance but then not involving time in the definition of the space-time distance between the spaces.
\end{rmrk}

These timeless notions have been studied in work by Vega, Sormani, Sakovich, Kunzinger, Steinbauer, Allen, Burtscher, Graf, Garci\'a-Heveling  
\cite{Sor-Ober-18}
\cite{Kunzinger-Steinbauer-22}
\cite{Allen-Burtscher-22} 
\cite{Graf-Sormani}
\cite{Burtscher-Garcia-Heveling-Global} 
\cite{Allen-Null}
\cite{SakSor-SIF}.

Note that these timeless intrinsic space-time distances do not control 
the cosmological time functions and thus have no control over the causal structure.  They are not definite.  See examples in Section~\ref{sect:definite-tl}.

\subsection{{\bf Level Matching Intrinsic Distances}}
\label{sect:defns-S-L}

It is then natural to consider the level sets
$\tau_i^{-1}(t)\subset \Omega(R_i)$ as metric-spaces with
the restricted or induced length distance functions.   
Since one can always derive the induced length distance
functions from the restricted distance functions, we will
use the restricted ones below.

\begin{defn}\label{defn:S-L-sup-GH}
The {\bf level sup $GH$ distance between null compactifiable space-times}:
\index{ga@$d_{S-GH}^{L-sup}$ and $d_{GH}^{L-sup}$}
\be \label{eq:S-L-sup-GH}
d_{S-GH}^{L-sup}\Big((N_1,g_1),(N_2,g_2)\Big)=
d_{GH}^{L-sup}
\left(\big(\bar{N}_1,\hat{d}_{g_1},\tau_{g_1}\big),
\big(\bar{N}_2,\hat{d}_{g_2},\tau_{g_2}\big)\right),
\ee
where
the {\bf level sup $GH$ distance between
compact timed-metric-spaces} is
\be
d_{GH}^{L-sup}
\left(\big(\bar{N}_1,\hat{d}_{1},\tau_{1}\big),
\big(\bar{N}_2,\hat{d}_{2},\tau_{2}\big)\right)
=
\sup d_{GH}\left(\big(\tau_1^{-1}(t),\hat{d}_{1}\big),
\big(\tau_2^{-1}(t),\hat{d}_{2}\big)\right).
\ee
with the sup taken over all
$t\in [\tau_{min},\tau_{max}]$ and
where $\tau_{min}$ and $\tau_{max}$
are as defined in (\ref{eq:tau-min}) and (\ref{eq:tau-max}).
\end{defn}

\begin{rmrk}\label{rmrk:SL-sup-F}
We can similarly define 
$d^{L-sup}_{S-{\mathcal{F}}}$,
$d^{L-sup}_{S-{\mathcal{VF}}}$
and $d^{L-sup}_{S-mm}$,
\index{gc@$d^{L-sup}_{S-{\mathcal{F}}}$ and $d^{L-sup}_{S-{\mathcal{VF}}}$}
\index{ge@$d^{L-sup}_{S-mm}$}
between space-times
with integral current structures and measures
on their level sets.
For $\mathcal{F}$ and $\mathcal{VF}$ distances it would be more natural to use the essential sup here because intrinsic flat distances are only
defined for almost every level set.  
This will appear in future work of Sakovich-Sormani \cite{SakSor-SIF}.
We leave to others to develop the corresponding notion for metric measure spaces.
\end{rmrk}

As a sup norm is often far stronger than one can hope to control time, it is natural to make the following weaker definition:

\begin{defn}\label{defn:S-L-ell-GH}
The {\bf $\ell^p$ 
 level $GH$ distance between null compactifiable space-times}:
 \index{h@$d_{S-GH}^{L-\ell p}$ and $d_{GH}^{L-\ell p}$}
 \be \label{eq:SL-ell-GH}
d_{S-GH}^{L-\ell p}\Big((N_1,g_1),(N_2,g_2)\Big)=
d_{GH}^{L-\ell p}
\left(\big(\bar{N}_1,\hat{d}_{g_1},\tau_{g_1}\big),
\big(\bar{N}_2,\hat{d}_{g_2},\tau_{g_2}\big)\right),
\ee
where
$$
d_{GH}^{L-\ell p}
\left(\big(\bar{N}_1,\hat{d}_{1},\tau_{1}\big),
\big(\bar{N}_2,\hat{d}_{2},\tau_{2}\big)\right)
=\int \left| d_{GH}\left(\big(\tau_1^{-1}(t),\hat{d}_{1}\big),
\big(\tau_2^{-1}(t),\hat{d}_{2}\big)\right)
\right|^p \, dt.
$$
where the integral
is over 
$t\in [\tau_{min},\tau_{max}]$ and
where $\tau_{min}$ and $\tau_{max}$
are as defined in (\ref{eq:tau-min}) and (\ref{eq:tau-max}).
\end{defn}

\begin{rmrk}\label{rmrk:SL-ell-F}
We can similarly define the  $\ell^p$ 
 level ${\mathcal{F}}$ 
 and ${\mathcal{VF}}$ distances between space-times, $d_{S-{\mathcal{F}}}^{L-\ell p}$
 and $d_{S-{\mathcal{VF}}}^{L-\ell p}$,
 \index{ja@$d_{S-{\mathcal{F}}}^{L-\ell p}$ and $d_{S-{\mathcal{VF}}}^{L-\ell p}$}
 .
In light 
 of the Ambrosio-Kirchheim Slicing Theorem \cite{AK} the $\ell^1$ level ${\mathcal{F}}$ distance between space-times is particularly interesting and will be studied further in upcoming work of Sakovich-Sormani \cite{SakSor-SIF}.  We leave to others to develop the corresponding notion for metric measure spaces,
 $d_{S-{mm}}^{L-\ell p}$
 \index{jc@$d_{S-{mm}}^{L-\ell p}$}.
 \end{rmrk}

Example~\ref{ex:levels-match} in Section~\ref{sect:definite-S-L} show all these level-set based notions of intrinsic distances between space-times are not definite.   However they are of some interest because convergence with respect to these intrinsic pseudodistances has some interesting consequences.  

\subsection{{\bf Big Bang Intrinsic Distances}}
\label{sect:defns-BB}

Sormani-Vega had unpublished joint work 
\cite{SV-BigBang} defining convergence of big bang space-times, 
first announced 
by Sormani in \cite{Sor-Ober-18}, using the null distance.
Their plan was to use the fact that cosmological time is the distance from the big bang point,
\be\label{eq:BB-dist-tau-2}
\tau_i(x)=\hat{d}_i(x,p_{BB,i}) \quad \forall \, x\in \bar{N}_i,
\ee
and planned to use pointed $GH$ or pointed intrinsic flat convergence
\be
(\bar{N}_i, \hat{d}_i, p_{BB,i})\to (\bar{N}_\infty, \hat{d}_\infty, p_{BB,\infty}),
\ee
to keep track of the time in the limit:
$\tau_i\to \tau_\infty$ as well.
Note that Vega went in other directions with the idea of what a big bang space-time should be 
and so they never developed and published their  ideas in
\cite{SV-BigBang}
\cite{Sor-Ober-18} and most likely never will. 
See further work by Vega in \cite{Vega-space-time}.

In this paper, we define a big bang space-time as in Definition~\ref{defn:BB}
as a causally-null-compactifiable space-time with a single big bang point in the initial data set satisfying (\ref{eq:BB-dist-tau-2})
which includes FLRW big bang space-times as mentioned in Remark~\ref{rmrk-SV-FLRW}. 
We imitate the definition of pointed $GH$ convergence in the following definition:

\begin{defn} \label{defn:BB-GH}
The {\bf intrinsic $S-BB-GH$ distance between null compactifable big bang space-times}
is 
\index{ka@$d_{S-BB-GH}$ and $d_{BB-GH}$}
\be \label{eq:BB-GH-1}
d_{S-BB-GH}\Big((N_1,g_1),(N_2,g_2)\Big)=
d_{BB-GH}\left(\big(\bar{N}_1,\hat{d}_{g_1}, \tau_1 \big),
\big(\bar{N}_2,\hat{d}_{g_2}, \tau_2\big)\right)
\ee
where the $BB-GH$ distance between 
big bang timed-metric-spaces is
the pointed $GH$ distance,
\be
d_{BB-GH}\left(\big(\bar{N}_1,\hat{d}_{1}, \tau_1\big),
\big(\bar{N}_2,\hat{d}_{2}, \tau_2 \big)\right)=
d_{pt-GH}\left(\big(\bar{N}_1,\hat{d}_{1}, p_{BB,1}\big),
\big(\bar{N}_2,\hat{d}_{2}, p_{BB,2}\big)\right),
\ee
based at the big bang points, $p_{BB,i}=\tau_i^{-1}(0)$
and where the pointed $GH$ distance 
is defined similarly to the
GH distance in Definition~\ref{defn:GH}  
by the
following infimum over all distance preserving maps $\varphi_i: \bar{N}_i \to Z$  and over all compact $Z$:
$d_{pt-GH}\left(\big(\bar{N}_1,\hat{d}_{1}, p_{BB,1}\big),
\big(\bar{N}_2,\hat{d}_{2}, p_{BB,2}\big)\right)=$
\be \label{eq:BB-GH-2}
=\inf \Bigg( d_H^Z\big(\varphi_1(\bar{N}_1),\varphi_2(\bar{N}_2)\big)
+ d_Z(\varphi_1(p_{BB,1}),\varphi_2(p_{BB,2}))
\Bigg)
\ee
\end{defn}

We will prove this distance between causally-null-compactifiable space-times is definite in Theorem~\ref{thm:BB-GH-definite}.

\begin{rmrk}\label{rmrk:BB-GH}
With the strong control over the basepoints in Definition~\ref{defn:BB-GH}, convergence of big bang space-times with respect to the intrinsic $S-BB-GH$ distance implies the big bang points converge to big bang points in the limit as in the original plan by Sormani-Vega \cite{SV-BigBang}.  The distances between points converge so this implies that the $\tau_i \to \tau_\infty$ as well.      
\end{rmrk}

\begin{rmrk}\label{rmrk-BB-F}
We can similarly define the intrinsic $S-BB-{\mathcal{F}}$, $S-BB-{\mathcal{VF}}$, and $S-BB-mm$ distances,
$d_{S-BB-{\mathcal{F}}}$, $d_{S-BB-{\mathcal{VF}}}$
and $d_{S-BB-mm}$,
 \index{kb@$d_{S-BB-{\mathcal{F}}}$ and $d_{S-BB-{\mathcal{VF}}}$}
\index{kd@$d_{S-BB-mm}$}
between big bang space-times
with integral current structures and measures
respectively.
However, there are some subtleties now.   It is possible for a sequence of big bang space-times to become thin near the big bang points so that the big bang points disappear under $BB-{\mathcal{F}}$, $BB-{\mathcal{VF}}$, and $BB-mm$.   To compensate for this possibility and allow for such sequences of space-times to converge, we will define the intrinsic timed notions of convergence in Section~\ref{sect:defns-tau-K}.
\end{rmrk}

\subsection{{\bf Future-Developed Intrinsic Distances}}
\label{sect:defns-FD}

Given a compact initial data set, $(M,h,k)$, satisfying the Einstein constraint equations, 
Choquet-Bruhat proved there is a globally hyperbolic space-time, $(N,g)$,
satisfying the Einstein vacuum equations such that $M$ lies as a Cauchy surface in $N$ with restricted metric tensor $h$ and second fundamental form $k$
\cite{Ch-Br}. In joint work with Geroch, she proved
there is a unique maximal development, $(N,g)$, which contains all such space-times \cite {Ch-Br-Ge}. Similar results are also proven for various other solutions to Einstein's Equations with various matter models. Many mathematicians have studied the stability of solutions to the
Einstein equations: Suppose $(M_j,h_j,K_j)\to (M_\infty, h_\infty,K_\infty)$, can one conclude that the global maximal developments converge, $(N_j,g_j)\to(N_\infty,g_\infty)$. 
For a survey of known results see the
books edited by Choquet-Bruhat \cite{Choquet-Bruhat-compact} and 
written by Ringstrom 
\cite{Ringstrom-Cauchy-text}.

To use the ideas we are developing in this paper we first convert the $(N_j,g_j)$ into timed-metric-spaces using the null distance
in Section~\ref{sect:space-times-initial-data}.  Consider a pair of
causally-null-compactifiable future-developed space-times, $(N_{j,+},g_j)$,
with initial surfaces, $(M_j,h_j,K_j)$,
as in Definition~\ref{defn:FD}.
These have
associated compact timed-metric-space, $(\bar{N}_{j,+},\hat{d}_{g_j},\tau_{g_j})$
with $M_j=\tau_{g_j}^{-1}(0)\subset \bar{N}_{j,+}$, and
$\tau_{g_j}$ is the distance to this initial data set.  Thus we can control time and causal structure
as long as we control this initial data set.   We introduce the following notion:

\begin{defn}\label{defn:FD-HH}
The {\bf intrinsic $S-FD-HH$ distance between causally-null-compactifiable future-developed space-times} is defined 
\index{la@$d_{S-FD-HH}$ and $d_{FD-HH}$}
\be \label{lc@eq:FD-HH-1}
d_{S-FD-HH}\Big((N_{1,+},g_1),(N_{2,+},g_2)\Big)=
d_{FD-HH}\left(\big(\bar{N}_{1,+},
M_1,\hat{d}_{g_1}\big),
\big(\bar{N}_{2,+}, M_2,\hat{d}_{g_2}\big)\right),
\ee
where the $FD-HH$ distance between
timed-metric-spaces satisfying the distance to
initial data property as in
(\ref{eq:init-data-prop-2})
is 
\be \label{eq:FD-HH-2}
=\inf \Bigg( d_H^Z\big(\varphi_1(\bar{N}_{1,+}),\varphi_2(\bar{N}_{2,+})\big)
+ 
d_H^Z\big(\varphi_1(M_1),\varphi_2(M_2)\big)
\Bigg),
\ee
where the infimum is over all distance preserving maps $\varphi_i: \bar{N}_i \to Z$  and over all complete $Z$.
\end{defn}

We will prove this distance between causally-null-compactifiable space-times is definite in Theorem~\ref{thm:FD-HH}.   So we can define convergence using this notion as follows:

\begin{defn}\label{defn:FD-conv}
We say that a sequence of causally-null-compactifiable future-developed space-times $(N_{j,+},g_j)$ converges to $(N_{\infty,+},g_\infty)$ in the
$d_{S-FD-HH}$ sense iff
\be \label{eq:FD-HH-conv-1}
d_{S-FD-HH}\Big((N_{j,+},g_j),(N_{\infty,+},g_\infty)\Big)\to 0.
\ee
See Figure~\ref{fig:FD-conv}.
\end{defn}

\begin{figure}[h] 
   \centering
   \includegraphics[width=5in]{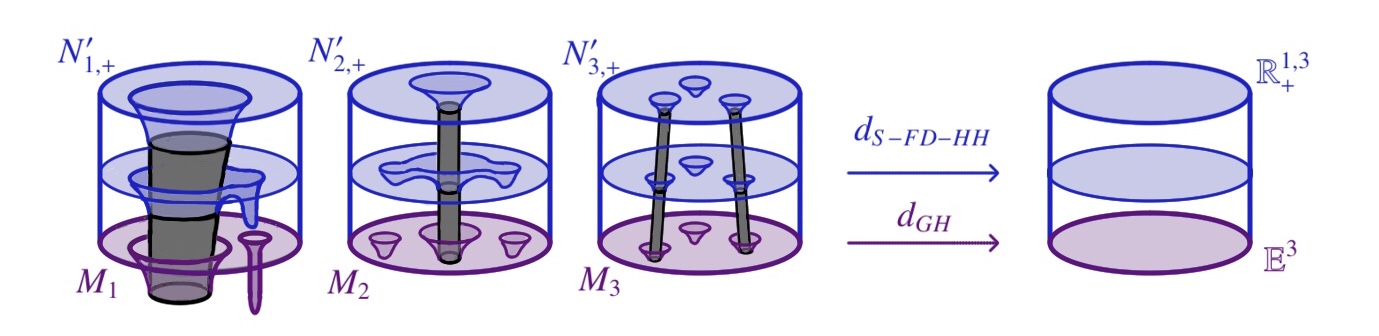}
   \caption{ Here $(N'_{j,+},g_j)$ are exteriors of space-times with merging black holes and stars where the stars and the black holes are decreasing in size as $j\to \infty$ so that $(N'_{j,+},g_j)$ intuitively appears to converge in the $d_{S-FD-HH}$ sense to future Minkowski space, $({\mathbb R}^{1,3}_+, g_{Mink})$, and the initial data $(M'_j,h_j)$ converge in the $GH$ sense to Euclidean
   space $({\mathbb E}^3,g_{\mathbb E})$
   as in Definitions~\ref{defn:FD-HH}-~\ref{defn:FD-conv}.
   }\label{fig:FD-conv}
\end{figure}

\begin{rmrk}
This notion of $FD-HH$ convergence agrees with
the
notion
of Gromov-Hausdorff Convergence for Metric Pairs introduced by Che, Galaz-Garc\'ia, Guijarro,
and Membrillo Solis in \cite{CGGGMS}.
\end{rmrk}

\begin{rmrk}\label{rmrk:FD-FF}
The intrinsic $S-FD-{FF}$ 
 distance, $d_{S-FD-FF}$,
 \index{ld@$d_{S-FD-FF}$}
 between causally-null-compactifiable future-developed space-times would be similarly
defined by taking the sum of the
flat distances in $Z$.   If we wish to add volume preserving we
could add a difference in volumes as well.  Further work in this direction is in progress by Sakovich and Sormani.
\end{rmrk}

\begin{rmrk} \label{rmrk:FD-HF}
It is also possible that one might wish to mix notions of distance
defining the intrinsic $S-FD-FH$,
 distance, $d_{S-FD-FH}$,
 \index{lf@$d_{S-FD-FH}$}, between causally-null-compactifiable future-developed space-times taking the flat distance between the images of
 the space-times and the Hausdorff distance between the images of the
 initial data sets. This has
 the advantage of controlling the initial data and thus the time function and causal structure more strongly than if the initial data set were allowed to have regions which disappear. 
 \end{rmrk}

 \begin{rmrk}\label{rmrk:FD-mm}
The intrinsic $S-FD-mm$
and $S-FD-mH$
 distances,
 $d_{S-FD-mm}$,
 \index{lw@$d_{S-FD-mm}$}
 and 
 $d_{S-FD-mH}$,
 \index{lx@$d_{S-FD-mH}$}
 between causally-null-compactifiable future-developed space-times can also be defined using Wasserstein distances or
 combinations of Wasserstein and
 Hausdorff distances.   Further work in this direction is in progress by Perales and Mondino.
 \end{rmrk}

\begin{rmrk} \label{rmrk:Che-Gomez}
Mauricio Che and Andr\'es Ahumada G\'omez
 have developed a pre-compactness theorem
for GH convergence of metric pairs 
in \cite{Che-Gomez-pairs}. 
They believe they may be able to apply to 
obtain a $S-FD-HH$ precompactness
theorem for sequences of causally null compactifiable future developed space-times.
Further work in this direction will appear soon.
\end{rmrk}

\subsection{{\bf Strip-based Intrinsic Distances}}
\label{sect:defns-strip}
The notion of an intrinsic distance between space-times
using cosmic strips, that we publish for the first time here in this section, was first announced by Sakovich-Sormani at their talks at the Fields Institute in 
2022.   

First let us recall a few facts about cosmic strips in space-times.  In Theorem~\ref{thm:proper} we proved that space-times, $(N_j,g_j)$,
with proper regular cosmological time functions, $\tau_j$, have causally-null-compactifiable cosmic strips,
$N_{j,s,t}=\tau_j^{-1}(s,t)$
with a number of nice properties.
In Theorem~\ref{thm:local-cosmo}
we extended this to a larger
class of space-times with more general time functions.
We also presented examples of regions which exhaust space-times, which can be viewed as space-times themselves with causally-null-compactifiable strips. In this section we study intrinsic distances
between such space-times.

\begin{defn}\label{defn:strip-sup-GH}
The {\bf strip sup $GH$ distance between null compactifiable space-times}:
\index{maa@$d_{S-GH}^{strip-sup}$ and $d_{GH}^{strip-sup}$}
\be \label{eq:strip-sup-GH}
d_{S-GH}^{strip-sup}\Big((N_1,g_1),(N_2,g_2)\Big)=
d_{GH}^{strip-sup}
\left(\big(\bar{N}_1,\hat{d}_{g_1},\tau_{g_1}\big),
\big(\bar{N}_2,\hat{d}_{g_2},\tau_{g_2}\big)\right),
\ee
where the {\bf strip sup $GH$ distance between
compact timed-metric-spaces} is
$$
d_{GH}^{strip-sup}
\left(\big(\bar{N}_1,\hat{d}_{1},\tau_{1}\big),
\big(\bar{N}_2,\hat{d}_{2},\tau_{2}\big)\right)
=
\sup d_{GH}\left(\big(\tau_1^{-1}[s,t],\hat{d}_{1}\big),
\big(\tau_2^{-1}[s,t],\hat{d}_{2}\big)\right).
$$
with the sup taken
over all $s,t\in [\tau_{min},\tau_{max}]$
with $s\le t$
and
where $\tau_{min}$ and $\tau_{max}$
are as defined in (\ref{eq:tau-min}) and (\ref{eq:tau-max}).
\end{defn}

\begin{rmrk}\label{rmrk:strip-sup-F}
We can similarly define 
$d^{strip-sup}_{S-{\mathcal{F}}}$,
$d^{strip-sup}_{S-{\mathcal{VF}}}$
and $d^{strip-sup}_{S-mm}$,
 \index{mc@$d^{strip-sup}_{S-{\mathcal{F}}}$ and $d^{strip-sup}_{S-{\mathcal{VF}}}$}
\index{me@$d^{strip-sup}_{S-mm}$}
between cosmic strips
with integral current structures and measures
respectively.
For $\mathcal{F}$ and $\mathcal{VF}$ distances it would be more natural to use the essential sup here because intrinsic flat distances are only
defined for almost every strip.  So we define the
{\bf esssup strip $\mathcal{F}$ distance between space-times}
as follows,
\be \label{eq:SL-sup-F}
d_{S-{\mathcal{F}}}^{strip-sup}\Big((N_1,g_1),(N_2,g_2)\Big)=
esssup \,d_{\mathcal{F}}\left(\big(\tau_1^{-1}(s,t),\hat{d}_{1}\big),
\big(\tau_2^{-1}(s,t),\hat{d}_{2}\big)\right),
\ee
with the $esssup$ taken
over all $s,t\in [\tau_{min},\tau_{max}]$
with $s\le t$
and similarly the {\bf esssup level $\mathcal{VF}$ distance between space-times}.   We leave to others to develop the corresponding notion for metric measure spaces.
\end{rmrk}

As a sup norm is often far stronger than one can hope to control time, it is natural to make the following weaker definition:

\begin{defn}\label{defn:strip-ell-GH}
The {\bf $\ell^p$ 
 level $GH$ distance between null compactifiable space-times}: 
 \index{na@$d_{S-GH}^{strip-\ell p}$ and $d_{GH}^{strip-\ell p}$}
 \be \label{eq:strip-ell-GH}
d_{S-GH}^{strip-\ell p}\Big((N_1,g_1),(N_2,g_2)\Big)=
d_{GH}^{strip-\ell p}
\left(\big(\bar{N}_1,\hat{d}_{g_1},\tau_{g_1}\big),
\big(\bar{N}_2,\hat{d}_{g_2},\tau_{g_2}\big)\right)
\ee
where
$$
d_{GH}^{strip-\ell p}
\left(\big(\bar{N}_1,\hat{d}_{1},\tau_{1}\big),
\big(\bar{N}_2,\hat{d}_{2},\tau_{2}\big)\right)
=
\int \left| d_{GH}\left(\big(\tau_1^{-1}(s,t),\hat{d}_{1}\big),
\big(\tau_2^{-1}(s,t),\hat{d}_{2}\big)\right)
\right|^p \, ds \,dt
$$
where the 
integration is over
\be
\{(s,t): \, \tau_{min}\le s<t \le \tau_{max}\,\}.
\ee
\end{defn}

\begin{rmrk}\label{rmrk:strip-ell-F}
We can similarly define 
$d^{strip-\ell p}_{S-{\mathcal{F}}}$,
$d^{strip-\ell p}_{S-{\mathcal{VF}}}$
and $d^{strip-\ell p}_{S-mm}$,
 \index{nc@$d^{strip-\ell p}_{S-{\mathcal{F}}}$ and $d^{strip-\ell p}_{S-{\mathcal{VF}}}$}
\index{ne@$d^{strip-\ell p}_{S-mm}$}
between cosmic strips
with integral current structures and measures
respectively.
In light 
 of the Ambrosio-Kirchheim Slicing Theorem \cite{AK} the $\ell^1$ level ${\mathcal{F}}$ distance between space-times is particularly interesting and will be studied further in upcoming work of Sakovich-Sormani.  We leave to others to develop the corresponding notion for metric measure spaces.
 \end{rmrk}

In Section~\ref{sect:definite-strip} we discuss the difficulties that arise when trying to prove these strip-based definitions of intrinsic flat distances are definite.   It is an open question whether they are definite or not.

\subsection{{\bf Intrinsic Timed Distances between Timed-Metric-Spaces}}
\label{sect:defns-tau-K}

In this section we introduce what may be the best intrinsic distance between causally-null-compactifiable space-times.   It applies to a far larger class of space-times than the intrinsic distances between big bang and future-developed space-times,
and we prove the notion is definite.

Recall that in Section~\ref{sect:back-Kuratowski} we reviewed the
notion of a Fr\'echet map,
$\kappa_X: X\to \ell^\infty$,
and discussed how alternate definitions of the Gromov-Hausdorff, metric measure, and intrinsic flat distances use these Fr\'echet maps in place of arbitrary distance preserving maps into complete metric-spaces.  

Here we introduce the notion of a timed-Fr\'echet map in
Definition~\ref{defn:tau-K}.\footnote{In our v1 post on the arxiv we accidentally named these maps timed-Kuratowski maps because we accidentally attributed the Fr\'echet maps to Kuratowski.}
We apply this notion to define 
intrinsic timed-Hausdorff distances and other timed distances between timed-metric-spaces in
Definition~\ref{defn:tau-K-dist}
and Remark~\ref{rmrk:tau-K-dist}.  
We prove the timed-Fr\'echet map is distance and time preserving in Propositions~\ref{prop:tau-K-dist-pres}
and~\ref{prop:tau-K-time-pres} which we will apply later to prove the intrinsic timed-Hausdorff distance  is definite
in Section~\ref{sect:definite-tau-K}.  

Recall that in Definition~\ref{defn:cncst}, a compact timed-metric-space, $(X,d,\tau)$, is a compact metric-space endowed with a Lipschitz one time function, $\tau:X \to [\tau_{min},\tau_{max}]$, and
a causal structure defined as in (\ref{eq:causal}).

\begin{defn}\label{defn:tau-K}
Given a compact timed-metric-space, $(X,d,\tau)$,
and a countably dense collection of points,
\be
{\mathcal N}=\{
x_1,x_2,x_3,...\}\subset X
\ee
we define the
timed-Fr\'echet map, 
\index{pa@$\kappa_{\tau,X}=\kappa_{\tau,X,{\mathcal N}}$}
$
\kappa_{\tau,X}
=\kappa_{\tau,X,{\mathcal N}}: X \to [0,\tau_{max}]\times \ell^\infty\subset \ell^\infty
$,
as follows:
\textcolor{blue}{
\be
\kappa_{\tau,X}(x)=
(\tau(x),\kappa_{X,{\mathcal N}}(x))=
(\tau(x), d(x_1,x),
d(x_2,x),
d(x_3,x),...).
\ee}
\end{defn}

We can now define new
intrinsic distances between causally-null-compactifiable space-times:

\begin{defn}
\label{defn:tau-K-dist}
The {\bf intrinsic 
Timed-Hausdorff
distance
between two causally-null-compactifiable space-times}, \index{pb@$d_{S-\tau-H}$ and $d_{\tau-H}$}
\be
d_{S-\tau-H}\Big((N_1,g_1),(N_2,g_2)\Big)
=d_{\tau-H}
\left(\big(\bar{N}_1,\hat{d}_{g_1},\tau_{g_1}\big),
\big(\bar{N}_2,\hat{d}_{g_2},\tau_{g_2}\big)\right),
\ee
where the {\bf
intrinsic 
timed-Hausdorff
distance
between two compact timed-metric-spaces}
\be \label{eq:tK-GH-1}
d_{\tau-H}
\Big((X_1,d_1,\tau_1),(X_2,d_2,\tau_2)\Big)=
\inf\, d^{\ell^\infty}_H(
\kappa_{\tau_1,X_1}(X_1),
\kappa_{\tau_2,X_2}(X_2))
\ee
where the infimum is taken over all possible 
timed-Fr\'echet maps,
$\kappa_{\tau_1,X_1}:X_1\to \ell^\infty$ and
$\kappa_{\tau_2,X_2}:X_2\to \ell^\infty$ which are
found by considering all countable dense collections of points in the $X_i$ and all reorderings of these collections of points.
\end{defn}

\begin{rmrk}\label{rmrk:tau-K-dist}
Similarly we may define the 
intrinsic timed-mm 
and intrinsic timed-flat distances, 
$d_{S-\tau-mm}$,
$d_{S-\tau-F}$ and
$d_{S-\tau-VF}$
\index{pd@$d_{S-\tau-mm}$}
\index{pe@$d_{S-\tau-F}$ and $d_{S-\tau-VF}$}.   We leave the metric measure notions to Perales and Mondino and explore the intrinsic flat notion ourselves in upcoming work
\cite{SakSor-SIF}.
\end{rmrk}

The following propositions provide some intuition about timed-Fr\'echet maps and will be applied later when proving definiteness:

\begin{prop}
\label{prop:tau-K-dist-pres}
Any
timed-Fr\'echet map, 
$
\kappa_{\tau,X}: X \to [0,\tau_{max}]\times \ell^\infty\subset \ell^\infty,
$
is distance preserving,
\be
d_{\ell^\infty}(\kappa_{\tau,X}(x),
\kappa_{\tau,X}(y))=d(x,y),
\ee
for any metric-space, $(X,d)$, with a Lipschitz one time function, $\tau:X\to[0,\tau_{max}]$
and any choice of countably dense points in $X$.
\end{prop}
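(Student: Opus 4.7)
The plan is to reduce this to the classical Kuratowski theorem (Theorem~\ref{thm:K-dist-pres} in the excerpt) by observing that prepending the coordinate $\tau(x)$ to the Kuratowski embedding can only affect the $\ell^\infty$ distance if $|\tau(x)-\tau(y)|$ exceeds the sup over the other coordinates, and this cannot happen because $\tau$ is Lipschitz one.

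More concretely, I would first write out
\be
d_{\ell^\infty}\bigl(\kappa_{\tau,X}(x),\kappa_{\tau,X}(y)\bigr)
=\max\Bigl\{|\tau(x)-\tau(y)|,\ \sup_{i\in\mathbb N}|d(x_i,x)-d(x_i,y)|\Bigr\}.
\ee
For the upper bound, the reverse triangle inequality gives $|d(x_i,x)-d(x_i,y)|\le d(x,y)$ for every $i$, and the Lipschitz one hypothesis on $\tau$ gives $|\tau(x)-\tau(y)|\le d(x,y)$, so the max is $\le d(x,y)$.

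For the matching lower bound I would use density of $\{x_i\}$ in $X$: choose a subsequence $x_{i_j}\to y$, so that $d(x_{i_j},y)\to 0$ and hence, by the triangle inequality, $d(x_{i_j},x)\to d(x,y)$. Therefore $|d(x_{i_j},x)-d(x_{i_j},y)|\to d(x,y)$, and so the sup over $i$ is at least $d(x,y)$. Combining the two bounds yields equality, proving the claim.

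There is no real obstacle here; this is essentially Kuratowski's argument (as reproduced in Theorem~\ref{thm:K-dist-pres}) with a harmless extra coordinate. The only point worth emphasizing in the write-up is that the Lipschitz one property of $\tau$ is exactly what is needed so that the new first coordinate does not enlarge the $\ell^\infty$ distance beyond $d(x,y)$; any $\tau$ with Lipschitz constant $>1$ would fail the upper bound and the analogous statement would be false.
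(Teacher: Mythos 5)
Your proof is correct and follows essentially the same approach as the paper's: both decompose $d_{\ell^\infty}(\kappa_{\tau,X}(x),\kappa_{\tau,X}(y))$ as the maximum of $|\tau(x)-\tau(y)|$ and the Kuratowski distance, and both use the Lipschitz one hypothesis to show the first term cannot exceed $d(x,y)$. The only difference is that the paper simply cites Theorem~\ref{thm:K-dist-pres} for the equality $d_{\ell^\infty}(\kappa_X(x),\kappa_X(y))=d(x,y)$, whereas you inline that theorem's reverse-triangle-inequality and density argument.
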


\begin{proof}
Let $x,y\in X$, then
\be
d_{\ell^\infty}(\kappa_{\tau,X}(x),
\kappa_{\tau,X}(y))
=\max\{|\tau(x)-\tau(y)|,
d_{\ell^\infty}(\kappa(x),
\kappa(y))\}.
\ee
Since the Fr\'echet map is distance preserving (cf. Theorem~\ref{thm:K-dist-pres})
and $\tau$ is Lipschitz one,
\be
d_{\ell^\infty}(\kappa(x),
\kappa(y))=
d_X(x,y)\ge |\tau(x)-\tau(y)|.
\ee
Thus
\be
d_{\ell^\infty}(\kappa(x),
\kappa(y))=\max\{
|\tau(x)-\tau(y)|,
d_X(x,y)\}=d_X(x,y).
\ee
\end{proof}

\begin{prop}
\label{prop:tau-K-time-pres}
If we consider the infinite dimensional
timed-metric-space, 
\be
([0,\tau_{max}]\times \ell^\infty,
d_{\ell^\infty},\tau_\infty
),
\ee
where 
\be
\tau_\infty(w_0,w_1,w_2,...)=w_0
\ee
is endowed with a causal structure
defined by
\be
w\in J^+(z)
\iff 
\tau_\infty(w)-\tau_\infty(z)=
d_{\ell^\infty}(w,z)
\ee
then the
timed-Fr\'echet map 
$
\kappa_{\tau,X}: X \to [0,\tau_{max}]\times \ell^\infty\subset \ell^\infty
$
is time preserving:
\be\label{eq:tau-K-time-pres}
\tau_\infty(\kappa_{\tau,X}(x))=\tau(x)
\ee
and preserves causality:
\be\label{eq:tau-K-causal-pres}
\kappa_{\tau,X}(p)\in J^+(\kappa_{\tau,X}(q))
\iff p\in J^+(q)
\ee
for any metric-space, $(X,d)$, with a Lipschitz one time function, $\tau:X\to[0,\tau_{max}]$ that
encodes causality as in (\ref{eq:causal}).
\end{prop}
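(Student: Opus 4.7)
The proof is a direct chain: time preservation is immediate from the construction, distance preservation was just proved in Proposition~\ref{prop:tau-K-dist-pres}, and the causal equivalence then reduces to the encoding of causality hypothesis on $(X,d,\tau)$ transplanted into $\ell^\infty$ via the definition of the causal structure on $([0,\tau_{max}]\times\ell^\infty,d_{\ell^\infty},\tau_\infty)$.

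First, I would verify (\ref{eq:tau-K-time-pres}) by simply unpacking the definitions. By Definition~\ref{defn:tau-K}, the zeroth coordinate of $\kappa_{\tau,X}(x)$ is $\tau(x)$, and by hypothesis $\tau_\infty$ picks off exactly this coordinate, so $\tau_\infty(\kappa_{\tau,X}(x))=\tau(x)$ without any computation. This is really just a sanity check that the construction does what its name promises.

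Next, I would establish (\ref{eq:tau-K-causal-pres}) by reducing both sides to the same equation on $X$. Fix $p,q\in X$ and set $w=\kappa_{\tau,X}(p)$, $z=\kappa_{\tau,X}(q)$. By the definition of the causal structure on the target, $w\in J^+(z)$ means
\begin{equation}
\tau_\infty(w)-\tau_\infty(z)=d_{\ell^\infty}(w,z).
\end{equation}
Applying (\ref{eq:tau-K-time-pres}) rewrites the left side as $\tau(p)-\tau(q)$, and applying Proposition~\ref{prop:tau-K-dist-pres} rewrites the right side as $d(p,q)$. So $w\in J^+(z)$ is equivalent to $\tau(p)-\tau(q)=d(p,q)$, which by the hypothesis that $\tau$ encodes causality on $X$ (as in (\ref{eq:causal})) is equivalent to $p\in J^+(q)$.

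There is really no obstacle here; the proposition is essentially a formal consequence of the two preceding building blocks plus the definition of $\tau_\infty$. The only thing to be careful about is the logical direction of the encoding-causality hypothesis: both implications in (\ref{eq:tau-K-causal-pres}) come for free because (\ref{eq:causal}) is itself a biconditional. If one wanted to drop the hypothesis that $\tau$ already encodes causality on $X$, then the forward implication (causal in $X$ implies causal in $\ell^\infty$) would still follow from the Lipschitz-one property of $\tau$ together with distance preservation, but the reverse implication would fail in general; this is why the hypothesis is stated as an ``iff'' via (\ref{eq:causal}) and why causally-null-compactifiability is the natural setting for this statement.
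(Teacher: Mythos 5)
Your proof is correct and follows exactly the same route as the paper's (which simply cites the definition of $\tau_\infty$ for time preservation, and then combines (\ref{eq:tau-K-time-pres}), (\ref{eq:causal}), and Proposition~\ref{prop:tau-K-dist-pres} for the causal equivalence). Your version just spells out the substitution $w=\kappa_{\tau,X}(p)$, $z=\kappa_{\tau,X}(q)$ explicitly, which the paper leaves implicit.
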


\begin{proof}
The first claim that
(\ref{eq:tau-K-time-pres}) holds follows from the definition of $\tau_\infty$ and the second claim that (\ref{eq:tau-K-causal-pres}) holds follows
from (\ref{eq:tau-K-time-pres}),
(\ref{eq:causal}), and
Proposition~\ref{prop:tau-K-dist-pres}.
\end{proof}

\begin{rmrk}\label{rmrk:tau-K-time-pres}
It may be possible to apply Proposition~\ref{prop:tau-K-time-pres} to provide alternate definitions of 
intrinsic timed-Hausdorff and timed-Flat distances which takes infima over all distance and time preserving maps into all compact or perhaps over all complete timed-metric-spaces rather than the infima over all timed-Fr\'echet maps.  Sakovich and Sormani will explore this in the intrinsic flat setting but leave both the $GH$ and $mm$ to Mondino and Perales.
\end{rmrk}

\begin{defn}\label{def:new-pres}
A map, $F: (X,d_X,\tau_X)\to (Y, d_Y, \tau_Y)$,
between timed-metric-spaces
is time preserving 
if $\tau_Y(F(x))=\tau_X(x)$ for all $x\in X$ and
distance preserving if
$
d_Y(F(x_1),F(x_2))=d_X(x_1,x_2) 
$
for all $x_1, x_2\in X$.
\end{defn}

\begin{rmrk}\label{rmrk-new-tau-K-1}
If we define the timed-metric-space,
$(Y',d_Y, \tau_Y)$, where
$
Y'=
[0,\tau_{max}]\times  \ell^\infty,$
and 
\be
d_Y((t_1,z_1),(t_2,z_2))=
\max\{|t_1-t_2|,d_{\ell^\infty}(z_1,z_2)\},
\ee
and
$\tau_Y(t,z)=t$,
then any
timed-Fr\'echet map,
$\kappa_{\tau,X}:X\to Y$ is distance and time preserving.   Furthermore, if $X$ is
compact and we take $Y=\kappa_{\tau,X}(X)$
then $(Y, d_Y, \tau_Y)$ is a compact 
timed-metric-space.
\end{rmrk}

\begin{rmrk}\label{rmrk-new-tau-K-2}
We conjecture that for any pair of compact
timed-metric-spaces, $N_i=(X_i,d_i,\tau_i)$,
we have
\be
d_{\tau-H}(N_1,N_2)=\inf 
\{d_H^Y(F_1(X_1),F_2(X_2))\}
\ee
where the infimum is over all compact timed-metric-spaces, $N=(Y,d,\tau)$,
and over all distance and time preserving maps $F_i:X_i\to Y$.  As we do not need this, we leave it 
to others to prove.
\end{rmrk}

\section{{\bf{Verifying which Intrinsic Space-Time Distances are Definite}}}
\label{sect:definite}

Recall that in the introduction we defined a definite intrinsic distance in Definition~\ref{defn:sd} as a distance between causally-null-compactifiable space-times satisfying the following:
\be
d_{S-dist}\Big((N_1,g_1),(N_2,g_2)\Big)=0
\ee
iff there is a bijection, $F: N_1\to N_2$ that is
distance preserving and time preserving,
and thus, by Theorem 1.3 in \cite{SakSor-Null}, is also a Lorentzian isometry. 

In Section~\ref{sect:definite-tl} below we demonstrate why a null distance preserving bijection without the time preserving does not suffice to achieve a Lorentzian isometry.  Thus the
timeless intrinsic distances between space-times of Section~\ref{sect:defns-tl} are not definite.
In Section~\ref{sect:definite-S-L} we provide level set matching examples which have no Lorentzian isometry demonstrating that the  intrinsic distances between space-times of Section~\ref{sect:defns-S-L} are not definite and neither are those from Remark~\ref{rmrk:Riem-levels}.

In Section~\ref{sect:definite-BB} and
Section~\ref{sect:definite-FD} we prove that the intrinsic distances between causally-null-compactifiable big bang space-times,
$d_{BB-GH}$, and between
causally-null-compactifiable
future-developed space-times,
$d_{FD-HH}$,
are definite and conjecture that the other notions described in Section~\ref{sect:defns-BB} and Section~\ref{sect:defns-FD} could possibly be definite as well.  However in Section~\ref{sect:definite-strip} we see the situation is not so clear for the strip based notions of intrinsic distances of Section~\ref{sect:defns-strip}. 

In Section~\ref{sect:definite-tau-K} we prove our main theorem, Theorem~\ref{thm:definite-tau-K}, that the intrinsic timed-Hausdorff distance between causally-null-compactifiable space-times is definite.
This theorem is quite challenging to prove but is done through a series of lemmas which might later be applied to prove our conjectures that the intrinsic timed flat distance and other notions defined in Section~\ref{sect:defns-tau-K} using the timed-Fr\'echet map are definite as well.

\subsection{{\bf The Timeless Intrinsic Space-Time Distances are not Definite}}
\label{sect:definite-tl}

In this section
we present very simple examples of distinct null compactifiable space-times which 
demonstrate that ignoring cosmological time when defining a timeless intrinsic space-time distance as in Section~\ref{sect:defns-tl}
leads to lack of definiteness.  The timeless space-time distances between these space-times are zero because they have null distance preserving isometries. However there is no Lorentzian isometry between these space-times.

\begin{ex}\label{ex:t-to-x}
Consider the following pair of Lorentzian manifolds,
\be
(N_j,g_j)= \bigg( (0,a_j)\times (0,b_j)\, , \, -dt^2+dx^2 \bigg).
\ee
Then by Lemma~\ref{lem:Mink-null}
\be
\hat{d}_{g_j}\bigg( (t_1,x_1), (t_2,x_2) \bigg)=\max\{|t_1-t_2|, |x_1-x_2)\}.
\ee
Thus if $a_1=b_2 \neq b_1=a_2$, then $F:N_1\to N_2$ defined by $F(t,x)=(x,t)$
is a null distance preserving bijection,
\be
\hat{d}_{g_2}\bigg( F(t_1,x_1), F(t_2,x_2) \bigg)=\hat{d}_{g_1}\bigg( (t_1,x_1), (t_2,x_2) \bigg) \forall \, (t_i,x_i)\in N_1,
\ee
so
\be
d^{tls}_{S-GH}((N_1,g_1),(N_2,g_2))=
d_{GH}\left(\big(\bar{N}_1,\hat{d}_{g_1},\tau_{g_1}\big),
\big(\bar{N}_2,\hat{d}_{g_2},\tau_{g_2}\big)\right)=0.
\ee
However $F:N_1\to N_2$ is not a Lorentian isometry $F^*g_2= -dx^2+dt^2\neq -dt^2+dx^2=g_1$.
It does not even map causal curves to causal curves.
\end{ex}

\begin{ex}\label{Ex:flip-t} 
Consider the pair of Lorentzian manifolds,
\be
(N_j,g_j)= \bigg( (0,1)\times M_j\, , \, -dt^2+ f_j(t) h_j^2 \bigg).
\ee
where $f_1(t)=1$ and $f_2(t)=1-t$.   Then $F:N_1\to N_2$ defined by $F(t,x)=(1-t,x)$,
maps causal curves to causal curves, so:
\be
\hat{d}_{g_1}\bigg( (t_1,x_1), (t_2,x_2) \bigg)=\hat{d}_{g_2}\bigg( (t_1,x_1), (t_2,x_2) \bigg)
\ee
Thus $F$ is a null distance preserving bijection, 
so
\be
d^{tls}_{S-GH}((N_1,g_1),(N_2,g_2))=
d_{GH}\left(\big(\bar{N}_1,\hat{d}_{g_1},\tau_{g_1}\big),
\big(\bar{N}_2,\hat{d}_{g_2},\tau_{g_2}\big)\right)=0.
\ee
While $F$ is a Lorentzian isometry, 
it is time-reversing and 
the causality is flipped,
\be
F(p)\in J^+(F(q)) \iff q\in J^+(p).
\ee
\end{ex}

\subsection{{\bf The Level Matching Intrinsic Space-Time Distances are not Definite}}
\label{sect:definite-S-L}

In this section
we present examples of distinct null compactifiable space-times which 
demonstrate that level matching notions of intrinsic space-times distances, like 
those mentioned in Remark~\ref{rmrk:Riem-levels}, and
$d_{S-L-sup-GH}$ of Definition~\ref{defn:S-L-sup-GH}
and $d_{S-GH}^{L-\ell p}$ of
Definition~\ref{defn:S-L-ell-GH}
of Section~\ref{sect:defns-S-L}
need not be definite.  However we do have
isometries between the level sets
as in Theorem~\ref{thm:levels} and
Remark~\ref{rmrk:levels}.

\begin{ex}\label{ex:levels-match}
Consider the following pair of Lorentzian manifolds,
\be
(N_j,g_j)= \bigg( (0,\pi)\times {\mathbb S}^2\, , \, -dt^2+ d\phi^2 + f_j^2(t,\theta) \sin^2(\phi) d\theta^2 \bigg).
\ee
where $h_j=d\phi^2 + f_j^2(t,\theta) \sin^2(\phi) d\theta^2$ is a smooth family of metric tensors on
${\mathbb S}^2$ with nonconstant $f_j$ such that 
\be
f_1(t,\theta,\phi)=f_2(t,t+\theta, \phi) 
\ee
Then it is easy to check that $\tau_{g_j}(t,x,y)=t$ and that
\be
F_t: \tau_{g_1}^{-1}(t) \to  \tau_{g_2}^{-1}(t) \textrm{ defined by } F_t(\theta,\phi)=(t+\theta, \phi)
\ee
is a Riemannian isometry for all
$t\in [\tau_{min},\tau_{max}]$.  
This
implies that
\be
d_{S-GH}^{L-sup}((N_1,g_1),(N_2,g_2))=0
\ee
and
\be
d_{S-GH}^{L-\ell p}((N_1,g_1),(N_2,g_2))=0.
\ee
However these manifolds need not have a Lorentzian isometry between them unless $f_j$ is constant.   
\end{ex}

\begin{thm} \label{thm:levels}
Given two compact timed-metric-spaces,
\be\label{eq:sup-0}
d_{GH}^{L-sup}
\left(\big(\bar{N}_1,\hat{d}_{1},\tau_{1}\big),
\big(\bar{N}_2,\hat{d}_{2},\tau_{2}\big)\right)=0
\ee
if and only if
they have null distance preserving maps 
\be\label{eq:F_t}
F_{t}: \tau^{-1}_1(t)\subset N_{1}\to
\tau^{-1}_2(t)\subset N_{2}
\ee
for every $t \in [\tau_{min}, \tau_{max}]$.
In addition,
\be\label{eq:ell-p-0}
d_{GH}^{L-\ell p}\left(\big(\bar{N}_1,\hat{d}_{1},\tau_{1}\big),
\big(\bar{N}_2,\hat{d}_{2},\tau_{2}\big)\right)=0,
\ee
iff there exists $F_t$ as in (\ref{eq:F_t})
for almost every $t\in [\tau_{min}, \tau_{max}]$.
In both cases,
\be \label{eq:min=min-max=max}
\min_{\bar{N}_1} \tau_1=
\min_{\bar{N}_2} \tau_2
\textrm{ and }
\max_{\bar{N}_1}\tau_1=
\max_{\bar{N}_2}\tau_2.
\ee
\end{thm}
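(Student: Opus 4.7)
The plan is to reduce both directions to the classical fact (see \cite{BBI}) that the Gromov--Hausdorff distance between two compact metric-spaces vanishes iff there is a distance-preserving bijection between them. The key observation is that each level set $\tau_i^{-1}(t)\subset \bar{N}_i$ is closed in the compact space $\bar{N}_i$ and hence is itself compact in the restricted metric $\hat{d}_i$. Thus the classical criterion applies level by level, and the ``null distance preserving maps'' $F_t$ in the statement are precisely the bijective isometries produced (or consumed) by that criterion. This reduces the theorem to a bookkeeping argument about when every level set pair $(\tau_1^{-1}(t),\tau_2^{-1}(t))$ has vanishing $d_{GH}$.

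For the $d_{GH}^{L-sup}=0$ equivalence, I would first establish (\ref{eq:min=min-max=max}). Under the convention that $d_{GH}(X,\emptyset)>0$ whenever $X$ is a nonempty compact metric-space, the supremum in Definition~\ref{defn:S-L-sup-GH} picks up a positive contribution as soon as one of $\tau_1^{-1}(t)$, $\tau_2^{-1}(t)$ is empty and the other is not. Since the ranges of $\tau_1,\tau_2$ are intervals (continuous images of the connected spaces $\bar{N}_i$), vanishing of the supremum forces the two intervals to share endpoints, yielding (\ref{eq:min=min-max=max}). Once this is established, the hypothesis $d_{GH}^{L-sup}=0$ gives $d_{GH}(\tau_1^{-1}(t),\tau_2^{-1}(t))=0$ for every $t\in[\tau_{min},\tau_{max}]$, and the classical criterion produces the $F_t$. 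Conversely, if such $F_t$ exist throughout $[\tau_{min},\tau_{max}]$, both level sets are nonempty there (giving (\ref{eq:min=min-max=max})), and each $F_t$ being a bijective isometry makes every term of the supremum zero.

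For the $d_{GH}^{L-\ell p}=0$ statement, the forward direction uses the standard fact that a nonnegative integrable function with vanishing integral is zero almost everywhere, yielding $d_{GH}(\tau_1^{-1}(t),\tau_2^{-1}(t))=0$ and hence $F_t$ for a.e.\ $t$. To recover (\ref{eq:min=min-max=max}) in this weaker setting I would argue by contradiction: if $\min_{\bar{N}_1}\tau_1 < \min_{\bar{N}_2}\tau_2$, then on the positive-measure interval $(\tau_{min},\min_{\bar{N}_2}\tau_2)$ the level sets of $\tau_2$ are empty while those of $\tau_1$ are nonempty (again because the range of $\tau_1$ is an interval), so $|d_{GH}(\tau_1^{-1}(t),\tau_2^{-1}(t))|^p$ is positive on a set of positive measure, contradicting a zero integral. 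The maximum is handled symmetrically, and the reverse direction is immediate once one notes that a.e.\ vanishing of the integrand gives a zero integral.

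The main technical subtlety is not deep, but needs to be handled cleanly: one must fix the convention that $d_{GH}(X,\emptyset)$ is positive (say $+\infty$) for nonempty compact $X$ so that the forward directions actually force (\ref{eq:min=min-max=max}); without such a convention the ``iff'' would fail if one allowed phantom matchings with empty level sets. A secondary point is measurability of $t\mapsto d_{GH}(\tau_1^{-1}(t),\tau_2^{-1}(t))$ in the $\ell^p$ case, which follows from the Lipschitz-one property of $\tau_i$ together with the fact that $t\mapsto \tau_i^{-1}(t)$ is upper semicontinuous with respect to Hausdorff convergence in $(\bar{N}_i,\hat{d}_i)$, so the integrand is Borel measurable and the ``integral zero implies zero a.e.'' step is legitimate.
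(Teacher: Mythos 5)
Your proposal is correct and follows essentially the same route as the paper's proof: reduce to the definiteness of $d_{GH}$ on each compact level set, and derive (\ref{eq:min=min-max=max}) from the observation that the $GH$ distance between an empty and a nonempty compact metric-space is never zero. Your version simply spells out a few points the paper leaves implicit (the convention for $d_{GH}(X,\emptyset)$, measurability of the integrand in the $\ell^p$ case, and the use of connectedness to see that the ranges of the $\tau_i$ are intervals).
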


\begin{proof}
We have (\ref{eq:sup-0})
iff
\be\label{eq:GH-level-0}
d_{GH}\left(\big(\tau_1^{-1}(t),\hat{d}_{1}\big),
\big(\tau_2^{-1}(t),\hat{d}_{2}\big)\right)=0
\ee
for all $t \in [\tau_{min}, \tau_{max}]$
iff (\ref{eq:F_t})
for all $t \in [\tau_{min}, \tau_{max}]$
by the definiteness of the $GH$ distance.
We also have (\ref{eq:ell-p-0})
iff (\ref{eq:GH-level-0}) for almost every
$t \in [\tau_{min}, \tau_{max}]$
iff (\ref{eq:F_t})
for almost every $t \in [\tau_{min}, \tau_{max}]$.

Note that the $GH$ distance between an empty metric-space and a nonempty metric-space is never zero,
so in either case,
\be
\tau_2^{-1}(t)=\emptyset \iff
\tau_1^{-1}(t)=\emptyset
\ee
for almost every $t\in [\tau_{min}, \tau_{max}]$.
Thus
\be
\min_{\bar{N}_1} \tau_1=\inf\{t\,|\tau_1^{-1}(t)\neq\emptyset \}
=\inf\{t\,|\tau_2^{-1}(t)\neq\emptyset \}
=\min_{\bar{N}_2} \tau_2
\ee
and the same for the maxima.
\end{proof}

\begin{rmrk}\label{rmrk:levels}
If any of the
intrinsic space-time distances defined using levels as in Section~\ref{sect:defns-S-L} between two causally-null-compactifiable space-times, or more generally, two compact-timed-metric-spaces is $0$, then we believe that for almost every $t\in [\tau_{min},\tau_{max}]$ there
is a distance preserving maps as in (\ref{eq:F_t}) and that (\ref{eq:min=min-max=max}) holds.   Sakovich-Sormani plan to prove this in the intrinsic flat setting and we leave the metric measure case to others, however the proof should be close to the proof above.
\end{rmrk}

\begin{rmrk}\label{rmrk:Riem-levels-2}
If two causally-null-compactifiable space-times
have
\be
d_{S-GH}^{L-sup}\Big((N_1,g_1),(N_2,g_2)\Big)=0
\ee
or
\be
d_{S-GH}^{L-\ell p}\Big((N_1,g_1),(N_2,g_2)\Big)=0
\ee
then we have the maps between levels as in Theorem~\ref{thm:levels}.  
Something similar
should hold for other intrinsic distances
from Section~\ref{sect:defns-S-L}.
\end{rmrk}

\subsection{{\bf The Big Bang Intrinsic Space-Time Distances are Definite}}
\label{sect:definite-BB}

Recall Definition~\ref{defn:BB} of a null compactifiable big bang space-time.  Here we prove that $d_{BB-GH}$ of
Definition~\ref{defn:BB-GH},
between such space-times, is definite in the sense of Definition~\ref{defn:sd}.   This proof was known to Sormani and Vega but never published
\cite{SV-BigBang}.

\begin{thm}\label{thm:BB-GH-definite}
Given two compact big bang 
timed-metric-spaces, 
\be \label{eq:BB-GH-here}
d_{BB-GH}\left(\big(\bar{N}_1,\hat{d}_{1},\tau_1 \big),
\big(\bar{N}_2,\hat{d}_{2}, \tau_2\big)\right)=0,
\ee
if and only if there is a
distance and time preserving
bijection, $F:N_1\to N_2$.
In particular,
given two causally-null-compactifiable big bang 
metric-space-times, we have
\be \label{eq:BB-GH-here2}
d_{S-BB-GH}\Big((N_1,g_1),(N_2,g_2)\Big)=0
\ee
iff there is a Lorentzian isometry satisfying $g_1=F^*g_2$ everywhere.
\end{thm}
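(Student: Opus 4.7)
The proof divides naturally into two parts. For the easy direction of the first claim, suppose $F: \bar{N}_1 \to \bar{N}_2$ is a distance and time preserving bijection. Since each $\bar{N}_i$ is a big bang timed-metric-space, the set $\tau_i^{-1}(0) = \{p_{BB,i}\}$ is a single point, so the time preserving property forces $F(p_{BB,1}) = p_{BB,2}$. Choosing $Z = \bar{N}_2$ with $\varphi_1 = F$ and $\varphi_2 = \mathrm{id}$ as distance preserving maps into $Z$ then gives both $d_H^{Z}(\varphi_1(\bar{N}_1), \varphi_2(\bar{N}_2)) = 0$ and $d_Z(\varphi_1(p_{BB,1}), \varphi_2(p_{BB,2})) = 0$, so $d_{BB-GH} = 0$.

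For the hard direction I plan to imitate the standard proof that $d_{GH} = 0$ implies the existence of a distance preserving bijection, adapted to the pointed setting. By the vanishing of $d_{BB-GH}$, for each $k \in \mathbb{N}$ there exist a metric space $Z_k$ and distance preserving maps $\varphi_{i,k}: \bar{N}_i \to Z_k$ making both the Hausdorff distance between the images and the distance between the images of the two big bang points smaller than $1/k$. These produce maps $F_k: \bar{N}_1 \to \bar{N}_2$ that are $2/k$-almost distance preserving and $2/k$-almost surjective with $\hat{d}_2(F_k(p_{BB,1}), p_{BB,2}) < 2/k$. Fixing a countable dense subset of $\bar{N}_1$ containing $p_{BB,1}$, a diagonal argument combined with the compactness of $\bar{N}_2$ produces a subsequential pointwise limit on this subset, which then extends by density and uniform continuity to a distance preserving bijection $F: \bar{N}_1 \to \bar{N}_2$ with $F(p_{BB,1}) = p_{BB,2}$. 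The crucial final observation is that such an $F$ is automatically time preserving, because
$$\tau_2(F(p)) = \hat{d}_2(F(p), p_{BB,2}) = \hat{d}_2(F(p), F(p_{BB,1})) = \hat{d}_1(p, p_{BB,1}) = \tau_1(p),$$
using only that $F$ is distance preserving, that $F(p_{BB,1}) = p_{BB,2}$, and the defining property that the cosmological time on a big bang timed-metric-space is the null distance from the big bang point.

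The second claim, concerning causally-null-compactifiable big bang metric-space-times, follows at once by combining the first claim with the theorems of Section~\ref{sect:back-C}. If $d_{S-BB-GH}((N_1,g_1),(N_2,g_2)) = 0$, then the first claim applied to the associated compact timed-metric-spaces gives a distance and time preserving bijection $F: N_1 \to N_2$, and Theorem~\ref{thm:dist-time-Lor} promotes $F$ to a time-oriented Lorentzian isometry satisfying $F^* g_2 = g_1$. Conversely, any such Lorentzian isometry is distance preserving by Theorem~\ref{thm:null-canonical} and time preserving by Theorem~\ref{thm:time-canonical}, whence $d_{S-BB-GH} = 0$ by the first claim.

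The main technical obstacle lies in the diagonal/Arzel\`a--Ascoli step in the hard direction, but this is a routine adaptation of Gromov's definiteness theorem (as found, for example, in Burago--Burago--Ivanov \cite{BBI}) to the pointed setting where the basepoint is included in the countable dense set. What makes the argument so clean for big bang space-times is precisely the defining property $\tau_i(\cdot) = \hat{d}_i(\cdot, p_{BB,i})$: once one has a basepoint-preserving distance preserving bijection, time preservation comes for free, so no additional work is required to control the cosmological time function across the two space-times.
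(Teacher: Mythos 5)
Your proposal is correct and follows essentially the same route as the paper: reduce to the definiteness of the pointed Gromov--Hausdorff distance to obtain a basepoint-preserving, distance preserving bijection, observe that time preservation is then automatic from $\tau_i(\cdot)=\hat{d}_i(\cdot,p_{BB,i})$, and invoke Theorem~\ref{thm:dist-time-Lor} for the Lorentzian statement. The only difference is that you re-derive pointed-GH definiteness via almost-isometries and an Arzel\`a--Ascoli/diagonal argument, whereas the paper simply cites it as known; this is a matter of level of detail, not of approach.
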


\begin{rmrk}\label{rmrk:BB-definite}
A similar theorem will be proven for the 
$BB-{\mathcal F}$ distance in upcoming work by various authors and the $BB-mm$ distance is left for others.   Such proofs can imitate the proof below.
\end{rmrk}

\begin{proof}
If
(\ref{eq:BB-GH-here}) holds
then
\be \label{eq:BB-GH-here3}
d_{pt-GH}\left(\big(\bar{N}_1,\hat{d}_{1}, p_{BB,1}\big),
\big(\bar{N}_2,\hat{d}_{2}, p_{BB,2}\big)\right)=0
\ee
where $p_{BB,i}=\tau_i^{-1}(0)$.
By the definiteness of $d_{pt-GH}$ there is distance preserving isometry which preserves the big bang points:
\be
F: \bar{N}_1\to \bar{N}_2
\textrm{ with } F(p_{BB,1})=p_{BB,2}.
\ee
Recall that in Definition~\ref{defn:BB},
on a big bang timed-metric-space,
the time function is the null
distance from the big bang point,
\be
\tau_i(p)=\hat{d}_\tau(p,p_{BB})
\ee
so
$F$ is also time preserving.
Thus it is also a Lorentzian isometry satisfying $g_1=F^*g_2$ everywhere when the big bang timed-metric-spaces are associated with causally-null-compactifiable big bang space-times. 

The converse immediately follows the fact that if
$F:N_1\to N_2$ is distance and time preserving,
then $F(p_{BB,1})=p_{BB,2}$ and any distance
preserving map $\phi_2: N_2 \to Z$ defines a 
distance preserving map
$\phi_1=\phi_2\circ F: N_1\to Z$ such that
\be
\phi_1(N_1)=\phi_2(N_2)
\textrm{ and }
\phi_1(p_{BB,1})=\phi_2(p_{BB,2})
\ee
so the pointed $GH$ distance is $0$ and so
is the intrinsic big bang space-time distance.
\end{proof}

\begin{rmrk}\label{rmrk-second-proof-BB-GH}
A second proof of the definiteness of the BB-GH distance might possibly be found by showing that BB-GH distance is
zero iff the timed-Hausdorff distance between the spaces is zero.
It would be of interest to prove this.
\end{rmrk}

\subsection{{\bf The Future-Developed Intrinsic Space-Time Distances are Definite}}
\label{sect:definite-FD}

Recall Definition~\ref{defn:FD} of a causally-null-compactifiable future development.  Here we prove that $d_{FD-HH}$ of
Definition~\ref{defn:FD-HH},
between such space-times, is definite in the sense of Definition~\ref{defn:sd}.

\begin{thm}\label{thm:FD-HH}
Given a pair of 
future-developed timed-metric-spaces (whose cosmological time is the null distance from the
initial data as in
(\ref{eq:init-data-prop-2}),
\be \label{eq:FD-HH-here}
d_{FD-HH}\left(\big(\bar{N}_1,
M_1,\hat{d}_{1}\big),
\big(\bar{N}_2, M_2,\hat{d}_{2}\big)\right)=0
\ee
if and only if there is a null distance and cosmological time preserving bijection $F:N_1\to N_2$. 
In particular, 
two causally-null-compactifiable future-developed space-times have
\be 
d_{S-FD-HH}\Big((N_{1,+},g_1),(N_{2,+},g_2)\Big)=
d_{FD-HH}\left(\big(\bar{N}_1,
M_1,\hat{d}_{g_1}\big),
\big(\bar{N}_2, M_2,\hat{d}_{g_2}\big)\right)=0
\ee
iff there is a Lorentzian isometry, $F: N_1\to N_2$. 
\end{thm}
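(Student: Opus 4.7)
The plan is to prove the harder direction (vanishing of $d_{FD-HH}$ implies existence of a distance and time preserving bijection) and then invoke Theorem~\ref{thm:dist-time-Lor} to upgrade to a Lorentzian isometry. Assuming (\ref{eq:FD-HH-here}), the infimum-over-$Z$ definition of $d_{FD-HH}$ gives a sequence of complete separable metric spaces $Z_j$ and distance-preserving maps $\varphi_j^{(i)}:\bar{N}_{i,+}\to Z_j$ such that both
\[
d_H^{Z_j}\bigl(\varphi_j^{(1)}(\bar{N}_{1,+}),\varphi_j^{(2)}(\bar{N}_{2,+})\bigr)\to 0
\quad\text{and}\quad
d_H^{Z_j}\bigl(\varphi_j^{(1)}(M_1),\varphi_j^{(2)}(M_2)\bigr)\to 0.
\]
Fix countable dense sets $\{p_k\}\subset \bar{N}_{1,+}$ chosen so that $\{p_k\}\cap M_1$ is dense in $M_1$. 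For each $k$, the first Hausdorff condition produces $q_{j,k}\in \bar{N}_{2,+}$ with $d_{Z_j}(\varphi_j^{(1)}(p_k),\varphi_j^{(2)}(q_{j,k}))\to 0$ as $j\to\infty$; moreover when $p_k\in M_1$ we may replace $q_{j,k}$ by a point of $M_2$, using the second Hausdorff condition. By compactness of $\bar{N}_{2,+}$ and a diagonal subsequence, $q_{j,k}\to F(p_k)\in \bar{N}_{2,+}$, with $F(p_k)\in M_2$ whenever $p_k\in M_1$. The standard Gromov--Hausdorff argument (essentially Proposition~\ref{prop:2GH} applied along the subsequence) shows $F$ is $1$-Lipschitz on the dense set, hence extends to a distance-preserving map $F:\bar{N}_{1,+}\to\bar{N}_{2,+}$, and by symmetry to a bijection.

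It remains to show $F(M_1)=M_2$ and that $F$ preserves cosmological time. For any $p\in M_1$, approximate by $p_{k_l}\in M_1$ with $p_{k_l}\to p$; then $F(p_{k_l})\in M_2$ converges to $F(p)$, and since $M_2$ is closed in $\bar{N}_{2,+}$ we conclude $F(M_1)\subseteq M_2$. The analogous argument for $F^{-1}$ (swapping the roles of the two spaces, using that the second Hausdorff bound is symmetric) gives $F^{-1}(M_2)\subseteq M_1$, so $F(M_1)=M_2$. The distance-from-initial-data property (\ref{eq:init-data-prop-2}) then gives, for every $p\in \bar{N}_{1,+}$,
\[
\tau_2(F(p))=\min_{m\in M_2}\hat{d}_{g_2}(F(p),m)=\min_{m'\in M_1}\hat{d}_{g_1}(p,m')=\tau_1(p),
\]
where the middle equality uses the bijection $F|_{M_1}:M_1\to M_2$ and the fact that $F$ is distance preserving on all of $\bar{N}_{1,+}$.

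This establishes a distance and time preserving bijection $F$, and in the causally-null-compactifiable setting Theorem~\ref{thm:dist-time-Lor} immediately upgrades $F$ to a time-oriented Lorentzian isometry $F^*g_2=g_1$. The converse direction is essentially free: given such an $F$, time preservation forces $F(M_1)=\tau_2^{-1}(0)\cap \bar{N}_{2,+}=M_2$, and taking $Z=\bar{N}_{2,+}$, $\varphi_1=F$, $\varphi_2=\mathrm{id}$ makes both Hausdorff distances in (\ref{eq:FD-HH-2}) vanish, so $d_{FD-HH}=0$.

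The main obstacle is the construction step that forces the candidate bijection $F$ to respect the distinguished subsets $M_i$; the naive GH extraction only produces $F$ with no a priori relation to $M_1$. The remedy is to build $F$ from a dense set that meets $M_1$ densely and to exploit the separate Hausdorff control on $M_1,M_2$ when choosing the approximants $q_{j,k}$. Once $F(M_1)=M_2$ is in hand, time preservation is automatic from (\ref{eq:init-data-prop-2}), which is precisely why the definition of $d_{FD-HH}$ tracks the initial data set rather than the cosmological time function directly.
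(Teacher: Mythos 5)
Your proposal is correct and follows essentially the same route as the paper's proof: extract almost-distance-preserving maps from the vanishing Hausdorff bounds, use the separate Hausdorff control on $M_1,M_2$ to force the limit bijection to carry $M_1$ onto $M_2$, deduce time preservation from the distance-from-initial-data property (\ref{eq:init-data-prop-2}), and upgrade to a Lorentzian isometry via Theorem~\ref{thm:dist-time-Lor}. The only cosmetic difference is that you run the standard Gromov--Hausdorff definiteness argument on a countable dense set with a diagonal subsequence, whereas the paper builds the almost-isometries $F_j$, $\tilde F_j$ globally and invokes Grove--Petersen to pass to the limit.
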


\begin{proof}
If (\ref{eq:FD-HH-here}) holds, then there is a sequence of
compact metric-spaces, $(Z_j,d_{Z_j})$
and 
distance preserving maps
\be
\varphi_{i,j}:\bar{N}_i \to Z_j
\ee
such that
\be \label{eq:FD-HH-2-here} d_H^{Z_j}\big(\varphi_{1,j}(\bar{N}_1),\varphi_{2,j}(\bar{N}_2)\big)
+ 
d_H^Z\big(\varphi_{1,j}(M_1),\varphi_{2,j}(M_2)\big)
<1/j.
\ee
We follow a common proof of the definiteness of the $GH$ distance.
By the definition of the Hausdorff distance, for each $j\in \mathbb N$
and for all $x\in M_1$, there exists
$y_x=F_j(x)\in M_2$ such that
\be\label{eq:Fj-for-FD-1}
d_{Z_j}(\varphi_{1,j}(x),\varphi_{2,j}(F_j(x)))<1/j.
\ee
and also for all $x\in N_1\setminus M_1$, there exists
$y_x=F_j(x)\in N_2$ such that
(\ref{eq:Fj-for-FD-1}) holds as well.
So we have a map 
\be
F_j:\bar{N}_1\to \bar{N}_2
\textrm{ such that } F_j(M_1)\subset M_2.
\ee
This map is $2/j$-almost distance preserving:
\be
|\hat{d}_{\tau_2}(F_j(x_1),F_j(x_2))
-\hat{d}_{\tau_1}(x_1,x_2)|<2/j,
\ee
because $\varphi_{i,j}$ are
distance preserving and
we can apply:
\be
|d_{Z_j}(p_1,p_2)-d_{Z_j}(q_1,q_2)|
\le 
d_{Z_j}(p_1,q_1)+d_{Z_j}(p_2,q_2)
\ee
to
\be
p_k=\varphi_{1,j}(F_j(x_k))
\textrm{ and }
q_k=\varphi_{2,j}(x_k).
\ee
We also have the reverse, switching $i=1$ and $i=2$, so that we have a a $2/j$ almost distance preserving map
\be
\tilde{F}_j:\bar{N}_2\to \bar{N}_1
\textrm{ such that } \tilde{F}_j(M_2)\subset M_1
\ee
defined by
\be\label{eq:Fj-for-FD-2}
d_{Z_j}(\varphi_{1,j}(\tilde{F}_j(y)),\varphi_{2,j}(y))<1/j.
\ee
Applying (\ref{eq:Fj-for-FD-1})
with $x=\tilde{F}_j(y)$ and
applying (\ref{eq:Fj-for-FD-2})
with $y={F}_j(x)$,
we see that these functions are
almost inverses of one another:
\begin{eqnarray*}
\hat{d}_{\tau_1}(\tilde{F}_j({F}_j(x)),x)
&=&
d_{Z_j}(
\varphi_{1,j}(\tilde{F}_j({F}_j(x)), 
\varphi_{1,j}(x) ))\\
&=&
d_{Z_j}(
\varphi_{1,j}(\tilde{F}_j({F}_j(x)), 
\varphi_{2,j}(F_j(x)) ))\\
&&\quad
+\,\,\,
d_{Z_j}(
\varphi_{2,j}({F}_j(x), 
\varphi_{1,j}(x) ))\quad <2/j\\
\hat{d}_{\tau_2}({F}_j(\tilde{F}_j(y)),y)
&\le &...\textrm{ as above }...\le 2/j .
\end{eqnarray*}
Since both $\bar{N}_i$ are compact, taking $j\to \infty$, by a theorem of Grove-Petersen
in \cite{Grove-Petersen}
we have
a subsequence which converges 
to a distance preserving pair of maps:
\be
F_\infty:\bar{N}_1\to \bar{N}_2
\textrm{ such that } F_\infty(M_1)\subset M_2.
\ee
and
\be
\tilde{F}_\infty:\bar{N}_2\to \bar{N}_1
\textrm{ such that } \tilde{F}_\infty(M_2)\subset M_1
\ee
which are inverses of each other because
\be
\hat{d}_{\tau_1}(\tilde{F}_\infty({F}_\infty(x)),x)=0
\textrm{ and }
\hat{d}_{\tau_2}({F}_\infty(\tilde{F}_\infty(y)),y)=0.
\ee
Furthermore, $F_\infty(M_1)\subset M_2$ and 
$\tilde{F}_\infty(M_2)=M_1$ so 
$F_\infty$ is a distance preserving 
bijection that maps $M_1$ onto $M_2$. 

Finally we show that $F$ is time preserving using the fact that
on a future-developed timed-metric-space, the cosmological time is the null distance from the
initial data as in
(\ref{eq:init-data-prop-2}).  Thus
\begin{eqnarray*}
\tau_2(F(p))&=&
\min_{y\in M_2} \hat{d}_{\tau_2}(F(p),y)\\
&=&
\min_{x\in M_1} \hat{d}_{\tau_2}(F(p),F(x))\\
&=&
\min_{x\in M_1} \hat{d}_{\tau_1}(p,x)\\
&=&
\tau_1(p)
\end{eqnarray*}
Once we know that $F$ is a distance and time preserving bijection,
it is a Lorentzian isometry because these are causally-null-compactifiable space-times.
\end{proof}

\begin{rmrk}\label{rmrk-second-proof-FD-HH}
A second proof of the definiteness of the  $FD-HH$ distance might possibly be found by showing that $FD-HH$ distance is
zero iff the intrinsic timed Hausdorff distance between the spaces is zero.
It would be of interest to prove this.
\end{rmrk}

\begin{rmrk}
Other intrinsic distances between future-developed space-times involving the intrinsic flat distance will be studied in
future work by Sakovich-Sormani \cite{SakSor-SIF}.   See also past work of Graf-Sormani with ideas towards compactness theorems in this direction \cite{Graf-Sormani}.  
\end{rmrk}

\subsection{{\bf The Strip-based Intrinsic Space-Time Distances may or may not be Definite}}
\label{sect:definite-strip}

In Section~\ref{sect:defns-strip} we introduced a few notions of intrinsic space-time distances using cosmic strips.   It is an open question whether these notions are definite or not:

\begin{rmrk} \label{rmrk:strip-definite}
Given a pair of space-times, $(N_1, g_1)$ and 
$(N_2,g_2)$ with time functions, $\tau_i:\Omega(R_i)\to {\mathbb R}$, such that their cosmic strips
$N_{i,s,t}=\tau_i^{-1}(s,t)$ are null compactifiable space-times, if any of the
intrinsic space-time distances defined in Section~\ref{sect:defns-strip} between them is $0$, then they have null distance preserving maps
\be
F_{s,t}: N_{1,s,t}\to N_{2,s,t}
\ee
for all $s<t$ in $[\tau_{min}, \tau_{max}]$
and in particular, 
\be
\min_{\bar{N}_1}\tau_1=
\min_{\bar{N}_2}\tau_2
\textrm{ and }
\max_{\bar{N}_1}\tau_1=
\max_{\bar{N}_2}\tau_2.
\ee
This is significantly stronger than level matching and neither counter example to the definiteness of level matching achieves this.   It is possible that one could somehow glue together the $F_{s,t}$ however this is not so easy as some $F_{s,t}$ might be time reversing and might involve an isometry in the spacelike direction.    
\end{rmrk}

\subsection{{\bf The Intrinsic Timed Hausdorff Distance is Definite}}
\label{sect:definite-tau-K}

Recall the Intrinsic Timed-Hausdorff Distance (and the other intrinsic timed distances defined using the timed-Fr\'echet map) in Section~\ref{sect:defns-tau-K} are defined between causally-null-compactifiable space-times and, more generally any pair of compact timed-metric-spaces. 
Here we prove the Intrinsic Timed-Hausdorff Distance defined as in Definition~\ref{defn:tau-K-dist} is definite:

\begin{thm}
\label{thm:definite-tau-K}
The intrinsic Timed-Hausdorff distance between causally-null-compactifiable space-times is definite
in the sense that
\be
d_{S-\tau-H}\Big((N_1,g_1),(N_2,g_2)\Big)=0
\ee
iff there is a time preserving Lorentzian isometry between them.
More generally, the intrinsic timed $H$ distance 
 between two compact timed-metric-spaces is definite
 in the sense that:
\be \label{eq:tau-K-definite-1}
d_{\tau-H}\Big((X,d_X,\tau_X),(Y,d_Y,\tau_Y)\Big)=
\inf\, d^{\ell^\infty}_H(
\kappa_{\tau_X,X}(X),
\kappa_{\tau_Y,Y}(Y))=0
\ee
iff there is a 
distance and time preserving bijection
\be\label{eq:tau-K-definite-2}
F:(X,d_X,\tau_X)\to (Y,d_Y,\tau_Y).
\ee
\end{thm}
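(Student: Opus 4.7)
The forward direction is immediate. Given a distance and time preserving bijection $F:X\to Y$, fix any countable dense set $\mathcal{N}=\{x_1,x_2,\ldots\}\subset X$. Since $F$ is a distance preserving bijection between compact metric spaces, $F(\mathcal{N})=\{F(x_1),F(x_2),\ldots\}$ is countable dense in $Y$. Building $\kappa_{\tau_X,X}$ from $\mathcal{N}$ and $\kappa_{\tau_Y,Y}$ from $F(\mathcal{N})$ (in the matched order) gives $\kappa_{\tau_Y,Y}(F(x))=\kappa_{\tau_X,X}(x)$ for every $x\in X$. Thus $\kappa_{\tau_X,X}(X)=\kappa_{\tau_Y,Y}(Y)$ in $\ell^\infty$, the Hausdorff distance is zero, and the infimum vanishes.

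For the converse, suppose the infimum is zero. There exist countable dense sets $\{x_i^{(j)}\}\subset X$ and $\{y_i^{(j)}\}\subset Y$ and sequences of timed-Kuratowski maps $\kappa_{\tau_X,X,j}$, $\kappa_{\tau_Y,Y,j}$ with $d_H^{\ell^\infty}(\kappa_{\tau_X,X,j}(X),\kappa_{\tau_Y,Y,j}(Y))<\epsilon_j\to 0$. For each $x\in X$ choose $F_j(x)\in Y$ with $d_{\ell^\infty}(\kappa_{\tau_X,X,j}(x),\kappa_{\tau_Y,Y,j}(F_j(x)))<\epsilon_j$, which unpacks to
$$|\tau_X(x)-\tau_Y(F_j(x))|<\epsilon_j \quad \text{and} \quad \sup_i|d_X(x_i^{(j)},x)-d_Y(y_i^{(j)},F_j(x))|<\epsilon_j.$$
Using Theorem~\ref{thm:K-dist-pres} to identify $d_Y(F_j(x),F_j(x'))=\sup_i|d_Y(y_i^{(j)},F_j(x))-d_Y(y_i^{(j)},F_j(x'))|$ and inserting the Kuratowski coordinates $d_X(x_i^{(j)},\cdot)$ by the triangle inequality, I obtain
$$|d_Y(F_j(x),F_j(x'))-d_X(x,x')|<2\epsilon_j \qquad \forall\, x,x'\in X.$$
Symmetrically construct $\tilde F_j:Y\to X$ with the analogous bounds. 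Arguing exactly as in the proof of Theorem~\ref{thm:FD-HH}, the composition $\tilde F_j\circ F_j$ is a $2\epsilon_j$-almost identity on $X$ in the null-distance metric (and similarly for $F_j\circ \tilde F_j$), so $F_j$ and $\tilde F_j$ are $2\epsilon_j$-almost inverses.

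Since $\bar{N}_1$ and $\bar{N}_2$ (i.e.\ $X$ and $Y$) are compact, the Grove--Petersen almost-isometry theorem from \cite{Grove-Petersen} extracts a subsequence along which $F_j\to F$ and $\tilde F_j\to \tilde F$ converge to mutually inverse distance preserving maps $F:X\to Y$, $\tilde F:Y\to X$. The bound $|\tau_Y(F_j(x))-\tau_X(x)|<\epsilon_j$ passes to the pointwise limit to yield $\tau_Y(F(x))=\tau_X(x)$, so $F$ is a distance and time preserving bijection, establishing (\ref{eq:tau-K-definite-2}). For causally-null-compactifiable space-times $(N_1,g_1),(N_2,g_2)$, applying Theorem~\ref{thm:dist-time-Lor} to this $F$ upgrades it to a time-oriented Lorentzian isometry. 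The main obstacle in the reverse direction is that the dense sets $\{x_i^{(j)}\}$ and $\{y_i^{(j)}\}$ depend on $j$ and are not a priori matched; the $\epsilon_j$-proximity in $\ell^\infty$, which forces agreement of the full Kuratowski coordinate vectors, is precisely what makes the resulting $F_j$ simultaneously almost distance and almost time preserving, and hence makes the Grove--Petersen extraction applicable.
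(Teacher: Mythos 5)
Your proposal is correct and follows essentially the same route as the paper's proof: the forward direction is the paper's Lemma~\ref{lem:F-to-tau-K} construction done inline, and the converse builds almost distance and almost time preserving maps $F_j$ from the $\ell^\infty$ Hausdorff closeness of the timed-Kuratowski images and extracts a limiting distance and time preserving bijection via Grove--Petersen, finishing with Theorem~\ref{thm:dist-time-Lor}. The only cosmetic difference is that you certify surjectivity through the mutual almost-inverse maps $\tilde F_j$ (as in the $FD$-$HH$ proof), whereas the paper verifies the almost-onto property of $F_j$ directly; both yield the same conclusion.
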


Towards proving this theorem, we
prove Lemmas~\ref{lem:F-to-tau-K}
and~\ref{lem:tau-K-to-F} which hold more generally.   In Remark~\ref{rmrk:tau-K-definite} we discuss how these lemmas may possibly be applied in the future to 
prove
$d_{S-\tau-mm}$ and
$d_{S-\tau-F}$ are definite 
of Section~\ref{sect:defns-tau-K}
as well. 

\begin{lem}\label{lem:F-to-tau-K}
If $F:(X,d_X,\tau_X)\to (Y,d_Y,\tau_Y)$ is a distance and
time preserving map then there are
timed-Fr\'echet maps
\be
\kappa_{\tau_Y,Y}:Y\to \ell^\infty
\textrm{ and } \kappa_{\tau_X,X}:X\to \ell^\infty
\ee
such that 
\be\label{eq:tau-K-circ}
\kappa_{\tau_Y,Y}\circ F=\kappa_{\tau_X,X}.
\ee
\end{lem}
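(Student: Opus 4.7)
The plan is to use $F$ itself to transport a countable dense subset of $X$ over to $Y$ and to build both timed-Kuratowski maps out of this single shared collection, after which the identity $\kappa_{\tau_Y,Y} \circ F = \kappa_{\tau_X,X}$ becomes a direct coordinate-by-coordinate check.

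Concretely, I would begin by fixing any countably dense set $\mathcal{N} = \{x_1, x_2, \ldots\}$ in $X$ (available because $X$, being compact, is separable) and using it to form $\kappa_{\tau_X, X}$ as in Definition~\ref{defn:tau-K}. I would then form the transported collection $\mathcal{N}' = \{F(x_1), F(x_2), \ldots\} \subset Y$. Because $F$ is distance preserving it is $1$-Lipschitz and hence continuous, so $\mathcal{N}'$ is automatically dense in $F(X)$. In the bijective setting in which this lemma will be applied inside the proof of Theorem~\ref{thm:definite-tau-K}, one has $F(X) = Y$ and therefore $\mathcal{N}'$ is itself a countable dense subset of $Y$; I would then use $\mathcal{N}'$, inheriting its indexing from $\mathcal{N}$, to define $\kappa_{\tau_Y, Y}$ via the formula of Definition~\ref{defn:tau-K}.

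With these choices the conclusion is essentially a tautology driven by the two preservation hypotheses. For each $x \in X$, the zeroth coordinate of $\kappa_{\tau_Y,Y}(F(x))$ equals $\tau_Y(F(x)) = \tau_X(x)$ by time preservation, while the $i$-th subsequent coordinate equals $d_Y(F(x_i), F(x)) = d_X(x_i, x)$ by distance preservation, which is exactly the $i$-th coordinate of $\kappa_{\tau_X,X}(x)$. The only genuinely delicate step is the assertion that $\mathcal{N}'$ is dense in $Y$: this is automatic for bijective $F$ (image of a dense set under a surjective isometry) but, since $F(X)$ is only guaranteed to be closed in $Y$ as a compact subset of a Hausdorff space, can fail when $F$ is not surjective. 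In that non-surjective case one can still attempt to interleave $\mathcal{N}'$ with a countable set dense in $Y$ and augment $\mathcal{N}$ in parallel, but this succeeds precisely when the added points of $Y$ already lie in $F(X)$ — so the cleanest reading of the lemma is with $F$ bijective, which is also exactly the setting demanded by the intended application.
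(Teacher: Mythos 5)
Your proposal is correct and takes essentially the same approach as the paper: push the countable dense set of $X$ forward by $F$ and verify the identity coordinate by coordinate using time and distance preservation. Your caveat about surjectivity is well taken --- the paper's own proof silently asserts that $\{F(x_i)\}$ is dense in $Y$, which indeed requires $F$ to have dense image, as it does in the bijective setting where the lemma is applied.
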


\begin{proof}
Given any countably dense set, 
\be
\{x_1,x_2, x_3,... \} \subset X,
\ee
we have a countably dense set
\be
\{y_1,y_2,y_3,....\}\subset Y
\textrm{ where } y_i=F(x_i).
\ee
Then the timed-Fr\'echet map
\begin{eqnarray*}
\kappa_{\tau_Y,Y}(F(x))
&=&(\tau_Y(F(x)), d_Y(y_1,F(x)),
 d_Y(y_2,F(x)),  d_Y(y_3,F(x)),...)\\
&=&(\tau_X(x), d_X(x_1,x),
d_X(x_2,x),d_X(x_2,x),....)
=\kappa_{\tau_X,X}(x).
\end{eqnarray*}
So (\ref{eq:tau-K-circ}) holds.
\end{proof}

The next lemma can be applied to any notion of distance between images of $X$ and $Y$ in $\ell^\infty$ such that zero distance between the images implies the two images are the same set.  We will apply this to the Hausdorff distance as part of the proof of Theorem~\ref{thm:definite-tau-K} but can also be applied
for Wasserstein and flat distances
as will be discussed in Remark~\ref{rmrk:tau-K-definite}.

\begin{lem}\label{lem:tau-K-to-F}
If there are
timed-Fr\'echet maps
\be
\kappa_{\tau_X,X}:X\to \ell^\infty
\textrm{ and } 
\kappa_{\tau_Y,Y}:Y\to \ell^\infty
\ee
with the same image,
\be\label{eq:same-image}
\kappa_{\tau_Y,Y}(Y)=
\kappa_{\tau_X,X}(X),
\ee
then there exists a
distance and time preserving bijection
$F: X \to Y$ such that
\be\label{eq:tau-K-circ-2}
\kappa_{\tau_Y,Y}\circ F=\kappa_{\tau_X,X}.
\ee
\end{lem}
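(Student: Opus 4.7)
The plan is to obtain $F$ by composing $\kappa_{\tau_X,X}$ with the inverse of $\kappa_{\tau_Y,Y}$, exploiting the fact that both $\tau$-Kuratowski maps are distance preserving (hence injective) by Proposition~\ref{prop:tau-K-dist-pres}, and time preserving (in the sense of Proposition~\ref{prop:tau-K-time-pres}).

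First I would observe that since $\kappa_{\tau_Y,Y}\colon Y \to \ell^\infty$ is distance preserving, it is injective, so it has a well-defined inverse $\kappa_{\tau_Y,Y}^{-1}\colon \kappa_{\tau_Y,Y}(Y) \to Y$ on its image. By the hypothesis (\ref{eq:same-image}), for every $x \in X$ the point $\kappa_{\tau_X,X}(x)$ lies in $\kappa_{\tau_Y,Y}(Y)$, so I may define
\[
F(x) \;=\; \kappa_{\tau_Y,Y}^{-1}\bigl(\kappa_{\tau_X,X}(x)\bigr) \qquad \forall\, x\in X.
\]
This $F$ satisfies $\kappa_{\tau_Y,Y}\circ F = \kappa_{\tau_X,X}$ by construction, which is exactly (\ref{eq:tau-K-circ-2}). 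Symmetrically, $\kappa_{\tau_X,X}^{-1}\circ \kappa_{\tau_Y,Y}$ provides a two-sided inverse, so $F\colon X\to Y$ is a bijection.

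Next I would verify $F$ preserves distance: for any $x_1,x_2\in X$, using that $\kappa_{\tau_Y,Y}$ and $\kappa_{\tau_X,X}$ are distance preserving (Proposition~\ref{prop:tau-K-dist-pres}),
\[
d_Y\bigl(F(x_1),F(x_2)\bigr) \;=\; d_{\ell^\infty}\bigl(\kappa_{\tau_Y,Y}(F(x_1)),\kappa_{\tau_Y,Y}(F(x_2))\bigr) \;=\; d_{\ell^\infty}\bigl(\kappa_{\tau_X,X}(x_1),\kappa_{\tau_X,X}(x_2)\bigr) \;=\; d_X(x_1,x_2).
\]
Finally, to see $F$ preserves time, apply Proposition~\ref{prop:tau-K-time-pres} to both Kuratowski maps with the projection $\tau_\infty$ on $\ell^\infty$: $\tau_Y(F(x)) = \tau_\infty(\kappa_{\tau_Y,Y}(F(x))) = \tau_\infty(\kappa_{\tau_X,X}(x)) = \tau_X(x).$

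There is no serious obstacle here; the real content of the lemma is simply that the hypothesis (\ref{eq:same-image}) upgrades the two distance- and time-preserving embeddings into $\ell^\infty$ to a genuine identification, and everything else is bookkeeping carried along by the fact that $\kappa_{\tau_X,X}$ and $\kappa_{\tau_Y,Y}$ are injective. The one subtle point worth stating explicitly is the injectivity argument, so that the bijection $F$ is unambiguously defined on all of $X$ with image all of $Y$.
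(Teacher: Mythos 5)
Your proof is correct and follows essentially the same route as the paper's: define $F=\kappa_{\tau_Y,Y}^{-1}\circ\kappa_{\tau_X,X}$ using injectivity of the distance-preserving Kuratowski maps and the same-image hypothesis, then check distance and time preservation. The only cosmetic difference is that for time preservation you cite Proposition~\ref{prop:tau-K-time-pres}, while the paper reads off the first coordinate of the timed-Kuratowski map directly from Definition~\ref{defn:tau-K}; these are the same observation.
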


\begin{proof}
By Proposition~\ref{prop:tau-K-dist-pres}
we know that
\be
\kappa_{\tau_Y,Y}:(Y,d_Y)\to 
(\kappa_{\tau_Y,Y}(Y), d_{\ell^\infty})
\subset (\ell^\infty,d_{\ell^\infty})
\ee
is distance preserving and thus one-to-one. Since $\kappa_{\tau_Y,Y}$ also maps
onto its image, 
it is a bijection and has an inverse:
\be
\kappa_{\tau_Y,Y}^{-1}:
(\kappa_{\tau_Y,Y}(Y), d_{\ell^\infty})
\to (Y,d_Y).
\ee
By (\ref{eq:same-image}),
\be
\kappa_{\tau_Y,Y}^{-1}: (\kappa_{\tau_X,X}(X),d_{\ell^\infty})\to (Y,d_Y).
\ee
Thus we can define the distance preserving bijection,
\be
F= \kappa_{\tau_Y,Y}^{-1}\circ
\kappa_{\tau_X,X}: (X,d_X)\to (Y,d_Y),
\ee
which satisfies \eqref{eq:tau-K-circ-2}.
To see that $F$ is time preserving,
note that if $y=F(x)$ then
\be
\kappa_{\tau_Y,Y}(y)=\kappa_{\tau_X,X}(x).
\ee
By the definition of the timed-Fr\'echet maps in
Definition~\ref{defn:tau-K}, we then have
\be
(\tau_Y(y),\kappa_Y(y))=(\tau_X(x),\kappa_X(x))
\ee
which implies $\tau_Y(y)=\tau_X(x)$.
\end{proof}


We now apply the above lemmas to prove Theorem~\ref{thm:definite-tau-K}:

\begin{proof} 
First we prove the easy direction,
starting with a distance and time preserving map, $F:(X,d_X,\tau_X)\to (Y,d_Y,\tau_Y)$,
we see by Lemma~\ref{lem:F-to-tau-K} that 
there are
timed-Fr\'echet maps with the
same image,
\be
\kappa_{\tau_X,X}(X)=\kappa_{\tau_Y,Y}(Y)\subset \ell^\infty
\ee
and so the Hausdorff distance
between these images is,
\be 
d^{\ell^\infty}_H(
\kappa_{\tau_X,X}(X),
\kappa_{\tau_Y,Y}(Y))=0.
\ee
Thus we have (\ref{eq:tau-K-definite-1}).

Conversely, suppose we have (\ref{eq:tau-K-definite-1}).
So there exists a sequence
of 
timed-Fr\'echet maps such that
\be \label{eq:ell-infty-j-tau}
d^{\ell^\infty}_H(
\kappa_{\tau_X,X,j}(X),
\kappa_{\tau_Y,Y,j}(Y))<1/j.
\ee  
Since $\ell^\infty$ is not compact,
we cannot apply Arzela-Ascoli to obtain a converging sequence of the timed-Fr\'echet maps and thus cannot directly apply  Lemma~\ref{lem:tau-K-to-F}.  Instead we imitate the proof that the Gromov-Hausdorff distance is definite.  

For each $j\in {\mathbb N}$, we claim that we can find a map $F_j:X\to Y$
which is 
almost time preserving:
\be\label{eq:almost-time}
|\tau_Y(F_j(x))-\tau_X(x)|<1/j.
\ee
and almost onto:
\be \label{eq:almost-onto}
\forall \, y\in Y \, \exists \, x\in X
\textrm{ s.t. }d_Y(F_j(x),y)<2/j
\ee
and almost distance preserving:
\be \label{eq:almost-dist}
|d_Y(F_j(x_1),F_j(x_2))-d_X(x_1,x_2)|<3/j.
\ee
Recall that by (\ref{eq:ell-infty-j-tau}) and the definition of
the Hausdorff distance, for all $x\in X$ there is $F_j(x)\in Y$ such that
\be \label{eq:ell-infty-j-tau-2}
d_{\ell^\infty}(
\kappa_{\tau_X,X,j}(x),
\kappa_{\tau_Y,Y,j}(F_j(x)))<1/j.
\ee  
Thus in particular the distance between the first components is also bounded which gives almost 
time preserving as in (\ref{eq:almost-time}). Applying the definition of
the Hausdorff distance again, for all $y\in Y$ there is $x\in X$ such that
\be 
d_{\ell^\infty}(
\kappa_{\tau_X,X,j}(x),
\kappa_{\tau_Y,Y,j}(y))<1/j.
\ee  
Combining this with (\ref{eq:ell-infty-j-tau-2}) and applying the
triangle inequality we have
\be
d_{\ell^\infty}(
\kappa_{\tau_Y,Y,j}(F(x)),
\kappa_{\tau_Y,Y,j}(y))<2/j
\ee  
which implies almost onto as in (\ref{eq:almost-onto})
because $\kappa_{\tau_Y,Y,j}$ is distance preserving.  Finally
we apply (\ref{eq:ell-infty-j-tau-2}) to a pair of points $x_1,x_2\in X$ and use the 
following inequality:
\be
|d(a_1,a_2)-d(b_1,b_2)|<d(a_1,b_1)+d(a_2,b_2)
\ee
with $d=d_{\ell^\infty}$ and with
\be
a_i=\kappa_{\tau_X,X,j}(x_i)
\textrm{ and }
b_i=\kappa_{\tau_Y,Y,j}(F(x_i))
\ee
to conclude almost distance
preserving as in (\ref{eq:almost-dist}) and complete our claim.

Since $(X,d_X)$ and $(Y,d_Y)$
are compact metric-spaces, 
and $F_j$ are almost onto
and almost distance preserving as in
(\ref{eq:almost-onto}) and (\ref{eq:almost-dist}),
a converging subsequence,
$F_{j_k}\to F$, exists such that
\be
\sup_{x\in X}|F_{j_k}(x)-F(x)|<1/k \to 0
\ee
and the limit function, $F$ is
an invertible distance preserving isometry.  The proof of this follows similar to the proof of the Arzela-Ascoli Theorem (even though the functions are not continuous).   It was known to Gromov and details are given in a paper by Grove-Petersen \cite{Grove-Petersen}.

Applying the fact that $\tau_Y$
is Lipschitz one and (\ref{eq:almost-time}) we see that
the limit function, $F$, is
also time preserving.
\end{proof}

\begin{rmrk}\label{rmrk:tau-K-definite}
To 
prove
$d_{S-\tau-mm}$ and
$d_{S-\tau-FF}$ are definite 
one would similarly apply the above lemmas and imitate the respective proofs that the metric measure and intrinsic flat distances are definite. Sakovich and Sormani will implement this for the $\mathcal{F}$ distance and leave the $mm$ distance to others.  
\end{rmrk}

\section{{\bf{Convergence of Space-Times}}}
\label{sect:conv}

Recall that once we have a notion of distance between space-times we have a notion of convergence for those space-times.
We may have smooth limits as in (\ref{eq:Sdistto}) or limits that are just
timed-metric-spaces as in (\ref{eq:distto}).   In this section we state natural conjectures related to our notions of convergence.

In Section~\ref{sect:relations}, we describe how we believe these various notions of convergence of space-times 
are related through a collection of conjectures. In Section~\ref{sect:prior} we review prior notions of convergence for space-times which might be related to our notions.

We present conjectures related to the
stability of Einstein's Equations with compact initial data in 
Section~\ref{sect:stability-compact}.
We present conjectures that might be
applied to justify the use of isotropic homogeneous FLRW space-times to describe the universe in Section~\ref{sect:FLRW}.   We refer to the book by Ringstrom for background \cite{Ringstrom-book-top}.

We discuss the definition of convergence
of asymptotically flat space-times in
Section~\ref{sect:conv-ex}. 
Using these notions, we state natural conjectures related to the Stability of Minkowski, Schwarzschild and Kerr  in Section~\ref{sect:stability}, and related to
the Final State Conjecture and Penrose's outline of the proof of the Penrose Inequality in Section~\ref{sect:Penrose}.
We recommend Giorgi's survey \cite{Giorgi-BAMS} for background.


\subsection{{\bf Relationships between the Notions of Convergence}}
\label{sect:relations}

Here we state some conjectures which might be interesting for doctoral students and postdocs.

\begin{conj}\label{conj:tK-conv-to-others}
If $(N_j,g_j)$ and $(N_\infty, g_\infty)$
are 
smooth causally-null-compactifiable space-times
converging in the intrinsic timed Hausdorff
sense,
\be 
d_{S-\tau-H}\Big((N_j,g_j),(N_\infty,g_\infty)\Big)\to 0,
\ee
then these spaces converge in the
timeless, sup level, and sup strip sense:
\begin{eqnarray}
&d^{tls}_{S-GH}
\Big((N_j,g_j),(N_\infty,g_\infty)\Big)&\to 0,\\
&d_{S-GH}^{L-sup}
\Big((N_j,g_j),(N_\infty,g_\infty)\Big)&\to 0,\\
&d_{S-GH}^{strip-sup}
\Big((N_j,g_j),(N_\infty,g_\infty)\Big)&\to 0.
\end{eqnarray}
More generally, if 
we have compact timed-metric-spaces
converging in the intrinsic timed-Hausdorff sense,
\be 
d_{\tau-H}\Big((\bar{N}_j,d_j,\tau_j),(\bar{N}_\infty,d_\infty,\tau_\infty)\Big)\to 0,
\ee
then we have timeless Gromov-Hausdorff convergence as well:
\begin{eqnarray}
&d^{tls}_{GH}\Big((\bar{N}_j,d_j,\tau_j),(\bar{N}_\infty,d_\infty,\tau_\infty)\Big)&\to 0,\\
&d_{GH}^{L-sup}\Big((\bar{N}_j,d_j,\tau_j),(\bar{N}_\infty,d_\infty,\tau_\infty)\Big)&\to 0,\\
&d_{GH}^{strip-sup}\Big((\bar{N}_j,d_j,\tau_j),(\bar{N}_\infty,d_\infty,\tau_\infty)\Big)&\to 0.
\end{eqnarray}
\end{conj}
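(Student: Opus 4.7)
The plan is to prove the conjecture in its more general form for compact timed-metric-spaces, since the smooth causally-null-compactifiable case is then immediate by passing to the associated timed-metric-spaces. The three implications will be handled separately.

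The timeless implication $d_{\tau-H}\to 0 \Rightarrow d_{GH}^{tls}\to 0$ reduces to a direct projection argument. A timed-Kuratowski map splits as $\kappa_{\tau,X}=(\tau,\kappa_X)$, and the projection $\pi:\ell^\infty\to\ell^\infty$ dropping the first coordinate is $1$-Lipschitz for $d_{\ell^\infty}$. Hausdorff distance is non-increasing under $1$-Lipschitz maps, so for any fixed choice of countable dense sets in $X_1$ and $X_2$,
\be\label{eq:timed-dominates}
d_H^{\ell^\infty}\bigl(\kappa_{X_1}(X_1),\kappa_{X_2}(X_2)\bigr) \le d_H^{\ell^\infty}\bigl(\kappa_{\tau_1,X_1}(X_1),\kappa_{\tau_2,X_2}(X_2)\bigr).
\ee
Taking infima over dense sets yields $d_{\kappa-GH}\le d_{\tau-H}$, and combining with the standard inequality $d_{GH}\le d_{\kappa-GH}$ from (\ref{eq:kappa-GH-comp}) gives $d_{GH}^{tls}\le d_{\tau-H}$, which handles the timeless claim in both the smooth and the general setting for free.

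For the sup-level and sup-strip implications, the plan is to first upgrade $d_{\tau-H}(X_j,X_\infty)<\epsilon_j\to 0$ into a sequence of maps $F_j:X_j\to X_\infty$ that are simultaneously $\epsilon_j$-almost distance preserving, $\epsilon_j$-almost onto, and $\epsilon_j$-almost time preserving. This is exactly the extraction used in the proof of Theorem~\ref{thm:definite-tau-K}: for each $x\in X_j$ one picks $F_j(x)\in X_\infty$ whose $\tau$-Kuratowski image lies within $\epsilon_j$ of that of $x$ in $\ell^\infty$, then reads off the three approximate properties from the two components of the $\ell^\infty$-norm together with the Hausdorff bound. One then wants to convert these global maps into almost isometries between matching level sets or strips. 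The natural tool is a retraction to a fixed time, built from the Lipschitz-$1$ generators $\bar\gamma_p:[\tau_{min},\tau(p)]\to\bar{N}$ supplied by Theorem~\ref{thm:proper}: composing $F_j$ with the retraction that flows each overtime point down to the relevant level produces almost isometries $\tau_j^{-1}(t)\to\tau_\infty^{-1}(t)$ and $\tau_j^{-1}[s,t]\to\tau_\infty^{-1}[s,t]$ with errors of order $\epsilon_j$, and uniformity in $t$ or in pairs $(s,t)$ then gives the sup statements after using compactness of the joint time range and continuity of $\tau_j,\tau_\infty$.

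I expect the main obstacle to be the \emph{upward} retraction that the above outline implicitly needs: if $\tau_\infty(F_j(x))<t$ while $\tau_j(x)=t$, then $F_j(x)$ must be moved up to level $t$, but the generators produced by Theorem~\ref{thm:proper} are past-inextendible and only flow downward. In the causally-null-compactifiable setting this should be handled by finding, for each such $p\in\bar N_\infty$ with $\tau_\infty(p)<t$, a future point $q\in\tau_\infty^{-1}(t)\cap J^+(p)$ with $\hat{d}_\infty(p,q)=t-\tau_\infty(p)$, whose existence should follow from the encoding-causality property~(\ref{eq:causal}) together with compactness of $\bar N_\infty$ and nonemptiness of $\tau_\infty^{-1}(t)$. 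For a purely timed-metric-space statement without any space-time background, this upward reachability is not automatic, and I suspect the general-case sup-level and sup-strip parts of the conjecture genuinely require an additional hypothesis on the limit ensuring that every point lies on a Lipschitz-$1$ curve covering $[\tau_{min},\tau_{max}]$. Making this bi-directional retraction precise, and checking its uniformity in $t$, is the principal technical step of the argument.
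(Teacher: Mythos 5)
You should first note that the paper offers no proof of this statement: it is stated explicitly as a conjecture (Section~\ref{sect:relations} is introduced as a list of open problems ``for doctoral students and postdocs''), so there is no argument of the authors to compare yours against. Your proposal is therefore being judged as a new attempt. The first part of it succeeds: the projection $\pi:\ell^\infty\to\ell^\infty$ dropping the zeroth coordinate is $1$-Lipschitz, it carries $\kappa_{\tau,X}(X)$ onto $\kappa_X(X)$ for the same dense set, Hausdorff distance does not increase under $1$-Lipschitz maps, and combining with (\ref{eq:kappa-ge}) gives the clean quantitative bound $d^{tls}_{GH}\le d_{\tau-H}$. That is a complete and correct proof of the timeless implication, and it is stronger than what the conjecture asks for.

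The sup-level and sup-strip parts remain a sketch, and the gap you flag is genuine — in fact the situation is worse than you suggest for the general timed-metric-space form. Timed-Hausdorff convergence does not force the range of $\tau_j$ to coincide with the range of $\tau_\infty$ for finite $j$: take $X_\infty=\{p\}$ with $\tau_\infty(p)=0$ and $X_j=\{p_j,q_j\}$ with $\tau_j(p_j)=0$, $\tau_j(q_j)=d_j(p_j,q_j)=\epsilon_j\to 0$. Then $d_{\tau-H}(X_j,X_\infty)\le\epsilon_j\to 0$, but $\tau_j^{-1}(\epsilon_j)\neq\emptyset$ while $\tau_\infty^{-1}(\epsilon_j)=\emptyset$, and the paper itself observes (in the proof of Theorem~\ref{thm:levels}) that the $GH$ distance between an empty and a nonempty space is never zero; so $d_{GH}^{L-sup}(X_j,X_\infty)$ does not tend to zero. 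Hence the general form of the sup-level claim is false as stated, and any proof must add a hypothesis (or restrict to the smooth setting and prove that every level $t\in[\tau_{min},\tau_{max}]$ of the limit and of each $X_j$ is nonempty and reachable from every nearby level, both downward and upward, with distance controlled by the time gap). Your proposed downward retraction along the generators of Theorem~\ref{thm:proper} is fine; the upward retraction needs the existence, for each $p$ and each $t>\tau(p)$ in the common range, of $q\in\tau^{-1}(t)\cap J^+(p)$ with $\hat d(p,q)=t-\tau(p)$, which holds for cosmic strips of space-times with proper regular cosmological time (via encoding causality) but must be either proved or assumed in the generality of Definition~\ref{defn:cncst}. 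Until that bi-directional, uniform-in-$t$ retraction is established, the second and third implications are not proved even in the smooth case.
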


\begin{conj}\label{conj:tK-conv-to-BB}
If, in addition to the hypotheses
in Conjecture~\ref{conj:tK-conv-to-others},
the causally-null-compactifiable
space-times, $(N_j,g_j)$ are big-bang
space-times as in Definition~\ref{defn:BB}, then the $\tau-H$ limit is a big bang space-time as well and we also have $BB-GH$
convergence. We expect counter examples exist when
replacing $dist=GH$ with $dist={\mathcal F}$ or
other notions of convergence that allow regions to
disappear.
\end{conj}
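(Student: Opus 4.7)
The plan is to argue in three stages: first, show the $\tau$-$H$ limit $(\bar{N}_\infty,\hat{d}_\infty,\tau_\infty)$ has a unique point $p_{BB,\infty}$ with $\tau_\infty(p_{BB,\infty})=0$; second, verify $\tau_\infty(p)=\hat{d}_\infty(p,p_{BB,\infty})$ for every $p\in \bar{N}_\infty$, making the limit a big bang timed-metric-space in the sense of Definition~\ref{defn:BB}; third, upgrade the $\tau$-$H$ control to $BB$-$GH$ control by using the timed-Kuratowski embeddings themselves as the admissible distance-preserving maps in Definition~\ref{defn:BB-GH} and tracking the big bang points explicitly.

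For the first stage, fix $\epsilon_j\to 0$ with $d_{\tau\text{-}H}((\bar{N}_j,\hat{d}_j,\tau_j),(\bar{N}_\infty,\hat{d}_\infty,\tau_\infty))<\epsilon_j$ and choose timed-Kuratowski maps $\kappa_j:\bar{N}_j\to\ell^\infty$ and $\kappa_\infty^{(j)}:\bar{N}_\infty\to\ell^\infty$ realizing this bound. Since $\hat{d}_j(x,p_{BB,j})=\tau_j(x)$ by Definition~\ref{defn:BB}, the first coordinate of $\kappa_j(p_{BB,j})$ is $0$. By the Hausdorff bound there is $q_j\in\bar{N}_\infty$ with $d_{\ell^\infty}(\kappa_j(p_{BB,j}),\kappa_\infty^{(j)}(q_j))<\epsilon_j$, whose first coordinate gives $\tau_\infty(q_j)<\epsilon_j$. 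By compactness of $\bar{N}_\infty$ a subsequence satisfies $q_j\to p_{BB,\infty}$, and continuity of $\tau_\infty$ forces $\tau_\infty(p_{BB,\infty})=0$. For uniqueness, any $p,q\in\tau_\infty^{-1}(0)$ are pulled back via the Hausdorff inequality to $p_j,q_j\in\bar{N}_j$ of $\tau_j$-height $<2\epsilon_j$; since the $N_j$ are big bang, these are $2\epsilon_j$-close to $p_{BB,j}$, so $\hat{d}_j(p_j,q_j)<4\epsilon_j$, and the distance-preservation of the $\kappa$'s (Proposition~\ref{prop:tau-K-dist-pres}) together with the triangle inequality in $\ell^\infty$ forces $\hat{d}_\infty(p,q)=0$.

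For the second stage, given any $p\in\bar{N}_\infty$, pull it back to $p_j\in\bar{N}_j$ with $d_{\ell^\infty}(\kappa_\infty^{(j)}(p),\kappa_j(p_j))<\epsilon_j$. Then $|\tau_\infty(p)-\tau_j(p_j)|<\epsilon_j$, while $\tau_j(p_j)=\hat{d}_j(p_j,p_{BB,j})$. Applying distance-preservation of $\kappa_j$ together with the fact that $\kappa_j(p_{BB,j})$ lies within $\epsilon_j+\hat{d}_\infty(q_j,p_{BB,\infty})\to 0$ of $\kappa_\infty^{(j)}(p_{BB,\infty})$, we conclude $\tau_\infty(p)=\hat{d}_\infty(p,p_{BB,\infty})$, which is exactly the big bang property (\ref{eq:dist-BB}).

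For the third stage, treating $\kappa_j$ and $\kappa_\infty^{(j)}$ as admissible distance-preserving maps into $Z=\ell^\infty$ in Definition~\ref{defn:BB-GH}, the triangle inequality gives
\[
d_{\ell^\infty}\bigl(\kappa_j(p_{BB,j}),\kappa_\infty^{(j)}(p_{BB,\infty})\bigr)\le \epsilon_j+\hat{d}_\infty(q_j,p_{BB,\infty})\to 0,
\]
and hence
\[
d_{BB\text{-}GH}\bigl((\bar{N}_j,\hat{d}_j,\tau_j),(\bar{N}_\infty,\hat{d}_\infty,\tau_\infty)\bigr)\le \epsilon_j+d_{\ell^\infty}\bigl(\kappa_j(p_{BB,j}),\kappa_\infty^{(j)}(p_{BB,\infty})\bigr)\to 0.
\]
The main obstacle lies in the uniqueness step of the first stage and the identification of $\tau_\infty$ with the distance from $p_{BB,\infty}$ in the second: although the timed-Kuratowski embeddings depend on $j$ through the choice of countable dense sequences, one must show that the images of the $p_{BB,j}$ track a single canonical limit point rather than drifting among possibly many low-$\tau$ accumulation points; the argument above handles this by exploiting the rigid identity $\tau_j=\hat{d}_j(\cdot,p_{BB,j})$ on the approximating spaces. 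The expected failure for $d=\mathcal{F}$ noted in the conjecture reflects that integral currents can lose low-mass regions near the big bang, so the initial point can cancel in the weak limit and the limit need not be big bang.
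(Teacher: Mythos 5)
The first thing to say is that the paper does not prove this statement: it is left as Conjecture~\ref{conj:tK-conv-to-BB}, an open problem (the closest the paper comes is Remark~\ref{rmrk-second-proof-BB-GH}, which merely suggests relating $d_{BB-GH}=0$ to $d_{\tau-H}=0$). So there is no proof of record to compare yours against; what you have written is a proposed resolution of an open conjecture, and on inspection it appears to be essentially correct for the $GH$ version. The structure --- locate a zero-time point in the limit, prove it is unique, identify $\tau_\infty$ with the distance to it, then feed the timed-Kuratowski embeddings into Definition~\ref{defn:BB-GH} --- is sound, and every estimate you use reduces to two facts: timed-Kuratowski maps are distance preserving (Proposition~\ref{prop:tau-K-dist-pres}), and the $\ell^\infty$ distance dominates the difference of first coordinates, i.e.\ of time values, so the Hausdorff bound controls $\tau$ as well as $\hat d$.

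Three points need tightening before this is a complete proof. First, in stage two your chain of triangle inequalities only yields $\hat{d}_\infty(p,p_{BB,\infty})\le\tau_\infty(p)$; the reverse inequality is needed for (\ref{eq:dist-BB}) and follows in one line from the Lipschitz-one property of $\tau_\infty$, namely $\tau_\infty(p)=\tau_\infty(p)-\tau_\infty(p_{BB,\infty})\le\hat{d}_\infty(p,p_{BB,\infty})$, but you should say so. Second, you extract only a subsequence of the $q_j$ in stage one, whereas stages two and three use $\hat{d}_\infty(q_j,p_{BB,\infty})\to0$ for the full sequence; this is rescued by your own uniqueness step (every subsequential limit of $q_j$ has $\tau_\infty=0$ by continuity, hence equals $p_{BB,\infty}$, so the whole sequence converges), and that implication should be made explicit to dispel any appearance of circularity. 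Third, Definition~\ref{defn:BB-GH} takes the infimum over compact $Z$, and $\ell^\infty$ is not compact; replace $Z$ by the union $\kappa_j(\bar{N}_j)\cup\kappa_\infty^{(j)}(\bar{N}_\infty)$ with the restricted $\ell^\infty$ metric, which is compact. You should also record that $\tau_\infty\ge0$ with its minimum attained exactly at $p_{BB,\infty}$ (this follows from the same first-coordinate estimate applied to all of $\bar{N}_\infty$), since Definition~\ref{defn:BB} implicitly normalizes the initial level set to $\tau^{-1}(0)$. With these repairs the argument settles the $GH$ half of the conjecture; your closing observation about $\mathcal{F}$-limits losing the big bang point matches the authors' stated expectation for why the current-based analogue should fail.
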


\begin{conj}\label{conj:tK-FD-to-BB}
One should be able to construct an example of
a sequence of causally-null-compactifiable 
future-developed space-times
satisfying the hypothesis in Conjecture~\ref{conj:tK-conv-to-others}
which ${S-\tau-H}$ 
converges to a compact big bang space-time
because their initial data sets shrink to a point.
\end{conj}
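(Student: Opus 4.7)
The plan is to construct the example via spatially homogeneous warped products that interpolate between a smooth initial surface of vanishing diameter and the FLRW big bang limit. Fix a compact Riemannian manifold $(M,h)$ with $\diam_h(M)=1$ and a final time $T>0$. For each $j\in\mathbb{N}$, let $\epsilon_j\to 0^+$ and define
\[
(N_{j,+},g_j)=\bigl((0,T)\times M,\; -dt^2+(\epsilon_j+t)^2 h\bigr),
\]
whose initial data set at $t=0$ is $(M,\epsilon_j^2 h, -\epsilon_j^{-1}h)$, of Riemannian diameter $\epsilon_j\to 0$. The natural candidate limit is $(N_{\infty,+},g_\infty)=\bigl((0,T)\times M,-dt^2+t^2 h\bigr)$, the FLRW big bang of Example~\ref{ex:induce}.

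First I would verify that each $(N_{j,+},g_j)$ is a smooth causally-null-compactifiable future-developed space-time in the sense of Definition~\ref{defn:FD}. In these warped products the timelike $t$-lines are the generators, so $\tau_{g_j}(t,x)=t$ extends by the generator construction to all of $\bar{N}_{j,+}=[0,T]\times M$; causality-encoding and compactness of the metric completion then follow from Theorem~\ref{thm:FD}, applied after extending the warped product slightly below $t=0$. Similarly, I would verify that $(N_{\infty,+},g_\infty)$ is a causally-null-compactifiable big bang space-time in the sense of Definition~\ref{defn:BB}: here I would invoke the Allen--Burtscher warped product estimates of \cite{Allen-Burtscher-22} to show that $\{t=0\}$ collapses to a single point $p_{BB}\in\bar{N}_{\infty,+}$ with $\tau_\infty(p)=\hat{d}_{g_\infty}(p,p_{BB})$, as anticipated by Conjecture~\ref{conj:BB} and Remark~\ref{rmrk-SV-FLRW}.

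The quantitative heart of the argument is a comparison of null distances under the pointwise identification $(t,x)\mapsto (t,x)$. Because $\tau_{g_j}=\tau_{g_\infty}=t$ on the common coordinate patch, time preservation is automatic, and I would show the two-sided bound
\[
\bigl|\hat{d}_{g_j}((t_1,x_1),(t_2,x_2))-\hat{d}_{g_\infty}((t_1,x_1),(t_2,x_2))\bigr|\le \eta_j\bigl(\min\{t_1,t_2\}\bigr),
\]
where $\eta_j(s)\to 0$ uniformly on each $[s_0,T]$ with $s_0>0$. This follows from the explicit warped-product formula for piecewise-null curves (the null length is $\int |dt|$, and the horizontal constraint $|dt|\ge(\epsilon_j+t)|dx|_h$ versus $|dt|\ge t|dx|_h$ produces length discrepancies $O(\epsilon_j)$ away from $t=0$). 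Near $t=0$ both distances are controlled by $t_1+t_2$ up to an error $O(\epsilon_j)$, using piecewise-null paths dipping down to the initial slice.

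To close $S$-$\tau$-$H$ convergence I would fix once and for all a countable dense subset $\{x_i\}\subset M$ and a dense $\{t_k\}\subset[0,T]$, use $\{(t_k,x_i)\}$ as the common reference base in each $\bar{N}_{j,+}$ and in $\bar{N}_{\infty,+}$, and compare the resulting timed-Kuratowski maps $\kappa_{\tau_j}$ and $\kappa_{\tau_\infty}$ into $\ell^\infty$. The comparison estimate above, together with Lipschitz $1$-ness of each coordinate, bounds the Hausdorff distance of the images by $\sup_t \eta_j(t)+\epsilon_j\diam_h(M)\to 0$. The main obstacle I expect is the singular regime near the big bang: in $\bar{N}_{j,+}$ the initial slice $\{0\}\times M$ is a set of diameter $\epsilon_j$, while in $\bar{N}_{\infty,+}$ it is the single point $p_{BB}$, and one must verify that every point of $\{0\}\times M\subset\bar{N}_{j,+}$ maps into an $o(1)$-neighborhood of $\kappa_{\tau_\infty}(p_{BB})$. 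This reduces to showing $\hat{d}_{g_j}((0,x),(0,x'))\le C\epsilon_j$ for all $x,x'\in M$, which is exactly the restricted-metric computation provided by Allen--Burtscher for a product slab at $t=0$. Once this is in hand, Theorem~\ref{thm:definite-tau-K} is not needed — only the definitional bound on $d_{\tau\text{-}H}$ via Kuratowski images — and the proof of the conjecture is complete.
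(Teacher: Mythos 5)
The statement you are proving is stated in the paper only as a conjecture: the authors give no construction and no proof, so there is nothing of theirs to compare your argument to. Judged on its own, your warped-product proposal is a sensible and essentially correct route: taking $g_j=-dt^2+(\epsilon_j+t)^2h$ on $(0,T)\times M$ gives smooth future developments with initial data of diameter $\epsilon_j\to 0$, each of which is causally-null-compactifiable by the extension trick plus Theorem~\ref{thm:FD} (or directly by Theorem~\ref{thm:proper}, since $\tau_{g_j}(t,x)=t$ is proper and regular), and the candidate limit $-dt^2+t^2h$ is exactly the kind of big bang warped product the paper has in mind in Example~\ref{ex:BB-tau} and Remark~\ref{rmrk-SV-FLRW}. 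Bounding $d_{\tau-H}$ by a common Kuratowski base and a uniform comparison of null distances is also the right mechanism, and only an upper bound on the infimum in Definition~\ref{defn:tau-K-dist} is needed, as you say.

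Two places deserve more care than your sketch gives them. First, the two-sided estimate $|\hat{d}_{g_j}-\hat{d}_{g_\infty}|\le C\epsilon_j$: the inequality $\hat{d}_{g_\infty}\le\hat{d}_{g_j}$ is immediate because $\epsilon_j+t\ge t$ makes every $g_j$-causal curve $g_\infty$-causal with the same null length, but the reverse inequality requires either an explicit competitor whose horizontal projection is an $h$-geodesic (so the extra cost is at most $\epsilon_j\,\diam_h(M)$) or the Allen--Burtscher warped-product distance formula; note that their uniform-convergence statements are typically formulated for warping functions bounded away from zero, whereas your limit has $f_\infty(t)=t\to 0$, so near $t=0$ you must argue directly (your $t_1+t_2+O(\epsilon_j)$ bound does this, but it should be stated as part of the proof rather than an afterthought). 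Second, the limit must be verified to be a causally-null-compactifiable \emph{big bang} space-time in the sense of Definition~\ref{defn:BB}; for a general compact $(M,h)$ this is the warped-product case of Conjecture~\ref{conj:BB}, which needs the level-set diameter estimate $\hat{d}_{g_\infty}((t,x),(t,y))\le 2t$ plus causality encoding from \cite{Allen-Burtscher-22}. You can sidestep any doubt here by simply choosing $M$ homogeneous (e.g.\ a round sphere or flat torus), for which Remark~\ref{rmrk-SV-FLRW} already covers the limit; nothing in your argument uses more than compactness of $M$, so this costs no generality for the purpose of producing the example. With those two points written out, your construction settles the conjecture.
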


\begin{conj}\label{conj-BB-to-tK}
If $(N_j,g_j)$ and $(N_\infty, g_\infty)$
are smooth causally-null-compactifiable big bang space-times such that
\be 
d_{S-BB-GH}\Big((N_j,g_j),(N_\infty,g_\infty)\Big)\to 0
\ee
then 
\be 
d_{S-\tau-H}\Big((N_j,g_j),(N_\infty,g_\infty)\Big)\to 0.
\ee
More generally, if 
we have big bang timed-metric-spaces 
as in Definition~\ref{defn:BB}
with
\be 
d_{BB-GH}\Big((\bar{N}_j,d_j,\tau_j),(\bar{N}_\infty,d_\infty,\tau_\infty)\Big)\to 0,
\ee
then
\be 
d_{\tau-H}\Big((\bar{N}_j,d_j,\tau_j),(\bar{N}_\infty,d_\infty,\tau_\infty)\Big)\to 0.
\ee
\end{conj}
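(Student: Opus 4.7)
The plan is to adapt the merged-dense-subset argument of Proposition~\ref{prop:2GH} to the timed setting, crucially using that on a big bang timed-metric-space the time function is exactly the null distance to the base point $p_{BB}$; I will prove the general statement for big bang timed-metric-spaces, which yields the $S$-version by passing to the associated spaces. First unpack $d_{BB-GH}((\bar{N}_j,\hat{d}_j,\tau_j),(\bar{N}_\infty,\hat{d}_\infty,\tau_\infty))\to 0$: for each $j$ there is a complete metric space $(Z_j,d_{Z_j})$ and distance preserving maps $\varphi_j:\bar{N}_j\to Z_j$, $\psi_j:\bar{N}_\infty\to Z_j$ with $d_H^{Z_j}(\varphi_j(\bar{N}_j),\psi_j(\bar{N}_\infty)) + d_{Z_j}(\varphi_j(p_{BB,j}),\psi_j(p_{BB,\infty}))<\epsilon_j\to 0$. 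Standard pointed Gromov-Hausdorff bookkeeping, as in the proof of Theorem~\ref{thm:FD-HH}, produces maps $F_j:\bar{N}_j\to\bar{N}_\infty$ and $G_j:\bar{N}_\infty\to\bar{N}_j$ that are $O(\epsilon_j)$-almost distance preserving and $O(\epsilon_j)$-almost onto, and since $F_j(p_{BB,j})$ is within $\epsilon_j$ of $p_{BB,\infty}$ I may redefine $F_j(p_{BB,j})=p_{BB,\infty}$ and $G_j(p_{BB,\infty})=p_{BB,j}$ at cost only $O(\epsilon_j)$.

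The big bang identity $\tau(p)=\hat{d}(p,p_{BB})$ of Definition~\ref{defn:BB} then delivers time control for free: for every $x\in\bar{N}_j$,
\be
|\tau_\infty(F_j(x))-\tau_j(x)|=\bigl|\hat{d}_\infty(F_j(x),F_j(p_{BB,j}))-\hat{d}_j(x,p_{BB,j})\bigr|\le C\epsilon_j.
\ee
To build compatible timed-Kuratowski maps I follow the merging trick of Proposition~\ref{prop:2GH}. Fix countable dense sets $\{a_i\}\subset\bar{N}_\infty$ and $\{b^{(j)}_i\}\subset\bar{N}_j$ with $a_1=p_{BB,\infty}$ and $b^{(j)}_1=p_{BB,j}$, and form the interleaved countable dense sets $\{b^{(j)}_1,G_j(a_1),b^{(j)}_2,G_j(a_2),\ldots\}\subset\bar{N}_j$ and $\{F_j(b^{(j)}_1),a_1,F_j(b^{(j)}_2),a_2,\ldots\}\subset\bar{N}_\infty$, which I use to define timed-Kuratowski maps $\kappa_j$ and $\kappa_\infty^{(j)}$ into $\ell^\infty$. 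A coordinate-by-coordinate comparison, exactly as in the proof of Proposition~\ref{prop:2GH} for the non-time entries (odd indices via $F_j$-almost-distance-preservation, even indices via the quasi-inverse relation $\hat{d}_\infty(F_j(G_j(a_i)),a_i)\le 2\epsilon_j$) and via the previous display for the time entry, yields
\be
d_{\ell^\infty}\bigl(\kappa_j(x),\kappa_\infty^{(j)}(F_j(x))\bigr)\le C\epsilon_j\quad\forall\, x\in\bar{N}_j,
\ee
with the symmetric bound $d_{\ell^\infty}(\kappa_j(G_j(y)),\kappa_\infty^{(j)}(y))\le C\epsilon_j$ for $y\in\bar{N}_\infty$. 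Taking Hausdorff distances of the images in $\ell^\infty$ therefore gives $d_{\tau-H}((\bar{N}_j,\hat{d}_j,\tau_j),(\bar{N}_\infty,\hat{d}_\infty,\tau_\infty))\le C\epsilon_j\to 0$, proving the conjecture.

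The hard part is purely a bookkeeping issue: each timed-Kuratowski map depends on the choice of countable dense subset within its own space, so bounding the $\ell^\infty$-Hausdorff distance of two such images requires coordinating dense subsets across both spaces. The interleaving trick already used in the $GH$ versus $\kappa$-$GH$ comparison handles this, and in the big bang setting the additional time coordinate is controlled for free by the basepoint matching built into $d_{BB-GH}$, so no substantively new ideas beyond Proposition~\ref{prop:2GH} are required. The same template should also settle Conjectures related to $d_{S-BB-\mathcal{F}}\Rightarrow d_{S-\tau-\mathcal{F}}$ in the intrinsic flat setting, once $d_{\mathcal{F}}$ is substituted for $d_H^{\ell^\infty}$ on the images of the timed-Kuratowski maps, provided no mass disappears near the big bang point.
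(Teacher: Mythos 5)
This statement is posed in the paper only as Conjecture~\ref{conj-BB-to-tK}; no proof is given there (the closest the paper comes is Remark~\ref{rmrk-second-proof-BB-GH}, which merely suggests relating vanishing of $d_{BB-GH}$ to vanishing of $d_{\tau-H}$), so there is no paper argument to compare yours against. On its merits, your argument is correct and does settle the Gromov--Hausdorff version of the conjecture, and it does so using only ingredients already in the paper: the correspondence bookkeeping from the proof of Theorem~\ref{thm:FD-HH} (to extract $O(\epsilon_j)$-almost isometries $F_j,G_j$ from the maps into $Z_j$), the interleaving of countable dense sets from Proposition~\ref{prop:2GH}, and the big bang identity $\tau_i(p)=\hat{d}_i(p,p_{BB,i})$ of Definition~\ref{defn:BB}, which is exactly what converts basepoint control into time control. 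The delicate points are all handled: redefining $F_j(p_{BB,j})=p_{BB,\infty}$ is legitimate because the basepoint term in $d_{BB-GH}$ gives $d_{Z_j}(\varphi_j(p_{BB,j}),\psi_j(p_{BB,\infty}))<\epsilon_j$, so the defining inequality for $F_j$, and hence the $2\epsilon_j$-almost-distance-preservation, is unaffected (in fact the redefinition is not even needed, since $\hat{d}_\infty(F_j(p_{BB,j}),p_{BB,\infty})\le 2\epsilon_j$ already yields the time estimate with a slightly larger constant); the interleaved collections remain countable and dense, so they genuinely define timed-Kuratowski maps admissible in the infimum of Definition~\ref{defn:tau-K-dist}; and both Hausdorff inclusions are obtained, one through $F_j$ and one through $G_j$, with the quasi-inverse bounds $\hat{d}_\infty(F_j(G_j(y)),y)\le 2\epsilon_j$ and $\hat{d}_j(G_j(F_j(x)),x)\le 2\epsilon_j$ entering exactly where you indicate. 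The smooth space-time version then follows by applying this to the associated compact timed-metric-spaces, as you note. One caution on your closing sentence: the paper itself (Remark~\ref{rmrk-BB-F}, Remark~\ref{rmrk:tK-conv-to-others}) warns that regions near the big bang point can vanish under $\mathcal{F}$-type distances, so the proviso that no mass disappears is a genuine additional hypothesis in the $\mathcal{F}$ analogue, not a formality, and that extension should not be presented as following by the same template without further argument.
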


\begin{conj}\label{conj-FD-to-tK}
If $(N_j,g_j)$ and $(N_\infty, g_\infty)$
are smooth causally-null-compactifiable future-developed space-times such that
\be 
d_{FD-HH}\Big((N_j,g_j),(N_\infty,g_\infty)\Big)\to 0
\ee
then 
\be 
d_{S-\tau-H}\Big((N_j,g_j),(N_\infty,g_\infty)\Big)\to 0.
\ee
More generally, if 
we have future-developed timed-metric-spaces 
with
\be 
d_{FD-HH}\Big((\bar{N}_j,d_j,\tau_j),(\bar{N}_\infty,d_\infty,\tau_\infty)\Big)\to 0,
\ee
then
\be 
d_{\tau-H}\Big((\bar{N}_j,d_j,\tau_j),(\bar{N}_\infty,d_\infty,\tau_\infty)\Big)\to 0,
\ee
\end{conj}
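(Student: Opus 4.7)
The plan is to compare timed-Kuratowski maps into $\ell^\infty$ using distance-preserving embeddings into a common complete metric space $Z$ witnessing the $FD$-$HH$ hypothesis. The key observation is that on a future-developed timed-metric-space, Definition~\ref{defn:FD} ensures $\tau(p)=\min_{q\in M}\hat d(p,q)$, so Hausdorff control of both the full space and its initial data set automatically controls the time function.

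Fix $\epsilon>0$ and suppose $d_{FD-HH}\bigl((\bar{N}_1,M_1,\hat d_1),(\bar{N}_2,M_2,\hat d_2)\bigr)<\epsilon$. Choose a complete metric space $(Z,d_Z)$ and distance-preserving maps $\varphi_i:\bar{N}_i\to Z$ with $d_H^Z(\varphi_1(\bar{N}_1),\varphi_2(\bar{N}_2))<\epsilon$ and $d_H^Z(\varphi_1(M_1),\varphi_2(M_2))<\epsilon$. First I would construct a map $F:\bar N_1\to\bar N_2$ that respects initial data: for $x\in M_1$ pick $F(x)\in M_2$ with $d_Z(\varphi_1(x),\varphi_2(F(x)))<\epsilon$ (possible since $\varphi_2(M_2)$ is $\epsilon$-dense near $\varphi_1(M_1)$), and for $x\in\bar N_1\setminus M_1$ pick any $F(x)\in\bar N_2$ with the same estimate. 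Then $F$ is $2\epsilon$-almost distance preserving and satisfies $F(M_1)\subseteq M_2$; construct $G:\bar N_2\to\bar N_1$ symmetrically.

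Next I would prove $|\tau_1(p)-\tau_2(F(p))|\le 4\epsilon$ for every $p\in\bar N_1$. For the upper bound, any $q\in M_1$ has $F(q)\in M_2$, so $\tau_2(F(p))\le\hat d_2(F(p),F(q))\le\hat d_1(p,q)+2\epsilon$, and the infimum over $q\in M_1$ gives $\tau_2(F(p))\le\tau_1(p)+2\epsilon$. For the lower bound, pick $q'\in M_2$ realising $\tau_2(F(p))=\hat d_2(F(p),q')$; by the Hausdorff bound on initial data there is $q\in M_1$ with $d_Z(\varphi_1(q),\varphi_2(q'))<\epsilon$, hence $\hat d_2(F(q),q')\le d_Z(\varphi_2(F(q)),\varphi_1(q))+d_Z(\varphi_1(q),\varphi_2(q'))<2\epsilon$, so $\tau_1(p)\le\hat d_1(p,q)\le\hat d_2(F(p),F(q))+2\epsilon\le\tau_2(F(p))+4\epsilon$. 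I would then build interleaved countable dense sets $\{x_i\}\subset\bar N_1$ and $\{y_i\}\subset\bar N_2$ exactly as in the proof of Proposition~\ref{prop:2GH}, using $F$ and $G$ to pair selected basepoints, and define the associated timed-Kuratowski maps $\kappa_{\tau_1,\bar N_1}$ and $\kappa_{\tau_2,\bar N_2}$ in accordance with Definition~\ref{defn:tau-K}. The zeroth coordinate of each $\kappa_{\tau_i,\bar N_i}$ is the time, controlled by the above estimate, and the remaining coordinates are distances to the interleaved basepoints, controlled precisely as in Proposition~\ref{prop:2GH} by $2\epsilon$. Therefore $d^{\ell^\infty}_H\bigl(\kappa_{\tau_1,\bar N_1}(\bar N_1),\kappa_{\tau_2,\bar N_2}(\bar N_2)\bigr)\le 4\epsilon$, giving $d_{\tau-H}\le 4\epsilon$. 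Letting $\epsilon\to 0$ along the hypothesised sequence finishes the proof, and the smooth case follows immediately since smooth causally-null-compactifiable future-developed space-times are future-developed timed-metric-spaces.

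The main obstacle is passing from Hausdorff closeness in $Z$ to genuine closeness of the time functions on the respective space-times; this step relies essentially on the distance-to-initial-data representation of $\tau$ in \eqref{eq:init-data-prop-2}, together with the careful choice of $F$ that maps $M_1$ into $M_2$ (not just close to $M_2$). A secondary concern is that for the general timed-metric-space statement one ought to verify a priori that the property of being a future-developed timed-metric-space (with its initial data slice distinguished) is preserved under $FD$-$HH$ limits; if it is not, the statement should be read as assuming both ends of the sequence already carry this structure, which is how the conjecture is posed.
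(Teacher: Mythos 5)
This statement is posed in the paper as an open conjecture (Section~\ref{sect:relations} explicitly offers it to doctoral students), so there is no proof of the paper's to compare against; the closest the paper comes is Remark~\ref{rmrk-second-proof-FD-HH}, which asks whether vanishing $FD$-$HH$ distance is equivalent to vanishing timed-Hausdorff distance. Your argument appears to settle the quantitative version, $d_{\tau-H}\le 4\,d_{FD-HH}$, and I do not see a gap in it. The two pillars are exactly the right ones: the interleaved-dense-set construction of Proposition~\ref{prop:2GH} controls all but the zeroth coordinate of the timed-Kuratowski images by $2\epsilon$, and the distance-to-initial-data property (\ref{eq:init-data-prop-2}) converts Hausdorff control of the pair $(\bar N_i, M_i)$ into the bound $|\tau_1(p)-\tau_2(F(p))|\le 4\epsilon$ on the zeroth coordinate. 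Your time estimate is essentially the same computation that appears at the end of the paper's proof of Theorem~\ref{thm:FD-HH} (where $F$ is exactly distance preserving and maps $M_1$ onto $M_2$), carried out with $\epsilon$-errors; the crucial design choice, which you make correctly, is to force $F(M_1)\subseteq M_2$ when selecting the correspondence, rather than merely choosing nearest points in $\bar N_2$. Two small points worth writing out in a final version: the selection of $q'\in M_2$ realising $\tau_2(F(p))$ uses that the minimum in (\ref{eq:init-data-prop-2}) is attained, which holds because $M_2=\tau_2^{-1}(0)$ is closed in the compact space $\bar N_2$; and the reduction of the smooth case to the metric case should cite Definition~\ref{defn:FD} together with Theorem~\ref{thm:FD}, which is what guarantees that the associated compact timed-metric-space of a causally-null-compactifiable future-developed space-time actually carries the future-developed structure your argument uses. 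Your closing caveat about whether the $FD$-$HH$ limit is itself future-developed is correctly identified as moot here, since the conjecture assumes the limit space already carries that structure.
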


\begin{rmrk}\label{rmrk:tK-conv-to-others}
We do not expect the above conjectures to hold for intrinsic flat convergence and volume preserving intrinsic flat convergence as stated above. 
By the Ambrosio-Kirchheim Slicing Theorem one expects an intrinsic timed-flat convergence defined using
an intrinsic timed flat distance
as in Remark~\ref{rmrk:tau-K-dist} to imply a level preserving convergence in the $\ell^p$ sense with $p=1$
and not sup convergence unless there is some stronger hypothesis on the curvature of the space.   Regions may disappear under intrinsic flat convergence, so we should be able to construct examples where big bang space-times converge to space-times without big bangs and future-developed space-times converge to space-times which are not future-developed.
This is being investigated by Sakovich-Sormani\cite{SakSor-SIF}.  The same issue
may happen with metric measure convergence which
we leave for others to explore.
\end{rmrk}

\subsection{{\bf Prior Weak Notions of Convergence of Space-Times}}
\label{sect:prior}

In this subsection we very briefly describe other notions of distances and convergence for sequences of smooth or less regular space-times and suggest
further work relating these ideas with ours so that they can be used together.

\begin{rmrk}\label{rmrk:smooth}
The most commonly applied prior notion of distance between two smooth Lorentzian manifolds, $(N_1,g_1)$ and
$(N_2,g_2)$ involves having a diffeomorphism, $\Psi:N_1\to N_2$ and then studying the various norms on $|\Psi_*g_1-g_2|$ either measured with respect to a background metric or with respect to one of the two metrics.   These can be sup norms or more commonly $C^k$ or $H^{1,p}$ norms and sometimes the manifolds considered are regions inside larger Lorentzian manifolds of interest. These notions
are used to define convergence of smooth Lorentzian manifolds to limit spaces which are also smooth or at
least are manifolds with $C^k$ or $H^{1,p}$ metric tensors.
They
have been applied to prove various Lorentzian stability
theorems.  See, for example, the beautiful surveys of Dafermos  \cite{Dafermos-ICM} and
Giorgi \cite{Giorgi-BAMS} and the many papers mentioned 
Section~\ref{sect:conv}.
\end{rmrk}

\begin{rmrk}\label{rmrk:timeless-smooth}
Allen and Allen-Burtscher relate the timeless notions of convergence
defined using the
null distance that we reviewed in Section~\ref{sect:defns-tl}
with these various smoother notions in \cite{Allen-Burtscher-22}\cite{Allen-Null}.   It would be interesting to extend their work to other notions of intrinsic space-time convergence introduced in this paper. 
\end{rmrk}

\begin{rmrk}\label{rmrk:KS-22}
Recall the
Lorentzian length spaces 
mentioned in Remark~\ref{rmrk:Lor-length}.
Kunzinger-Steinbauer have 
converted these spaces to timed-metric-spaces using the null distance
and studied the intrinsic
timeless $GH$ distance, $d^{tls}_{S-GH}$ between them 
in 
\cite{Kunzinger-Steinbauer-22}.
It would be interesting if they could study the notion of 
intrinsic big bang distance, 
$d_{BB-GH}$, and intrinsic timed distance, $d_{\tau-H}$,  
between the timed-metric-spaces associated with their Lorentzian length spaces.  
\end{rmrk}

There are other approaches to the
convergence of space-times which do not involve the null distance or metric spaces at all.  

\begin{rmrk}\label{rmrk:Noldus}
Noldus introduced the first notion of a ``Gromov-Hausdorff" distance between space-times which is not definite but has been explored in a few papers by Noldus and by Bombelli-Noldus in
\cite{Noldus-Topology} \cite{Noldus} \cite{Noldus-limit}\cite{Bombelli-Noldus}
\end{rmrk}

\begin{rmrk}\label{rmrk:Minguzzi-Suhr}
In 2022, Minguzzi-Suhr have introduced an intriguing notion of 
``Lorentzian Gromov-Hausdorff" 
convergence of
``Lorentzian metric-spaces" 
 \cite{Minguzzi-Suhr-24}.
 Recall that ``Lorentzian metric spaces" are not metric spaces but causets that are topological spaces with a ``Lorentzian distance function" that satisfies some natural properties similar to the Lorentzian distance on a Lorentzian manifold.  Their Lorentzian Gromov-Hausdorff distance is defined by adapting Gromov's definition for $GH$ distance using relations closely related to the $\epsilon$-almost isometries we reviewed in Section~\ref{sect:back-GH}. 
They prove their Lorentzian $GH$ distance is definite in the sense that their Lorentzian $GH$ distance between to Lorentzian metric spaces is $0$ iff there is a ``Lorentzian distance function"
preserving bijection between them.
It is not mentioned in \cite{Minguzzi-Suhr-24} if this implies there is a Lorentzian isometry between the spaces when the pair of Lorentzian metric spaces are Lorentzian manifolds.  
\end{rmrk}

\begin{rmrk}\label{rmrk-Muller}
In a very recently posted arxiv paper,  M\"uller has defined three $GH$ distances,
$d^+_{GH}$, $d^-_{GH}$, and
$d^x_{GH}$ between pairs of partially ordered measure spaces
in
\cite{Mueller-GH}.   We have not yet read this deep paper.
\end{rmrk}

\begin{rmrk}\label{rmrk:CMM}
Recall the metric measure approach to studying space-times appearing in the papers cited in Remark~\ref{rmrk:measure-space-time}.
It is natural to test their notions using $C^{1,loc}$ convergence of Lorentzian metric tensors and $C^0$ convergence of  measures.   See, for example,
the recently posted \cite{Cavalletti-Manini-Mondino-Optimal} by
Cavalletti-Manini-Mondino.   Mondino plans to study how their notion interacts with the notions of intrinsic metric measure  space-time convergence suggested within this paper.
\end{rmrk}

\begin{rmrk}\label{rmrk:both}
It would be very interesting to formulate conjectures and prove theorems relating our notions of space-time convergence with other notions of convergence for Lorentzian manifolds and Lorentzian length spaces.  Naturally one would need to restrict to a class of space-times where both notions can be defined.
\end{rmrk}

\subsection{{\bf Stability with Compact Initial Data Sets}} 
\label{sect:stability-compact}  

Given a sequence of compact initial data sets, $(M_j,h_j,k_j)$, satisfying the Einstein constraint equations, 
Choquet-Bruhat proved there are space-times, $(N_j,g_j)$,
satisfying the Einstein vacuum equations such that $M_j$ lies as a smooth Cauchy surface in $N_j$ with restricted metric tensor $h_j$ and second fundamental form $k_j$
\cite{Bruhat-52}. Similar solutions have been found satisfying Einstein's Equations for a variety of matter models. 
However, even if the initial data sets converge $C^k$ smoothly, $(M_j,h_j,k_j)\to (M_\infty,h_\infty,k_\infty)$,
the space-time solutions, $(N_j,g_j)$
need not converge smoothly to
$(N_\infty,g_\infty)$.  The standard approach is to use Sobolev spaces to
describe the convergence of the space-times, and thus also Sobolev spaces to
describe the convergence of the initial data.   See, for example, Ringstr\"om's text \cite{Ringstrom-Cauchy-text}. 

\begin{rmrk}\label{rmrk:low-reg-compact}
For lower regularity discussion of the Einstein constraint equations on initial data, lower regularity convergence of solutions to Einstein's Equations and examples demonstrating instability, see the work of Berger,
Choquet-Bruhat, 
Christodoulou,
Chrusciel, Ettinger, Isenberg,
Layne, LeFloch,
Lindblad,  Maxwell, Moncrief, Klainerman, Ringstr\"om, Rodnianski, Smith, Smulevici, Szeftel, Tataru,
 \cite{Berger-Isenberg-Layne}
\cite{Chrusciel-Maximal}
\cite{Christodoulou-BV}
\cite{Ettinger-Lindblad-sharp}
\cite{Klainerman-Rodnianski-Rough}
\cite{KlRodSz} 
\cite{LeFloch-Smulevici-JEMS}
\cite{Maxwell-Rough}
\cite{Moncrief-Gowdy}
\cite{Ringstrom-instability}
\cite{Smith-Tataro} 
and others in
Ringstr\"om's survey \cite{Ringstrom-origins} and books \cite{Ringstrom-Cauchy-text} \cite{Ringstrom-book-top}.
These results all fix a background space and study variations in the metric tensor and thus implicitly involve a diffeomorphism between the spaces being studied.  
\end{rmrk}

We would also like to allow for collapsing and changing topology.  So we state
the following stability conjecture: 

\begin{conj}\label{conj:Einstein-stable}
Suppose $(M_j,h_j,k_j)$ are a sequence of compact initial data sets that converge to $(M_\infty,h_\infty, 0)$ in the sense that
$(M_j,h_j)\to (M_\infty,h_\infty)$ 
in the $GH$, $mm$, or $\mathcal{F}$ sense, and $||k_j||\to 0$, 
and  $(N'_{j,+},g_j)$ are
future developments 
of the Einstein vacuum equations
for $(M_j,h_j, k_j)$ with the interiors of black holes removed as in Remark~\ref{rmrk:cmpct-remove-holes}.
Then
$(N'_{j,+},g_j)$ converge to 
the future development, $(N'_{\infty,+}, g_\infty)$, of $(M_\infty,h_\infty,0)$,
in some intrinsic space-time sense. 
 One might use one of the future developed intrinsic space-time distances
of Section~\ref{sect:defns-FD}
or one of the intrinsic timed space-time distances
defined in 
Section~\ref{sect:defns-tau-K}.  Certain matter models might also be considered.
One would need to first verify 
$(N'_{j,+},g_j)$ and $(N'_{\infty,+},g_j)$ are
causally-null-compactifiable future developments as in Definition~\ref{defn:FD}
using a theorem like Theorem~\ref{thm:FD}.
\end{conj}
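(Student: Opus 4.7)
The plan is to prove convergence with respect to $d_{S-FD-HH}$ from Definition~\ref{defn:FD-HH} (or equivalently, using the intrinsic timed-Hausdorff distance $d_{S-\tau-H}$ of Theorem~\ref{thm:definite-tau-K}), since this distance encodes both the initial-data convergence hypothesized in the conjecture and the causal/time structure of the future developments. The first step is to verify that each $(N'_{j,+}, g_j)$ and the limit $(N'_{\infty,+}, g_\infty)$ is a causally-null-compactifiable future-developed space-time in the sense of Definition~\ref{defn:FD}. For the limit this should follow directly from Theorem~\ref{thm:FD} since $k_\infty = 0$ and $(M_\infty, h_\infty)$ is compact. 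For the sequence, Theorem~\ref{thm:FD} applies provided one verifies that, after excising the black-hole interiors as in Remark~\ref{rmrk:cmpct-remove-holes}, the cosmological time function $\tau_{g_j}$ on $N'_{j,+}$ remains proper; this requires showing that the excised horizons together with the initial Cauchy surface give an effective compactification of each level set $\tau_{g_j}^{-1}(t)$.

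Once all spaces are causally-null-compactifiable, the second step is to construct, for each $j$, an almost distance- and time-preserving map $F_j : \bar{N}'_{j,+} \to \bar{N}'_{\infty,+}$. Starting from the hypothesized initial-data convergence, one obtains almost-isometries $\Phi_j : M_j \to M_\infty$ (in the $GH$ or $\mathcal{F}$ sense, modulo passing to a common ambient space $Z_j$). The idea is to extend $\Phi_j$ into the bulk by identifying generators: for $p \in N'_{j,+}$ with $\tau_{g_j}$-generator $\bar{\gamma}_p : [0, \tau_{g_j}(p)] \to \bar{N}'_{j,+}$ meeting $M_j$ at $q = \bar{\gamma}_p(0)$, set $F_j(p) = \bar{\gamma}_{\Phi_j(q)}(\tau_{g_j}(p))$, where $\bar{\gamma}_{\Phi_j(q)}$ is the generator of the limit cosmological time. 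By construction $F_j$ preserves cosmological time on the nose via (\ref{eq:generator-bar-2}), which reduces the problem to controlling the deviation $|\hat{d}_{g_\infty}(F_j(p), F_j(p')) - \hat{d}_{g_j}(p, p')|$.

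The third and technically most demanding step is to obtain the null-distance comparison. Because the null distance is an infimum over piecewise causal zig-zags and because its rectifiable length agrees with the sum of $\tau$-differences (Theorem~\ref{thm:induced}), it suffices to control, up to small error, the local causal structure in a neighborhood of each generator. For this, one invokes the local Cauchy stability of Einstein's vacuum equations in a sufficiently regular norm on shrinking harmonic-gauge tubes around the generators; the assumption $\|k_j\| \to 0$ combined with $h_j \to h_\infty$ is exactly what propagates initial-data closeness into bulk closeness via the hyperbolic evolution. The local estimate can then be globalized either by exhausting $N'_{j,+}$ by causally-null-compactifiable sub-space-times as in Section~\ref{sect:space-times-ex} or, equivalently, by using the Lipschitz-one property of $\tau_{g_j}$ to truncate at a fixed time $\tau \le T$ and prove the estimate uniformly on each finite strip.

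The principal obstacle is the black-hole formation regime. Since $(M_\infty, h_\infty, 0)$ is compact with vanishing second fundamental form, its future development is expected to be globally regular, but a nearby $(M_j, h_j, k_j)$ may nevertheless form trapped regions whose event horizons move and merge as $j$ varies. Excision turns $N'_{j,+}$ into a space-time with causal boundary, and to compare it with the boundary-free $N'_{\infty,+}$ one must show that the excised regions become small with respect to $\hat{d}_{g_j}$; otherwise the Hausdorff term $d_H^Z(\varphi_1(M_1), \varphi_2(M_2))$ in $d_{FD-HH}$ need not tend to zero. Overcoming this likely requires quantitative no-trapped-surface criteria for small data in the spirit of Christodoulou--Klainerman, together with a careful analysis of whether convergence fails precisely at those indices $j$ where black holes form. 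This may in turn motivate a weaker form of the conjecture, in which the limit is permitted to be a non-smooth causally-null-compactifiable timed-metric-space rather than the smooth future development of $(M_\infty, h_\infty, 0)$, a flexibility that is naturally accommodated by $d_{S-\tau-H}$ but not by $d_{S-FD-HH}$.
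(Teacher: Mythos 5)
This statement is Conjecture~\ref{conj:Einstein-stable}, which the paper poses as an open problem: there is no proof in the paper to compare against. The surrounding remarks only suggest entry points --- special symmetric cases (Remark~\ref{rmrk:easier-Einstein-stable}), partial $L^p$ estimates on $k_j$ aimed at $d_{S-FD-FF}$ convergence (Remark~\ref{rmrk:Einstein-stable}), and the preliminary task of understanding cosmological time in low-regularity settings (Remark~\ref{rmrk:tau-compact-stable}). So your text should be read as a proposed strategy, and judged as such it contains a genuine gap rather than a proof.

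The gap is concentrated in your third step. You propose to control the null distance in the bulk by invoking ``local Cauchy stability of Einstein's vacuum equations in a sufficiently regular norm,'' arguing that $h_j\to h_\infty$ together with $\|k_j\|\to 0$ ``is exactly what propagates initial-data closeness into bulk closeness.'' But the hypothesis of the conjecture is only $GH$, $mm$, or $\mathcal{F}$ convergence of $(M_j,h_j)$. These are intrinsic, diffeomorphism-free notions: the $M_j$ need not be diffeomorphic to $M_\infty$, may carry small black holes or thin deep wells, and in the $\mathcal{F}$ case whole regions may disappear in the limit. Classical Cauchy stability requires the data to be identified by a diffeomorphism and close in a Sobolev norm; none of that is available here, and producing bulk closeness from weak intrinsic closeness of the data is precisely the content of the conjecture, not a tool one may cite. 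Your second step has a related difficulty: the map $\Phi_j$ furnished by $GH$ convergence is only a $2\epsilon$-almost isometry (and under $\mathcal{F}$ convergence is defined only away from sets of small mass), the generators $\bar{\gamma}_{\Phi_j(q)}$ are unique only where $\tau$ is differentiable, and without the bulk estimate of step three there is no reason the composite $F_j$ is almost distance preserving --- two points on nearby generators can have very different null distance if the causal cones between them differ. You do correctly identify the black-hole excision issue, and your closing suggestion to weaken the target to a non-smooth timed-metric-space limit under $d_{S-\tau-H}$ is consistent with the spirit of the paper; but as written the argument assumes the conclusion at its central step.
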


\begin{rmrk}\label{rmrk:easier-Einstein-stable}
It might be easier to study Conjecture~\ref{conj:Einstein-stable} in special cases where we assume $(M_\infty,g_\infty)$ is a flat torus or a standard sphere or hyperbolic space
assuming various kinds of symmetries.  It is even of interest to better understand how the cosmological time behaves on such sequences of space-times.  See Remark~\ref{rmrk:BB-stab} for
an application of this special case. 
\end{rmrk}

\begin{rmrk} \label{rmrk:Einstein-stable}
In Conjecture~\ref{conj:Einstein-stable},
one might also consider various norms for
$||k_j|| \to 0$.  Graf and Sormani have completed a first paper considering various $L^p$ bounds on the $k_j$,
and obtaining estimates which would help prove $d_{S-FD-FF}$ convergence in \cite{Graf-Sormani}.  Further work by Graf, Sakovich, Sormani applying this is to appear.  
\end{rmrk}

\begin{rmrk}\label{rmrk:ALMM}
Anderson, Lott, Moncrief and Mondal have also explored
lower regularity approaches to
these stability questions:
including changing topology
and collapsing in
 \cite{Anderson-Asym}
 \cite{Lott-Collapsing-18}
\cite{Lott-Backreaction-18}
 \cite{Moncrief-Mondal-19}.
This is even a concern with a fixed future development, $(N_+,g)$ from a fixed compact initial data set, $(M,h,k)$,
when studying cosmic strips,
\be
N_j=\tau_g^{-1}(t_j, t_j+t_0)\subset N_+
\ee
as $t_j \to \infty$
as in Anderson's paper \cite{Anderson-Asym}
or other regions $N_j\subset N_+$
which advance to infinity like those studied by Fischer-Moncrief in \cite{Fischer-Moncrief-reduced}.
It would be interesting to see if their results and conjectures can be viewed in a new way using our notions of intrinsic distances between causally null compactifiable space-times.
\end{rmrk}

\begin{rmrk}\label{rmrk:tau-compact-stable}
As a preliminary step towards Conjecture~\ref{conj:Einstein-stable}, it would be interesting to explore the behavior of the cosmological time functions in the various lower regularity settings mentioned in Remark~\ref{rmrk:low-reg-compact}
and Remark~\ref{rmrk:ALMM}.
See also Remark~\ref{rmrk:tau-FLRW}.
One should be able to 
extend Theorem~\ref{thm:FD} 
as discussed in Remark~\ref{rmrk:low-reg-init-data} above to prove cosmic strips in these lower regularity space-times with compact initial data
are causally-null-compactifiable future-developed space-times
as in Definition~\ref{defn:FD}.
\end{rmrk}

\subsection{{\bf Approximating FLRW Space-Times}}
\label{sect:FLRW}

When physicists model the universe using Friedmann-LeMaitre-Robertson-Walker (FLRW) space-times, they are assuming that the universe can be
modeled by 
a homogeneous isotropic space-time with spacelike Cauchy surfaces that have constant sectional curvature.  At the same time, it is well known that the universe is only almost homogeneous and almost isotropic. 

\begin{rmrk}\label{rmrk:evol}
The evolution of space-times that are close to FLRW space-times has been studied by Had\v{z}i\'c,
 LeFloch, Moncrief, Reiris, Ringstr\"om, Rodnianski, Smulevici, Speck 
 in
\cite{LeFloch-Smulevici-JEMS}
\cite{Mahir-Speck-FLRW}
\cite{Moncrief-Gowdy}
\cite{Reiris-FLRW}
\cite{Rod-Speck-FLRW}
and others surveyed in book by Ringstr\"om
\cite{Ringstrom-book-top}. 
Moncrief, Hao, Chrusciel, Berger, Isenberg, and Ringström all have work on the stability of FLRW space-times which is surveyed in Chapter XVI of 
Choquet-Bruhat's Oxford Monograph \cite{Choquet-Bruhat-OxMono}.
Most of these studies assume the space-times are diffeomorphic to an FLRW space-time.
\end{rmrk}

\begin{rmrk}\label{rmrk:anisotropic}
In fact, the universe is a space-time full of black holes each of which adds to its topological complexity.  See for example the work of physicists Dyer-Roeder,
Gabrielli-Labini, Kantowski, and
Krasinski
 \cite{Dyer-Roeder}
   \cite{Gabrielli-Labini}
   \cite{Kantowski}
\cite{Krasinski-text}
and physics survey on Cosmological Perturbation Theory by Bernardeau, Colombi, Gaztanaga, and Scoccimarro \cite{bernardeau2002large} trying to explore this anisotropic universe.
\end{rmrk}

We can rephrase the idea that an anisotropic universe evolves ``close" to an FLRW space-time as a conjecture using our notions of intrinsic space-time distances because our notions allow  settings where there are no diffeomorphisms between the spaces that we are comparing. 
 
\begin{conj}\label{conj:almost-isotropic}
An almost isotropic and almost homogeneous null compactifiable space-time with a big bang as in Section~\ref{sect:space-times-big-bang} (with appropriate natural assumptions) is close in 
one of our intrinsic space-time distances to an FLRW space-time.  We might estimate using one of the big-bang intrinsic space-time distances
of Section~\ref{sect:defns-BB}
or the intrinsic timed distances
defined in 
Section~\ref{sect:defns-tau-K}.
\end{conj}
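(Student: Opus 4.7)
The plan is to first convert the vague hypotheses into quantitative ones and then reduce to a Riemannian stability problem on level sets before lifting back up. Concretely, given a causally-null-compactifiable big bang space-time $(N,g)$ with cosmological time $\tau_g: N \to (0,\tau_{max})$ and smooth level sets $(M_t, h_t) = (\tau_g^{-1}(t), g|_{\tau_g^{-1}(t)})$, I would take ``$\epsilon$-almost isotropic'' to mean the sectional curvatures of $h_t$ lie within $\epsilon$ of a reference value $K(t)$, ``$\epsilon$-almost homogeneous'' to mean a quantitative bound on how $h_t$ differs from a homogeneous model up to pullback by diffeomorphisms (e.g.\ $\diam_{h_t}(M_t)$ and volume comparable to the constant-curvature model), and I would add a third hypothesis that the generators $\gamma_p$ of $\tau_g$ are $\epsilon$-close to being orthogonal to the level sets with respect to $g$. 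The target FLRW space-time $(N_\infty,g_\infty)$ would be $(0,\tau_{max})\times M_\infty$ with $g_\infty = -dt^2 + f^2(t)\, h_{M_\infty}$, where $h_{M_\infty}$ has constant curvature and $f(t)$ is extracted from the diameter profile of the $(M_t,h_t)$.

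The first main step is to apply Riemannian stability results (Cheeger--Colding, Anderson--Cheeger, or Gromov's almost flat manifold theorem when $K\equiv 0$) to produce, for each $t$, a Gromov--Hausdorff close model $(M_\infty, f^2(t)\, h_{M_\infty})$ and a distance-preserving almost-isometry $\Phi_t: (M_t, h_t) \to (M_\infty, f^2(t)\, h_{M_\infty})$. The second step is to splice these into a space-time map by setting $F(p) = (\tau_g(p), \Phi_{\tau_g(p)}(\pi(p)))$ where $\pi$ projects along the generator $\gamma_p$. The third and crucial step is to compare the null distances using Lemma~\ref{lem:Mink-null} and its warped-product extensions in \cite{Allen-Burtscher-22}: because $\hat{d}_{g_\infty}$ on an FLRW strip is explicitly computable, one can control $|\hat{d}_g(p,q) - \hat{d}_{g_\infty}(F(p),F(q))|$ in terms of the almost-orthogonality and almost-isotropy parameters.

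For the final estimate I would use the intrinsic timed-Hausdorff distance $d_{S-\tau-H}$ of Section~\ref{sect:defns-tau-K} rather than $d_{S-BB-GH}$, because Theorem~\ref{thm:definite-tau-K} together with Lemma~\ref{lem:F-to-tau-K} reduces the task to producing timed-Kuratowski maps with nearly overlapping images in $\ell^\infty$. The almost-isometries $F$ constructed above are automatically almost time-preserving (since $\tau_g(F(p)) = \tau_g(p)$ by construction), so the timed-Kuratowski images differ by at most the almost-distance-preserving error in the first coordinate and by the Hausdorff error from the level set approximation in the remaining coordinates, yielding an estimate $d_{S-\tau-H}((N,g),(N_\infty,g_\infty)) \le C \epsilon$.

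The hardest step will be step three: there is no Lorentzian analogue of Cheeger--Colding at hand, and controlling how infinitesimal curvature and orthogonality errors integrate along piecewise null curves to produce a global null distance estimate is genuinely subtle, especially near the big bang where $f(t) \to 0$ and distances degenerate. A related obstacle is that the cosmological time function $\tau_g$ is defined as a supremum of Lorentzian lengths over all past curves, not just vertical generators, so ``almost isotropy plus almost orthogonality of generators'' does not obviously force $\tau_g$ to behave like the coordinate $t$ of the target FLRW metric. Overcoming this will likely require either a synthetic Ricci bound in the spirit of Mondino--Suhr \cite{Mondino-Suhr} to control $\tau_g$ directly, or a PDE argument along the generator flow following Andersson--Galloway--Howard \cite{AGH}. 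As a first tractable special case I would attempt the flat FLRW setting where $(M_\infty, h_{M_\infty})$ is a flat torus, so that Gromov's almost flat theorem applies and the warped product null distance formula is easy to estimate explicitly.
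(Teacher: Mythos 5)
The statement you are trying to prove is Conjecture~\ref{conj:almost-isotropic}, which the paper leaves open and deliberately vague; there is no proof in the paper to compare against, so your proposal can only be judged as a research plan. As such it is a sensible program, but it has two genuine gaps beyond the one you flag. First, your step one assumes that almost isotropy of the level sets yields a Gromov--Hausdorff close constant-curvature model via Cheeger--Colding type stability. The paper's Remark~\ref{rmrk:GAFA} warns that this is false in general: an almost isotropic Riemannian manifold need not be $GH$ close to a constant-curvature space (thin deep wells as in \cite{LeeSormani1} are the standard obstruction), and the positive result in \cite{Sormani-cosmos} requires strong extra hypotheses; under the weaker, more natural hypotheses only $\mathcal{F}$-closeness is conjectured and that Riemannian conjecture is itself unproven. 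So either your ``appropriate natural assumptions'' must be strong enough to rule out wells (at which point the conjecture you prove is much weaker than intended), or you must switch to $d_{S-\tau-\mathcal{F}}$ throughout, in which case the Riemannian input you need does not yet exist.

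Second, the splicing step $F(p)=(\tau_g(p),\Phi_{\tau_g(p)}(\pi(p)))$ hides the real difficulty. The maps $\Phi_t$ produced by $GH$ closeness are neither canonical nor continuous, and nothing forces them to be coherent in $t$: each $\Phi_t$ may differ from $\Phi_s$ by an isometry of the model fiber depending arbitrarily on $t$. This is exactly the phenomenon of Example~\ref{ex:levels-match}, where \emph{exact} isometries of every level set exist and yet there is no Lorentzian isometry and the level-matching distances vanish without the space-times being the same. Your almost-orthogonality hypothesis on the generators is the right kind of condition to rule this out, but you give no mechanism converting it into a quantitative rigidity statement for the family $t\mapsto\Phi_t$; without that, the assembled $F$ need not be almost distance preserving for the null distance even if every $\Phi_t$ is an exact isometry. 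A smaller, fixable gap: Lemma~\ref{lem:F-to-tau-K} applies to exactly distance- and time-preserving maps, so to conclude $d_{S-\tau-H}\le C\epsilon$ you would need a quantitative analogue in the spirit of Proposition~\ref{prop:2GH} for timed-Kuratowski maps, which you should state and prove rather than cite.
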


\begin{rmrk}\label{rmrk:GAFA} 
If we assume a space-time is a warped product
then this becomes a Riemannian question and we need only understand the level sets of time.
However, even when a Riemannian manifold is almost isotropic, it need not be smoothly close to Riemannian manifold of constant sectional curvature.  See for example the paper of
Sormani \cite{Sormani-cosmos} which reviews counter examples and proves that, under additional hypotheses, such a Riemannian manifold is at least $GH$ close to a manifold of constant sectional curvature. Those additional hypotheses are strong but necessary to achieve $GH$ convergence.  With more natural weaker assumptions,
the Riemannian manifold might have thin deep wells sinilar to those in Lee-Sormani's \cite{LeeSormani1}.  In the weaker setting, Sormani has conjectures that these almost isotropic Riemannian manifolds are close in the $\mathcal{F}$ sense but this has not yet been proven.  Let Sormani know if you would like to work on this question.
\end{rmrk}

\begin{rmrk}\label{rmrk:BB-stab}
Note that given an FLRW big bang space-time, $(N,g)$, with a proper cosmological time function,
$\tau:N\to [0,\infty)$, the cosmic strips,
$(N_{s,t},g)$ are null compactifiable and are
future developments of level sets, $(M=\tau^{-1}(s),h)$.   So Conjecture~\ref{conj:Einstein-stable} can also be seen as studying the stability of FLRW space-times.  
\end{rmrk}

\begin{rmrk}\label{rmrk:tau-FLRW}
As a preliminary step
towards Conjecture~\ref{conj:almost-isotropic}, it would be interesting to explore the behavior of the cosmological time function in the various settings mentioned in Remark~\ref{rmrk:evol}
and Remark~\ref{rmrk:anisotropic}
where 
the stability of the FLRW space-times has already been explored.
\end{rmrk}

\subsection{{\bf Convergence with Exhaustions}}
\label{sect:conv-ex}

It is also of interest to study the convergence of space-times that are asymptotically flat and 
are thus not null compactifiable.

When studying sequences of complete noncompact Riemannian manifolds, $(M_j, h_j)$, it is standard to choose points  $p_j\in M_j$, exhaust $(M_j, h_j)$ with closed Riemannian balls $\bar{B}_j(p_j,R)$ about the points $p_j$, and then study the Gromov-Hausdorff convergence of these balls. Specifically, we say that the associated metric spaces $(M_j,d_j,p_j)$ converge to $(M_\infty,d_\infty,p_\infty)$  in the pointed Gromov-Hausdorff sense,  
\be
(M_j,d_j,p_j)\ptGHto (M_\infty,d_\infty,p_\infty),
\ee
if for any $R>0$ we have
\be
d_{pt-GH}((\bar{B}_j(p_j,R),d_j),(\bar{B}_\infty(p_\infty,R),d_\infty)\to 0.
\ee
This works because closed balls in a Riemannian manifold, $\bar{B}(p,R)\subset (M,g)$,
endowed with the restricted Riemannian distance
are compact and exhaust the Riemannian manifold,
\be
\bigcup_{R\to \infty} \bar{B}(p,R)\supset M.
\ee
See Gromov \cite{Gromov-metric}.  

In our setting we cannot just use balls because we need the causal structure to be preserved.   So we need to exhaust the space-time with causally-null-compactifiable regions and use those regions rather than balls to define an intrinsic space-time convergence.

\begin{rmrk} \label{rmrk:exhausted-conv}
In Remark~\ref{rmrk:pt-exhaustion}
and Remark~\ref{rmrk:CS-exhaustion}
we suggested two possible
ways one might canonically exhaust 
a sequence of asymptotically flat space-times, $(N_j,g_j)$, by causally-null-compactifiable regions, $\Omega_j(R_i)$, so that
\be
N_{j,0,\tau_{max}}=\bigcup_{i=1}^\infty \Omega_j(R_i).
\ee
We could then define intrinsic space-time convergence (subject to these exhaustions) by requiring the appropriate intrinsic space-time   convergence of the regions in the exhaustion:
\be
\forall \, i\in {\mathbb N},
\,\,
d_{S-dist}((\Omega_j(R_i),g_j),
(\Omega_\infty(R_i),g_\infty))\to 0.
\ee
It would be interesting to explore the
properties of such a convergence for various
$d_{S-dist}$ we have proposed within.
\end{rmrk}

\subsection{{\bf Stability of Minkowski, Schwarzschild, and Kerr Space-Times}}
\label{sect:stability}

A key research theme 
in Mathematical General Relativity is the problem of stability for Minkowski, Schwarzschild, and Kerr  as solutions of the Einstein vacuum equations.  See, for example, the beautiful surveys of Dafermos  \cite{Dafermos-ICM} and
Giorgi \cite{Giorgi-BAMS} for an overview of this problem.  See work of
Christodoulou, Klainerman, Lindblad, Rodniansky, and
Bieri on the stability of
Minkowski space 
\cite{CK-Minkowski}
\cite{Lindblad-Rodniaski}\cite{Bieri-JDG}.   
See also the work 
on the stability of Schwarzschild 
by Dafermos, Holzegel, Klainerman, Rodnianski, Szeftel and Taylor \cite{DHR}\cite{DHRT-Sch}
\cite{KS-Global-Schwarzschild}
and on stability of Kerr by 
Giorgi, Klainerman, Szeftel,
\cite{KS-small}
\cite{GKS-22}.
This research area is very active and new results appear regularly.

Let us consider the question of stability of Minkowski space, which we may formulate
vaguely as follows: if initial data, $(M_j,h_j,k_j)$, converges to
Euclidean space, 
$({\mathbb E}^3, h_{{\mathbb E}^3},0)$,  in a certain sense, do
the exteriors of the maximal future developments with respect to the 
Einstein vacuum equations,
$(N'_{j,+}, g_j)$,
converge in some sense to future Minkowski space,
$({\mathbb R}^{1,3}_+, g_{Mink})$?  

The notion of convergence of the initial data in the aforementioned works is stronger than the notions of convergence considered in the current paper. For example, when considering the stability of Minkowski space it is assumed that $(M_j,h_j,k_j)$ and 
$({\mathbb E}^3, g_{{\mathbb E}^3},0)$ are diffeomorphic and the convergence is measured by comparing the metric tensors.  However, it
is important to verify stability even when the initial data for a sequence of space-times is not diffeomorphic to the initial data for the limit, perhaps due to the existence of small black holes or thin deep gravity wells.  See Remark~\ref{rmrk:PMT-Stab}.

\begin{rmrk}\label{rmrk:PMT-Stab} 
Recall that Schoen and Yau \cite{Schoen-Yau-positive-mass} proved that an asymptotically flat
time symmetric initial data set whose mass is  zero must be isometric to Euclidean space.
Lee and Sormani prove in \cite{LeeSormani1} that
sequences of uniformly asymptotically flat time symmetric 
spherically-symmetric initial data sets with masses converging to zero converge in the $\mathcal{F}$ sense to Euclidean space but have counter examples to smooth or even Gromov-Hausdorff convergence.
This geometric stability result 
or almost rigidity of the positive mass theorem, has subsequently been extended, in particular removing the assumption of spherical symmetry, to a variety of cases.  See for example related work by 
Alaee, Allen, Basso, Bryden, Cabrera Pacheco, Creuz, Del Nin, Dong, Elefterios, Huang, Huisken, Ilmanen, Jauregui, Khuri, Lee,
LeFloch,
McCormick, Perales, Song, to mention a few
\cite{Huisken-Ilmanen}
\cite{Sormani-Stavrov}
\cite{Bryden-stability}
\cite{Bryden-Khuri-Sormani}
\cite{HLS} 
\cite{Allen-IMCF-CAG}
\cite{LeFloch-Sormani-1}
\cite{LeeSormani1}
\cite{LeeSormani2} 
\cite{Allen-Perales}
\cite{Huang-Lee-Perales}
\cite{DelNin-Perales-Rigidity}
\cite{Basso-Creutz-Elefterios}
\cite{Dong-Song-Stability}
\cite{A-CP-Mc}
\cite{Jauregui-Lee-ADM-F}.
Note that the precise definition of uniform asymptotic flatness varies in these papers.
See also Bryden, Khuri, and Sormani's work \cite{Bryden-Khuri-Sormani} where intrinsic flat convergence of a sequence of initial data sets as above is obtained without assuming time symmetry. See also Graf-Sormani \cite{Graf-Sormani}.
The asymptotically hyperbolic case has also been studied in works by Cabrera Pacheco, Graf, Sakovich, Sormani and Perales \cite{Sak-Sor-AH}, \cite{Pacheco-graphs}, \cite{PGP}.
\end{rmrk}

We believe that similar techniques might be applied to study sequences of Lorentzian manifolds.
We make the following conjecture, which is deliberately vague with regards to the uniform 
asymptotic flatness, the meaning of exterior region, and the notion of convergence for the initial data sets: 

\begin{conj}\label{conj:Stab-Mink}
Suppose one has a sequence of uniformly asymptotically flat time-symmetric
initial data sets, converging to the Euclidean space in some weak sense:
\be
(M_j,h_j,\kappa_j=0)\to ({\mathbb E}^3, g_{\mathbb E^3},0).
\ee
Let 
$(N'_{j,+},g_j)$ be the  exterior region of the
maximal future development 
with respect to the Einstein vacuum equations 
of $(M_j,h_j,\kappa_j)$,
with the cosmological time function $\tau_j:N'_{j,+}\to [0,\infty)$
as in Remark~\ref{rmrk:exterior-region}.
Then for any $\tau_0>0$,
the cosmic strips, 
\be
N'_{j,0,\tau_0}=\tau_j^{-1}(0,\tau_0)\subset N'_{j,+},
\ee
converge to a cosmic strip
$(N^{Mink}_{0,\tau_0},g_{Mink})$
where
 $(N^{Mink}_+,g_{Mink})$ is the future  region of Minkowski space-time, in an intrinsic future developed  sense
using a Cederbaum-Sakovich STCMC exhaustion
by causally-null-compactifiable sub-space-times as in Remark~\ref{rmrk:CS-exhaustion}
and Remark~\ref{rmrk:exhausted-conv}.   We conjecture that, among the
 various intrinsic future developed distances between space-times 
 defined in Section~\ref{sect:defns-FD}, the intrinsic 
  $S-FD-FF$ or
  $S-FD-mm$ convergence may work better
  than the intrinsic $S-FD-HH$ convergence.   
\end{conj}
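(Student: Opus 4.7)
The plan is to attack the conjecture in three layers: first, establishing that the proposed exhaustion consists of causally-null-compactifiable sub-space-times with controllable cosmological time, then obtaining initial-data convergence of the $\mathcal{F}$ or $mm$ type from existing stability results for the Riemannian positive mass theorem, and finally propagating this initial convergence into the future using the causal past of the Cederbaum-Sakovich STCMC foliation. At the level of initial data, the starting point would be the asymptotically flat stability results of Lee-Sormani, Huang-Lee-Perales, Bryden-Khuri-Sormani, and related works (see Remark~\ref{rmrk:PMT-Stab}), which give, under appropriate uniform asymptotic flatness assumptions and a mass bound $m_{ADM}(M_j) \to 0$, intrinsic flat (or metric measure) convergence of the STCMC-bounded regions $W_j(R) \subset M_j$ to $B_R(0) \subset \mathbb{E}^3$ as $j \to \infty$ for each fixed $R$. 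This gives the baseline convergence of the initial slice that must be propagated.

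To construct the sub-space-times, fix $R > 0$ and $\tau_0 > 0$, and define $\Omega_j(R, \tau_0) = I^-(U_j(R)) \cap N'_{j,+}$ where $U_j(R) = J^+(W_j(R)) \cap \tau_j^{-1}(0, \tau_0)$ as in Remark~\ref{rmrk:CS-exhaustion}. First I would verify, using an extension of Theorem~\ref{thm:FD} to the setting of sub-space-times of asymptotically flat space-times along the lines sketched in Section~\ref{sect:space-times-sub} and Proposition~\ref{prop:cpct-ext-Sch}, that each $(\Omega_j(R, \tau_0), g_j)$ is a causally-null-compactifiable future-developed space-time with initial surface $(W_j(R), h_j, 0)$ and with $\tau_j$ extending continuously to a Lipschitz-$1$ distance-from-initial-data function on $\overline{\Omega}_j(R,\tau_0)$. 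The corresponding statement for Minkowski space, $\Omega_\infty(R, \tau_0) = I^-(J^+(B_R(0)) \cap \tau^{-1}(0,\tau_0)) \cap \mathbb{R}^{1,3}_+$, follows from the Minkowski calculations of Section~\ref{sect:space-times-Minkowski}.

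The propagation step is the main obstacle and the place where ``stability of the Einstein vacuum equations'' in the strong PDE sense enters. The plan is: on each fixed compact sub-region, extract from Einstein evolution estimates a comparison map $\Psi_{j,R,\tau_0}$ between the bulk of $\Omega_j(R,\tau_0) \setminus (\text{black hole region})$ and $\Omega_\infty(R,\tau_0)$, with a small ``bad set'' $T_{j,\mathrm{bad}}$ of $g_j$-volume tending to zero (this is where $\mathcal{F}$ or $mm$ convergence is essential — the bad set may correspond to forming black holes and so need not vanish under $GH$, exactly as in the Riemannian Lee-Sormani examples). On the complement, I would use $C^0$ closeness of the Lorentzian metric tensors together with Allen-Burtscher type estimates (as in Remark~\ref{rmrk:timeless-smooth}) to show the null distances $\hat d_{g_j, \tilde\tau}$ and $\hat d_{g_{\mathrm{Mink}}, \tau}$ are uniformly close, and that the cosmological times $\tau_j$ and $\tau$ are close as well. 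Combined with the initial-surface Hausdorff/$\mathcal{F}$ control inherited from the initial-data convergence, this yields $d_{S-FD-FF}\big(\Omega_j(R,\tau_0), \Omega_\infty(R,\tau_0)\big) \to 0$ for each $(R, \tau_0)$ via the current-theoretic definition of Remark~\ref{rmrk:FD-FF}, which together with the exhaustion framework of Remark~\ref{rmrk:exhausted-conv} gives the claimed space-time convergence.

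The hardest step will be controlling the cosmological time $\tau_j$ on the exterior region in the presence of small, potentially nontrivial horizons: one must show that generators of $\tau_j$ issuing from points in $\Omega_j(R,\tau_0)$ actually reach the initial slice $W_j(R)$ without being intercepted by event horizons, and that $\tau_j$ converges to Minkowski time on the good set. This essentially requires quantitative Cauchy stability for the Einstein vacuum equations with low-regularity initial data that may not even be diffeomorphic to Euclidean space, a problem strictly harder than classical Minkowski stability; it is likely that even a partial result under additional hypotheses (e.g.\ spherical symmetry, as in the Lee-Sormani setting, or a uniform no-trapped-surfaces assumption on an annular region) would already be a substantial theorem and a natural first target before attempting the full conjecture.
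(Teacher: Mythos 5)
The statement you are addressing is a conjecture: the paper offers no proof of it, only a surrounding roadmap (Remarks~\ref{rmrk:PMT-Stab}, \ref{rmrk-SakSorSch}, \ref{rmrk:sph-sym}, \ref{rmrk:stable-generators}, \ref{rmrk:CS-exhaustion}, \ref{rmrk:exhausted-conv}). Your proposal is therefore not something that can be checked against an existing argument; what I can say is that your three-layer plan tracks the authors' own intended strategy closely --- PMT-stability results to control the initial slices, STCMC regions $W_j(R)$ as the spatial truncation, an extension of Theorem~\ref{thm:FD} / Proposition~\ref{prop:cpct-ext-Sch} to certify the $\Omega_j(R,\tau_0)$ as causally-null-compactifiable, and spherical symmetry as the first tractable case (which is exactly the Sakovich--Sormani work in progress described in Remark~\ref{rmrk-SakSorSch}). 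You also correctly identify $\mathcal{F}$/$mm$ rather than $HH$ as the right notion, for the right reason (disappearing wells and horizons).

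The genuine gap is in your middle paragraph, and it is not a technicality: you write that one should ``extract from Einstein evolution estimates a comparison map $\Psi_{j,R,\tau_0}$ \dots with a small bad set of volume tending to zero.'' No such estimates exist under the stated hypotheses. Every known Cauchy-stability theorem for the vacuum equations requires the initial data to be close to Euclidean data in a weighted Sobolev norm on a fixed diffeomorphism class, whereas the hypothesis of the conjecture is only weak ($GH$, $mm$, or $\mathcal{F}$) convergence of $(M_j,h_j)$, which permits $M_j$ not diffeomorphic to $\mathbb{E}^3$ and permits sup-norm-large perturbations concentrated on sets of small volume. Producing a bulk comparison map with a quantitatively small bad set from intrinsic-flat-small initial data is precisely the open content of the conjecture, not an input to it; as stated, your step assumes what is to be proved. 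Two further points your plan leaves unaddressed, both flagged as open in the paper: (i) the definition of the exterior region $N'_{j,+}$ for a development with many small black holes is itself unsettled (Remark~\ref{rmrk:exterior-region}), and your argument needs generators of $\tau_j$ from points of $\Omega_j(R,\tau_0)$ to reach $W_j(R)$ rather than terminate on a horizon, which is not known; (ii) the claim that the $\Omega_j(R,\tau_0)$ built from the STCMC exhaustion are causally-null-compactifiable is itself only conjectured in Remark~\ref{rmrk:CS-exhaustion}, so your first layer also requires a new theorem rather than a citation. Your closing paragraph honestly concedes most of this; I would only insist that the proposal be presented as a research program whose central step is open, not as a proof whose hardest step is deferred.
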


\begin{rmrk}\label{rmrk-SakSorSch}
Sakovich and Sormani have work in progress showing that
if $(N'_{j,+},g_j)$ is a sequence of future exterior Schwarzschild space-times as in Example~\ref{ex:ext-Sch} with mass converging to $0$,
\be
\mass_{ADM}(N'_{j,+},g_j)\to 0,
\ee
then $(N'_{j,+},g_j)$
converges to future Minkowski space with respect to various notions of space-time intrinsic flat convergence, using 
an appropriately defined exhaustion.  Perhaps one might explore metric measure convergence in this setting as well. Note that in this particular situation the exterior region is naturally defined by the event horizon and we can
find an exhaustion of this exterior region by causally null compactifiable sub space-times
as discussed in Section~\ref{sect:space-times-Schwarzschild} and Section~\ref{sect:conv-ex}. 
\end{rmrk}

\begin{rmrk}\label{rmrk:sph-sym}  To study more general spherically symmetric space-times would be more challenging because the cosmological time function has not yet been studied in those settings.  It would be worthwhile to study the cosmological time function in the spherically symmetric space-times studied by Christodoulou in \cite{Christodoulou-1999} satisfying Einstein vacuum equations. 
\end{rmrk}

\begin{rmrk}\label{rmrk:stable-generators}
It would be of interest to study the stability of cosmological time functions and their generators in the setting of 
Christodoulou-Klainerman's 
\cite{CK-Minkowski}
where the metric tensors have already been proven to be stable. 
\end{rmrk}

The problems suggested in Remark~\ref{rmrk:sph-sym} and Remark~\ref{rmrk:stable-generators}
should be quite manageable, while 
Conjecture~\ref{conj:Stab-Mink}
would be far more challenging. Let us make a few further remarks on the full conjecture.  

\begin{rmrk}\label{rmrk:limit-diff-matter-model}  
There has also been interesting work showing that sequences of manifolds satisfying Einstein vacuum equations can converge to a limit manifold satisfying a different matter model.
See the work of Huneau-Luk \cite{Huneau-Luk}
and Touati \cite{Touati-optics}. Stavrov Allen, Benko, Benjamin, and McDermott have studied 
convergence of Brill-Lindquist geometrostatic initial data towards dust initial data
\cite{Benko-Stavrov} and
\cite{Benjamin-McDermott-Stavrov}
using intrinsic flat convergence.
\end{rmrk}

\begin{remark}\label{rmrk:short-pulse}
Even when the initial data in the sequence of 
Conjecture~\ref{conj:Stab-Mink} 
are diffeomorphic to Euclidean space, it
is possible that their future developments will develop black holes and thus the 
cosmic strips will not be diffeomorphic to cosmic strips in Minkowski space. See for instance the work of Kehle and Unger \cite{Kehle-Unger-Vacuum} where examples of vacuum gravitational collapse to very slowly rotating Kerr black holes are constructed, using Christodoulou's short pulse method 
\cite{Christodoulou-formation-black} and a variant of the characteristic gluing of Aretakis, Chimek and Rodnianski \cite{Aretakis-Czimek-Rodnianski}. We believe that the notions of $S-FD-mm$ or $S-FD-FF$ convergence  developed in our paper can be useful for studying such gravitational collapse phenomena. As a  first step, it would be of interest to study the behavior of cosmological times in the examples of Kehle and Unger \cite{Kehle-Unger-Vacuum}. 
\end{remark}

\begin{rmrk}\label{rmrk:stab}
It is also natural to consider the stability of Schwarzschild and,
more generally of Kerr, as in work of 
Dafermos, Holzegel, Rodnianski, and Taylor \cite{DHR}\cite{DHRT-Sch}
and 
Giorgi, Klainerman, and Szeftel,
\cite{KS-small}
\cite{KS-Global-Schwarzschild}
\cite{GKS-22}. 
While causal structure and location of horizons has been explored in these papers, it would also be of interest to understand the behavior of the cosmological time in these settings.
Subsequently, one could apply these results to compare the null distance
in the exterior regions of black hole space-times with that of the exterior of Kerr or Schwarzschild space-times.
\end{rmrk}

\begin{rmrk}\label{rmrk:Eikonal}
Throughout this paper we have primarily studied the cosmological time function through the use of its generators.  Finding the generating geodesics and their Lorentzian lengths is just one method for computing the cosmological time function.   It is also possible that one might study the cosmological time function using the fact that it is a viscocity solution to the Eikonal equation.
See the papers of Cui-Jin \cite{Cui-Jin}
and Zhu-Wu-Cui \cite{Zhu-Wu-Cui}.  
\end{rmrk}

\subsection{{\bf Advancing Time Towards Infinity}}
\label{sect:Penrose}

Given reasonable initial data, due to gravitational
radiation, one expects that a general asymptotically flat solution of the 
Einstein vacuum equations will asymptotically  settle down into a stationary regime. A similar scenario is
expected to hold true in the presence of matter, as the matter fields shall be ``swallowed'' by the black hole in the process. This conjecture, called the {\bf Final State Conjecture},  if true, characterizes all possible asymptotic states of an isolated gravitational system: one expects it to evolve to a finite number of Kerr black holes moving away from each other.   See, for example, the surveys by Giorgi \cite{Giorgi-BAMS} and by Ionescu and Klainerman \cite{IonescuKlainerman} for more details.

If the system is an asymptotically flat galaxy with multiple black holes, one cannot expect convergence to this final state to be smooth convergence.   In fact, with deep gravity wells around stars falling into the central black hole, we would not even expect a convergence in any suitably defined Gromov-Hausdorff sense. See the left side of Figure~\ref{fig:final-state}. 

Here we restate the Final State Conjecture 
using our cosmic strips and pointed exhaustions by
causally null compactifiable sub-space-times defined in Sections~\ref{sect:space-times-strip}-\ref{sect:space-times-sub} and depicted in the center of
Figure~\ref{fig:final-state} as follows: 

\begin{conj}\label{conj:final-state}
Suppose $N'_+$ is the exterior
region outside of the event horizon in
an asymptotically flat future
developed space-time, $(N,g)$, with at
least one black hole satisfying Einstein's equations.
 Suppose $\tau: N'_+\to (0,\infty)$
is its cosmological time function and suppose $C:(0,\infty)\to N'$ is a causal curve
lying near the horizon boundary, $\partial N'$,
parameterized by cosmological time,
$
\tau(C(t))=t$ for any
$t\in (0,\infty).
$
Consider a sequence of well chosen
space-like surfaces, $M_j \subset N'$,
diverging to the future
as in Remark~\ref{rmrk:well-chosen}
where 
$
C(t_j)\subset \partial M_j \textrm{ and }t_j \to \infty.
$
Let $N'_{j,+}=J^+(M_j)\subset N'$ be the exterior future development of $M_j$, 
let $\tau_j:N'_{j,+}\to [0,\infty)$ be
the cosmological time of $N'_{j,+}$.
For any $\tau_0>0$,
the cosmic strips,
\be
N'_{j,0,\tau_0}=\tau_j^{-1}(0,\tau_0)\subset N'_{j,+}\subset N',
\ee
converge in some intrinsic space-time sense
(using a pointed exhaustion based at $C(t_j)$
by causally-null-compactifiable sub-space-times, $\Omega_j(R_i)\subset N'_{j,0,\tau_0}$
as in Remark~\ref{rmrk:pt-exhaustion}
and Remark~\ref{rmrk:exhausted-conv})
to a cosmic strip,
$(N'_{Kerr,0,\tau_0},g_{Kerr})$ in
 the exterior of future Kerr space-time,  $(N'_{Kerr,+},g_{Kerr})$ as in Remark~\ref{rmrk:Kerr-exterior}.  
 We conjecture that among the
 various intrinsic future developed distances between space-times 
 defined in Section~\ref{sect:defns-FD}, an intrinsic
  $S-FD-FF$ or
  $S-FD-mm$ convergence may work better
  than $S-FD-HH$.  
\end{conj}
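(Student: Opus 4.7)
The plan is to reduce Conjecture~\ref{conj:final-state} to a combination of two ingredients: (a) a classical Final State type stability result, producing diffeomorphism-based convergence of $(N'_{j,+},g_j)$ to a Kerr exterior in a suitably weighted norm on each exhausting region, and (b) a translation mechanism showing that such diffeomorphism-based convergence implies the intrinsic $S\text{-}FD\text{-}\mathcal{FF}$ or $S\text{-}FD\text{-}mm$ convergence of the cosmic strips along the exhaustion.

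First I would verify the setup. For an appropriate choice of surfaces $M_j$ that foliate $N'$ near future timelike infinity and meet $\partial N'$ transversally at $C(t_j)$, one must construct the pointed exhaustions $\Omega_j(R_i)$ of $N'_{j,0,\tau_0}$ based at $C(t_j)$ and check that each is a causally-null-compactifiable sub-space-time. This is done by combining a Cederbaum-Sakovich STCMC foliation on $M_j$ (Remark~\ref{rmrk:CMC}) with the pointed past construction of Section~\ref{sect:space-times-Minkowski}, adapted to Kerr along the lines of Section~\ref{sect:space-times-Schwarzschild} and Remark~\ref{rmrk:Kerr-exterior}, and invoking Galloway's causality theorem (Theorem~\ref{thm:G}) to certify the encoding-causality hypothesis of Definition~\ref{defn:sub-space-time}. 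An analogous construction on the Kerr target $(N'_{Kerr,+},g_{Kerr})$ yields reference regions $\Omega_\infty(R_i)$ with initial surfaces $\Sigma_\infty(R_i)\subset M_{Kerr}$.

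Second, I would invoke the nonlinear stability of Kerr in subextremal regimes (Klainerman-Szeftel, Giorgi-Klainerman-Szeftel, and in the Schwarzschild case Dafermos-Holzegel-Rodnianski-Taylor) to obtain, for each $R_i$ and $j$ large, a diffeomorphism $\Phi_{j,i}:\Omega_j(R_i)\to\Omega_\infty(R_i)$ such that $\Phi_{j,i}^{\ast}g_{Kerr}-g_j$ decays to zero in a suitable weighted norm as $t_j\to\infty$. Third, I would translate this metric closeness into intrinsic timed closeness. Using that $\tau_g$ and $\hat d_g$ depend continuously on $g$ through generators (Theorem~\ref{thm:AGH-generator}) and through piecewise causal zig-zags, combined with Allen-Burtscher-type estimates relating $C^0$-closeness of Lorentzian metrics to closeness of null distances, one shows that $(\Phi_{j,i})_{\ast}\tau_{g_j}\to\tau_{g_{Kerr}}$ uniformly and $(\Phi_{j,i})_{\ast}\hat d_{g_j}\to\hat d_{g_{Kerr}}$ in the intrinsic Hausdorff or flat sense. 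Deep gravity wells and small black holes that prevent any $C^0$-closeness globally are absorbed as thin sets of small mass or measure, analogously to Lee-Sormani \cite{LeeSormani1}; the $\mathcal{FF}$ and $mm$ variants of Definition~\ref{defn:FD-HH} (Remarks~\ref{rmrk:FD-FF}-\ref{rmrk:FD-mm}) are precisely designed to tolerate such defects, and one then passes to the intrinsic convergence along the exhaustion as in Remark~\ref{rmrk:exhausted-conv}.

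The principal obstacle is the Final State Conjecture itself, which remains wide open outside of perturbative and highly symmetric regimes and is one of the central problems in mathematical general relativity; without (a), step two is simply a conjectural input. Assuming (a), three technical hurdles remain genuinely difficult: (i) the cosmological time $\tau_{g_j}$ on $N'_{j,+}$ depends on the entire causal past back to $M_j$, so controlling $(\Phi_{j,i})_{\ast}\tau_{g_j}-\tau_{g_{Kerr}}$ requires quantitative decay estimates along generating geodesics on time intervals of length comparable to $t_j$, which in general do not follow from local stability in $\Omega_j(R_i)$; (ii) the geometric measure theory and optimal transport underlying $d_{S\text{-}FD\text{-}\mathcal{FF}}$ and $d_{S\text{-}FD\text{-}mm}$ on causally-null-compactifiable space-times is only being developed (Remarks~\ref{rmrk:int-curr-space-time} and~\ref{rmrk:measure-space-time}), so the translation step rests on work in progress; (iii) canonically selecting the surfaces $M_j$ and certifying that the pointed exhaustion property and future causal completeness persist in the presence of multiple merging horizons and infalling matter demands an analysis that does not appear to reduce to any previously treated case, and is likely where the most delicate new ideas will be required.
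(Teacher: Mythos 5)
This statement is a conjecture: the paper offers no proof of it, and neither do you. Your ``proposal'' is a conditional research program whose input (a) is essentially the classical Final State Conjecture itself, which you correctly identify as wide open; what you have written therefore reduces one open problem to another open problem plus a translation mechanism, rather than establishing anything. That is the honest state of the art, and you say so, but it should be stated plainly at the outset that no proof exists or is claimed, either in the paper or in your sketch.

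It is worth comparing your program with the one the authors sketch in \remref{rmrk:Final-State-Limits} and \remref{rmrk:No-Hair}, because the two point in opposite directions. You propose to start from diffeomorphism-based nonlinear stability of Kerr and push it forward into intrinsic $d_{S-FD-FF}$ or $d_{S-FD-mm}$ convergence. If completed, this would establish the conjecture only in the perturbative, slowly rotating regimes where those smooth results hold, and it presupposes exactly the diffeomorphic setting that the intrinsic formulation is designed to escape (multiple merging horizons, deep gravity wells, changing topology). The authors instead propose to exploit the metric-space structure directly: show the sequence of cosmic strips is Cauchy with respect to an intrinsic timed distance, extract a limit timed-metric-space by completeness, prove the limit is stationary in a weak (e.g.\ optimal-transport) sense, and then invoke a weak No Hair theorem to identify it with Kerr. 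The two programs are complementary --- yours would serve as a consistency check of the definitions against known smooth stability, theirs is the route that could in principle reach the non-diffeomorphic case --- but they are not the same argument, and only the second addresses the full scope of the conjecture as stated. Finally, your flagged difficulty (i) is a genuine obstruction and deserves emphasis: the cosmological time $\tau_{g_j}$ on $N'_{j,+}$ is determined by the entire causal past back to $M_j$, so closeness of the metrics on a fixed region $\Omega_j(R_i)$ does not by itself control $\tau_{g_j}$ there, and hence does not control the null distance or the causal structure that the $FD$ distances are built on. This is precisely the kind of global issue the Cauchy-sequence approach of \remref{rmrk:Final-State-Limits} is meant to sidestep.
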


\begin{figure}[h]
   \centering
   \includegraphics[height=1.4in]{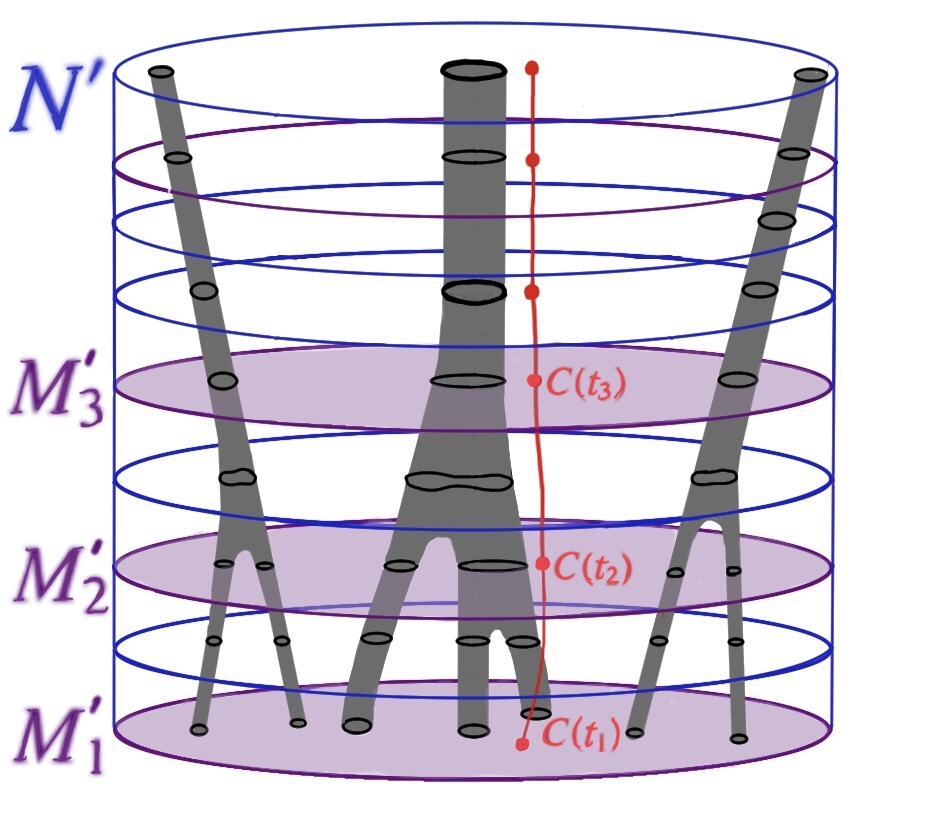}
   \includegraphics[height=1.4in]{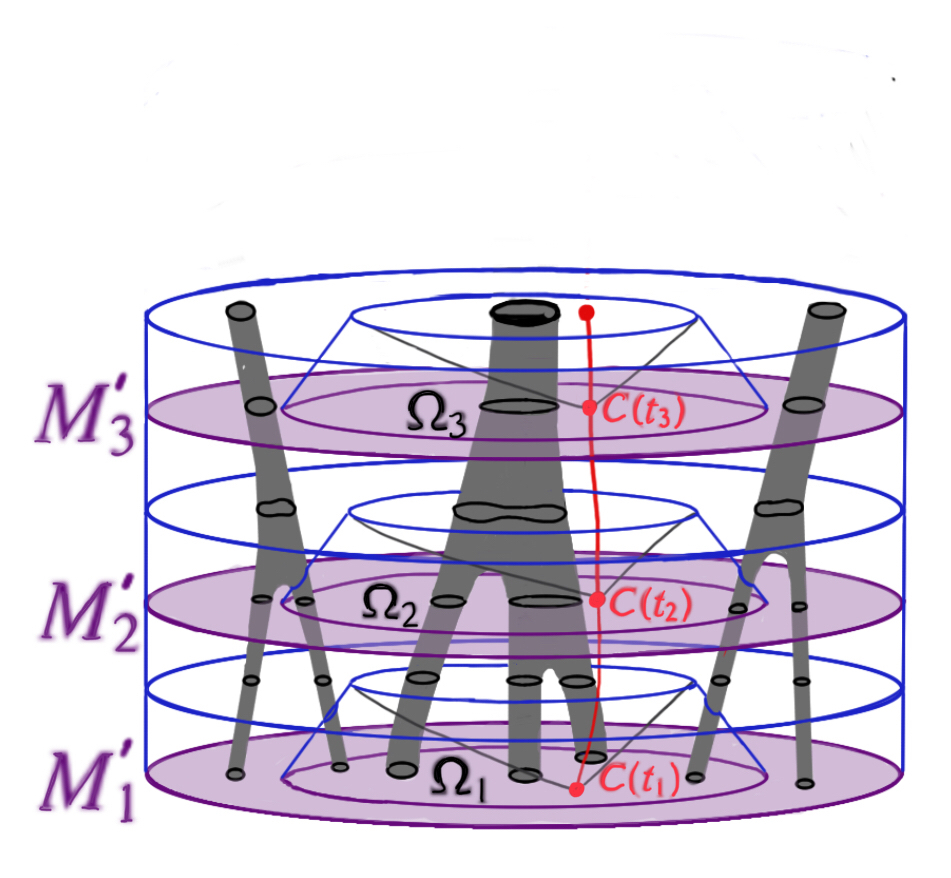}
  \,\, \includegraphics[height=1.4in]{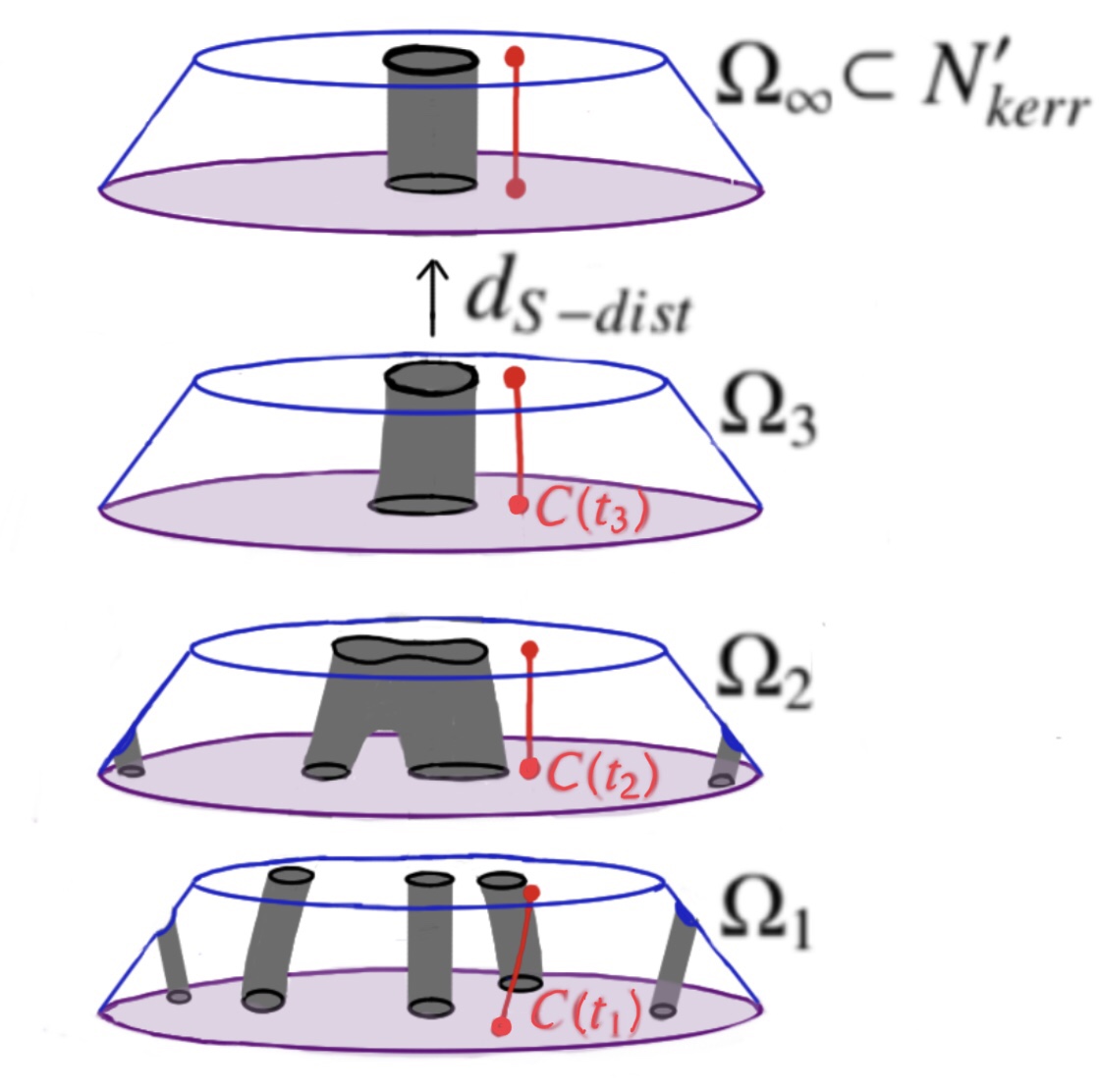}\,\,
\caption{On the left we see a space-time, $N'$,
   with black holes merging as time evolves forward towards the prediction of the Final State Conjecture (that there will be finitely many black holes moving away from one another) with a causal curve, $C$, following one of the black holes centering our view as in Conjecture~\ref{conj:final-state}.
   In the center, we see the 
the cosmic strips, $N'_{j,0,\tau_0}\subset N'$ and regions, $\Omega_j=\Omega_j(R_i)\subset N'_{j,0,\tau_0}$, from the pointed exhaustion centered on $C(t_j)$ for fixed $R_i$.  On the right, we see the 
$\Omega_j=\Omega_j(R_i)$ viewed as intrinsic 
 space-times converging upward to their limit
 $\Omega_\infty=\Omega_\infty(R_i)\subset N'_{Kerr,0,\tau_0}$
   as in Remark~\ref{rmrk:follow-one}.
   }\label{fig:final-state}
\end{figure}

\begin{rmrk}\label{rmrk:follow-one}
Note that in the above conjecture we only mention one black hole in the limit.  This is because we are taking a pointed convergence following a causal curve, $C$, along the horizon of just one black hole towards the final state.   In the Final State Conjecture the other black holes are moving away.  So for any fixed $R_i$, the other black holes are eventually outside of 
causally-null-compactifiable regions, $\Omega_j(R_i)\subset N_j'$, that are defined by the pointed exhaustion
in Remark~\ref{rmrk:pt-exhaustion}.  When taking a pointed limit as in Remark~\ref{rmrk:exhausted-conv},
we fix the $R_i$ and take $j\to \infty$
so that 
\be
\Omega_j(R_i)\to  \Omega(R_\infty)\subset N'_{Kerr,+}.
\ee
See the right side of Figure~\ref{fig:final-state}. If we follow a different black hole with the curve, $C$, we will see a different limit describing its destiny.
\end{rmrk}

\begin{rmrk}\label{rmrk-pted-choice}
For simplicity, we chose the sequence of points
$C(t_j)\subset M_j \subset N$ in the horizon boundary, $\partial N'$ in Conjecture`\ref{conj:final-state} to follow one black hole into the future.   However 
one might choose, $C(t_j)\subset M_j$, near a black hole or a gravity well but not on the horizon boundary as discussed by Lee-Sormani in \cite{LeeSormani1}.   Note if $C(t_j)$ are not near any black holes as $t_j\to \infty$ then 
$N_{j,0,\tau_0}=\tau_j^{-1}(0,\tau_0)\subset N'_{j,+}\subset N'$
will converge using a pointed exhaustion based at $C(t_j)$
to Minkowski (which is a special case of Kerr).
\end{rmrk}

\begin{rmrk}\label{rmrk:well-chosen}
It is unknown exactly what these ``well-chosen'' sequences of space-like asymptotically flat, $M'_j\subset N'$,
diverging to the future should be.   If the Final State Conjecture is correct, these surfaces are converging in the pointed sense to a maximal surface in Kerr.   
\end{rmrk}

If we add the hypothesis that we are starting with a time-symmetric asymptotically flat initial data set, then we are in a scenario considered by Penrose \cite{Penrose-1973}, see also the surveys by Mars \cite{Mars-overview-Penrose}\cite{Mars-CQG}, where it was suggested that the future development of such a space-time evolves towards a single black hole. See the left side of Figure~\ref{fig:Penrose-to-Kerr}.

\begin{figure}[b] 
   \centering
   \includegraphics[height=1.5in]{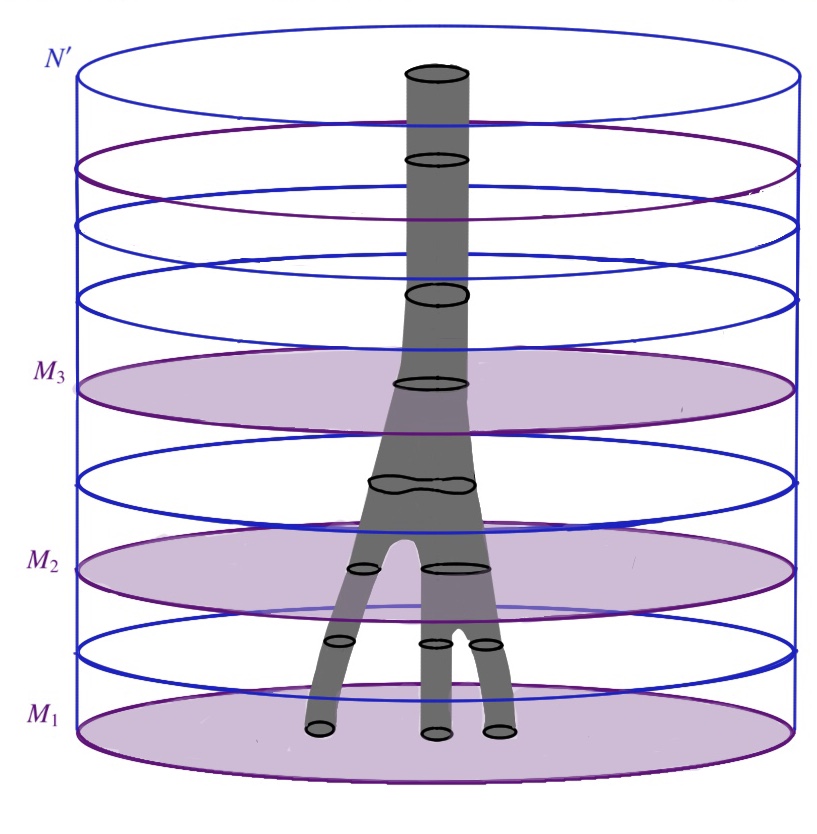}
   \includegraphics[height=1.5in]{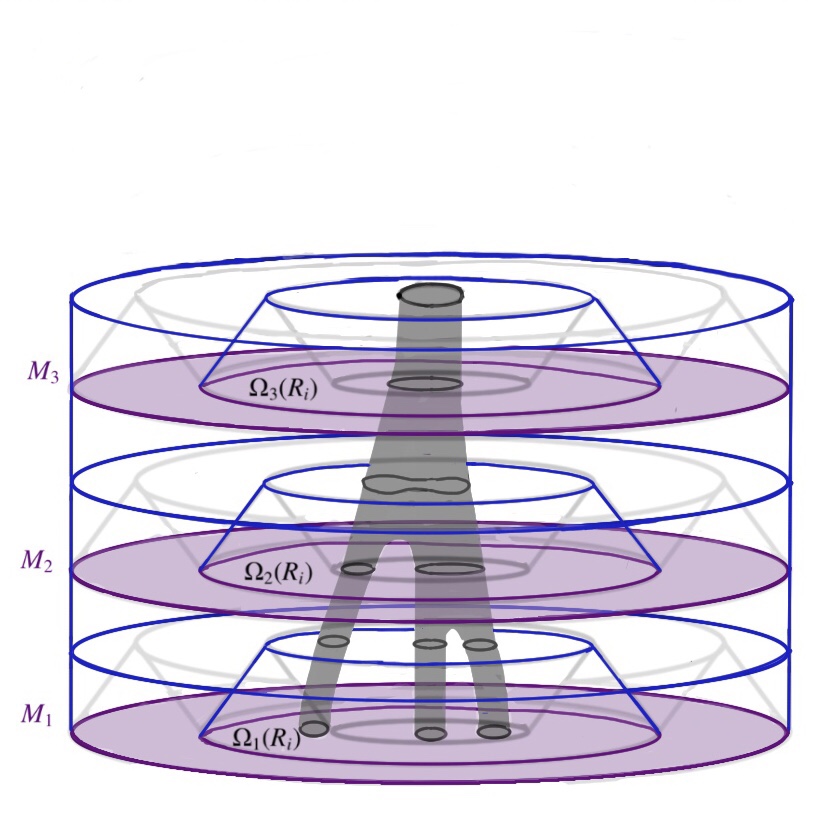}
   \includegraphics[height=1.5in]{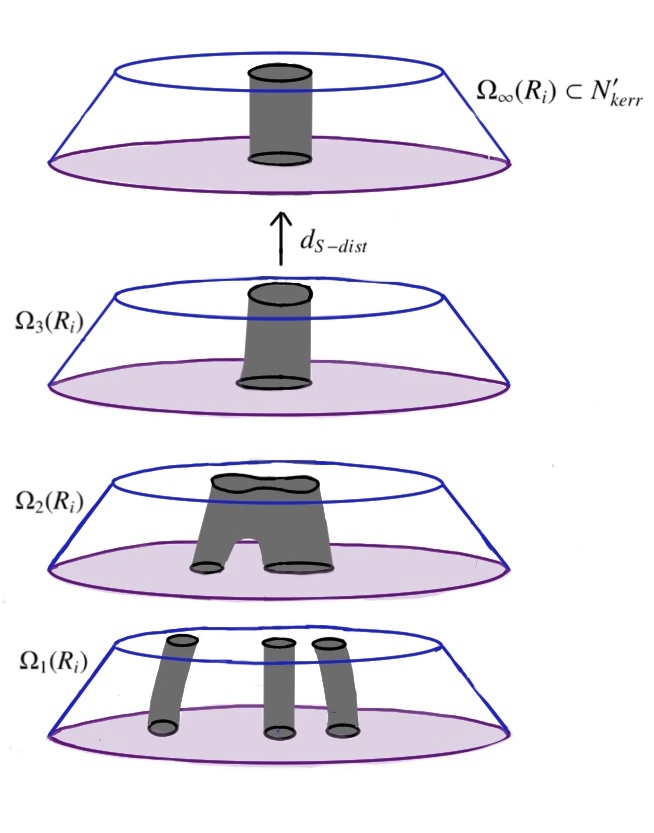}
   \caption{This depicts Conjecture~\ref{conj:Penrose-to-Kerr}.
   On the left we see a space-time, $N'$,
   with black holes merging into a single black hole.
In the center, we see the 
the cosmic strips, $N'_{j,0,\tau_0}\subset N'$ and STCMC regions, $\Omega_j=\Omega_j(R_i)\subset N'_{j,0,\tau_0}$, for fixed $R_i$.  On the right, we see the 
$\Omega_j=\Omega_j(R_i)$ viewed as intrinsic 
 space-times converging upward to their limit
 $\Omega_\infty=\Omega_\infty(R_i)\subset N'_{Kerr,0,\tau_0}\subset N'_{Kerr}$.
   }\label{fig:Penrose-to-Kerr}
\end{figure}

Since there is only one limiting black hole in this setting, we might use the Cederbaum-Sakovich STCMC exhaustions as in Remark~\ref{rmrk:CS-exhaustion} 
because they naturally center the evolving central black hole.  See Figure~\ref{fig:Penrose-to-Kerr}.
We can then restate this step of Penrose's argument
as the following conjecture:

\begin{conj}\label{conj:Penrose-to-Kerr}
Suppose $N'$ is
the exterior region (outside the event horizon) in
an asymptotically flat future
developed space-time, $(N,g)$,  satisfying Einstein's 
equations with a time-symmetric initial data set, $(M,h,0)$. 
Consider a sequence of well chosen
space-like surfaces, $M_j\subset N'$,
diverging to the future
as in Remark~\ref{rmrk:well-chosen}. 
and 
$
N'_{j,+}=J^+(M_j)\subset N
$
viewed as the exterior future development of $(M_j, h_j, \kappa_j)$.  Let 
$
\tau_j:N'_{j,+}\to [0,\infty)
$
the Cosmological time of $N'_{j,+}$.  
Then for any $\tau_0>0$,
the cosmic strips,
\be
N'_{j,0,\tau_0}=\tau_j^{-1}(0,\tau_0)\subset N'_{j,+}\subset N',
\ee
converge in an intrinsic future developed sense
(using a Cederbaum-Sakovich STCMC exhaustion
by causally-null-compactifiable sub-space-times, 
$\Omega_j(R_i)\subset N'_{j,0,\tau_0}$,
as in Remark~\ref{rmrk:CS-exhaustion}
and Remark~\ref{rmrk:exhausted-conv}) 
to the cosmic strip
\be
N'_{Kerr,0,\tau_0}=\tau_{g_{Kerr}}^{-1}(0,\tau_0)\subset N'_{Kerr,+}\subset N'_{Kerr},
\ee
where
$(N'_{Kerr},g_{Kerr})$ is the exterior of an asymptotically flat Kerr space-time.   We conjecture that among the
 various intrinsic future developed distances between space-times 
 defined in Section~\ref{sect:defns-FD}, the
  intrinsic flat and metric measure 
  notions,
  $d_{S-FD-FF}$ or
  $d_{S-FD-mm}$, may work better
  than $d_{S-FD-HH}$.  See Figure~\ref{fig:Penrose-to-Kerr}.
  \end{conj}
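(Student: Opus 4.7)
The plan is to decompose this conjecture into three layers, attacking the intrinsic-distance infrastructure first, the asymptotic geometry of the individual strips second, and the comparison with Kerr last. First, I would verify that each Cederbaum-Sakovich STCMC region $\Omega_j(R_i)\subset N'_{j,0,\tau_0}$ really is a causally-null-compactifiable sub-space-time in the sense of Definition~\ref{defn:sub-space-time}, and likewise for $\Omega(R_i)\subset N'_{Kerr,0,\tau_0}$. The argument should follow the Schwarzschild template of Proposition~\ref{prop:cpct-ext-Sch}: run an Arzela-Ascoli argument on the generators of $\tau_j$, use the event horizon as a barrier so that sequences cannot escape through $\partial N'$ in the spirit of (\ref{eq:ext-Sch-past}), and invoke Galloway's theorem (Theorem~\ref{thm:G}) to obtain causality encoding on each level set. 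This is what allows the intrinsic distances of Section~\ref{sect:defns-FD} to even be defined on the objects in question.

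Second, I would analyze how the intrinsic geometry of $\Omega_j(R_i)$ degenerates as $j\to\infty$ for fixed $R_i$. The natural route is to input some form of the Final State Conjecture to produce diffeomorphisms $\Psi_j: \Omega(R_i)\to \Omega_j(R_i)$ together with an $H^{1,p}$-type estimate $\|\Psi_j^* g_j - g_{Kerr}\|\to 0$ on the STCMC region. From this one derives $C^0$ convergence of the cosmological times $\tau_j\circ \Psi_j \to \tau_{g_{Kerr}}$, and then uniform convergence of the null distances $\hat{d}_{g_j}\circ(\Psi_j,\Psi_j)\to \hat{d}_{g_{Kerr}}$ by the strategy of Allen-Burtscher \cite{Allen-Burtscher-22}. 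The initial slices $M_j\cap \Omega_j(R_i)$ Hausdorff-converge to their Kerr counterparts after a common Kuratowski embedding into $\ell^\infty$, which combined with the above yields $d_{S-FD-HH}\!\left(\Omega_j(R_i),\Omega(R_i)\right)\to 0$ and, after passing to $i\to\infty$ through the exhaustion as in Remark~\ref{rmrk:exhausted-conv}, the asserted convergence.

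The third and main obstacle is that the Final State Conjecture is itself completely open outside very restricted perturbative regimes, so the smooth convergence input assumed above is simply unavailable in general. Realistic intermediate targets I would pursue are: (i) verify the conjecture unconditionally in the spherically symmetric setting of Christodoulou \cite{Christodoulou-1999}, where the Kerr limit reduces to Schwarzschild and the analysis of $\tau_j$ is essentially $1+1$-dimensional, in the spirit of Remark~\ref{rmrk-SakSorSch}; (ii) prove the conjecture conditionally on the Klainerman-Szeftel-Giorgi stability of slowly rotating Kerr \cite{KS-small}\cite{GKS-22}, restricted to initial data in the perturbative neighborhood of such a solution. The genuinely hard final step is to replace $d_{S-FD-HH}$ by $d_{S-FD-FF}$ or $d_{S-FD-mm}$, as suggested in the conjecture, so that thin gravity wells and small black holes falling into the merging hole are allowed to \emph{disappear} in the limit rather than obstruct Hausdorff closeness. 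For this one needs the integral current or measure structure announced in \cite{SakSor-SIF} together with a volume-decay estimate on the disappearing regions, and I expect that this step, which is exactly where the Lee-Sormani type phenomena of Remark~\ref{rmrk:PMT-Stab} enter the Lorentzian picture, will be the central technical obstacle even in the perturbative regime.
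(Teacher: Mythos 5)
You should first note that the statement you are proving is stated in the paper as a conjecture: the paper gives no proof of it, and indeed much of the surrounding machinery it invokes is itself only conjectural there (Remark~\ref{rmrk:CS-exhaustion} only \emph{conjectures} that the STCMC regions $\Omega_j(R_i)$ are causally-null-compactifiable, Remark~\ref{rmrk:well-chosen} admits that the ``well chosen'' surfaces $M_j$ are not yet defined, and the $d_{S-FD-FF}$/$d_{S-FD-mm}$ notions rest on the integral current space-time framework announced but not developed in \cite{SakSor-SIF}). So there is no paper proof to compare against, and what you have written is a research program rather than a proof. As a program it is reasonable and broadly consistent with the intentions signaled in the paper (Schwarzschild-template compactifiability via Proposition~\ref{prop:cpct-ext-Sch} and Theorem~\ref{thm:G}, Allen--Burtscher-style control of null distances from metric convergence, the spherically symmetric and slowly-rotating-Kerr special cases of Remarks~\ref{rmrk:sph-sym} and~\ref{rmrk:stab}), but you should present it as such and not as a proof.

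The concrete gaps are the ones your own third paragraph half-acknowledges, and they are fatal to calling this a proof. First, your second layer inputs the Final State Conjecture (or Kerr stability) to produce diffeomorphisms $\Psi_j$ with $\|\Psi_j^*g_j-g_{Kerr}\|\to 0$; since Conjecture~\ref{conj:Penrose-to-Kerr} is essentially an intrinsic, diffeomorphism-free reformulation of that final-state picture, assuming such $\Psi_j$ makes the argument conditional at best and close to circular at worst — and it also forfeits the main point of the conjecture, which is to handle the non-diffeomorphic regime (small black holes, thin wells) where no such $\Psi_j$ exists. Second, even granting smooth metric convergence on the STCMC regions, the step ``$\tau_j\circ\Psi_j\to\tau_{g_{Kerr}}$ uniformly, hence $\hat d_{g_j}\to\hat d_{g_{Kerr}}$'' is not automatic: cosmological time is defined by a supremum over past causal curves that may exit the region $\Omega_j(R_i)$, so one needs barrier arguments controlling generators near the horizon and near the outer STCMC boundary uniformly in $j$ (the paper only carries this out for exact Schwarzschild in Proposition~\ref{prop:cpct-ext-Sch}), and the Allen--Burtscher estimates you cite are proved in warped-product settings, not here. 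Third, the passage from Hausdorff closeness of the exhaustion regions to the asserted $S$-$FD$-$FF$ or $S$-$FD$-$mm$ convergence cannot be waved through: those distances are not yet defined in the paper, no semicontinuity or volume-decay tools for the disappearing regions exist in the Lorentzian setting, and your own plan defers exactly this ``central technical obstacle.'' Until those three items are supplied, what you have is a plausible decomposition of the conjecture into open sub-problems, which is valuable as a roadmap but does not constitute a proof of the statement.
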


This conjecture is related to a key step in Penrose's original argument for proving the Penrose Inequality.
In his famous heuristic argument, Penrose considers 
the exterior, $N'$, of the future maximal development, $(N,g)$, of the initial state $(M,h,k=0)$ and concludes that
\be
\mass_{ADM}(M,h)\ge \frac{1}{2} \left(\frac{\area_h(\partial M')}{\omega_{n-1}}\right)^{\frac{n-2}{n-1}}
\ee  
where $\partial M'$ in his argument is the intersection
of $M$ with the event horizon, $\partial N'$.  He refers to the Final State Conjecture to claim that this space-time
evolves towards Kerr space-time as time goes to infinity
which can be rephrased as in Conjecture~\ref{conj:Penrose-to-Kerr}.

\begin{rmrk}\label{rmrk:Riem-Penrose}
Penrose's heuristic argument is not used in the mathematical proofs of the Penrose Inequality by Huisken-Ilmanen or Bray in 
\cite{Huisken-Ilmanen} \cite{Bray-Penrose}.   In their setting the surface, $\partial M$, is defined to be the outermost minimizing surface in the asymptotically flat Riemannian manifold, $M$, and there is no reference to its future development $N$ in their proofs. This is because one cannot define the event horizon of a black hole in terms of initial data $(M,h,k)$ for the Einstein equations.  However, heuristically, the area of the outermost minimizing surface, $\partial M$, is believed to be smaller than the area of $\partial M'$ considered by Penrose in his setting.
\end{rmrk}

It is well worth deriving a mathematical proof of Penrose's original argument.  We can at least restate each of his steps just as we have restated the Final State Conjecute above.

An important ingredient in the argument by Penrose is the Black Hole Area Law proven initially by Hawking (\cite{Hawking-Ellis}  and in more generality by Chrusciel-Delay-Galloway-Howard \cite{CDGH-area}.
 This law states that the total area of the  event horizon $\partial N'$ of a black hole space-time $(N',h)$ is a non-decreasing function of time.
 In particular:
 \be
 \area_{h}(\partial N' \cap \tau^{-1}(\tau_j))\le \area_{h}(\partial N' \cap \tau^{-1}(\tau_j))
 \textrm{ for }\tau_j\le \tau_{j+1}.
 \ee
 This is proven to be true at least when the level sets of $\tau$ are $C^2$ in \cite{CDGH-area}
 and might be true more generally.  
 
 In the argument of Penrose, he claims that the area increases to the 
 area of the horizon in the limit.  This means he
 believes there is at least semi-continuity of the area under convergence to that limit.
 So we restate this as the following conjecture:

\begin{conj}\label{conj:Penrose-Area}
Under the hypotheses of Conjecture~\ref{conj:Penrose-to-Kerr},
with 
\be
M'_j=\tau^{-1}(\tau_j)\subset N'
\ee
where $\tau_j \to \infty$,
and $h_j$ the induced metric from $g_j$,
we have the $\mathcal{VF}$ convergence
\be
(M'_j,h_j)\to (M'_\infty,h_\infty)
\ee
and
\be
\limsup_{j\to \infty} Area_{h_j}(\partial M'_j)
\le Area_{h_\infty}(\partial M'_\infty).
\ee
\end{conj}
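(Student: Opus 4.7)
The plan is to derive this conjecture as a consequence of three inputs used in combination: the intrinsic $S$-$FD$-$\mathcal{VF}$ convergence of cosmic strips from Conjecture~\ref{conj:Penrose-to-Kerr}, an Ambrosio--Kirchheim style slicing of the limiting integral current space-time by the Lipschitz one function $\tau$, and the Chru\'{s}ciel--Delay--Galloway--Howard Black Hole Area Law applied both in the approximating space-times and in the Kerr limit. First I would assume the intrinsic flat (or volume-preserving intrinsic flat) version of Conjecture~\ref{conj:Penrose-to-Kerr}, so that the exhausting causally-null-compactifiable sub-space-times $\Omega_j(R_i) \subset N'_{j,0,\tau_0}$ converge in the $S$-$FD$-$\mathcal{VF}$ sense (through a Cederbaum--Sakovich STCMC exhaustion) to $\Omega_\infty(R_i) \subset N'_{Kerr,0,\tau_0}$, and so that the initial surfaces $M'_j = \tau^{-1}(\tau_j)$ themselves are pulled along by these exhaustions.

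Next I would apply the slicing theorem to the Lipschitz one time function $\tau_{g_j}$ (Lipschitz one in the null distance by Sormani--Vega \cite{SV-Null}) on each $\Omega_j(R_i)$ viewed as an integral current space-time, together with Theorem~\ref{thm:proper} which gives the appropriate extension of the generators and of $\tau$ to the metric completion. This produces, for almost every $t \in (0,\tau_0)$, a volume-preserving intrinsic flat convergence of slices $\tau_j^{-1}(t) \cap \Omega_j(R_i) \to \tau_\infty^{-1}(t) \cap \Omega_\infty(R_i)$, and by exhausting in $R_i$ and then in $t$, the desired $\mathcal{VF}$ convergence $(M'_j,h_j) \to (M'_\infty, h_\infty)$ for the first claim. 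Crucially, the same slicing applied to the relative boundary strata carried by $\partial \Omega_j(R_i)$ would give convergence of the horizon cross-sections $\partial M'_j$ to $\partial M'_\infty$ as integral $(n{-}2)$-currents, hence the Sormani--Wenger lower semi-continuity of mass gives $\mass(\partial M'_\infty) \le \liminf_{j \to \infty}\mass(\partial M'_j)$.

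To obtain the reverse inequality $\limsup_{j\to\infty}\mass(\partial M'_j) \le \mass(\partial M'_\infty)$ asked for in the conjecture, I would exploit that the sequence $A_j := \area_{h_j}(\partial M'_j)$ is already monotone non-decreasing by the Black Hole Area Law applied within the single space-time $(N',g)$; thus $\limsup A_j = \lim A_j = \sup_j A_j$, and it suffices to produce a uniform upper bound $A_j \le A_\infty$. For this, I would use that $\Omega_j(R_i) \to \Omega_\infty(R_i)$ $\mathcal{F}$-close embeds each $\partial M'_j$ almost isometrically into a neighborhood of the Kerr horizon cross-section inside a common metric space, and then push this forward to identify the flat limit of $\partial M'_j$ as precisely a cross-section of the Kerr event horizon; since Kerr is stationary, every such cross-section has area exactly $A_\infty$, so combining with the already-established lower semi-continuity one obtains the needed equality, forcing $A_j \le A_\infty$ for every $j$.

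The hard part will be Step three, specifically ruling out loss of horizon area along the $\mathcal{VF}$ convergence (cancellation of the oriented $(n{-}2)$-currents $\partial M'_j$, disappearance of thin horizon fingers reaching into regions carrying no limit volume, or topology changes as black holes merge). The Area Law of Chru\'{s}ciel--Delay--Galloway--Howard \cite{CDGH-area} requires $C^2$ level sets, so a secondary obstacle is extending it to the low-regularity slices produced by slicing, possibly by reducing to almost every $t$ where the cross-sections are rectifiable and invoking a density argument. I expect that these difficulties are the exact obstructions that make Penrose's heuristic step resist a rigorous proof, and that progress will require first establishing Conjecture~\ref{conj:Penrose-to-Kerr} in a form strong enough to control the event horizon as a rectifiable null hypersurface together with its cross-sectional areas, not merely the bulk space-time geometry.
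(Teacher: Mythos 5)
You should first be aware that the paper does not prove this statement: it is stated as Conjecture~\ref{conj:Penrose-Area} and left open. The only guidance the paper gives is the remark immediately following it, namely that areas of boundaries are in general only \emph{lower} semicontinuous under intrinsic flat convergence, so that one should work with the stronger $\mathcal{VF}$ convergence (which carries the total mass term) together with the Ambrosio--Kirchheim slicing theorem and the results of Portegies, Jauregui--Lee, Jauregui--Lee--Perales, and Portegies--Sormani. Your first two steps (assume the $\mathcal{VF}$ form of Conjecture~\ref{conj:Penrose-to-Kerr}, then slice by the Lipschitz one time function to obtain convergence of the slices $M'_j$) are consistent with that suggested route, and your text is honestly framed as a strategy rather than a completed proof.

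The genuine gap is in your third step. Lower semicontinuity of mass under flat convergence gives $\area_{h_\infty}(\partial M'_\infty)\le\liminf_{j}\area_{h_j}(\partial M'_j)$, which is the \emph{opposite} of the conjectured inequality, and identifying the flat limit of the currents $\partial M'_j$ as a Kerr horizon cross-section of area $A_\infty$ does not by itself exclude $\lim_j A_j>A_\infty$: the flat limit of a sequence of currents can have strictly smaller mass than the limit of their masses (wrinkled or fingered horizons whose excess area vanishes in the limit are exactly the failure mode of upper semicontinuity). So ``combining with the already-established lower semicontinuity'' cannot force $A_j\le A_\infty$; what you need is precisely continuity of the boundary areas, which must be an independent input --- for instance $\mathcal{VF}$ convergence of the codimension-two horizon cross-sections themselves, or a Jauregui--Lee type argument --- and cannot be deduced from lower semicontinuity plus the Area Law. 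The Area Law gives monotonicity of $A_j$, hence $\limsup_j A_j=\lim_j A_j=\sup_j A_j$, but it supplies no comparison of that limit with $A_\infty$.
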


\begin{rmrk}
Note that for intrinsic flat convergence, the areas of boundaries are usually lower semicontinuous, not
upper semicontinuous, so it would
be better to use the stronger properties of $\mathcal{VF}$ convergence developed in the work of 
Portegies, Jauregui-Lee,
and Jauregui-Lee-Perales in \cite{Portegies-evalues}  \cite{Jauregui-Lee-ADM-F}
\cite{Jauregui-Lee-Perales}
combined with the Slicing Theorem
of Ambrosio-Kirchheim \cite{AK} and
results in Portegies-Sormani \cite{Sormani-Portegies-prop}.
\end{rmrk}

To complete Penrose's arguement, one uses the above conjectures and then the fact that Penrose's Inequality holds on Kerr, so that
\be
\frac{1}{2} \left(\frac{\area_h(\partial M)}{\omega_{n-1}}\right)^{\frac{n-2}{n-1}}\le
\frac{1}{2} \left(\frac{\area_{Kerr}(\partial M)}{\omega_{n-1}}\right)^{\frac{n-2}{n-1}}
= \mass(M_{Kerr}) \le 
\mass(M,h).
\ee
The final inequality above is then justified by
the fact that 
$\mass(M_j,h_j,k_j)$ should be nonincreasing
\footnote{Justication by physicists as to why mass is nonincreasing can be found in the survey by Mars \cite{Mars-CQG}.}
starting from $\mass(M,h)$ to the
limit $\mass(M_{Kerr})$.
For this argument to hold one needs 
at least semicontinuity of the mass under convergence to the limit.  This can be phrased as follows: 

\begin{conj}\label{conj:Penrose-mass}
Under the hypotheses of 
Conjecture~\ref{conj:Penrose-Area},
the mass is lower semicontinuous:
\be \label{eq:Penrose-mass}
\liminf_{j\to \infty} \mass(M_j,h_j)
\ge \mass(M_\infty,h_\infty). 
\ee
\end{conj}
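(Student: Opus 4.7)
The plan is to derive the desired lower semicontinuity of the ADM mass from the existing semicontinuity theorems established by Jauregui--Lee \cite{Jauregui-Lee-ADM-F} and Jauregui--Lee--Perales \cite{Jauregui-Lee-Perales} for sequences of asymptotically flat Riemannian manifolds converging in the $\mathcal{VF}$ sense. Those theorems assert, roughly, that if a sequence of pointed, uniformly asymptotically flat Riemannian manifolds $(M_j, h_j)$ with nonnegative scalar curvature converges in the volume preserving intrinsic flat sense to a limit $(M_\infty, h_\infty)$ satisfying the same structural hypotheses, then $\liminf_j \mass(M_j, h_j) \ge \mass(M_\infty, h_\infty)$. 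My strategy is to verify these hypotheses in the present setting and then invoke the theorem as a black box.

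First I would establish that each spacelike slice $(M'_j, h_j) = \tau^{-1}(\tau_j)$ is asymptotically flat with a well-defined ADM mass, using that it lies in the exterior region $N'$ of an asymptotically flat space-time and that $\tau$ asymptotes to an asymptotically timelike coordinate at spatial infinity. Since Conjecture~\ref{conj:Penrose-to-Kerr} posits intrinsic convergence of $(N'_{j,+}, g_j)$ to the exterior of Kerr, the asymptotic geometry of $M'_j$ should resemble that of a constant $\tau$ slice in exterior Kerr, which is standardly asymptotically flat. Second, I would establish uniform asymptotic flatness of the sequence, namely uniform decay estimates on $h_j - \delta$ in suitable asymptotic coordinates, uniform in $j$. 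Third, I would verify the scalar curvature hypothesis: in the time symmetric initial slice this follows from the Hamiltonian constraint together with the dominant energy condition, but for the later slices $K_j$ is generally nonzero, so one must either apply a spacetime variant of Jauregui--Lee that incorporates the momentum constraint, or reduce to the Riemannian statement via the inequality between Riemannian ADM mass and spacetime ADM energy.

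The hardest step will be establishing the uniform asymptotic flatness of the $M'_j$ together with uniform control on $K_j$ as $\tau_j \to \infty$. In an $N'$ containing merging black holes, the level sets of $\tau$ need not remain uniformly spacelike in a uniform way, and one must rule out that the asymptotic regions of the $M'_j$ tilt toward null directions in a manner that spoils the decay rates required for the semicontinuity theorem. Because Conjecture~\ref{conj:Penrose-to-Kerr} already posits convergence to an exterior Kerr cosmic strip using a Cederbaum--Sakovich STCMC exhaustion by causally null compactifiable sub space-times, one expects that asymptotic flatness is controlled uniformly in $j$ inside each STCMC end; making this quantitative, and in particular extracting the STCMC foliation data from the cosmological time level sets, should be the core technical task.

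Once uniform asymptotic flatness and the energy condition are in hand, invoking the Jauregui--Lee--Perales theorem finishes the argument. A natural alternative, inspired by Huang--Lee--Perales \cite{Huang-Lee-Perales} and the pointed $\mathcal{VF}$ framework suggested in Remark~\ref{rmrk:PMT-Stab}, would be to prove the statement using pointed $\mathcal{VF}$ convergence restricted to the asymptotic ends alone, circumventing the need for uniform control deep in the interior where the horizons live and where uniform decay estimates are not feasible.
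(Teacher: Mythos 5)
The statement you are proving is stated in the paper as an open conjecture (Conjecture~\ref{conj:Penrose-mass}); the paper offers no proof, only a remark immediately afterwards observing that Jauregui--Lee established exactly the inequality (\ref{eq:Penrose-mass}) for $\mathcal{VF}$-convergent sequences of asymptotically flat Riemannian manifolds using Huisken's isoperimetric mass \cite{Jauregui-Lee-ADM-F}. Your proposal follows precisely the route the authors themselves suggest, so there is no divergence of approach to report --- but your write-up is a research plan, not a proof, and you should be explicit that the central difficulties remain open rather than deferred.

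Concretely, three gaps stand between your outline and a proof. First, the Jauregui--Lee semicontinuity theorem requires nonnegative scalar curvature (and quantitative uniform asymptotic flatness); the slices $M'_j=\tau^{-1}(\tau_j)$ for $\tau_j>0$ are not time symmetric, their second fundamental forms $K_j$ do not vanish, and the Hamiltonian constraint then gives $\scalar_{h_j}=|K_j|^2-(\tr K_j)^2$, which has no sign in general. You acknowledge this but offer no mechanism to handle it: no ``spacetime variant'' of Jauregui--Lee incorporating the momentum constraint currently exists, and the reduction from ADM energy to Riemannian mass you allude to is not available off the shelf. Second, the uniform asymptotic flatness of the $\tau$-level sets as $\tau_j\to\infty$ --- including ruling out that the ends tilt toward null directions --- is, as you say yourself, the core technical task; naming it does not discharge it. Third, the quantity Jauregui--Lee control is Huisken's isoperimetric mass of the $\mathcal{VF}$ limit, which need not coincide with the ADM mass appearing in (\ref{eq:Penrose-mass}) unless one separately proves the limit is a smooth asymptotically flat manifold (here, a Kerr slice --- which is itself the content of the preceding conjectures, so invoking it risks circularity). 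A correct treatment would either restate the conjecture in terms of isoperimetric mass or prove the two notions agree on the limit. As it stands, your proposal correctly identifies the intended toolbox but does not close any of the steps that make the statement a conjecture rather than a theorem.
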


\begin{rmrk}
Note that 
when a sequence of asymptotically flat Riemannian manifolds $(M_j,h_j)$
converges to $(M_\infty,h_\infty)$ in the $\mathcal{VF}$ sense, 
Jauregui-Lee proved (\ref{eq:Penrose-mass})
using Huisken's isoperimetric mass
\cite{Jauregui-Lee-ADM-F}.   Their arguments should probably work for $mm$ convergences as well.
\end{rmrk}

 Returning to the Final State Conjecture,
 as we stated it in Conjecture~\ref{conj:final-state}, we have two final remarks:

\begin{rmrk}\label{rmrk:Final-State-Limits}
One of the main challenges towards proving the Final State Conjecture is to prove that there is a convergence (in any suitable sense) to a limit space-time that is a stationary solution of the Einstein equations. The advantage of using an intrinsic distance between space-times to describe this convergence is that one can approach this challenge using the theory of Cauchy sequences.  Under the hypotheses of Conjecture~\ref{conj:Penrose-to-Kerr}
, 
one could approach the problem by first proving that a sequence of cosmic strips is a Cauchy sequence with respect to the intrinsic timed $mm$ or $FF$ distance and then arguing that the limit, which is a timed-metric-space endowed with a measure or current structure, is stationary.
Part of the challenge in this problem would be to define what it means for a timed-metric-space to be stationary.
\end{rmrk}

\begin{rmrk}\label{rmrk:No-Hair}  
Once one has proven the limit space is a stationary space-time, then the Final State Conjecture is reduced to
proving that Kerr space-time is the unique stationary solution to the Einstein vacuum equations. 
This is called the No Hair Conjecture.
See, for example, work by Chrusciel
\cite{Chrusciel-no-hair}
and Alexakis-Ionescu-Klainerman
\cite{Alexakis-Ionescu-Klainerman-Rigidity}
and others surveyed by Giorgi in \cite{Giorgi-BAMS}. 
 To apply the ideas in Remark~\ref{rmrk:Final-State-Limits}, one could try to prove a No Hair Theorem for the limiting timed-metric-spaces using a weak definition of the Einstein Vacuum Equation.
 Such weak definitions have been explored using measures and optimal transport methods.
 See Remark~\ref{rmrk:measure-space-time}.
 Perales and Mondino are working to relate 
 timed metric measure spaces achieved as our intrinsic $mm$ limits to the metric measure space-times studied by Mondino-Suhr \cite{Mondino-Suhr} which satisfy an optimal transport formulation of the Einstein vacuum equations.  
\end{rmrk}

\subsection{{\bf Final Remarks}}
Throughout this paper we have remarked on further directions and stated conjectures.
There are
partial results on many of these problems that have already been completed (as described within the remarks) that we hope will appear soon.  We welcome anyone to work on the conjectures, but please let us know so we can coordinate efforts and form teams.  Ideally we can come together at workshops either in person or online.   For ease of communication, we chose names and notations for our notions that are distinct from existing ones.  We review all this notation, old and new, in the index below. 

\begin{rmrk}
After presenting the results in this paper, a few people suggested they might be interested in investigating intrinsic timed distances for causally-null-compactifiable space-times using other canonical time functions.   In many places within this paper we have directly used the fact that our time functions are regular cosmological time functions.  Not all our theorems hold for other time functions. So please proceed with caution.  If you do write a paper with other time functions and wish to distinguish between your work and ours, you might refer to our space-times as ``causally null compactifiable via the cosmological time function" 
and our intrinsic distances as ``intrinsic distances defined via null compactification with respect to cosmological time".  So for example, we have defined the ``intrinsic timed Hausdorff distance
defined via null compactification with respect to cosmological time". We ask that everyone please be careful not to cause confusion when proving things with other time functions. 
\end{rmrk}

\printindex

\newpage
\bibliographystyle{plain}
\bibliography{SF-bib}
\end{document}